\numberwithin{equation}{section}
\let\al=\alpha
\let\f=\frac
\let\om=\omega
\let\pa=\partial
\def\bbT{\mathbb{T}}
\newcommand{\beq}{\begin{equation}}
\newcommand{\eeq}{\end{equation}}
\newcommand{\ben}{\begin{eqnarray}}
\newcommand{\een}{\end{eqnarray}}
\newcommand{\beno}{\begin{eqnarray*}}
\newcommand{\eeno}{\end{eqnarray*}}
\newtheorem{theorem}{Theorem}[section]
\newtheorem{definition}[theorem]{Definition}
\newtheorem{lemma}[theorem]{Lemma}
\newtheorem{proposition}[theorem]{Proposition}
\newtheorem{remark}[theorem]{Remark}
\begin{document}
\begin{CJK*}{UTF8}{gkai}
\title[Linear inviscid damping]{Linear inviscid damping for the $\beta$-plane equation}

\author{Dongyi Wei}
\address{School of Mathematical Science, Peking University, 100871, Beijing, P. R. China}
\email{jnwdyi@163.com}

\author{Zhifei Zhang}
\address{School of Mathematical Science, Peking University, 100871, Beijing, P. R. China}
\email{zfzhang@math.pku.edu.cn}

\author{Hao Zhu}
\address{Chern Institute of Mathematics, Nankai University, 300071, Tianjin, P. R. China}
\email{haozhu@nankai.edu.cn}

\date{\today}

\maketitle

\begin{abstract}
In this paper, we study the linear inviscid damping for the linearized $\beta$-plane equation around shear flows.
We develop a new method to give the explicit decay rate of the velocity for a class of monotone shear flows. This method is based on the space-time
estimate and the vector field method in sprit of the wave equation. For general shear flows including the Sinus flow, we also prove the linear damping by establishing the limiting absorption principle, which is based on the compactness method introduced by Wei-Zhang-Zhao in \cite{WZZ2}. The main difficulty is that the Rayleigh-Kuo equation has more singular points due to the Coriolis effects so that the compactness argument becomes more involved and  delicate.

\end{abstract}
\section{Introduction}

In this paper, we are concerned with the large-scale motion of ocean and atmosphere. By a large-scale motion, we mean the ratio $L/D\gg1$, where $L$ and $D$ are horizonal and vertical scale length, respectively. For such large scale flows, the rotation of the earth may affect the dynamics of the fluid significantly, therefore the Coriolis force must be taken into account. While, the vertical acceleration can be neglected in the equation of motion.
Under the $\beta$-plane approximation of the Coriolis force, the motion for large scale flow could be described by 2-D incompressible Euler equation with rotation
\begin{align}\label{Euler equation}
		\partial_{t}\vec{v}+(\vec{v}\cdot\nabla)\vec{v}=-\nabla P-\beta yJ\vec
{v},\quad  \nabla\cdot\vec{v}=0,
	\end{align}
where $\vec{v}=(v_{1},v_{2})$ is the fluid velocity, $P$ is the pressure,
\[
J=%
\begin{pmatrix}
0 & -1\\
1 & 0
\end{pmatrix}
\]
is the rotation matrix, and $\beta$ is the Rossby number. Here we study the fluid in a finite channel, i.e.,
\beno
\Omega=\big\{(x,y): x\in \bbT, y\in {[y_1,y_2]}\big\}
\eeno
with non-slip boundary condition on $\pa\Omega$:
\beno
v_2=0\quad \text{on}\quad  y=y_1, y_2.
\eeno
Let us refer to \cite{Pedlosky1987} for more introduction on geophysical fluids.\smallskip

The vorticity $\omega$ is defined as $\omega=\partial_{x}v_{2}-\partial_{y}v_{1}$, and the stream function $\psi$ is
introduced such that $\vec{v}=\nabla^{\perp}\psi=(\pa_y\psi,-\pa_x\psi)$. The
vorticity form of (\ref{Euler equation}) takes
\begin{equation}
\partial_{t}\omega+(\vec{v}\cdot\nabla)\omega+\beta v_{2}%
=0.\label{vorticity-eqn}%
\end{equation}

Consider the shear flow $(u(y),0)$, which is a steady solution of (\ref{vorticity-eqn}). The
linearized equation of (\ref{vorticity-eqn}) around $(u(y),0)$ takes
\begin{align}\label{linearized Euler equation}
	\partial_{t}\omega+u\partial_{x}\omega+(\beta-u^{\prime\prime})v_{2}=0.
	\end{align}
In terms of the stream function, (\ref{linearized Euler equation}) can be written as
\begin{align*}
\partial_{t}\Delta\psi+u\partial_{x}\Delta\psi+(\beta-u^{\prime\prime})\partial_x\psi=0.
\end{align*}
 By taking Fourier transform in $x$, we get
 \begin{align}\label{equation after Fourier transform}
 (\partial^2_y-\alpha^2)\partial_t\widehat{\psi}=i\alpha((u''-\beta)-u(\partial^2_y-\alpha^2))\widehat{\psi}.
 \end{align}
For any fixed $\alpha>0$ and $\beta\in \mathbb{R}$, we define
\begin{align*}
\mathcal{R}_{\alpha,\beta}\widehat{\psi}:=-(\partial^2_y-\alpha^2)^{-1}((u''-\beta)-u(\partial^2_y-\alpha^2))\widehat{\psi}.
\end{align*}
Then (\ref{equation after Fourier transform}) is equivalent to
\beno
-\frac{1}{ i\alpha}\partial_t\widehat{\psi}=\mathcal{R}_{\alpha,\beta}\widehat{\psi}.
\eeno

Dynamical behavior of fluid around a shear flow  under the Coriolis force is believed to be more fruitful. Barotropic instability of shear flows is a classical problem in geophysical fluid  dynamics. Kuo \cite{Kuo1949} gave a necessary condition for the instability that $\beta-u''$ must change sign in $[y_1,y_2]$, which is a generalization of Rayleigh's inflection-point theorem. Pedlosky  proved that an unstable wave speed must lie in the semicircle with {center} ${u_{\min}+u_{\max}\over2}$ and radius ${u_{\max}-u_{\min}\over2}+{|\beta|\over 2\alpha^2}$ in \cite{Pedlosky1963, Pedlosky1964}, which is a generalization of Howard's semicircle theorem.
In the literature, there are several numerical analysis on barotropic instability, see \cite{Kuo1974, Pedlosky1987} for the flow with Sinus profile; see \cite{Balmforth-Piccolo2001, Burns-Maslowe-Brown2002, Engevik2004, Maslowe1991} for the Bickley jet; and see \cite{Dickinson-Clare1973} for the hyperbolic-tangent flow.
In a recent paper \cite{LYZ}, Lin, Yang and the third author gave a systematic study for the barotropic instability, where they proved several results sketched below.

\begin{itemize}

\item[1.]  Give a classification of neutral modes in $H^2$ (i.e. {regular}, singular and non-resonant neutral modes) for general shear flows;

\item[2.] Introduce a method based on Hamiltonian structure to study the stability for a class of shear flows, and especially obtain precise lower transition from unstable waves to stable ones for the Sinus flow;

\item[3.] Construct traveling waves, which is purely due to Coriolis effect, near the Sinus flow with traveling speeds beyond the range of the basic flow;

\item[4.] Prove the linear inviscid damping in time averaged sense for the Sinus flow with $\beta\in(-{\pi^2\over2},{\pi^2\over2})$.

\end{itemize}

In this paper, we study the linear inviscid damping for the linearized $\beta$-plane equation. This could be regarded as the first step toward understanding the asymptotic stability of shear flows in a large scale motion. Since the work on Landau damping by Mouhot and Villani \cite{MV}, the study of the inviscid damping has become a very
active field as an analogue of Landau damping in hydrodynamics. In fact,  Orr in 1907(\cite{Orr}) found the damping phenomena for the Couette flow $(y,0)$
earlier than Landau damping in 1946(\cite{Lan}). Recently, Bedrossian and Masmoudi \cite{BM1} proved nonlinear inviscid damping for the 2-D Euler equations around the Couette flow for the perturbation in Gevrey class.
On the other hand, Lin and Zeng \cite{LZ} proved that nonlinear inviscid damping is not true for the perturbation of vorticity in $H^s$ for $s<\f32$.
The linear damping for the Couette flow could be easily generalized to the $\beta$-plane equation. It also seems possible to generalize nonlinear damping result in \cite{BM1} to the $\beta$-plane equation. Let us also mention recent results on long time behaviour of the $\beta$-plane equation near the trivial solution \cite{EW, PW}.\smallskip

For general shear flows, the linear damping is a highly nontrivial problem due to the presence of nonlocal part $u''(y)\pa_x(-\Delta)^{-1}$ and the Coriolis effect. In this case, the linear dynamics is associated with the singularities of the solution for the Rayleigh-Kuo equation at the critical layers(i.e., $u=c$):
\begin{align*}
(u-c)(\phi''-\alpha^2\phi)-(u''-\beta)\phi=f.
\end{align*}

When $\beta=0$, Case \cite{Case} gave a first prediction of linear damping for monotone shear flows. His prediction was confirmed by a series of works
\cite{Ros, Ste, Zill1, Zill2}, and finally by \cite{WZZ1}. In \cite{WZZ2, WZZ3}, the first two authors and Zhao proved the linear damping for
non-monotone flows including Poiseuille flow $u(y)=y^2$ and Kolmogorov flow $u(y)=\cos y$. In such case, there are two mechanisms leading to the damping: the vorticity mixing and  the vorticity depletion phenomena at the stationary streamlines, which was first observed by Bouchet and Morita \cite{BM} for the latter.
Let us emphasize that nonlocal part $u''(y)\pa_x(-\Delta)^{-1}$ plays an important role for non-monotone flows.\smallskip

The case of $\beta\neq 0$ is the goal of this paper. We first consider the linear damping for a class of monotone shear flows, and prove the same decay estimates of the velocity as the case of $\beta=0$. More importantly, we develop a new method, which is much simpler than that in \cite{WZZ1}. This method is based on the space-time estimate and the vector field method in sprit of the wave equation. First of all, we establish the space-time estimate of the velocity by using the limiting absorption principle. Next we derive the decay estimates of the velocity from the space-time estimate with the help of  the vector field method. We believe that new method could be used to the other related problems such as
the setting considered in \cite{BCV, GNRS}, and might shed some light on nonlinear inviscid damping for stable monotone shear flows.\smallskip

In the following theorem, we assume that $x\in \mathbb{T}_{L}$(i.e., the period is  $2\pi/L$), $(y_1,y_2)=(0,1),\ u(y)\in C^4([0,1])$ and  $u'(y)\ge c_0$ for some $c_0>0$.

\begin{theorem}\label{thm:monotone}
Assume that the linearized operator $\mathcal{R}_{\alpha,\beta}$ has no embedding eigenvalues or eigenvalues for $\alpha\neq 0$, and the initial vorticity satisfies $\int_{\bbT_L}\om_0(x,y)dx=0$. Then it holds that
\begin{itemize}

 \item[1.] if $\om_0(x,y)\in H^{-1}_xH^1_y$, then
\beno
\|\vec{v}(t)\|_{L^2}\leq \frac{C}{\langle t\rangle}\|\omega_0\|_{H^{-1}_xH_y^1};
\eeno
\item[2.] if $\om_0(x,y)\in H^{-1}_xH^2_y$, then
\beno
\|v_2(t)\|_{L^2}\leq \frac{C}{\langle t\rangle^2}\|\omega_0\|_{H^{-1}_xH_y^2};
\eeno
\item[3.] if $\om_0(x,y)\in L_{x,y}^2$, there exists $\om_\infty(x,y)\in L_{x,y}^2$ such that
\beno
\|\om(t,x+tu(y),y)-\om_\infty\|_{L^2}\longrightarrow 0\quad \textrm{as}\quad t\rightarrow +\infty.
\eeno
\end{itemize}
\end{theorem}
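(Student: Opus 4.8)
The plan is to analyze the problem mode by mode in the $x$-frequency $\alpha$, exploiting the spectral structure of $\mathcal{R}_{\alpha,\beta}$. Since $\int_{\bbT_L}\om_0\,dx=0$, the $\alpha=0$ mode is absent, so every relevant mode has $|\alpha|\ge L>0$. Under the hypothesis that $\mathcal{R}_{\alpha,\beta}$ has no (embedded) eigenvalues, its spectrum is purely continuous and fills the range $[u(0),u(1)]$. I would first write the solution via the inverse Laplace transform of the resolvent,
\[
\widehat{\psi}(t,\alpha,y)=\frac{1}{2\pi i}\int_{u(0)}^{u(1)}e^{-i\alpha ct}\big(\Phi^{+}_{c}-\Phi^{-}_{c}\big)(\alpha,y)\,dc,
\]
where $\Phi^{\pm}_{c}=\lim_{\ep\to0^{+}}(\mathcal{R}_{\alpha,\beta}-(c\pm i\ep))^{-1}\widehat{\psi}_{0}$ are the one-sided boundary values of the resolvent, i.e. the solutions of the inhomogeneous Rayleigh--Kuo equation $(u-c)(\phi''-\alpha^{2}\phi)-(u''-\beta)\phi=\widehat{\om}_{0}/(i\alpha)$ with $\pm i0$ regularization at the critical layer $u=c$. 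The existence, uniqueness and quantitative $c$-regularity of these boundary values is the content of the limiting absorption principle, and is where the no-embedded-eigenvalue hypothesis enters to provide a uniform bound up to the real axis.

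With this representation in hand, the decay estimates follow from the vector field / integration-by-parts mechanism in the spectral variable $c$, playing the role of the commuting vector fields for the wave equation. Writing $\widehat{v}_{2}=-i\alpha\widehat{\psi}$ and $\widehat{v}_{1}=\partial_{y}\widehat{\psi}$, and letting $\rho(c,\alpha,y)$ denote the spectral density (the jump $\Phi^{+}_{c}-\Phi^{-}_{c}$, or its $y$-derivative), each integration by parts in $c$ against $e^{-i\alpha ct}$ produces a factor $(\alpha t)^{-1}$ together with $\partial_{c}\rho$, provided the boundary contributions at the endpoints $c=u(0),u(1)$ vanish. Since the $c$-interval is compact, Cauchy--Schwarz converts the resulting $L^{1}_{c}$ bounds into $L^{2}_{y}$ bounds, and summation over $\alpha$ with the $H^{-1}_{x}$ weight gives the decay. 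For $\vec v$ the density $\partial_{y}\rho$ carries a logarithmic singularity in $c$ at the critical layer, so only one integration by parts is admissible, yielding $\langle t\rangle^{-1}$ at the cost of $\om_{0}\in H^{-1}_{x}H^{1}_{y}$. For $v_{2}=-i\alpha\psi$ the density is one degree smoother in $c$, so two integrations by parts are allowed, giving $\langle t\rangle^{-2}$ and requiring $\om_{0}\in H^{-1}_{x}H^{2}_{y}$; the vanishing of the first endpoint term is again secured by the no-eigenvalue hypothesis. The space-time estimate $\int_{0}^{\infty}\|\vec v(t)\|_{L^{2}}^{2}\,dt\lesssim\|\om_{0}\|^{2}$, obtained by Plancherel in $t$ applied to the same representation (so that the time integral becomes the $L^{2}_{c}$ norm of the density, controlled by the limiting absorption principle), serves both as an independent a priori bound and as input for the scattering statement.

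For the vorticity convergence in part 3, I would pass to the frame moving with the flow, $W(t,x,y)=\om(t,x+tu(y),y)$, equivalently $\widehat{W}=e^{i\alpha tu}\widehat{\om}$, which by the linearized equation satisfies $\partial_{t}\widehat{W}=-e^{i\alpha tu}(\beta-u'')\widehat{v}_{2}$. For data in the dense class $H^{-1}_{x}H^{2}_{y}$, part 2 gives $\|v_{2}(t)\|_{L^{2}}\lesssim\langle t\rangle^{-2}\in L^{1}_{t}$, so $\widehat{W}(t)$ is Cauchy in $L^{2}$ and converges to some $\om_{\infty}$; the limit is identified from the spectral representation, the oscillatory part $\int e^{i\alpha t(u(y)-c)}\rho/(u(y)-c)\,dc$ tending to zero by a Riemann--Lebesgue argument in $c$. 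A density argument, using the uniform-in-time $L^{2}$ bound supplied by the space-time estimate, then extends the convergence to all $\om_{0}\in L^{2}_{x,y}$.

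The hard part is the limiting absorption principle, and more precisely the uniform control of $\rho$ and its $c$-derivatives up to the boundary of the continuous spectrum. The Coriolis term replaces the inflection-point structure of the $\beta=0$ case by the Rayleigh--Kuo potential $u''-\beta$, which introduces additional singular points and makes the construction of the Green's function for $(u-c)(\partial_{y}^{2}-\alpha^{2})-(u''-\beta)$ and the analysis of its $\pm i0$ boundary values considerably more delicate; controlling the logarithmic singularity at the critical layer, verifying the vanishing of the endpoint boundary terms (so that the integrations by parts are legitimate), and keeping all bounds uniform in $\alpha$ constitute the core technical work. Monotonicity $u'\ge c_{0}$ is essential here, since it makes $u$ a diffeomorphism, so each $c$ corresponds to a single critical layer $y=u^{-1}(c)$ and the change of variables $c\mapsto u^{-1}(c)$ is non-degenerate.
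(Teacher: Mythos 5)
Your strategy is viable in principle but it is not the paper's: you reconstruct the solution from the jump of the resolvent across the continuous spectrum and then integrate by parts in the spectral variable $c$, trading $c$-derivatives of the spectral density $\rho=\Phi^+_c-\Phi^-_c$ for powers of $(\alpha t)^{-1}$. That is essentially the route of \cite{WZZ1} for $\beta=0$, and the point of Section 2 of this paper is to develop a lighter method that avoids it. The paper first proves a space-time estimate (Proposition \ref{lem5.1}), $\alpha^2\int_0^\infty(\|\partial_y\psi\|_{L^2}^2+\alpha^2\|\psi\|_{L^2}^2)\,dt\le C\|\om_0\|_{L^2}^2$ plus a boundary-trace analogue, using a limiting absorption principle (Lemma \ref{lem5.6}) that bounds only the resolvent itself, with no derivatives in $c$. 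It then applies the physical-space vector field $X=(1/u')\partial_y+i\alpha t$, which commutes with $\partial_t+i\alpha u$: since $X\om=\om'/u'+i\alpha t\,\om$, a uniform-in-time bound on $\|X\om(t)\|_{L^2}$ (obtained by feeding the commutator error $\psi_4$ back into the space-time estimate) immediately yields the $\langle t\rangle^{-1}$ decay, and a logarithmic bootstrap on $M(T)=\sup\,\alpha^4(1+t)^2\|e^{-it\alpha\mathcal{R}'_{\alpha,\beta}}f\|_{-2}/\|f\|_2$ yields the $\langle t\rangle^{-2}$ decay of $v_2$. What your route would buy is an explicit spectral representation; what the paper's route buys is that only a zeroth-order resolvent bound is ever needed, which is why the argument is short.

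As written, your plan also has a genuine gap at its center. Everything rests on (i) $C^1$ (resp.\ $C^2$) regularity of $c\mapsto\rho(c,\alpha,\cdot)$ on the closed interval $[u(0),u(1)]$, uniformly across the critical layer where $\rho$ has logarithmic-type singular behavior, and (ii) the vanishing or smallness of the boundary terms at $c=u(0),u(1)$ generated by each integration by parts. You assert that (ii) is ``secured by the no-eigenvalue hypothesis''; it is not. Those endpoint contributions are governed by the boundary behavior of $\widehat\om_0$ and of the homogeneous Rayleigh--Kuo solutions at $y=0,1$, not by the spectral assumption, and it is exactly these terms that obstruct higher decay rates in a finite channel (cf.\ \cite{Zill2}); for $\beta\neq0$ controlling them would require redoing the whole Green's-function analysis for $(u-c)(\partial_y^2-\alpha^2)-(u''-\beta)$. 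Neither (i) nor (ii) is established anywhere in your proposal, and together they constitute essentially all of the work for parts 1 and 2. Your sketch of part 3 (passing to $\widehat{W}=e^{i\alpha tu}\widehat\om$, using $\|v_2(t)\|_{L^2}\in L^1_t$ on a dense class and the uniform $L^2$ bound from the space-time estimate to conclude by density) is correct and matches the argument the paper imports from the $\beta=0$ case in \cite{WZZ1}.
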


The second part of this paper is to consider the linear damping for general shear flows, which satisfy
\beno
(\textbf{H1}) \quad u\in H^{4}(y_{1},y_{2}), \,\,  u''(y_c)\neq0,\,\, \beta/u''(y_c)<9/8\,\, \text{at critical points } u'(y_c)=0.
\eeno

\begin{theorem}\label{thm:non-monotone}
Assume that $u$ satisfies $(\textbf{H1})$, $\mathcal{R}_{\alpha,\beta}$ has no embedding eigenvalues for $\al\neq 0$,
and the initial vorticity satisfies $\widehat{\omega}_0(\alpha,y)=0$ for $y \in\{y_1,y_2\}\cap(u')^{-1}\{0\}$ and ${\widehat{\omega}_0(\alpha,\cdot)\over p(\cdot)}\in H_y^1(y_1,y_2)$, where
\begin{align}\label{def-p}
p(z)=\prod\limits_{y\in A}(z-y),\quad A=\big\{y\in[y_1,y_2]|u'(y)=0,u''(y)=\beta\big\}.
\end{align}
Moreover, $P_{\sigma_d(\mathcal{R}_{\alpha,\beta})}\widehat{\psi}(0,\alpha,\cdot)=0$, where $P_{\sigma_d(\mathcal{R}_{\alpha,\beta})}$ is the spectral projection to $\sigma_d(\mathcal{R}_{\alpha,\beta})$. Then it holds that
\begin{align*}
\|\widehat{v}(\cdot, \alpha,\cdot)\|_{H^1_tL^2_y}\leq C \big\|{\widehat{\omega}_0(\alpha,\cdot)\over p}\big\|_{H_y^1}.
\end{align*}
Here $C$ is a constant depending on $\al,\beta$.
In particular, we have
$$\lim\limits_{t\to\infty}\|\widehat{v}(t,\alpha,\cdot)\|_{L^2_y}=0.$$
\end{theorem}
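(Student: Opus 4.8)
The plan is to represent the velocity through the resolvent of $\mathcal{R}_{\alpha,\beta}$ and to turn time decay into integrability in the spectral variable. Since $-\tfrac{1}{i\alpha}\partial_t\widehat{\psi}=\mathcal{R}_{\alpha,\beta}\widehat{\psi}$, the solution is $\widehat{\psi}(t)=e^{-i\alpha t\mathcal{R}_{\alpha,\beta}}\widehat{\psi}(0)$, and the inverse Laplace transform gives
\[
\widehat{\psi}(t,\cdot)=\frac{1}{2\pi i}\oint_{\Gamma}e^{-i\alpha t c}(c-\mathcal{R}_{\alpha,\beta})^{-1}\widehat{\psi}(0)\,dc,
\]
where $\Gamma$ encloses $\sigma(\mathcal{R}_{\alpha,\beta})$. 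Writing $\Phi(\cdot,c)=(c-\mathcal{R}_{\alpha,\beta})^{-1}\widehat{\psi}(0)$, a short computation shows that $\Phi$ solves the inhomogeneous Rayleigh--Kuo equation
\[
(u-c)(\Phi''-\alpha^2\Phi)-(u''-\beta)\Phi=-\widehat{\omega}_0,\qquad \Phi(y_1,c)=\Phi(y_2,c)=0 .
\]
I would then collapse $\Gamma$ onto the continuous spectrum $[u_{\min},u_{\max}]$: the assumption that $\mathcal{R}_{\alpha,\beta}$ has no embedding eigenvalues guarantees that the only contribution along the real axis is the jump $\Phi(\cdot,\lambda-i0)-\Phi(\cdot,\lambda+i0)$, while the hypothesis $P_{\sigma_d(\mathcal{R}_{\alpha,\beta})}\widehat{\psi}(0,\alpha,\cdot)=0$ removes the residues at the discrete eigenvalues. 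This yields a spectral representation of $\widehat{\psi}$, hence of $\widehat{v}=(\partial_y\widehat{\psi},-i\alpha\widehat{\psi})$, as an oscillatory integral over $[u_{\min},u_{\max}]$.

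Since $\widehat{v}(t,\cdot)$ is, up to constants, the inverse Fourier transform in $t$ (with frequency $\alpha\lambda$) of this jump, Plancherel's theorem in $t$ reduces the target estimate to
\[
\|\widehat{v}(\cdot,\alpha,\cdot)\|_{H^1_tL^2_y}^2\sim\int_{u_{\min}}^{u_{\max}}(1+\lambda^2)\,\big\|\Phi(\cdot,\lambda-i0)-\Phi(\cdot,\lambda+i0)\big\|_{H^1_y}^2\,d\lambda,
\]
using that $\|\widehat{v}\|_{L^2_y}\simeq\|\widehat{\psi}\|_{H^1_y}$ and that $\partial_t$ multiplies the integrand by $\alpha\lambda$. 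Thus the theorem is equivalent to the limiting absorption principle in integrated form, namely the uniform bound
\[
\sup_{\epsilon>0}\int_{u_{\min}}^{u_{\max}}(1+\lambda^2)\,\big\|\Phi(\cdot,\lambda+i\epsilon)\big\|_{H^1_y}^2\,d\lambda\le C\,\Big\|\frac{\widehat{\omega}_0}{p}\Big\|_{H^1_y}^2
\]
together with its analogue at $\lambda-i\epsilon$; the existence of the boundary values then follows by a routine compactness/monotonicity argument.

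The central step is to establish this uniform bound by the compactness method of \cite{WZZ2}. I would argue by contradiction: if it failed there would exist $\epsilon_n\to0^+$ and data $f_n$ with $\|f_n/p\|_{H^1_y}=1$ along which the left-hand side diverges; normalizing the corresponding solutions $\Phi_n$ and localizing near the spectral value $\lambda_n\to\lambda_*$ where the blow-up concentrates, one extracts a weak limit solving the \emph{homogeneous} Rayleigh--Kuo equation at $c=\lambda_*$ with Dirichlet data, i.e. an embedding eigenfunction of $\mathcal{R}_{\alpha,\beta}$, contradicting the hypothesis. The hard part, and the genuine departure from the $\beta=0$ theory, is that the Coriolis term $(u''-\beta)$ creates additional singular points of the Rayleigh--Kuo equation. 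At a critical point $y_c$ with $u'(y_c)=0$ the indicial exponents of the regular singular point are $\mu_\pm=\tfrac12\bigl(1\pm\sqrt{9-8\beta/u''(y_c)}\bigr)$, which are real \emph{precisely} because of the condition $\beta/u''(y_c)<9/8$ in $(\textbf{H1})$. When moreover $u''(y_c)=\beta$, so that $y_c\in A$, these exponents degenerate to $\{0,1\}$, producing a logarithmic resonance and a genuinely more singular solution operator. This is exactly where the weight $p(z)=\prod_{y\in A}(z-y)$ enters: demanding $\widehat{\omega}_0/p\in H^1_y$ forces the data to vanish at the points of $A$ to the order that cancels the resonant singularity, so that the normalized limits in the compactness argument stay in $H^1_y$ and the extraction closes. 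The boundary hypothesis $\widehat{\omega}_0=0$ on $\{y_1,y_2\}\cap(u')^{-1}\{0\}$ plays the same regularizing role at critical points lying on $\partial\Omega$.

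Granting the integrated bound, the estimate $\|\widehat{v}(\cdot,\alpha,\cdot)\|_{H^1_tL^2_y}\le C\|\widehat{\omega}_0/p\|_{H^1_y}$ follows from the Plancherel identity above. The $L^2_y$ decay is then immediate from the one-dimensional Sobolev embedding $H^1(\mathbb{R}_t;L^2_y)\hookrightarrow C_0(\mathbb{R}_t;L^2_y)$: an $L^2_y$-valued function that is $H^1$ in time is continuous and tends to zero in $L^2_y$ as $t\to\infty$, giving $\lim_{t\to\infty}\|\widehat{v}(t,\alpha,\cdot)\|_{L^2_y}=0$. I expect the bulk of the technical labor to be the quantitative analysis of the solution operator near the resonant layers indexed by $A$, and the verification that the $p$-weighted $H^1_y$ norm is exactly the one that makes the compactness extraction succeed.
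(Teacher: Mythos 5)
Your proposal follows essentially the same route as the paper: the resolvent/Dunford representation of $\widehat{\psi}$, collapsing the contour onto $\text{Ran}(u)$ using the absence of embedding eigenvalues and the vanishing of the discrete spectral projection, Plancherel in $t$ to reduce the $H^1_tL^2_y$ bound to a limiting absorption principle, and a contradiction--compactness argument producing an embedding eigenfunction if the uniform bound fails (the paper in fact proves the pointwise-in-$c$ uniform $H^1_y$ bound, from which your integrated version follows since $\text{Ran}(u)$ is compact). Your identification of the indicial exponents, of the role of $(\textbf{H1})$, and of why the weight $p$ is needed at the degenerate critical points in $A$ is consistent with the paper's Lemmas 3.3--3.10.
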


The proof of Theorem \ref{thm:non-monotone} follows the  method introduced in \cite{WZZ2}, where the key ingredient is to
establish the limiting absorption principle by using the compactness argument. Compared with the case with no Coriolis effects, new difficulty is that the Rayleigh-Kuo equation has  more singular points due to the influence of $\beta$ so that the compactness argument becomes more involved and delicate.\smallskip

In section 5, we  will apply Theorem \ref{thm:non-monotone} to the flow with Sinus profile. For this flow, the region of $(\alpha,\beta)$ parameters so that $\mathcal{R}_{\alpha,\beta}$ has no embedding eigenvalues can be precisely determined.

\section{Linear inviscid damping for monotone shear flows}

In this section, we prove the explicit decay estimate of the velocity for a class of monotone shear flows, which satisfy
\beno
 u(y)\in C^4([0,1]),\quad u'(y)\ge c_0.
\eeno
And the period $2\pi/L\ge c_0$ in $x$ variable. We use the $L^2$ inner product $ \langle f,g\rangle=\int_0^1f(y)\overline{g(y)}dy,$ and use $\psi=-(\partial^2_y-\alpha^2)^{-1}\om $ to denote the unique solution of $-(\partial^2_y-\alpha^2)\psi=\om,\ \psi(0)=\psi(1)=0. $

\subsection{Space-time estimate}

For any fixed $\alpha\in(2\pi\mathbb{Z}/L)\setminus\{ 0\}$ and $\beta\in \mathbb{R}$, we define
\begin{align*}
\mathcal{R}'_{\alpha,\beta}\widehat{\om}=-((u''-\beta)(\partial^2_y-\alpha^2)^{-1}-u)\widehat{\om}.
\end{align*}
Then we have $\mathcal{R}'_{\alpha,\beta}(\partial^2_y-\alpha^2)=(\partial^2_y-\alpha^2)\mathcal{R}_{\alpha,\beta}$ in $ H_0^1(0,1)$ and (\ref{linearized Euler equation}) is equivalent to
$$\partial_t\widehat{\om}=-i\alpha\mathcal{R}'_{\alpha,\beta}\widehat{\om}$$ after taking Fourier transform in $x$.
Without loss of generality, we may assume $\al>0$ in the sequel, so $\alpha\in \Lambda:=\big\{2\pi k/L|k\in\mathbb{Z}_+\big\}$ and $\al\ge 2\pi/L\ge c_0$.

\begin{proposition}\label{lem5.1}
Assume that $\mathcal{R}_{\alpha,\beta} $ has no embedding eigenvalues or eigenvalues.
Let $\psi=-(\partial^2_y-\alpha^2)^{-1}\om$ and $\om(t,y)$ solve
\beno
 \partial_t{\om}+i\alpha\mathcal{R}'_{\alpha,\beta}{\om}+f=0
\eeno
for $t\in[0,T]$ and $y\in[0,1]$. Then we have
\beno
&&\|\om(T)\|_{L^2}^2+\alpha^2\int_0^T\big(\|\partial_y\psi(t)\|_{L^2}^2+\alpha^2\|\psi(t)\|_{L^2}^2\big)dt\\
&&\quad\leq C \|\om(0)\|_{L^2}^2+C\alpha^{-2}\int_0^T\big(\|\partial_yf(t)\|_{L^2}^2+\alpha^2\|f(t)\|_{L^2}^2\big)dt.
\eeno
Moreover, if $f(t,0)=f(t,1)=0$, then
\beno
\alpha\int_0^T\big(|\partial_y\psi(t,0)|^2+|\partial_y\psi(t,1)|^2\big)dt\leq C \|\om(0)\|_{L^2}^2+C\int_0^T\big(\alpha^{-2}\|\partial_yf(t)\|_{L^2}^2+\|f(t)\|_{L^2}^2\big)dt.
\eeno
Here the constant $C$ only depends on $\beta$ and $u$.
\end{proposition}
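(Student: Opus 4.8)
My plan is to prove the space-time velocity bound by the limiting absorption principle (LAP) combined with Plancherel in time, and then to read off the bound on $\|\om(T)\|_{L^2}$ and the boundary trace from energy identities. First I would isolate the core quantity $\alpha^2\int_0^T\big(\|\partial_y\psi\|^2+\alpha^2\|\psi\|^2\big)\,dt$, writing $E(\psi):=\|\partial_y\psi\|_{L^2}^2+\alpha^2\|\psi\|_{L^2}^2$, and transform in $t$. Extending the evolution to $t\ge0$ (extending $f$ by $0$ for $t>T$; a standard time-cutoff reduces the interval $[0,T]$ to this setting) and inserting the regularizer $e^{-\epsilon t}$, the time transform $\widehat\om(\lambda)$ satisfies $(\mathcal{R}'_{\alpha,\beta}-c)\widehat\om=\frac{1}{i\alpha}\big(\om(0)-\widehat f\big)$ with $c=\lambda+i\epsilon'$, $\epsilon'=\epsilon/\alpha>0$, so that $\widehat\psi=-(\partial_y^2-\alpha^2)^{-1}\widehat\om$ solves the Rayleigh--Kuo equation
\[
(u-c)(\partial_y^2-\alpha^2)\widehat\psi-(u''-\beta)\widehat\psi=-\tfrac{1}{i\alpha}\big(\om(0)-\widehat f\big),\qquad \widehat\psi(0)=\widehat\psi(1)=0 .
\]
Since Plancherel in $t$ commutes with the $y$-quadratic form $E$, and the time frequency is $-\alpha\lambda$, the quantity $\int_0^\infty e^{-2\epsilon t}\,\alpha^2E(\psi(t))\,dt$ equals, up to the Plancherel constant and the Jacobian $d\xi=\alpha\,d\lambda$, a positive multiple of $\int_{\mathbb{R}}\alpha^2 E\big(\widehat\psi(\lambda+i\epsilon')\big)\,d\lambda$. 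Thus the space-time bound reduces to controlling $\int_{\mathbb{R}}E(\widehat\psi)\,d\lambda$ uniformly in $\epsilon'$ and then letting $\epsilon'\to0$.

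The core estimate to establish is therefore the integrated resolvent (LAP) bound
\[
\int_{\mathbb{R}}\alpha^2 E\big(\widehat\psi(\lambda+i\epsilon')\big)\,d\lambda\ \le\ C\|\om(0)\|_{L^2}^2+C\alpha^{-2}\int_{\mathbb{R}}\big(\|\partial_y\widehat f\|^2+\alpha^2\|\widehat f\|^2\big)\,d\lambda ,
\]
uniformly in $\epsilon'>0$, after which a further application of Plancherel on the data converts the right-hand $f$-integral back into $\int_0^T\big(\|\partial_y f\|^2+\alpha^2\|f\|^2\big)\,dt$. To prove it I would use the classical substitution $\widehat\psi=(u-c)\varphi$, which removes the singular coefficient and recasts the equation as $\big[(u-c)^2\varphi'\big]'+\beta(u-c)\varphi-\alpha^2(u-c)^2\varphi=\text{(RHS)}$. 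Pairing with $\bar\varphi$ and separating real and imaginary parts, using $\operatorname{Im}(u-c)=-\epsilon'$ and $\operatorname{Im}(u-c)^2=-2\epsilon'(u-\lambda)$, yields two energy identities; since $u'\ge c_0$ forces the critical point $u(y_c)=\lambda$ to be unique and non-degenerate, the weight $(u-\lambda)$ is controllable and these identities close to give $E(\widehat\psi)$-bounds that stay finite as $\epsilon'\to0$. The hypothesis that $\mathcal{R}_{\alpha,\beta}$ has \emph{no embedding eigenvalues or eigenvalues} guarantees that the homogeneous Rayleigh--Kuo problem has only the trivial solution up to the real axis, so the resolvent does not blow up and the constants are uniform.

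For the vorticity norm I would use the identity $\frac{d}{dt}\|\om\|_{L^2}^2=2\alpha\operatorname{Im}\langle(u''-\beta)\psi,\om\rangle-2\operatorname{Re}\langle f,\om\rangle$. Integrating by parts the first pairing (the boundary term vanishing since $\psi(0)=\psi(1)=0$) reduces it to $\int_0^1(u''-\beta)'\operatorname{Im}(\psi\,\overline{\partial_y\psi})\,dy$, which is bounded by $C\alpha^{-1}E(\psi)$; its time integral is then $\lesssim\alpha^{-2}\big(\alpha^2\int_0^TE(\psi)\big)$ and is absorbed by the space-time bound already obtained (here $\alpha\ge c_0$ is used). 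The forcing pairing is absorbed by Young's inequality against $\alpha^2E(\psi)$, at the cost of the $\alpha^{-2}\big(\|\partial_y f\|^2+\alpha^2\|f\|^2\big)$ term (integration by parts being available precisely when $f$ vanishes at the endpoints, which is also the hypothesis of the second estimate). Finally, the boundary bound is the trace version of the LAP: under $f(t,0)=f(t,1)=0$ the Rayleigh--Kuo right-hand side vanishes at the endpoints, so the same $\varphi$-energy identities also control $|\partial_y\widehat\psi(c,0)|^2+|\partial_y\widehat\psi(c,1)|^2$, and Plancherel in $t$ turns the integrated-in-$\lambda$ trace bound into the stated $\alpha\int_0^T\big(|\partial_y\psi(t,0)|^2+|\partial_y\psi(t,1)|^2\big)\,dt$ estimate.

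The main obstacle is the second step: making the resolvent bound uniform as $\epsilon'\to0$ in spite of the singularity of $(u-c)^{-1}$ at the critical layer $u(y)=\lambda$. One must control singular integrals of the type $\int|\widehat\psi|^2/|u-c|$ and combine the real- and imaginary-part identities with care, while invoking the no-embedding-eigenvalue hypothesis to rule out resonant amplification at the spectrum. This is also where the Coriolis parameter enters, through the zeroth-order term $\beta(u-c)\varphi$, and one must verify that it does not destroy the coercivity coming from monotonicity; controlling this term is what makes the compactness/energy argument for the LAP delicate.
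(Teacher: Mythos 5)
Your overall architecture (transform in time, reduce to a resolvent/limiting-absorption bound for the Rayleigh--Kuo equation, return via Plancherel) is the same as the paper's, and your energy identity for $\frac{d}{dt}\|\om\|_{L^2}^2$ is correct as far as it goes. But there are two genuine gaps where the argument as proposed does not close, and in both places the paper uses a device that is absent from your plan.

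First, the initial data. When you transform the full equation, the source of the resolvent problem is $\frac{1}{i\alpha}\bigl(\om(0)-\widehat f\bigr)$. The limiting absorption bound (Lemma \ref{lem5.6}) controls $\|\partial_y\Phi\|_{L^2}+\alpha\|\Phi\|_{L^2}$ by the $H^1_y$ norm of the source, so applying it pointwise in $\lambda$ requires $\om(0)\in H^1$ (not assumed: the proposition only uses $\|\om(0)\|_{L^2}$) and, worse, the $\om(0)$ contribution is independent of $\lambda$, so its integral over $\lambda\in\mathbb{R}$ diverges; no pointwise resolvent bound can yield your claimed integrated estimate with $C\|\om(0)\|_{L^2}^2$ on the right. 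You flag the uniform-in-$\epsilon'$ singular-integral control as ``the main obstacle,'' but the obstacle is resolved in the paper not by a sharper resolvent identity but by the decomposition $\om=\om_1+\om_2$ with $\om_1(t,y)=e^{-i\alpha t u(y)}\om(0,y)$: the space--time and boundary-trace estimates for the free transport part are proved directly from the explicit solution, Plancherel in $t$, and the change of variables $z=u(y)$ (this is where $u'\ge c_0$ enters and where only $\|\om(0)\|_{L^2}$ is needed), while $\om_2$ has zero initial data and an $L^2_tH^1_y$ forcing $f_1=-f-i\alpha(u''-\beta)\psi_1$, to which the pointwise limiting absorption principle legitimately applies after Laplace transform.

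Second, the vorticity bound. In your identity the forcing term $-2\operatorname{Re}\langle f,\om\rangle$ must be integrated by parts to trade $\om$ for $\psi$, and this produces the boundary term $f\,\overline{\partial_y\psi}\big|_0^1$. For the first inequality of the proposition $f$ is \emph{not} assumed to vanish at $y=0,1$, and the boundary trace $\int_0^T|\partial_y\psi(t,j)|^2dt$ of the actual solution is exactly what you do not control in that case (it is the conclusion of the second inequality, available only when $f|_{\partial}=0$). Estimating $|\langle f,\om\rangle|\le\|f\|_{L^2}\|\om\|_{L^2}$ instead gives only a $T$-dependent constant. The paper avoids this by a duality argument (Lemma \ref{lem5.7}): it pairs $\om$ against the \emph{backward free transport} $\om_1(t,y)=e^{i\alpha(T-t)u(y)}\om(T,y)$, so the boundary term involves $\partial_y\psi_1$ of the free evolution, whose trace estimate is unconditional. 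Without these two devices the proof does not go through as written.
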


We need the following lemmas.

\begin{lemma}\label{lem5.2}
Let $\psi=-(\partial^2_y-\alpha^2)^{-1}\om$ and $\om(t,y)$ solve $\partial_t{\om}+i\alpha u{\om}=0$ for $t\in\mathbb{R}$ and $y\in[0,1]$.
Then we have
\beno
&&\alpha^2\int_{\mathbb{R}}\big(\|\partial_y\psi(t)\|_{L^2}^2+\alpha^2\|\psi(t)\|_{L^2}^2\big)dt \leq C \|\om(0)\|_{L^2}^2,\\
&&\alpha\int_{\mathbb{R}}\big(|\partial_y\psi(t,0)|^2+|\partial_y\psi(t,1)|^2\big)dt \leq C \|\om(0)\|_{L^2}^2,
\eeno
where the constant $C$ only depends  on $c_0$.
\end{lemma}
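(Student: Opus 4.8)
The plan is to exploit the fact that the transport equation $\pa_t\om + i\al u\om = 0$ is solved explicitly by $\om(t,y) = e^{-i\al u(y)t}\om(0,y)$, so that the stream function becomes, for each fixed $y$, a Fourier integral in the time variable whose $L^2_t$ norm is controlled by Plancherel's theorem. First I would represent $\psi(t,\cdot) = -(\pa_y^2-\al^2)^{-1}\om(t,\cdot)$ through the Dirichlet Green's function of $-(\pa_y^2-\al^2)$ on $[0,1]$,
\[
G_\al(y,y') = \frac{\sinh(\al y_<)\sinh(\al(1-y_>))}{\al\sinh\al},\quad y_<=\min(y,y'),\ y_>=\max(y,y'),
\]
so that $\psi(t,y) = \int_0^1 G_\al(y,y')\,e^{-i\al u(y')t}\om(0,y')\,dy'$ and $\pa_y\psi(t,y) = \int_0^1 \pa_y G_\al(y,y')\,e^{-i\al u(y')t}\om(0,y')\,dy'$. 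Since $u'\ge c_0>0$, the map $y'\mapsto s=u(y')$ is a diffeomorphism of $[0,1]$ onto $[u(0),u(1)]$, and after this change of variable each of these expressions is, for fixed $y$, exactly the Fourier transform in the frequency $\tau=\al t$ of a function supported on $[u(0),u(1)]$.

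Applying Plancherel in $\tau$ and undoing the change of variable, I obtain for every fixed $y$
\[
\al\int_{\mathbb{R}}|\psi(t,y)|^2\,dt = 2\pi\int_0^1 |G_\al(y,y')|^2\,|\om(0,y')|^2\,\frac{dy'}{u'(y')}\le \frac{2\pi}{c_0}\int_0^1 |G_\al(y,y')|^2\,|\om(0,y')|^2\,dy',
\]
together with the analogous identity for $\pa_y\psi$ in which $G_\al$ is replaced by $\pa_y G_\al$. Integrating in $y$ and using Tonelli reduces matters to the weighted kernel bounds $\int_0^1|G_\al(y,y')|^2\,dy\lesssim \al^{-3}$ and $\int_0^1|\pa_y G_\al(y,y')|^2\,dy\lesssim \al^{-1}$, which follow from the elementary pointwise estimates $G_\al(y,y')\lesssim \al^{-1}e^{-\al|y-y'|}$ and $|\pa_y G_\al(y,y')|\lesssim e^{-\al|y-y'|}$, valid uniformly since $\al\ge c_0$ guarantees $\sinh\al\gtrsim e^\al$. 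This yields $\al^4\int_{\mathbb{R}}\|\psi\|_{L^2}^2\,dt\lesssim c_0^{-1}\|\om(0)\|_{L^2}^2$ and $\al^2\int_{\mathbb{R}}\|\pa_y\psi\|_{L^2}^2\,dt\lesssim c_0^{-1}\|\om(0)\|_{L^2}^2$, which is precisely the first stated inequality.

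For the boundary estimate, I observe that $\pa_y G_\al(0,y') = \sinh(\al(1-y'))/\sinh\al$ and $\pa_y G_\al(1,y')=-\sinh(\al y')/\sinh\al$ are both bounded by $1$ on $[0,1]$. Running the same Plancherel-plus-change-of-variable argument at the two endpoints gives, for $j\in\{0,1\}$,
\[
\al\int_{\mathbb{R}}|\pa_y\psi(t,j)|^2\,dt \le \frac{2\pi}{c_0}\int_0^1 |\pa_y G_\al(j,y')|^2\,|\om(0,y')|^2\,dy'\le \frac{2\pi}{c_0}\|\om(0)\|_{L^2}^2,
\]
which is the second inequality.

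Because the whole argument is essentially a clean application of Plancherel, I do not expect a serious obstacle. The only points requiring care are the justification of the change of variable $s=u(y')$ (using that $u$ is a monotone $C^4$ diffeomorphism with $u'\ge c_0$) and the interchange of the $t$- and $y$-integrations, which is legitimate by Tonelli as all integrands are nonnegative. The place where the rates are actually decided is the sharp $\al$-dependence of the kernel $L^2$-bounds: it is exactly the gain of $\al^{-3}$ for $G_\al$ and $\al^{-1}$ for $\pa_y G_\al$ that produces the weights $\al^4$ and $\al^2$ in the statement.
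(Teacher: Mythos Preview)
Your argument is correct. It differs from the paper's proof in the way the first inequality is obtained: the paper expands $\om$ and $\psi$ in the Dirichlet sine basis $\varphi_k(y)=\sin(\pi k y)$, applies Plancherel to each coefficient $\langle\om(t),\varphi_k\rangle$, and then sums the explicit series $\sum_k\frac{1}{(\pi k)^2+\al^2}\le\frac{1}{2\al}$. You instead represent $\psi$ through the Green's function $G_\al$, apply Plancherel for each fixed $y$, and then integrate in $y$ using the pointwise kernel bounds $\int_0^1|G_\al(y,y')|^2\,dy\lesssim\al^{-3}$ and $\int_0^1|\pa_yG_\al(y,y')|^2\,dy\lesssim\al^{-1}$. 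These are two sides of the same coin: the spectral sum and the kernel $L^2$ norm encode the same information about $(-\pa_y^2+\al^2)^{-1}$. For the boundary estimate the two proofs are essentially identical, since your $\pa_yG_\al(j,\cdot)$ is precisely the function $\gamma_j$ the paper introduces, and both arguments amount to computing $\pa_y\psi(t,j)=\mp\langle\om(t),\gamma_j\rangle$ and applying Plancherel. Your route has the mild advantage of not relying on an explicit eigenbasis, while the paper's spectral approach gives slightly sharper constants; both are equally short.
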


\begin{proof}
We use the basis in $L^2(0,1): \varphi_k(y)=\sin(\pi ky),\ k\in\mathbb{Z}_+.$ Then we have
\begin{align*}
&\om=\sum_{k=1}^{+\infty}2\langle \omega,\varphi_k\rangle\varphi_k,\ \|\om\|_{L^2}^2=\sum_{k=1}^{+\infty}2|\langle \omega,\varphi_k\rangle|^2,\ \psi=\sum_{k=1}^{+\infty}\frac{2\langle \omega,\varphi_k\rangle}{(\pi k)^2+\alpha^2}\varphi_k,\\ &\|\partial_y\psi(t)\|_{L^2}^2+\alpha^2\|\psi(t)\|_{L^2}^2=\langle \psi(t),\om(t)\rangle=\sum_{k=1}^{+\infty}\frac{2|\langle \omega(t),\varphi_k\rangle|^2}{(\pi k)^2+\alpha^2}.
\end{align*}
Since $\partial_t{\om}+i\alpha u{\om}=0$, the solution is given by $\om(t,y)=e^{-i\alpha tu(y)}\om(0,y)$. So,
\begin{align*}
&\langle \omega(t),\varphi_k\rangle=\int_0^1e^{-i\alpha tu(y)}\om(0,y)\varphi_k(y)dy=\int_{u(0)}^{u(1)}e^{-i\alpha tz}\frac{\om(0,u^{-1}(z))\varphi_k(u^{-1}(z))}{u'(u^{-1}(z))}dz,
\end{align*}
from which and Plancherel's formula, we infer that
\begin{align*}
\int_{\mathbb{R}}|\langle \omega(t),\varphi_k\rangle|^2dt&=\frac{2\pi}{\alpha}\int_{u(0)}^{u(1)}\left| \frac{\om(0,u^{-1}(z))\varphi_k(u^{-1}(z))}{u'(u^{-1}(z))}\right|^2dz=\frac{2\pi}{\alpha}\int_{0}^{1} \frac{|\om(0,y)\varphi_k(y)|^2}{u'(y)}dy\\ &\leq \frac{2\pi}{\alpha}\int_{0}^{1} \frac{|\om(0,y)|^2}{u'(y)}dy\leq \frac{2\pi}{\alpha c_0}\|\om(0)\|_{L^2}^2.
\end{align*}
Therefore,
\begin{align*}
\int_{\mathbb{R}}(\|\partial_y\psi(t)\|_{L^2}^2+\alpha^2\|\psi(t)\|_{L^2}^2)dt&=\sum_{k=1}^{+\infty}\int_{\mathbb{R}}\frac{2|\langle \omega(t),\varphi_k\rangle|^2}{(\pi k)^2+\alpha^2}dt\leq\sum_{k=1}^{+\infty}\frac{2\pi}{\alpha c_0}\frac{2\|\om(0)\|_{L^2}^2}{(\pi k)^2+\alpha^2}\\
&\leq \int_{0}^{+\infty}\frac{2\pi}{\alpha c_0}\frac{2\|\om(0)\|_{L^2}^2}{(\pi z)^2+\alpha^2}dz=\frac{2\pi}{\alpha^2 c_0}\|\om(0)\|_{L^2}^2,
\end{align*}
which gives the first inequality.

Let
\begin{align}\label{gamma12-def}
\gamma_1(y)=\dfrac{\sinh(\alpha y)}{\sinh\alpha }, \;\gamma_0(y)=\dfrac{\sinh(\alpha(1- y))}{\sinh\alpha }.
\end{align}
 Then we have\begin{align*}
&\langle \om,\gamma_1\rangle=-\langle (\partial^2_y-\alpha^2)\psi,\gamma_1\rangle=-\langle \psi,(\partial^2_y-\alpha^2)\gamma_1\rangle-(\psi'\gamma_1-\psi\gamma_1')|_0^1=-\partial_y\psi(t,1),
\end{align*} and $\langle \om,\gamma_0\rangle=\partial_y\psi(t,0). $
As in the proof of the first inequality, we have
\begin{align*}
\int_{\mathbb{R}}|\partial_y\psi(t,{j})|^2dt=\int_{\mathbb{R}}|\langle \omega(t),\gamma_j\rangle|^2dt&=\frac{2\pi}{\alpha}\int_{0}^{1} \frac{|\om(0,y)\gamma_j(y)|^2}{u'(y)}dy\leq \frac{2\pi}{\alpha c_0}\|\om(0)\|_{L^2}^2,\ j=0,1,
\end{align*}
which gives the second inequality.
\end{proof}

\begin{lemma}\label{lem5.7}
Let $\psi=-(\partial^2_y-\alpha^2)^{-1}\om$ and $\om(t,y)$ solve $\partial_t{\om}+i\alpha u{\om}+f=0$ for $t\in[0,T]$ and $y\in[0,1]$ and $ \om(0)=0$. Then we have
\beno
\|\om(T)\|_{L^2}^2\leq C\int_0^T\big(\alpha^{-2}\|\partial_yf(t)\|_{L^2}^2+\|f(t)\|_{L^2}^2\big)dt,
\eeno
where the constant $C$ only depends on $c_0$.
\end{lemma}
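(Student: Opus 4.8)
The plan is to solve the inhomogeneous transport equation explicitly and then reduce the estimate to a one-dimensional Fourier restriction bound that exploits $u'\ge c_0$. First I would remove the oscillation with an integrating factor: setting $W(T,y)=e^{i\al Tu(y)}\om(T,y)$ and using $\om(0)=0$, the equation $\pa_t\om+i\al u\om+f=0$ integrates to $W(T,y)=-\int_0^T e^{i\al su(y)}f(s,y)\,ds$, so that $|\om(T,y)|=|W(T,y)|$ and it suffices to bound $\int_0^1|W(T,y)|^2\,dy$.

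Next, since $u'\ge c_0>0$, the change of variables $z=u(y)$ turns this into $\int_{u(0)}^{u(1)}|P(z)|^2\,\frac{dz}{u'}\le c_0^{-1}\int_{u(0)}^{u(1)}|P(z)|^2\,dz$, where $P(z)=\int_0^T e^{i\al sz}F(s,z)\,ds$ and $F(s,z)=f(s,u^{-1}(z))$. The key observation is that $P(z)$ is the time-Fourier transform $\cF_sF(\cdot,z)$ of $F$ (extended by zero for $s\notin[0,T]$) evaluated at frequency $-\al z$; that is, $P(z)=\cF_sF(-\al z,z)$. I would then change the outer variable to $\eta=-\al z$, reducing the goal to controlling the restriction of the two-variable function $\cF_sF(\eta,z)$ to the curve $z=-\eta/\al$.

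To handle that restriction, for each fixed $\eta$ I would apply the one-dimensional trace inequality on the interval $[u(0),u(1)]$ (whose length is $\ge c_0$) to the function $z\mapsto\cF_sF(\eta,z)$, bounding its value at $z=-\eta/\al$ by $c_0^{-1}\|\cF_sF(\eta,\cdot)\|_{L^2_z}^2+2\|\cF_sF(\eta,\cdot)\|_{L^2_z}\|\pa_z\cF_sF(\eta,\cdot)\|_{L^2_z}$. Integrating in $\eta$, applying Cauchy--Schwarz to the cross term, and using Plancherel in $s$ together with $\pa_z\cF_sF=\cF_s\pa_zF$ converts everything into $\|F\|_{L^2_{s,z}}$ and $\|\pa_zF\|_{L^2_{s,z}}$. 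Reverting $z=u(y)$ gives $\|F\|_{L^2}^2\le\|u'\|_\infty\int_0^T\|f\|_{L^2}^2$ and, crucially, $\pa_zF=(\pa_yf)/u'$ so that $\|\pa_zF\|_{L^2}^2\le c_0^{-1}\int_0^T\|\pa_yf\|_{L^2}^2$; a final application of Young's inequality, splitting the cross term with weight $\al$, produces exactly the factor $\al^{-2}$ in front of $\|\pa_yf\|_{L^2}^2$ and yields the claim.

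The main obstacle I anticipate is the coupling between the frequency variable $\eta$ and the spatial variable $z$ in $\cF_sF(-\al z,z)$: one cannot simply apply Plancherel in $s$ at a fixed frequency. The device that resolves this is to treat $\cF_sF$ as a genuine function of two variables and bound its trace along the diagonal curve by the trace inequality before integrating in $\eta$. A secondary technical point is that $f$ is not assumed to vanish at $y=0,1$ in this lemma, so I must use the trace inequality on the bounded interval $[u(0),u(1)]$ rather than extending $F$ by zero in $z$ (which would create spurious boundary jumps and a non-$L^2$ derivative $\pa_zF$); the interval trace inequality then contributes only the harmless lower-order term with constant $c_0^{-1}$.
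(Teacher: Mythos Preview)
Your argument is correct and takes a genuinely different route from the paper's. The paper proceeds by duality: it introduces the free backward flow $\om_1(t,y)=e^{i\al(T-t)u(y)}\om(T,y)$, computes $\pa_t\langle\om_1,\om\rangle=-\langle\om_1,f\rangle=\langle(\pa_y^2-\al^2)\psi_1,f\rangle$, integrates by parts, and then closes via Cauchy--Schwarz in $t$ using the space-time bounds on $\psi_1$ (and its boundary traces) already established in Lemma~\ref{lem5.2}. In effect Lemma~\ref{lem5.7} is obtained as the adjoint estimate to Lemma~\ref{lem5.2}. Your approach instead writes the Duhamel solution explicitly, recognizes $\om(T,\cdot)$ as the restriction of the two-variable function $(\eta,z)\mapsto\cF_sF(\eta,z)$ to the curve $\eta=-\al z$, and controls that restriction by a pointwise trace inequality in the spatial variable followed by Plancherel in $s$. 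This is more elementary in that it bypasses the stream function $\psi_1$ and the boundary terms entirely; the paper's route, on the other hand, makes the $TT^*$ structure transparent and recycles Lemma~\ref{lem5.2} as a black box.

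One small point: as you have written it, the step $\|F\|_{L^2_{s,z}}^2\le\|u'\|_{L^\infty}\int_0^T\|f\|_{L^2}^2$ makes your constant depend on $\|u'\|_{L^\infty}$ as well as $c_0$, which is slightly weaker than the stated lemma. This is easily fixed by applying the trace inequality in the original variable $y\in[0,1]$ rather than in $z$: keep $W(T,y)=-\cF_sf(-\al u(y),y)$, change only the outer integration variable to $\eta=-\al u(y)$ (picking up $d y=d\eta/(\al u')\le d\eta/(\al c_0)$), and for each fixed $\eta$ bound $|\cF_sf(\eta,y(\eta))|^2\le \|\cF_sf(\eta,\cdot)\|_{L^2_y}^2+2\|\cF_sf(\eta,\cdot)\|_{L^2_y}\|\pa_y\cF_sf(\eta,\cdot)\|_{L^2_y}$ on the unit interval. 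Plancherel in $s$ then produces $\|f\|_{L^2_{t,y}}$ and $\|\pa_yf\|_{L^2_{t,y}}$ directly, and together with $\al\ge c_0$ (to absorb the $\al^{-1}$ in the lower-order term) the constant depends only on $c_0$.
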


\begin{proof}
Let $\om_1(t,y)=e^{i\alpha(T- t)u(y)}\om(T,y)$ and $\psi_1=-(\partial^2_y-\alpha^2)^{-1}\om_1$.
Then we have
\beno
\partial_t{\om_1}+i\alpha u{\om_1}=0,\quad \om_1(T)=\om(T),\ \|\om_1(0)\|_{L^2}=\|\om(T)\|_{L^2}.
\eeno
Then it follows from Lemma \ref{lem5.2} that 
\beno
&&\alpha^2\int_{0}^T\big(\|\partial_y\psi_1(t)\|_{L^2}^2+\alpha^2\|\psi_1(t)\|_{L^2}^2\big)dt
\leq C \|\om_1(0)\|_{L^2}^2=C \|\om(T)\|_{L^2}^2,\\
&&\alpha\int_0^T\big(|\partial_y\psi_1(t,0)|^2+|\partial_y\psi_1(t,1)|^2\big)dt \leq C \|\om_1(0)\|_{L^2}^2=C \|\om(T)\|_{L^2}^2.
\eeno

Noticing that
\begin{align*}
\partial_t\langle\om_1,\om\rangle=&\langle\partial_t\om_1,\om\rangle+\langle\om_1,\partial_t\om\rangle=-\langle i\alpha u\om_1,\om\rangle-\langle \om_1,f+i\alpha u\om\rangle=-\langle \om_1,f\rangle\\
=&\langle (\partial^2_y-\alpha^2)\psi_1,f\rangle=-\langle \partial_y\psi_1,\partial_yf\rangle-\alpha^2\langle \psi_1,f\rangle+({\partial_y\psi_1}\overline{f})|_{y=0}^1,
\end{align*}
we infer that
\begin{align*}
&|\partial_t\langle\om_1,\om\rangle|\leq \|\partial_yf\|_{L^2}\|\partial_y\psi_1\|_{L^2}+\alpha^2\| f\|_{L^2}\|\psi_1\|_{L^2}+\| f\|_{L^{\infty}}\big(|\partial_y\psi_1(t,0)|+|\partial_y\psi_1(t,1)|\big).
\end{align*}
By Gagliardo-Nirenberg inequality, we get
\begin{align*}
\|f\|_{L^{\infty}}^2\leq C\|f\|_{L^{2}}\|f\|_{H^{1}}\leq C\alpha^{-1}(\|f\|_{H^1}^2+\alpha^2\|f\|_{L^2}^2)\leq C\alpha^{-1}(\|\partial_yf\|_{L^2}^2+\alpha^2\|f\|_{L^2}^2).
\end{align*}

In summary, we obtain
\begin{align*}
\|\om(T)\|_{L^2}^2=&\langle\om_1,\om\rangle|_0^T\leq\int_0^T|\partial_t\langle\om_1,\om\rangle|dt\\
\leq&\int_0^T\big(\| \partial_yf\|_{L^2}\|\partial_y\psi_1\|_{L^2}+\alpha^2\| f\|_{L^2}\|\psi_1\|_{L^2}+\| f\|_{L^{\infty}}(|\partial_y\psi_1(t,0)|+|\partial_y\psi_1(t,1)|)\big)dt\\
\leq& \left(\int_0^T(\|\partial_yf(t)\|_{L^2}^2+\alpha^2\|f(t)\|_{L^2}^2+2\alpha\|f(t)\|_{L^{\infty}}^2)dt\right)^{\frac{1}{2}}\\ &\times\left(\int_0^T(\|\partial_y\psi_1(t)\|_{L^2}^2+\alpha^2\|\psi_1(t)\|_{L^2}^2+\alpha^{-1}
|\partial_y\psi_1(t,0)|^2+\alpha^{-1}|\partial_y\psi_1(t,1)|^2)dt\right)^{\frac{1}{2}}\\ \leq&\left(C\int_0^T(\|\partial_yf(t)\|_{L^2}^2+\alpha^2\|f(t)\|_{L^2}^2)dt\right)^{\frac{1}{2}} \left(C\alpha^{-2}\|\om(T)\|_{L^2}^2\right)^{\frac{1}{2}},
\end{align*}
which gives our result.
\end{proof}

The following limiting absorption lemma will be proved in next section.

\begin{lemma}\label{lem5.6}
Let  $\beta\in\mathbb{R}$.
Assume that $\mathcal{R}_{\alpha,\beta}$ has no embedding eigenvalues or eigenvalues for any $\alpha\in \Lambda$. Then there exists $\varepsilon_0>0$ such that
for any $c\in \mathbb{C},\ 0<\text{Im}(c)<\varepsilon_0$ and $\alpha\in \Lambda,$ the unique solution $\Phi$ to the boundary value problem
\begin{align*}
(u-c)(\Phi''-\alpha^2\Phi)-(u''-\beta)\Phi=\omega, \;\Phi(0)=\Phi(1)=0
\end{align*}
has the following uniform bound
\begin{align*}
\|\partial_y\Phi\|_{L^2}+\alpha\|\Phi\|_{L^2}\leq C\alpha^{-1}\big(\|\partial_y\om\|_{L^2}+\alpha\|\om\|_{L^2}\big).
\end{align*}
Moreover, if $\om(0)=\om(1)=0 $, we have
\begin{align*}
|\partial_y\Phi(0)|+|\partial_y\Phi(1)|\leq C\alpha^{-\frac{1}{2}}\big(\|\partial_y\om\|_{L^2}+\alpha\|\om\|_{L^2}\big).
\end{align*}
\end{lemma}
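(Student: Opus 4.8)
The plan is to prove the estimate by a contradiction/compactness argument in the spirit of \cite{WZZ2}, invoking the absence of embedding eigenvalues and eigenvalues only at the very last step. Suppose the first bound fails. Then there are $\alpha_n\in\Lambda$, $c_n\in\mathbb{C}$ with $0<\mathrm{Im}(c_n)<\varepsilon_0$, and data $\om_n$, with solutions $\Phi_n$ of
\[
(u-c_n)(\Phi_n''-\alpha_n^2\Phi_n)-(u''-\beta)\Phi_n=\om_n,\quad \Phi_n(0)=\Phi_n(1)=0,
\]
normalized so that $\|\partial_y\Phi_n\|_{L^2}+\alpha_n\|\Phi_n\|_{L^2}=1$ while $\alpha_n^{-1}\big(\|\partial_y\om_n\|_{L^2}+\alpha_n\|\om_n\|_{L^2}\big)\to0$. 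The goal is to extract a limit that is a nontrivial homogeneous solution, contradicting the hypotheses.

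First I would reduce to a compact range of parameters. If $|c_n|\to\infty$, then $|u-c_n|\to\infty$ uniformly, the zeroth-order coefficient $(u''-\beta)/(u-c_n)\to0$, and the bound follows by inverting $\partial_y^2-\alpha_n^2$ and treating the singular term perturbatively, contradicting the normalization; hence $c_n$ is bounded and, along a subsequence, $c_n\to c_*$ with $\mathrm{Im}(c_*)\ge0$. Boundedness of $\alpha_n$ I would obtain from a direct estimate valid for $\alpha$ large, where the $O(1/\alpha)$ localization of the Green's function of $\partial_y^2-\alpha^2$ near the endpoints lets one control the singular term; since $\Lambda$ is discrete a further subsequence then has $\alpha_n\equiv\alpha_*$. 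The normalization gives $\Phi_n$ bounded in $H_0^1(0,1)$, so $\Phi_n\rightharpoonup\Phi_*$ weakly in $H_0^1$ and strongly in $L^2$.

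The decisive step is to pass to the limit and show $\Phi_*\neq0$. Away from the set where $u-c_*$ vanishes the coefficient $(u''-\beta)/(u-c_n)$ converges uniformly, so $\Phi_*$ solves the homogeneous Rayleigh--Kuo equation with parameters $(\alpha_*,c_*)$. If $\mathrm{Im}(c_*)>0$, or if $c_*$ is real with $\mathrm{Re}\,c_*\notin[u(0),u(1)]$, there is no singularity, the convergence is strong in $H_0^1$, $\Phi_*\neq0$, and $c_*$ is an eigenvalue of $\mathcal{R}_{\alpha_*,\beta}$, a contradiction. The remaining case, $c_*$ real with a critical point $y_c=u^{-1}(\mathrm{Re}\,c_*)\in(0,1)$, is the genuine difficulty: the weight $1/(u-c_n)$ develops a singularity at $y_c$, and the hard part is to prove strong convergence (hence $\Phi_*\neq0$) and to identify $\Phi_*$ as a bona fide $H_0^1$ embedding eigenfunction. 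Here I would exploit the imaginary part of the identity obtained by testing the equation against $\overline{\Phi_n}$,
\[
\mathrm{Im}(c_n)\int_0^1\frac{(u''-\beta)|\Phi_n|^2}{|u-c_n|^2}\,dy=-\mathrm{Im}\int_0^1\frac{\om_n\,\overline{\Phi_n}}{u-c_n}\,dy,
\]
together with uniform local estimates near $y_c$, to bound the concentration of $\Phi_n$ at the critical layer and to take the limit in the singular term; the monotonicity $u'\ge c_0$ (a single, non-degenerate critical point) is exactly what makes this tractable, in contrast to the non-monotone setting of Theorem \ref{thm:non-monotone}. The resulting nonzero $\Phi_*$ is an embedding eigenvalue, contradicting the hypothesis, and the first bound follows.

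For the boundary-derivative bound I would assume $\om(0)=\om(1)=0$ and rewrite the equation as $-(\partial_y^2-\alpha^2)\Phi=g$ with $g=-\big(\om+(u''-\beta)\Phi\big)/(u-c)$. As in the proof of Lemma \ref{lem5.2}, $\partial_y\Phi(0)=\langle g,\gamma_0\rangle$ and $\partial_y\Phi(1)=-\langle g,\gamma_1\rangle$, where $\gamma_0,\gamma_1$ from \eqref{gamma12-def} are exponentially concentrated in a layer of width $\sim1/\alpha$ at the respective endpoints, so that $\|\gamma_j\|_{L^2}\sim\alpha^{-1/2}$. Pairing against $\gamma_0$ localizes the estimate near $y=0$, where $\om(0)=0$ and $\Phi(0)=0$ tame the singular weight $1/(u-c)$; combined with the interior bound on $\|\partial_y\Phi\|_{L^2}+\alpha\|\Phi\|_{L^2}$ this yields $|\partial_y\Phi(0)|\le C\alpha^{-1/2}\big(\|\partial_y\om\|_{L^2}+\alpha\|\om\|_{L^2}\big)$, and symmetrically at $y=1$. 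The only subtlety is when the critical layer sits close to an endpoint, which I would absorb by carrying the boundary quantities $\alpha^{1/2}|\partial_y\Phi(0)|$ and $\alpha^{1/2}|\partial_y\Phi(1)|$ inside the same compactness functional, so that both bounds are delivered by one contradiction argument.
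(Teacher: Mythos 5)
Your overall architecture (contradiction, weak compactness, passage to a limiting homogeneous/critical-layer solution, and the final appeal to the absence of eigenvalues and embedding eigenvalues) matches the paper's, and you correctly isolate the interior critical-layer compactness as the delicate step. But there is a genuine gap in the quantitative part of your argument: everything that produces the factors $\alpha^{-1}$ and $\alpha^{-1/2}$ in the stated bounds rests, in the paper, on a uniform-in-$c$ oscillatory integral estimate
\begin{align*}
\Big|\int_0^1\frac{f(y)}{u(y)-c}\,dy\Big|\leq C\alpha^{-\frac12}\big(\|\partial_yf\|_{L^2}+\alpha\|f\|_{L^2}\big),\qquad f\in H_0^1(0,1),\ c^i>0,
\end{align*}
proved (Lemma \ref{lem5.4}) by writing $1/(u-c)=i\int_0^\infty e^{-it(u-c)}dt$, substituting $z=u(y)$ (here $u'\ge c_0$ is essential), and applying Plancherel. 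This feeds into Lemma \ref{lem5.5}, an \emph{unconditional} resolvent bound for the $\beta$-free equation $(u-c)(\Phi''-\alpha^2\Phi)=\omega$, which is then used twice: to show $\alpha_n$ stays bounded along the contradiction sequence (via $1=\alpha_n\|\Phi_n\|_{H^1_{\alpha_n}}\le C(\alpha_n^{-1}+\delta_n)$ after moving $(u''-\beta)\Phi_n$ to the right-hand side), and to deduce the boundary-derivative estimate directly from the first inequality. Your substitutes do not deliver this. The ``$O(1/\alpha)$ localization of the Green's function of $\partial_y^2-\alpha^2$'' cannot control the source $(\omega+(u''-\beta)\Phi)/(u-c)$, because as $\mathrm{Im}(c)\to0$ this source is not uniformly in $L^1$ near the critical layer (its $L^1$ norm diverges logarithmically); one must exploit the cancellation in the principal value, i.e., precisely the monotonicity-based Fourier argument above. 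Likewise, for the boundary estimate, pairing with $\gamma_0,\gamma_1$ from \eqref{gamma12-def} is the right move, but ``$\omega(0)=\Phi(0)=0$ tames the singular weight'' does not address a critical layer sitting at an arbitrary interior point, where $\gamma_j$ is not small for $\alpha=O(1)$; again the needed input is the $\alpha^{-1/2}$ bound on $\int f/(u-c)$ for $f=(\omega+(u''-\beta)\Phi)\gamma_j\in H^1_0$.

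A secondary structural problem: your fallback of ``carrying $\alpha^{1/2}|\partial_y\Phi(0)|$ and $\alpha^{1/2}|\partial_y\Phi(1)|$ inside the same compactness functional'' is not workable as stated, because the derivative traces $\partial_y\Phi_n(0),\partial_y\Phi_n(1)$ are not controlled by weak $H^1$ convergence; you would need $H^2$ compactness up to the boundary, which the normalization does not supply near the critical layer. The paper avoids this entirely: once the first inequality is known, the right-hand side $\omega+(u''-\beta)\Phi$ vanishes at $y=0,1$, and the second part of Lemma \ref{lem5.5} gives the boundary bound with no further limiting argument. In short: your compactness skeleton is sound and close to the paper's, but without an analogue of Lemmas \ref{lem5.4}--\ref{lem5.5} the proof does not close, neither for the uniformity in large $\alpha$ nor for the boundary-derivative estimate.
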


Now we are in a position to prove Proposition \ref{lem5.1}.

\begin{proof}
{\bf Step 1.} We introduce
\beno
\om_1(t,y)=e^{-i\alpha tu(y)}\om(0,y),\quad \om_2=\om-\om_1,\quad \psi_j=-(\partial^2_y-\alpha^2)^{-1}\om_j,\ j=1,2.
\eeno
Then we have $\om=\om_1+\om_2,\ \psi=\psi_1+\psi_2$ and
\beno
\partial_t{\om_1}+i\alpha u{\om_1}=0.
\eeno
By Lemma \ref{lem5.2}, we have
\begin{align}\label{psi1a}&\alpha^2\int_{0}^T(\|\partial_y\psi_1(t)\|_{L^2}^2+\alpha^2\|\psi_1(t)\|_{L^2}^2)dt\leq C \|\om_1(0)\|_{L^2}^2=C \|\om(0)\|_{L^2}^2,\\ \label{psi1b}&\alpha\int_0^T(|\partial_y\psi_1(t,0)|^2+|\partial_y\psi_1(t,1)|^2)dt \leq C \|\om_1(0)\|_{L^2}^2=C \|\om(0)\|_{L^2}^2.
\end{align}
Also we have $\|\om_1(T)\|_{L^2}^2=\|\om(0)\|_{L^2}^2 .$

Thanks to the definition of $\mathcal{R}'_{\alpha,\beta} $ and $ \psi_1$, we have $ \mathcal{R}'_{\alpha,\beta}{\om_1}=(u''-\beta) \psi_1+u{\om_1}.$ Thus, $\partial_t{\om_1}+i\alpha\mathcal{R}'_{\alpha,\beta}\om_1=i\alpha(u''-\beta) \psi_1,$ $ \om_2(0)=\om(0)-\om_1(0)=0,$ and
\begin{align*}
&\partial_t{\om_2}+i\alpha\mathcal{R}'_{\alpha,\beta}\om_2=-f-i\alpha(u''-\beta) \psi_1:=f_1.
\end{align*}
Moreover, we have
\begin{align*}
\|\partial_yf_1\|_{L^2}+\alpha\|f_1\|_{L^2}&\leq\|\partial_yf\|_{L^2}+\alpha\|f\|_{L^2}+\alpha\|\partial_y((u''-\beta) \psi_1)\|_{L^2}+\alpha^2\|(u''-\beta) \psi_1\|_{L^2}\\&\leq\|\partial_yf\|_{L^2}+\alpha\|f\|_{L^2}+C\alpha(\|\partial_y \psi_1\|_{L^2}+\alpha\|\psi_1\|_{L^2}),
\end{align*}
which gives
\begin{align}\label{f1}
&\int_{0}^T(\|\partial_yf_1(t)\|_{L^2}^2+\alpha^2\|f_1(t)\|_{L^2}^2)dt\\ \nonumber
&\leq C\int_{0}^T(\|\partial_yf(t)\|_{L^2}^2+\alpha^2\|f(t)\|_{L^2}^2)dt+C\alpha^2\int_{0}^T(\|\partial_y\psi_1(t)\|_{L^2}^2
+\alpha^2\|\psi_1(t)\|_{L^2}^2)dt\\ \nonumber
&\leq C\int_{0}^T(\|\partial_yf(t)\|_{L^2}^2+\alpha^2\|f(t)\|_{L^2}^2)dt+C\|\om(0)\|_{L^2}^2.
\end{align}
This means that $f_1\in L^2((0,T);H^1(0,1)).$
 \smallskip

{\bf Step 2.} Now we extend $\om_2,\ \psi_2,\ f_1$ to $t\in[0,+\infty)$ in the following way
\beno
\om_2(t)=e^{-i(t-T)\alpha\mathcal{R}'_{\alpha,\beta}}\om_2(T),\quad \psi_2(t)=-(\partial^2_y-\alpha^2)^{-1}\om_2(t),\quad f_1(t)=0\,\,\text{ for }\,\,t>T.
\eeno
Then $\partial_t{\om_2}+i\alpha\mathcal{R}'_{\alpha,\beta}\om_2=f_1$ for $t\in[0,+\infty)$.
Since $\mathcal{R}'_{\alpha,\beta} $ is a bounded operator on $H^1(0,1) $ and $\om_2(0)=0$,  we have $\om_2\in C([0,T];H^1(0,1)).$
Since $\om_2(t)=e^{-i(t-T)\alpha\mathcal{R}'_{\alpha,\beta}}\om_2(T),$ $\psi_2(t)=-(\partial^2_y-\alpha^2)^{-1}\om_2(t)$ for $t>T,$ we have $\psi_2 \in L^2((T,+\infty);H^1(0,1))$ {(See the proof of Theorem \ref{thm:non-monotone})}. Thanks to $\mathcal{R}'_{\alpha,\beta}(\partial^2_y-\alpha^2)=(\partial^2_y-\alpha^2)\mathcal{R}_{\alpha,\beta}$, we find
\beno
\partial_t{\psi_2}+i\alpha\mathcal{R}_{\alpha,\beta}\psi_2=f_2,\quad \psi_2(0)=0,
\eeno
with $f_2=-(\partial^2_y-\alpha^2)^{-1}f_1.$ Let $f_3=i\alpha(u''-\beta) \psi_2-f_1$. Then
\beno
\partial_t{\om_2}+i\alpha u{\om_2}+f_3=0,
\eeno
where $f_3$ satisfies
\begin{align*}
\|\partial_yf_3\|_{L^2}+\alpha\|f_3\|_{L^2}&\leq\|\partial_yf_1\|_{L^2}+\alpha\|f_1\|_{L^2}+\alpha\|\partial_y((u''-\beta) \psi_2)\|_{L^2}+\alpha^2\|(u''-\beta) \psi_2\|_{L^2}\\&\leq\|\partial_yf_1\|_{L^2}+\alpha\|f_1\|_{L^2}+C\alpha(\|\partial_y \psi_2\|_{L^2}+\alpha\|\psi_2\|_{L^2}).
\end{align*}
Then it follows from Lemma \ref{lem5.7} that for any $s>0$,
\begin{align}\label{om2}&\|\om_2(s)\|_{L^2}^2\leq C\int_0^s\big(\alpha^{-2}\|\partial_yf_3(t)\|_{L^2}^2+\|f_3(t)\|_{L^2}^2\big)dt\\ \nonumber
&\leq C\int_0^{+\infty}\big(\alpha^{-2}\|\partial_yf_1(t)\|_{L^2}^2+\|f_1(t)\|_{L^2}^2+\|\partial_y \psi_2(t)\|_{L^2}^2+\alpha^2\|\psi_2(t)\|_{L^2}^2\big)dt<+\infty.
\end{align}
Thus, $\om_2\in L^{\infty}((0,+\infty);L^2(0,1))$ and $\psi_2=-(\partial^2_y-\alpha^2)^{-1}\om_2\in {L^{\infty}}((0,+\infty);H^2(0,1)).$

Now we can take Laplace transform in $t.$ For $\text{Re}(\lambda)>0$, let
\begin{align*}
&\Phi(\lambda,y)=\int_0^{+\infty}\psi_2(t,y)e^{-\lambda t}dt,\quad F_j(\lambda,y)=\int_0^{T}f_j(t,y)e^{-\lambda t}dt,\ j=1,2.
\end{align*}
Then $ \Phi(\lambda,\cdot)\in H^2(0,1),\ F_1(\lambda,\cdot)\in H^1(0,1)$ for $\text{Re}(\lambda)>0. $ Using Plancherel's formula, we know that
for $\varepsilon>0,\ j=0,1,$
\begin{align}\label{PhiH1}
&\int_{\mathbb{R}}\|\partial_y^j\Phi(\varepsilon+is)\|_{L^2}^2ds=
{2\pi}\int_0^{+\infty}e^{-2\varepsilon t}\|\partial_y^j\psi_2(t)\|_{L^2}^2dt,\\ \label{PhiC1} &\int_{\mathbb{R}}|\partial_y\Phi(\varepsilon+is,j)|^2ds=
{2\pi}\int_0^{+\infty}e^{-2\varepsilon t}|\partial_y\psi_2(t,j)|^2dt,\\ \label{F1}
&\int_{\mathbb{R}}\|\partial_y^jF_1(\varepsilon+is)\|_{L^2}^2ds=
{2\pi}\int_0^{T}e^{-2\varepsilon t}\|\partial_y^jf_1(t)\|_{L^2}^2dt.
\end{align}
Furthermore, $\Phi$ satisfies
\begin{align}\label{W}
(u-i\lambda/\alpha)(\partial_y^2\Phi-\alpha^2\Phi)-(u''-\beta)\Phi=W,\quad \Phi(\lambda,0)=\Phi(\lambda,1)=0
\end{align}
with $W=-(i/\alpha)(\partial^2_y-\alpha^2)F_2=(i/\alpha)F_1.$

If $ \text{Re}(\lambda)\in(0,\alpha\varepsilon_0)$, then $ \text{Im}(i\lambda/\alpha)\in(0,\varepsilon_0)$, and by Lemma \ref{lem5.6},
\begin{align*}
\|\partial_y\Phi(\lambda)\|_{L^2}^2+\alpha^2\|\Phi(\lambda)\|_{L^2}^2\leq C\alpha^{-4}\big(\|\partial_yF_1(\lambda)\|_{L^2}^2+\alpha^2\|F_1(\lambda)\|_{L^2}^2\big).
\end{align*}
Integrating this over $\text{Re}(\lambda)=\varepsilon\in(0,\alpha\varepsilon_0) $ and using \eqref{PhiH1}, \eqref{F1}, we deduce that 
\begin{align*}
\int_0^{+\infty}e^{-2\varepsilon t}\big(\|\partial_y\psi_2(t)\|_{L^2}^2+\alpha^2\|\psi_2(t)\|_{L^2}^2\big)dt\leq C\alpha^{-4}\int_0^{T}e^{-2\varepsilon t}\big(\|\partial_yf_1(t)\|_{L^2}^2+\alpha^2\|f_1(t)\|_{L^2}^2\big)dt.
\end{align*}
Letting $\varepsilon\to 0+$, we obtain
\begin{align}\label{psi2}
\int_0^{+\infty}\big(\|\partial_y\psi_2(t)\|_{L^2}^2+\alpha^2\|\psi_2(t)\|_{L^2}^2\big)dt\leq C\alpha^{-4}\int_0^{T}\big(\|\partial_yf_1(t)\|_{L^2}^2+\alpha^2\|f_1(t)\|_{L^2}^2\big)dt.
\end{align}

{\bf Step 3.} Recall that $\om=\om_1+\om_2,\ \psi=\psi_1+\psi_2.$ It follows from \eqref{psi1a}, \eqref{om2}, \eqref{psi2} and \eqref{f1} that
\begin{align*}
&\|\om(T)\|_{L^2}^2+\alpha^2\int_0^T\big(\|\partial_y\psi(t)\|_{L^2}^2+\alpha^2\|\psi(t)\|_{L^2}^2\big)dt\\ \leq& 2\sum_{j=1}^2\|\om_j(T)\|_{L^2}^2+2\alpha^2\sum_{j=1}^2\int_0^T\big(\|\partial_y\psi_j(t)\|_{L^2}^2+\alpha^2\|\psi_j(t)\|_{L^2}^2\big)dt\\ \leq& C \|\om(0)\|_{L^2}^2+C\int_0^{+\infty}\big(\alpha^{-2}\|\partial_yf_1(t)\|_{L^2}^2+\|f_1(t)\|_{L^2}^2+\alpha^2\|\partial_y \psi_2(t)\|_{L^2}^2+\alpha^4\|\psi_2(t)\|_{L^2}^2\big)dt\\
\leq& C \|\om(0)\|_{L^2}^2+C\alpha^{-2}\int_0^T\big(\|\partial_yf_1(t)\|_{L^2}^2+\alpha^2\|f_1(t)\|_{L^2}^2)dt\\ \leq& C \|\om(0)\|_{L^2}^2+C\alpha^{-2}\int_0^T\big(\|\partial_yf(t)\|_{L^2}^2+\alpha^2\|f(t)\|_{L^2}^2\big)dt,
\end{align*}
which gives the first inequality.

If $f(t,0)=f(t,1)=0 $, then $f_1=0,\ F_1=0$ and  $W=0$ at $y=0,1$. Thus, by Lemma \ref{lem5.6} and \eqref{W}, we deduce that for $ \text{Re}(\lambda)\in(0,\alpha\varepsilon_0),\ j=0,1,$
\begin{align*}
|\partial_y\Phi(\lambda,j)|\leq C\alpha^{-\frac{1}{2}}\big(\|\partial_yW\|_{L^2}+\alpha\|W\|_{L^2}\big)
=C\alpha^{-\frac{3}{2}}\big(\|\partial_yF_1(\lambda)\|_{L^2}+\alpha\|F_1(\lambda)\|_{L^2}\big).
\end{align*}
Hence,
\begin{align*}
\alpha|\partial_y\Phi(\lambda,j)|^2\leq C\alpha^{-2}(\|\partial_yF_1(\lambda)\|_{L^2}^2+\alpha^2\|F_1(\lambda)\|_{L^2}^2).\
\end{align*} Integrating this over $\text{Re}(\lambda)=\varepsilon\in(0,\alpha\varepsilon_0) $ and using \eqref{PhiC1}, \eqref{F1}, we obtain
\begin{align*}
\alpha\int_0^{+\infty}e^{-2\varepsilon t}|\partial_y\psi_2(t,j)|^2dt\leq C\alpha^{-2}\int_0^{T}e^{-2\varepsilon t}\big(\|\partial_yf_1(t)\|_{L^2}^2+\alpha^2\|f_1(t)\|_{L^2}^2\big)dt,\ j=0,1.
\end{align*}
Letting $ \varepsilon\to 0+$, we get
\begin{align}\label{psi2a}
\alpha\int_0^{+\infty}|\partial_y\psi_2(t,j)|^2dt\leq C\alpha^{-2}\int_0^{T}(\|\partial_yf_1(t)\|_{L^2}^2+\alpha^2\|f_1(t)\|_{L^2}^2)dt,\ j=0,1.
\end{align}
Now the second inequality follows from \eqref{psi1b}, \eqref{psi2a} and \eqref{f1}.
\end{proof}

\subsection{Decay estimates via the vector field method}
In this subsection,
we assume that $\mathcal{R}_{\alpha,\beta} $ has no embedding eigenvalues or eigenvalues. Let $\psi=-(\partial^2_y-\alpha^2)^{-1}\om$ and $\om(t,y)$ solve $\partial_t{\om}+i\alpha\mathcal{R}'_{\alpha,\beta}{\om}=0$ for $t\in[0,+\infty)$, $y\in[0,1]$. 

First of all, it follows from Proposition \ref{lem5.1} that
\begin{align}\label{psiL2H1}
&\sup_{t>0}\|\om(t)\|_{L^2}^2+\alpha^2\int_0^{+\infty}(\|\partial_y\psi(t)\|_{L^2}^2+\alpha^2\|\psi(t)\|_{L^2}^2)dt\leq C\|\om(0)\|_{L^2}^2,\\
\label{psiL2bd}&\alpha\int_0^{+\infty}(|\partial_y\psi(t,0)|^2+|\partial_y\psi(t,1)|^2)dt\leq C \|\om(0)\|_{L^2}^2.
\end{align}

We introduce the vector field $X=(1/u')\partial_y+i\alpha t$, which commutes with $\partial_t+i\alpha u$.
Then we have
\begin{align*}
(\partial_t+i\alpha u)X\om=&X(\partial_t{\om}+i\alpha u{\om})=-i\alpha((1/u')\partial_y+i\alpha t)((u''-\beta) \psi)\\
=&-i\alpha(u'''/u')\psi-i\alpha(u''-\beta)X\psi.
\end{align*}
We denote
\beno
\om_1=X{\om},\quad \psi_1=-(\partial^2_y-\alpha^2)^{-1}\om_1,\quad \psi_2=-(\partial^2_y-\alpha^2)^{-1}(\om'/u'),\quad \psi_3=\psi_2-\psi'/u'.
\eeno
Then we find
\beno
\psi_1=\psi_2+i\alpha t\psi,\quad X\psi=\psi_1-\psi_3.
\eeno
This shows that
\begin{align*}
&(\partial_t+i\alpha u)\om_1=-i\alpha(u'''/u')\psi-i\alpha(u''-\beta)( \psi_1-\psi_3),
\end{align*}
which implies
\begin{align*}
\partial_t{\om_1}+i\alpha\mathcal{R}'_{\alpha,\beta}{\om_1}&=(\partial_t+i\alpha u)\om_1+i\alpha(u''-\beta) \psi_1\\
&=-i\alpha(u'''/u')\psi+i\alpha(u''-\beta)\psi_3:=\psi_4.
\end{align*}

\begin{lemma}\label{lem5.3}
It holds that for any $t>0$,
\beno
\alpha^2\big(\|\partial_y\psi(t)\|_{L^2}+\alpha\|\psi(t)\|_{L^2}\big)\leq C(1+t)^{-1}\big(\|\partial_y\om(0)\|_{L^2}+\alpha\|\om(0)\|_{L^2}\big),
\eeno
where the constant $C$ only depends  on $\beta$ and $u$.
\end{lemma}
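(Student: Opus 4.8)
The plan is to run the vector field argument already set up: since $X=(1/u')\pa_y+i\al t$ commutes with $\pa_t+i\al u$, the function $\om_1=X\om$ solves $\pa_t\om_1+i\al\mathcal R'_{\al,\beta}\om_1=\psi_4$, and the velocity decay will be extracted from the identity $i\al t\psi=\psi_1-\psi_2$ once $\|\om_1(t)\|_{L^2}$ is shown to stay bounded by the initial $H^1$-type norm. Throughout write $\|\phi\|_E:=\|\pa_y\phi\|_{L^2}+\al\|\phi\|_{L^2}$. I would first record two elliptic facts. For $\psi_1=-(\pa_y^2-\al^2)^{-1}\om_1$ the standard Dirichlet estimate gives $\|\psi_1\|_E\le C\al^{-1}\|\om_1\|_{L^2}$. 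For $\psi_2$, testing $-(\pa_y^2-\al^2)\psi_2=\om'/u'$ against $\psi_2$ and integrating by parts to move $\pa_y$ off $\om'$ (the boundary term drops since $\psi_2$ is Dirichlet) converts $\|\om'\|_{L^2}$ into $\|\om\|_{L^2}$ and yields $\|\psi_2\|_E\le C\|\om\|_{L^2}$; this trading of one derivative is essential, since it is what keeps the non-integrable-in-time quantity $\int_0^\infty\|\om'\|_{L^2}^2\,dt$ from ever appearing.

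Next I would apply Proposition \ref{lem5.1} to $\om_1$ with source $\psi_4=-i\al(u'''/u')\psi+i\al(u''-\beta)\psi_3$, giving $\sup_t\|\om_1(t)\|_{L^2}^2\le C\|\om_1(0)\|_{L^2}^2+C\al^{-2}\int_0^\infty(\|\pa_y\psi_4\|_{L^2}^2+\al^2\|\psi_4\|_{L^2}^2)\,dt$. Because $\om_1(0)=(1/u')\pa_y\om(0)$, the data term is $\le C\|\pa_y\om(0)\|_{L^2}^2$, and the $(u'''/u')\psi$ piece of the source is controlled at once by \eqref{psiL2H1}. The entire problem thus reduces to the space-time bound $\int_0^\infty\|\psi_3\|_E^2\,dt\le C\|\om(0)\|_{L^2}^2$.

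This is the step I expect to be the main obstacle. A direct computation shows that $-(\pa_y^2-\al^2)\psi_3$ is a linear combination of $\psi''$ and $\psi'$ with bounded smooth coefficients; substituting $\psi''=\al^2\psi-\om$ would reintroduce $\|\om\|_{L^2}$, whose time integral diverges, so a pointwise elliptic bound on $\psi_3$ is hopeless. Instead I would split $\psi_3=\psi_3^0+\eta$, where $\psi_3^0$ solves the same equation with homogeneous Dirichlet data and $\eta$ is the explicit $\sinh$-extension (built from $\gamma_0,\gamma_1$ of \eqref{gamma12-def}) matching the boundary values $\psi_3|_{y=0,1}=-\pa_y\psi/u'|_{y=0,1}$. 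Testing the equation for $\psi_3^0$ against $\psi_3^0$ and integrating by parts to replace $\psi''$ by $\psi'$—now all boundary terms vanish because $\psi_3^0$ is Dirichlet—gives $\|\psi_3^0\|_E\le C\|\pa_y\psi\|_{L^2}$, hence $\int_0^\infty\|\psi_3^0\|_E^2\,dt<\infty$ by \eqref{psiL2H1}. For the harmonic part, $\|\eta\|_E^2\le C\al(|\pa_y\psi(t,0)|^2+|\pa_y\psi(t,1)|^2)$, so $\int_0^\infty\|\eta\|_E^2\,dt<\infty$ by the boundary space-time estimate \eqref{psiL2bd}. Thus the apparent divergence is absorbed into the boundary flux rather than the bulk vorticity, and both conclusions of Proposition \ref{lem5.1} are genuinely used.

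Finally, with $\sup_t\|\om_1(t)\|_{L^2}\le C(\|\pa_y\om(0)\|_{L^2}+\al\|\om(0)\|_{L^2})$ established, I would feed the two elliptic bounds into $i\al t\psi=\psi_1-\psi_2$ to get $\al t\,\|\psi(t)\|_E\le\|\psi_1\|_E+\|\psi_2\|_E\le C(\al^{-1}\|\om_1\|_{L^2}+\|\om\|_{L^2})$. Multiplying by $\al/t$ and using the uniform bound on $\|\om_1\|_{L^2}$ together with $\sup_t\|\om(t)\|_{L^2}\le C\|\om(0)\|_{L^2}$ from \eqref{psiL2H1} yields $\al^2\|\psi(t)\|_E\le Ct^{-1}(\|\pa_y\om(0)\|_{L^2}+\al\|\om(0)\|_{L^2})$ for $t\ge1$; for $t\le1$ the bound is immediate from $\|\psi\|_E\le C\al^{-1}\|\om\|_{L^2}$ and the same uniform control of $\|\om\|_{L^2}$, and the two regimes merge into the stated factor $(1+t)^{-1}$.
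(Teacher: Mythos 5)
Your proposal is correct and follows essentially the same route as the paper: apply Proposition \ref{lem5.1} to $\om_1=X\om$ with source $\psi_4$, control $\psi_3$ by splitting it into a Dirichlet part (where the $\om'$ terms cancel, leaving only $\psi',\psi''$ with bounded coefficients) plus a $\sinh$-extension of the boundary trace $-\psi'/u'$, and then extract the decay from $i\al t\psi=\psi_1-\psi_2$ together with the bounds $\|\psi_1\|_E\le C\al^{-1}\|\om_1\|_{L^2}$ and $\|\psi_2\|_E\le C\|\om\|_{L^2}$. Your decomposition $\psi_3=\psi_3^0+\eta$ is exactly the paper's $\psi_{3,1}+\psi_{3,2}$, and your elliptic bound on $\psi_2$ is the same integration by parts the paper performs directly on the pairing $\langle\psi,\om'/u'\rangle$.
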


\begin{proof}
By Proposition \ref{lem5.1} we have\begin{align}\label{psi1L2H1}
\sup_{t>0}\|\om_1(t)\|_{L^2}^2\leq C\|\om_1(0)\|_{L^2}^2+C\int_0^{+\infty}\big(\alpha^{-2}\|\partial_y\psi_4(t)\|_{L^2}^2+\|\psi_4(t)\|_{L^2}^2\big)dt.
\end{align}
To proceed it, let us first claim that
\begin{align}\label{psi4H1a}
&\alpha^{-1}\|\partial_y\psi_4\|_{L^2}+\|\psi_4\|_{L^2}
 \leq C\big(\|\partial_y\psi\|_{L^2}+\alpha\|\psi\|_{L^2})+C\alpha^{\frac{1}{2}}(|\psi'(t,0)|+|\psi'(t,1)|\big).
\end{align}

Using \eqref{psi1L2H1}, \eqref{psi4H1a}, \eqref{psiL2H1} and \eqref{psiL2bd}, we conclude that
\begin{align}\label{psi1L2H1a}
\sup_{t>0}\|\om_1(t)\|_{L^2}^2\leq& C\|\om_1(0)\|_{L^2}^2+C\int_0^{+\infty}\big(\|\partial_y\psi(t)\|_{L^2}^2+\alpha^{2}\|\psi(t)\|_{L^2}^2\big)dt\\ \nonumber &+C\alpha\int_0^{+\infty}\big(|\partial_y\psi(t,0)|^2+|\partial_y\psi(t,1)|^2\big)dt\\ \nonumber
\leq& C\|\om_1(0)\|_{L^2}^2+C\alpha^{-2}\|\om(0)\|_{L^2}^2+C\|\om(0)\|_{L^2}^2\leq C\|\om(0)\|_{H^1}^2,
\end{align}
here we used the fact that
\begin{align*}
&\|\om_1(0)\|_{L^2}=\|(\om'/u'+i\alpha t\om)|_{t=0}\|_{L^2}=\|(\om'/u')|_{t=0}\|_{L^2}\leq C\|\om'|_{t=0}\|_{L^2}\leq C\|\om(0)\|_{H^1}.
\end{align*}

Since $\om_1=\om'/u'+i\alpha t\om,\ \psi=-(\partial^2_y-\alpha^2)^{-1}\om,$ and $1/u'\in C^1([0,1])$, we have
\begin{align*}
\alpha t\big(\|\partial_y\psi\|_{L^2}^2
+\alpha^2\|\psi\|_{L^2}^2\big)=&\alpha t\langle \psi,\om\rangle=i\langle \psi,\om_1-\om'/u'\rangle=i\langle \psi,\om_1\rangle+i\langle (\psi/u')',\om\rangle\\
\leq& \|\psi\|_{L^2}\|\om_1\|_{L^2}+\|(\psi/u')'\|_{L^2}\|\om\|_{L^2}\\
\leq&\|\psi\|_{L^2}\|\om_1\|_{L^2}+
C\big(\|\psi\|_{L^2}+\|\psi'\|_{L^2}\big)\|\om\|_{L^2}\\ \leq&
C\big(\alpha\|\psi\|_{L^2}+\|\psi'\|_{L^2}\big)\big(\alpha^{-1}\|\om_1\|_{L^2}+\|\om\|_{L^2}\big),
\end{align*}
from which, \eqref{psiL2H1} and \eqref{psi1L2H1a}, we infer that
\begin{align}\label{psiH1t>1}
\alpha t\big(\|\partial_y\psi(t)\|_{L^2}+\alpha\|\psi(t)\|_{L^2}\big)\leq&
C\big(\alpha^{-1}\|\om_1(t)\|_{L^2}+\|\om(t)\|_{L^2}\big)\\
\nonumber\leq&
C\big(\alpha^{-1}\|\om(0)\|_{H^1}+\|\om(0)\|_{L^2}\big)\\
\nonumber\leq& C\alpha^{-1}\big(\|\partial_y\om(0)\|_{L^2}+\alpha\|\om(0)\|_{L^2}\big).
\end{align}
On the other hand, using
$\|\partial_y\psi\|_{L^2}^2
+\alpha^2\|\psi\|_{L^2}^2=\langle \psi,\om\rangle\leq \|\psi\|_{L^2}\|\om\|_{L^2}$,
we get
\begin{align}\label{psiH1t<1}
\|\partial_y\psi(t)\|_{L^2}+\alpha\|\psi(t)\|_{L^2}\leq
C\alpha^{-1}\|\om(t)\|_{L^2}\leq
C\alpha^{-1}\|\om(0)\|_{L^2}.
\end{align}
Then the lemma is a consequence of \eqref{psiH1t>1} and \eqref{psiH1t<1}.\smallskip

It remains to prove \eqref{psi4H1a}. As $u(y)\in C^4([0,1])$ and $u'(y)\ge c_0$, we have $u'''/u',\ u''-\beta\in C^1([0,1])$, and
\begin{align}\label{psi4H1}
\alpha^{-1}\|\partial_y\psi_4\|_{L^2}+\|\psi_4\|_{L^2}\leq& \|\partial_y((u'''/u')\psi)\|_{L^2}+\|\partial_y((u''-\beta)\psi_3)\|_{L^2}\\ \nonumber&+
\alpha\|(u'''/u')\psi\|_{L^2}+\alpha\|(u''-\beta)\psi_3\|_{L^2}\\ \nonumber
\leq&C\big(\|\partial_y\psi\|_{L^2}+\|\partial_y\psi_3\|_{L^2}+\alpha\|\psi\|_{L^2}
+\alpha\|\psi_3\|_{L^2}\big).
\end{align}
To estimate $\psi_3$, we decompose $ \psi_3= \psi_{3,1}+\psi_{3,2}$, where
\beno
(\partial^2_y-\alpha^2)\psi_{3,1}=(\partial^2_y-\alpha^2)\psi_{3},\quad (\partial^2_y-\alpha^2)\psi_{3,2}=0
\eeno
with $\psi_{3,1}=0,\ \psi_{3,2}=\psi_3$ at $y=0,1.$ Recall that
\beno
(\partial^2_y-\alpha^2)\psi_2=-\om'/u',\quad (\partial^2_y-\alpha^2)\psi=-\om.
\eeno
Then we have
\begin{align}\label{psi31}
(\partial^2_y-\alpha^2)\psi_{3,1}=&(\partial^2_y-\alpha^2)\psi_{3}=(\partial^2_y-\alpha^2)(\psi_2-\psi'/u')\\ \nonumber
=&(\partial^2_y-\alpha^2)\psi_2-\partial_y(\partial^2_y-\alpha^2)\psi/u'-2(\psi'(1/u')')'+\psi'(1/u')''\\ \nonumber=&-\om'/u'-
\partial_y(-\om)/u'-2(\psi'(1/u')')'+\psi'(1/u')''\\\nonumber
=&-2(\psi'(1/u')')'+\psi'(1/u')'',
\end{align}
which implies
\begin{align*}
\|\partial_y\psi_{3,1}\|_{L^2}^2
+\alpha^2\|\psi_{3,1}\|_{L^2}^2=&-\langle \psi_{3,1},(\partial^2_y-\alpha^2)\psi_{3,1}\rangle=-\langle \psi_{3,1},(\partial^2_y-\alpha^2)\psi_{3}\rangle\\=&-\langle \psi_{3,1},-2(\psi'(1/u')')'+\psi'(1/u')''\rangle\\
=&-2\langle \partial_y\psi_{3,1},\psi'(1/u')'\rangle-\langle \psi_{3,1},\psi'(1/u')''\rangle\\ \leq&C\|\partial_y\psi_{3,1}\|_{L^2}\|\partial_y\psi\|_{L^2}+C\|\psi_{3,1}\|_{L^2}\|\partial_y\psi\|_{L^2}.
\end{align*}
This shows that 
\begin{align}\label{psi31H1}
&\|\partial_y\psi_{3,1}\|_{L^2}
+\alpha\|\psi_{3,1}\|_{L^2} \leq C\|\partial_y\psi\|_{L^2}.
\end{align}

To estimate $\psi_{3,2}$, we recall that
\beno
(\partial^2_y-\alpha^2)\gamma_j=0,\ \gamma_j(j)=1,\ \gamma_j(1-j)=0\ \text{for}\ j\in\{0,1\},
\eeno
where $\gamma_j$  is defined in (\ref{gamma12-def}). So, $\psi_{3,2}=\psi_{3,2}(t,0)\gamma_0+\psi_{3,2}(t,1)\gamma_1.$ Thanks to $|\gamma_j'(j)|= \alpha\coth\alpha\leq C\alpha$ for $j\in\{0,1\},$ we get
\begin{align*}
&\|\gamma'_j\|_{L^2}^2
+\alpha^2\|\gamma_j\|_{L^2}^2=-\langle \gamma_j,(\partial^2_y-\alpha^2)\gamma_j\rangle+\gamma_j'\gamma_j|_0^1=|\gamma_j'\gamma_j(j)|=|\gamma_j'(j)| \leq C\alpha,
\end{align*}
which gives
\begin{align*}
\|\partial_y\psi_{3,2}\|_{L^2}
+\alpha\|\psi_{3,2}\|_{L^2} &\leq |\psi_{3,2}(t,0)|(\|\gamma'_0\|_{L^2}
+\alpha\|\gamma_0\|_{L^2})+|\psi_{3,2}(t,1)|(\|\gamma'_1\|_{L^2}
+\alpha\|\gamma_1\|_{L^2})\\
&\leq C\alpha^{\frac{1}{2}}\big(|\psi_{3,2}(t,0)|+|\psi_{3,2}(t,1)|\big).
\end{align*}
Thanks to $\psi_{3,2}(t,j)=\psi_{3}(t,j)$, $\psi_{2}(t,j)=0$ for $j\in\{0,1\}$, and $\psi_3=\psi_2-\psi'/u'$, we get
\beno
|\psi_{3,2}(t,j)|=|\psi_{3}(t,j)|=|\psi'(t,j)/{u'}(j)|\leq C|\psi'(t,j)|
\eeno
 and hence,
\begin{align}\label{psi32H1}
\|\partial_y\psi_{3,2}\|_{L^2}
+\alpha\|\psi_{3,2}\|_{L^2}  &\leq C\alpha^{\frac{1}{2}}(|\psi_{3,2}(t,0)|+|\psi_{3,2}(t,1)|)\\ \nonumber&\leq C\alpha^{\frac{1}{2}}(|\psi'(t,0)|+|\psi'(t,1)|).
\end{align}
Now \eqref{psi4H1a} follows from \eqref{psi4H1}, \eqref{psi31H1} and \eqref{psi32H1}.
\end{proof}

Since ${\psi}(t,j)=0$ for $j=0,1$, we have
\begin{align*}
\partial_{t}{\omega}(t,j)+i \alpha u(j){\omega}(t,j)=0,\ \text{for}\ j\in\{0,1\},
\end{align*} and $|{\omega}(t,j)|=|e^{-i \alpha tu(j)}{\omega}(0,j)|\leq \|\om(0)\|_{L^{\infty}}. $
With $\gamma_j$ defined as above, using the fact that
\begin{align*}
&\langle \om,\gamma_1\rangle=-\langle (\partial^2_y-\alpha^2)\psi,\gamma_1\rangle=-\langle \psi,(\partial^2_y-\alpha^2)\gamma_1\rangle-(\psi'\gamma_1-\psi\gamma_1')|_0^1=-\psi'(t,1),
\end{align*}
we infer that for any $t>0$,
\begin{align*}
\alpha t|\psi'(t,1)|=&\alpha t|\langle \om,\gamma_1\rangle|=|\langle \om_1-\om'/u',\gamma_1\rangle|=\left|\langle \om_1,\gamma_1\rangle+\langle \om,(\gamma_1/u')'\rangle-\om\gamma_1/u'|_{y=0}^1\right|\\ \leq &\|\om_1\|_{L^2}\|\gamma_1\|_{L^2}+\|\om\|_{L^2}\|(\gamma_1/u')'\|_{L^2}+|{\omega}(t,1)/u'(1)|\\
\leq& \|\om_1\|_{L^2}\|\gamma_1\|_{L^2}+C\|\om\|_{L^2}(\|\gamma_1\|_{L^2}+\|\gamma_1'\|_{L^2})+C\|\om(0)\|_{L^{\infty}}\\
\leq& C\|\om_1\|_{L^2}\alpha^{-\frac{1}{2}}+C\|\om\|_{L^2}(\alpha^{-\frac{1}{2}}+\alpha^{\frac{1}{2}})+C\|\om(0)\|_{L^{\infty}}\\
\leq& C\alpha^{\frac{1}{2}}(\alpha^{-1}\|\om_1\|_{L^2}+\|\om\|_{L^2})+C\|\om(0)\|_{H^1}^{\f12}\|\om(0)\|_{L^2}^{\f12}\\ \leq&
C\alpha^{\frac{1}{2}}(\alpha^{-1}\|\om(0)\|_{H^1}+\|\om(0)\|_{L^2})\leq C\alpha^{-\frac{1}{2}}(\|\partial_y\om(0)\|_{L^2}+\alpha\|\om(0)\|_{L^2}).
\end{align*}
On the other hand, we have
\begin{align*}
|\psi'(t,1)|=|\langle \om,\gamma_1\rangle|\leq \|\om\|_{L^2}\|\gamma_1\|_{L^2}\leq C\|\om(t)\|_{L^2}\alpha^{-\frac{1}{2}}\leq C\|\om(0)\|_{L^2}\alpha^{-\frac{1}{2}}.
\end{align*}
This shows that
\begin{align}\label{psit1}
&|\psi'(t,1)|\leq C\alpha^{-\frac{3}{2}}(1+t)^{-1}(\|\partial_y\om(0)\|_{L^2}+\alpha\|\om(0)\|_{L^2}).
\end{align}
Similarly, we have
\begin{align}\label{psit0}
&|\psi'(t,0)|=|\langle \om,\gamma_0\rangle|\leq C\alpha^{-\frac{3}{2}}(1+t)^{-1}(\|\partial_y\om(0)\|_{L^2}+\alpha\|\om(0)\|_{L^2}).
\end{align}

The following lemma is devoted to the decay estimate for the second component of the velocity. For this, we introduce the following norms:
\beno
\|\om\|_{-2}^2=\|\psi\|_{L^2}^2,\quad \|\om\|_{-1}^2=\|\partial_y\psi\|_{L^2}^2+\alpha^2\|\psi\|_{L^2}^2,
\quad \|\om\|_{0}^2=\|\om\|_{L^2}^2,
\eeno
where $\psi=-(\partial^2_y-\alpha^2)^{-1}\om$ and
\beno
\|\om\|_{1}^2=\|\partial_y\om\|_{L^2}^2+\alpha^2\|\om\|_{L^2}^2,\quad \|\om\|_{2}^2=\|\partial_y^2\om\|_{L^2}^2+2\alpha^2\|\partial_y\om\|_{L^2}^2+\alpha^4\|\om\|_{L^2}^2.
\eeno
Since $\|\om\|_{0}^2=\|(\partial^2_y-\alpha^2)\psi\|_{L^2}^2=\|\partial_y^2\psi\|_{L^2}^2+2\alpha^2\|\partial_y\psi\|_{L^2}^2
+\alpha^4\|\psi\|_{L^2}^2$, we have
\beno
\alpha^{k-j}\|\om\|_{j}\leq \|\om\|_{k}\quad \text{for every}\  -2\leq j\leq k\leq 2.
\eeno

 We denote by the semigroup $\om(t)=e^{-it\alpha\mathcal{R}'_{\alpha,\beta}}\om_0$ the solution to $\partial_t{\om}+i\alpha\mathcal{R}'_{\alpha,\beta}{\om}=0,\ \om(0)=\om_0$. Then Lemma \ref{lem5.3} and \eqref{psiL2H1} imply that
 \begin{align}\label{H-1H1}
 \alpha^2\|e^{-it\alpha\mathcal{R}'_{\alpha,\beta}}f\|_{-1}\leq C(1+t)^{-1}\|f\|_{1},\quad
 \|e^{-it\alpha\mathcal{R}'_{\alpha,\beta}}f\|_{0}\leq C\|f\|_{0}.
 \end{align}

\begin{lemma}\label{lem5.3-2}
It holds that for any $t>0$,
\beno
\alpha^4\|\psi(t)\|_{L^2}\leq C(1+t)^{-2} (\|\partial_y^2\om(0)\|_{L^2}+\alpha\|\partial_y\om(0)\|_{L^2}+\alpha^2\|\om(0)\|_{L^2}),
\eeno
where the constant $C$ only depends on $\beta$ and $u$.
\end{lemma}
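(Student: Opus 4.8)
The plan is to iterate the vector field method of Lemma \ref{lem5.3} one more time, using $X=(1/u')\partial_y+i\alpha t$ a second time to produce the extra power of $t$. I set $\om_1=X\om$ and $\om_2=X^2\om$, with stream functions $\psi_1=-(\partial_y^2-\alpha^2)^{-1}\om_1$ and $\psi_{(2)}=-(\partial_y^2-\alpha^2)^{-1}\om_2$. Since $X$ commutes with $\partial_t+i\alpha u$, the same computation as in Lemma \ref{lem5.3} gives
\beno
\partial_t\om_2+i\alpha\mathcal{R}'_{\alpha,\beta}\om_2=\psi_5,
\eeno
and the first task is to identify $\psi_5$. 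Expanding $X^2$ and using the identity $(u''-\beta)'/u'=u'''/u'$ (both coefficients are $C^1$ because $u\in C^4$), the terms of $\psi_5$ carrying the top power $t^2$ cancel, while the surviving $t$-dependent part collapses to a resonant multiple of $t\,\psi_4$, where $\psi_4=-i\alpha(u'''/u')\psi+i\alpha(u''-\beta)\psi_3$ is the forcing of the $\om_1$-equation. The $t$-independent part of $\psi_5$ involves only $\psi,\psi',\psi''$ and the second-stream corrections, and is square-integrable in time by the first-order decay \eqref{psiL2H1}--\eqref{psiL2bd} and the boundary decay \eqref{psit1}--\eqref{psit0}.

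The heart of the argument is the uniform-in-time bound $\sup_{t>0}\|\om_2(t)\|_{L^2}\le C\|\om(0)\|_2$. At $t=0$ one has $\om_2(0)=((1/u')\partial_y)^2\om(0)$, so $\|\om_2(0)\|_{L^2}\le C\|\om(0)\|_2$, and applying Proposition \ref{lem5.1} to $\om_2$ would reduce everything to bounding the space-time $H^1$-norm of $\psi_5$. The obstruction is precisely the resonant term: since $\|\psi_4(t)\|$ decays only like $(1+t)^{-1}$, the factor $t$ destroys square-integrability in time, so Proposition \ref{lem5.1} cannot be applied to $\om_2$ naively. I would therefore isolate this term by exploiting that $\om_1$ itself solves $\partial_t\om_1+i\alpha\mathcal{R}'_{\alpha,\beta}\om_1=\psi_4$, i.e. treat $t\,\psi_4$ through the $\om_1$-dynamics rather than as an external source, so that it does not generate secular growth of $\om_2$. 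Once $\om_2$ is controlled in $L^2$, the first-order decay of the intermediate stream,
\beno
\|\psi_1(t)\|_{L^2}\le C\alpha^{-2}(1+t)^{-1}\|\om(0)\|_2,
\eeno
follows from the Biot--Savart relation $\psi_2^{(1)}+i\alpha t\psi_1=\psi_{(2)}$ (the $X$-analogue of $\psi_1=\psi_2+i\alpha t\psi$) together with the gain $\|\psi_2^{(1)}\|_{L^2}\le C\alpha^{-1}\|\om_1\|_{L^2}$ obtained by integrating by parts in the inverse Laplacian.

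Finally I would extract the decay. Writing $\Psi=-(\partial_y^2-\alpha^2)^{-1}\psi$, so that $\langle\om,\Psi\rangle=\|\psi\|_{L^2}^2$ and $\|\Psi\|_{L^2}\le\alpha^{-2}\|\psi\|_{L^2}$, I pair the expansion $-\alpha^2t^2\om=\om_2-\om''/u'^2+\om'u''/u'^3-2i\alpha t\,\om'/u'$ against $\Psi$; after substituting $\om'/u'=\om_1-i\alpha t\om$ one obtains
\beno
\alpha^2t^2\|\psi\|_{L^2}^2=\langle\om_2,\Psi\rangle-2i\alpha t\langle\psi_1,\psi\rangle-\langle\om''/u'^2,\Psi\rangle+\langle\om'u''/u'^3,\Psi\rangle .
\eeno
Each term on the right is bounded by $C(\beta,u)\|\om(0)\|_2\,\|\psi\|_{L^2}$: the first by the uniform $L^2$-bound on $\om_2$, the cross term $2\alpha t|\langle\psi_1,\psi\rangle|\le 2\alpha t\|\psi_1\|_{L^2}\|\psi\|_{L^2}$ by the first-order decay of $\psi_1$ just established, and the last two by integrating by parts twice, the boundary contributions being handled via $|\partial_y\Psi(t,j)|\le C\alpha^{-1/2}\|\psi\|_{L^2}$ and $|\om(t,j)|\le\|\om(0)\|_{L^\infty}$ exactly as in the derivation of \eqref{psit1}. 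Dividing by $\|\psi\|_{L^2}$ yields $\alpha^2t^2\|\psi(t)\|_{L^2}\le C\|\om(0)\|_2$, and combining with the trivial bound $\alpha^4\|\psi(t)\|_{L^2}\le\alpha^2\|\om(t)\|_{L^2}\le C\alpha^2\|\om(0)\|_{L^2}\le C\|\om(0)\|_2$ (valid for bounded $t$ from \eqref{psiL2H1}) gives the claimed $(1+t)^{-2}$ rate. I expect the main obstacle to be the middle step: the resonant $t\,\psi_4$ forcing in the $\om_2$-equation, which blocks a direct energy estimate and whose resolution through the $\om_1$-dynamics is exactly what upgrades the first-order estimate to second order.
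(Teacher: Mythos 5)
Your proposal correctly identifies the structural obstacle, but it does not resolve it, and the route you choose is not the paper's. The paper never introduces $X^2\om$ and never proves (nor needs) a uniform $L^2$ bound on a second vector field iterate. Instead it fixes $T>0$, defines the bootstrap quantity $M(T)=\sup\{\alpha^4(1+t)^2\|e^{-it\alpha\mathcal{R}'_{\alpha,\beta}}f\|_{-2}:0<t<T,\ \|f\|_2\le1\}$, and proves the self-improving inequality $M(T)\le C(1+\ln(M(T)+1))$, which forces $M(T)\le C_1$ uniformly in $T$. The key input is Lemma \ref{lem:om-1}: a Duhamel representation of $\om_1=X\om$ with forcing $\psi_4$, in which the borderline kernel $(t-s)^{-1}s^{-1}$ is split over $(0,\tfrac{t}{M+1})$, $(\tfrac{t}{M+1},\tfrac{Mt}{M+1})$, $(\tfrac{Mt}{M+1},t)$ and the bootstrap hypothesis on $M$ is used near the endpoints, producing exactly the $\ln(M+1)$ loss that the self-improvement then absorbs.

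The genuine gap in your argument is the assertion $\sup_{t>0}\|\om_2(t)\|_{L^2}\le C\|\om(0)\|_2$ for $\om_2=X^2\om$. As you note, the forcing $\psi_5$ for $\om_2$ contains the resonant term $i\alpha t\,\psi_4$, and since $\|\psi_4(t)\|_{H^1}$ decays only like $t^{-1}$, this term is merely bounded in time, so $\int_0^T(\alpha^{-2}\|\partial_y(t\psi_4)\|_{L^2}^2+\|t\psi_4\|_{L^2}^2)\,dt$ grows linearly in $T$ and Proposition \ref{lem5.1} only yields $\|\om_2(T)\|_{L^2}\lesssim\sqrt{T}$. Your proposed remedy — ``treat $t\psi_4$ through the $\om_1$-dynamics'' — is not a construction: the natural substitution $\om_2-i\alpha t\om_1=(1/u')\partial_y\om_1$ trades the term $i\alpha t\psi_4$ for $i\alpha\om_1$, which is again bounded but not square-integrable in the norm Proposition \ref{lem5.1} requires (one only controls $\om_1$ in $L^\infty_tL^2_y$, not in $L^2_tH^1_y$), so the secular growth reappears. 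Faster decay of $\psi_4$ is equivalent to the second-order decay one is trying to prove, so any direct energy or Duhamel estimate at this level is circular; breaking the circularity is precisely the role of the paper's logarithmic bootstrap, and that idea is absent from your proposal. Your final pairing identity against $\Psi=-(\partial_y^2-\alpha^2)^{-1}\psi$ is fine as far as it goes (the boundary terms indeed vanish or are controlled because $\Psi(t,j)=0$), but it rests entirely on the unproven middle step.
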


\begin{proof}
It suffices to show that
\beno
\alpha^4\|e^{-it\alpha\mathcal{R}'_{\alpha,\beta}}f\|_{-2}\leq C(1+t)^{-2}\|f\|_{2}.
\eeno
For $T>0$, we define
\begin{align*}
M=M(T):=\sup\big\{\alpha^4(1+t)^2\|e^{-it\alpha\mathcal{R}'_{\alpha,\beta}}f\|_{-2}: 0<t<T,\ f\in  H^2(0,1),\ \|f\|_2\leq 1\big\}.
\end{align*}

First of all, we get by \eqref{H-1H1} that
\beno
\alpha^4(1+t)^2\|e^{-it\alpha\mathcal{R}'_{\alpha,\beta}}f\|_{-2}\leq\alpha^2(1+t)^2\|e^{-it\alpha\mathcal{R}'_{\alpha,\beta}}f\|_{0}
\leq C\alpha^2(1+t)^2\|f\|_{0}\leq C(1+T)^2\|f\|_{2},
\eeno
which implies that $M(T)\leq C(1+T)^2.$ Now we fix $T>0$ and assume $M=M(T)>1$. We  will show that
\beno
M(T)\leq C\big(\ln(M(T)+1)+1\big)
\eeno
with $C$ independent of $T$ and $\alpha$.

Let us first claim that for $0<t<T$,
\ben\label{om1-est}
\|\om_1(t)\|_{-2}\le C\alpha^{-3}t^{-1}\|\om(0)\|_{2}\big(1+
\ln(M+1)\big),
\een
which will be proved in Lemma \ref{lem:om-1}.

Recall that $\psi_1=-(\partial^2_y-\alpha^2)^{-1}\om_1,\ \psi_3=\psi_2-\psi'/u',\ \psi_1=\psi_2+i\alpha t\psi,\ \psi_3= \psi_{3,1}+\psi_{3,2}$. Then by \eqref{psi31H1}, \eqref{psi32H1}, \eqref{psiH1t>1}, \eqref{psit1} and \eqref{psit0}, we get
\begin{align*}
\alpha t\|\psi\|_{L^2}\leq \|\psi_1\|_{L^2}+\|\psi_2\|_{L^2}\leq& \|\om_1\|_{-2}+\|\psi_3\|_{L^2}+\|\psi'/u'\|_{L^2}\\ \leq &\|\om_1\|_{-2}+\|\psi_{3,1}\|_{L^2}+\|\psi_{3,2}\|_{L^2}+C\|\psi'\|_{L^2}\\ \leq&
\|\om_1\|_{-2}+C\alpha^{-1}\|\psi'\|_{L^2}+C\alpha^{-\frac{1}{2}}(|\psi'(t,0)|+|\psi'(t,1)|)+C\|\psi'\|_{L^2}\\ \leq&
\|\om_1\|_{-2}+C\alpha^{-2}t^{-1}(\|\partial_y\om(0)\|_{L^2}+\alpha\|\om(0)\|_{L^2}).
\end{align*}
This means that
\begin{align*}
&\alpha t\|\psi(t)\|_{L^2}\leq
\|\om_1(t)\|_{-2}+C\alpha^{-2}t^{-1}\|\om(0)\|_{1},
\end{align*}
which along with \eqref{om1-est} gives
\begin{align*}
\alpha t\|\psi(t)\|_{L^2}\leq&
\|\om_1(t)\|_{-2}+C\alpha^{-2}t^{-1}\|\om(0)\|_{1}\\ \leq& C\alpha^{-3}t^{-1}\|\om(0)\|_{2}(1+
\ln(M+1))+C\alpha^{-3}t^{-1}\|\om(0)\|_{2}\\ \leq& C\alpha^{-3}t^{-1}\|\om(0)\|_{2}(1+
\ln(M+1)).
\end{align*}
And by \eqref{psiH1t<1}, we have
\begin{align*}
&\|\psi(t)\|_{L^2}\leq C\alpha^{-2}\|\om(0)\|_{L^2}\leq C\alpha^{-4}\|\om(0)\|_{2}.
\end{align*}
Then we conclude that for $0<t<T$,
\begin{align*}
\|e^{-it\alpha\mathcal{R}'_{\alpha,\beta}}\om(0)\|_{-2}=&\|\om(t)\|_{-2}=\|\psi(t)\|_{L^2}\leq C\alpha^{-4}\|\om(0)\|_{2}\min(t^{-2}(1+
\ln(M+1)),1)\\ \leq& C\alpha^{-4}(1+t)^{-2}\|\om(0)\|_{2}\big(1+
\ln(M+1)\big),
\end{align*}
Here $C$ is a constant independent of $T,\ \alpha$ and $\om(0).$ Thanks to the definition of $M(T)$, we have
\beno
M(T)\leq C\big(1+\ln(M(T)+1)\big).
\eeno
Thus, there exists a constant $C_0>0$ independent of $T$ and $\alpha$ so that if $M(T)>1$, then $M(T)\leq C_0\big(1+
\ln(M(T)+1)\big)$. This implies the existence of a constant $C_1>1$ so that $M(T)<C_1$ for every $T>0$.
Now we have
\begin{align*}
\alpha^4\|\psi(t)\|_{L^2}=&\al^4\|e^{-it\alpha\mathcal{R}'_{\alpha,\beta}}\om(0)\|_{-2}\leq C_1(1+t)^{-2} \|\om(0)\|_{2}\\
\leq& C_1(1+t)^{-2}\big(\|\partial_y^2\om(0)\|_{L^2}+\alpha\|\partial_y\om(0)\|_{L^2}+\alpha^2\|\om(0)\|_{L^2}\big),
\end{align*}
which gives our result.
\end{proof}.

\begin{lemma}\label{lem:om-1}
It holds that for any $0<t<T$,
\beno
\|\om_1(t)\|_{-2}\le C\alpha^{-3}t^{-1}\|\om(0)\|_{2}\big(1+
\ln(M+1)\big),
\eeno
where the constant $C$ is independent of $T$ and $ \al$.
\end{lemma}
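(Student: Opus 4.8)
\textbf{Proof plan for Lemma \ref{lem:om-1}.}

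The goal is to bound $\|\om_1(t)\|_{-2}$, where $\om_1=X\om$ satisfies the forced equation $\partial_t\om_1+i\alpha\mathcal{R}'_{\alpha,\beta}\om_1=\psi_4$ with $\psi_4=-i\alpha(u'''/u')\psi+i\alpha(u''-\beta)\psi_3$. The plan is to use the Duhamel representation for this equation together with the decay bounds on the semigroup $e^{-it\alpha\mathcal{R}'_{\alpha,\beta}}$ that are already available, namely \eqref{H-1H1} and the defining bootstrap bound encoded in $M=M(T)$ (which controls $\alpha^4(1+s)^2\|e^{-is\alpha\mathcal{R}'_{\alpha,\beta}}f\|_{-2}$ in terms of $\|f\|_2$). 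Writing $\om_1(t)=e^{-it\alpha\mathcal{R}'_{\alpha,\beta}}\om_1(0)+\int_0^t e^{-i(t-s)\alpha\mathcal{R}'_{\alpha,\beta}}\psi_4(s)\,ds$, I would estimate the two contributions separately in the $\|\cdot\|_{-2}$ norm.

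First I would handle the homogeneous term. Since $\|\om_1(0)\|_2\le C\|\om(0)\|_2$ (using $u',1/u'\in C^1$ and $\om_1(0)=\om'(0)/u'$ as in the proof of Lemma \ref{lem5.3}), the definition of $M$ gives $\|e^{-it\alpha\mathcal{R}'_{\alpha,\beta}}\om_1(0)\|_{-2}\le C\alpha^{-4}(1+t)^{-2}M\|\om(0)\|_2$, which is comfortably within the claimed bound for $t$ bounded below. For the Duhamel integral, the key is to control $\|\psi_4(s)\|$ in a suitable norm and then propagate it by the semigroup. From the structure of $\psi_4$ and the estimates \eqref{psi31H1}, \eqref{psi32H1} for $\psi_3$ together with the already-proved first-order decay \eqref{psiH1t>1} and the boundary decay \eqref{psit1}, \eqref{psit0}, one sees that $\psi_4$ is essentially $\alpha$ times a first-order quantity in $\psi$, so $\|\psi_4(s)\|_0$ (or a low-order norm of it) decays like $(1+s)^{-1}$ times $\alpha^{-1}\|\om(0)\|_1$-type data. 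I would then split the time integral at, say, $s=t/2$ and use the $(1+(t-s))^{-2}$-type decay from $M$ on the half where $t-s$ is large, and a crude bound on the half where $s$ is large, collecting the logarithm $\ln(M+1)$ from the region where the $(1+\tau)^{-2}$ kernel is integrated against a $\tau^{-1}$ source — this is the standard mechanism by which the $t^{-2}$ rate degrades to $t^{-2}\ln$ in vector-field arguments, and here it is absorbed into the $(1+\ln(M+1))$ factor.

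The main obstacle will be bookkeeping the interplay between the semigroup decay expressed through $M$ (which is itself the quantity being bootstrapped) and the decay of the source $\psi_4$, so that the final estimate is \emph{linear} in $\ln(M+1)$ rather than in $M$ itself. Concretely, the dangerous region is where both $t-s$ and $s$ are of order $t$: there the kernel $\alpha^{-4}(1+t-s)^{-2}M$ meets a source whose low norm decays like $s^{-1}$, and naively integrating $\int^t (1+t-s)^{-2}\,ds$ against $s^{-1}$ would reproduce a factor proportional to $M\ln t$, which is too weak to close the bootstrap. The resolution is to use the \emph{second} semigroup bound in \eqref{H-1H1} (the $\|\cdot\|_0$-boundedness, i.e.\ the order-zero propagation) on the part of $\psi_4$ that is controlled in $\|\cdot\|_1$ without invoking $M$, thereby paying only one power of the $(1+s)^{-1}$ source decay and generating the logarithm from $\int_1^t s^{-1}\,ds$, while reserving the $M$-dependent bound only for the genuinely high-regularity remainder where the time kernel is integrable. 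Carefully separating $\psi_4$ into a piece estimated by the $M$-free bound \eqref{H-1H1} and a piece where the extra smoothing from $\psi_3$ keeps the $M$-dependence logarithmic is the crux of the argument.
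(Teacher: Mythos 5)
Your skeleton (Duhamel for $\partial_t\om_1+i\alpha\mathcal{R}'_{\alpha,\beta}\om_1=\psi_4$, semigroup decay expressed both through $M$ and through \eqref{H-1H1}, a time-splitting, and the correct diagnosis that the whole game is keeping the $M$-dependence logarithmic) matches the paper. But two of your concrete mechanisms would fail. First, the homogeneous term: the bound $\|e^{-it\alpha\mathcal{R}'_{\alpha,\beta}}\om_1(0)\|_{-2}\le CM\alpha^{-4}(1+t)^{-2}\|\om(0)\|_2$ is \emph{not} within the target $C\alpha^{-3}t^{-1}\|\om(0)\|_2(1+\ln(M+1))$: for $1\le t\ll M/\ln(M+1)$ it exceeds it by a factor of order $M/(t\ln M)$. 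The paper instead uses the $M$-free first-order decay in \eqref{H-1H1} together with $\|\cdot\|_{-2}\le\alpha^{-1}\|\cdot\|_{-1}$ to get $C\alpha^{-3}t^{-1}\|\om_1(0)\|_1\le C\alpha^{-3}t^{-1}\|\om(0)\|_2$, with no $M$ at all; you must do the same. Second, and more seriously, your proposed origin of the logarithm is wrong. The final bound must carry $\ln(M+1)$ and not $\ln t$: a $\ln t$ would turn the bootstrap inequality of Lemma \ref{lem5.3-2} into $M(T)\le C(1+\ln(M(T)+1)+\ln(1+T))$, which no longer closes to a $T$-independent constant. Your mechanism --- order-zero propagation of a source decaying like $s^{-1}$, ``generating the logarithm from $\int_1^ts^{-1}ds$'' --- produces exactly the forbidden $\ln t$, and moreover discards all $(t-s)$-decay, so it cannot even recover the overall prefactor $t^{-1}$.

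The actual argument splits the Duhamel integral at the $M$-\emph{dependent} points $t/(M+1)$ and $Mt/(M+1)$. On the middle window one applies the $M$-free bound \eqref{psi4-2}, namely $\|e^{-i(t-s)\alpha\mathcal{R}'_{\alpha,\beta}}\psi_4(s)\|_{-2}\le C\alpha^{-3}(t-s)^{-1}s^{-1}\|\om(0)\|_1$, whose integral over that window is $\le C\alpha^{-3}t^{-1}\ln(M+1)\|\om(0)\|_1$ --- this is where the logarithm really comes from. Near the endpoints, where $(t-s)^{-1}s^{-1}$ is not integrable, the paper further decomposes $\psi_4=\psi_{4,1}+\psi_{4,2}+\psi_{4,3}+\psi_{4,4}$: the pieces controlled in $\|\cdot\|_2$ are propagated with the $M$-dependent bound \eqref{MT}, and the dangerous factor $M$ is cancelled precisely because each endpoint window has length $t/(M+1)$; the remaining low-regularity pieces are controlled through $\|\psi(s)\|_{L^2}$, for which the self-improved quadratic decay \eqref{psiL2} (again a consequence of the definition of $M$) yields $\int_0^{\infty}M(1+s)^{-1}(1+M+s)^{-1}ds=\ln(M+1)$. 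None of these three devices --- $M$-dependent splitting points, the four-fold decomposition of $\psi_4$, and the improved decay of $\|\psi(s)\|_{L^2}$ --- appears in your plan, and without them the estimate closes with a factor $M$ or $\ln t$ rather than $\ln(M+1)$.
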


\begin{proof}

Recall that $\partial_t{\om_1}+i\alpha\mathcal{R}'_{\alpha,\beta}{\om_1}=\psi_4$. By Duhamel's principle, we get
\begin{align*}
&\om_1(t)=e^{-it\alpha\mathcal{R}'_{\alpha,\beta}}\om_1(0)+\int_0^te^{-i(t-s)\alpha\mathcal{R}'_{\alpha,\beta}}\psi_4(s)ds,
\end{align*}
from which, we infer that 
\begin{align}\label{om1}
&\|\om_1(t)\|_{-2}\leq\|e^{-it\alpha\mathcal{R}'_{\alpha,\beta}}\om_1(0)\|_{-2}+
\int_0^t\|e^{-i(t-s)\alpha\mathcal{R}'_{\alpha,\beta}}\psi_4(s)\|_{-2}ds.
\end{align}

Thanks to $\om_1(0,y)=\partial_y\om(0,y)/u'(y)$, we get
\begin{align*}
\|\om_1(0)\|_{1}=\|\partial_y\om(0)/u'\|_{1}\leq C\|\partial_y\om(0)\|_{1}\leq C\|\om(0)\|_{2},
\end{align*}
which along with \eqref{H-1H1} gives
\begin{align}\label{om1-2}
\|e^{-it\alpha\mathcal{R}'_{\alpha,\beta}}\om_1(0)\|_{-2}&\leq \alpha^{-1}\|e^{-it\alpha\mathcal{R}'_{\alpha,\beta}}\om_1(0)\|_{-1}\leq C\alpha^{-3}t^{-1}\|\om_1(0)\|_{1}\\ \nonumber&\leq C\alpha^{-3}t^{-1}
\|\om(0)\|_{2}.
\end{align}

By \eqref{psi4H1a}, \eqref{psiH1t>1}, \eqref{psit1} and \eqref{psit0}, we have
\begin{align*}
\alpha^{-1}\|\psi_4(t)\|_{1}\leq&
C(\|\partial_y\psi(t)\|_{L^2}+\alpha\|\psi(t)\|_{L^2})+C\alpha^{\frac{1}{2}}(|\psi'(t,0)|+|\psi'(t,1)|)\\
\leq &C(\alpha^{-2}+\alpha^{-1})t^{-1}(\|\partial_y\om(0)\|_{L^2}+\alpha\|\om(0)\|_{L^2})\leq C\alpha^{-1}t^{-1}\|\om(0)\|_{1}.
\end{align*}
from which and \eqref{H-1H1}, we infer that for $t>s>0$,
\begin{align}\label{psi4-2}
\|e^{-i(t-s)\alpha\mathcal{R}'_{\alpha,\beta}}\psi_4(s)\|_{-2}&\leq \alpha^{-1}\|e^{-i(t-s)\alpha\mathcal{R}'_{\alpha,\beta}}\psi_4(s)\|_{-1}\leq C\alpha^{-3}(t-s)^{-1}\|\psi_4(s)\|_{1}\\ \nonumber&\leq C\alpha^{-3}(t-s)^{-1}
s^{-1}\|\om(0)\|_{1}.
\end{align}

As $(t-s)^{-1}s^{-1} $ is not integrable, we have to improve the estimate for $s$ close to $t$ or $0$.
To this end, we decompose $\psi_4=-i\alpha(u'''/u')\psi+i\alpha(u''-\beta)\psi_3= \psi_{4,1}+\psi_{4,2}+\psi_{4,3}+\psi_{4,4}$,
where
\begin{align*}
&\psi_{4,1}(t,y)=-i\alpha\int_0^y(u'''/u')'(z)\psi(t,z)dz,\\
&\psi_{4,2}(t,y)=-i\alpha\int_0^y(u'''/u')(z)\partial_y\psi(t,z)dz,\\ &\psi_{4,3}=i\alpha(u''-\beta)\psi_{3,1},\ \psi_{4,4}=i\alpha(u''-\beta)\psi_{3,2}.
\end{align*}
Then we have
\begin{align*}
&\psi_{4,1}+\psi_{4,2}=-i\alpha(u'''/u')\psi,\ \psi_{4,3}+\psi_{4,4}=i\alpha(u''-\beta)\psi_{3}.
\end{align*}

Thanks to the definition of $M=M(T)$, we deduce that for any $f\in H^2(0,1)$ and $0<s<T$,
\begin{align*}
\|e^{-is\alpha\mathcal{R}'_{\alpha,\beta}}f\|_{-2}\leq M\alpha^{-4}(1+s)^{-2}\|f\|_{2},
\end{align*}
and by \eqref{H-1H1}, we have
\begin{align*}
\|e^{-is\alpha\mathcal{R}'_{\alpha,\beta}}f\|_{-2}\leq& \alpha^{-1}\|e^{-is\alpha\mathcal{R}'_{\alpha,\beta}}f\|_{-1}\leq C\alpha^{-3}(1+s)^{-1}\|f\|_{1}\\
\leq& C\alpha^{-4}(1+s)^{-1}\|f\|_{2}.
\end{align*}
Therefore,
\begin{align}\label{MT}
\|e^{-is\alpha\mathcal{R}'_{\alpha,\beta}}f\|_{-2}\leq& \alpha^{-4}\min(M(1+s)^{-2} ,C(1+s)^{-1})\|f\|_{2}\\ \nonumber
\leq& CM\alpha^{-4}(1+s)^{-1}(1+M+s)^{-1}\|f\|_{2},
\end{align}
which implies
\begin{align}\label{psiL2}
\|\psi(s)\|_{L^2}=\|\om(s)\|_{-2}=\|e^{-is\alpha\mathcal{R}'_{\alpha,\beta}}\om(0)\|_{-2}\leq \frac{CM\|\om(0)\|_{2}}{\alpha^{4}(1+s)(1+M+s)}.
\end{align}

Let $\psi_{3,3}=(\partial^2_y-\alpha^2)^{-1}\psi_{3,1}$.  Using \eqref{psi31} and  $\psi_{3,1}=0$ at $y=0,1,$ we get
\begin{align*}
\|\psi_{3,1}\|_{L^2}^2=&\langle \psi_{3,1},(\partial^2_y-\alpha^2)\psi_{3,3}\rangle=\langle (\partial^2_y-\alpha^2)\psi_{3,1},\psi_{3,3}\rangle\\=&\langle -2(\psi'(1/u')')'+\psi'(1/u')'',\psi_{3,3}\rangle=-2\langle \psi,(\psi_{3,3}'(1/u')')'\rangle-\langle \psi,(\psi_{3,3}(1/u')'')'\rangle\\ =&-\langle \psi,2\psi_{3,3}''(1/u')'+3\psi_{3,3}'(1/u')''+\psi_{3,3}(1/u')'''\rangle\\ \leq&C\|\psi\|_{L^2}(\|\partial_y^2\psi_{3,3}\|_{L^2}+
\|\partial_y\psi_{3,3}\|_{L^2}+\|\psi_{3,3}\|_{L^2})\leq C\|\psi\|_{L^2}\|\psi_{3,1}\|_{L^2},
\end{align*}
here we used $\|\psi_{3,1}\|_{L^2}^2=\|\partial_y^2\psi_{3,3}\|_{L^2}^2+
2\alpha^2\|\partial_y\psi_{3,3}\|_{L^2}^2+\alpha^4\|\psi_{3,3}\|_{L^2}^2. $ This gives $\|\psi_{3,1}\|_{L^2}\leq C\|\psi\|_{L^2} $, and then
\begin{align}\label{psi4123}
\|(\psi_{4,1}+\psi_{4,2}+\psi_{4,3})(s)\|_0=&\|-i\alpha(u'''/u')\psi(s)+i\alpha(u''-\beta)\psi_{3,1}(s)\|_{0}\\ \nonumber\leq & C\alpha\|\psi(s)\|_{L^2}+C\alpha\|\psi_{3,1}(s)\|_{L^2}\\\nonumber
\leq& C\alpha\|\psi(s)\|_{L^2}\leq CM\alpha^{-3}s^{-2}\|\om(0)\|_{2}.
\end{align}

Using $\|\partial^2_y\psi_{3,2}\|_{L^2}=\alpha^2\|\psi_{3,2}\|_{L^2}$, \eqref{psi32H1}, \eqref{psit1} and \eqref{psit0}, we obtain
\begin{align*}
\|\psi_{3,2}\|_2^2=&\|\partial_y^2\psi_{3,2}\|_{L^2}^2+
2\alpha^2\|\partial_y\psi_{3,2}\|_{L^2}^2+\alpha^4\|\psi_{3,2}\|_{L^2}^2\\
\leq& C\alpha^2(\|\partial_y\psi_{3,2}\|_{L^2}+\alpha\|\psi_{3,2}\|_{L^2})^2\leq C\alpha^3(|\psi'(t,0)|+|\psi'(t,1)|)^2\\
\leq& C(1+t)^{-2}(\|\partial_y\om(0)\|_{L^2}+\alpha\|\om(0)\|_{L^2})^2\leq C(1+t)^{-2}\|\om(0)\|_{1}^2,
\end{align*}
which gives
\begin{align*}
\|\psi_{3,2}(t)\|_2\leq C(1+t)^{-1}\|\om(0)\|_{1},
\end{align*}
and
\begin{align}\label{psi44}
\|\psi_{4,4}(s)\|_2=\|i\alpha(u''-\beta)\psi_{3,2}(s)\|_2\leq C\alpha\|\psi_{3,2}(s)\|_2\leq C\alpha(1+s)^{-1}\|\om(0)\|_{1}.
\end{align}
Then by \eqref{H-1H1}, \eqref{psi4123}, \eqref{psi44} and \eqref{MT}, we infer that for $0<s<t<T$,
\begin{align}\nonumber
\|e^{-i(t-s)\alpha\mathcal{R}'_{\alpha,\beta}}\psi_4(s)\|_{-2}\leq& \|e^{-i(t-s)\alpha\mathcal{R}'_{\alpha,\beta}}(\psi_{4,1}+\psi_{4,2}+\psi_{4,3})(s)\|_{-2}+
\|e^{-i(t-s)\alpha\mathcal{R}'_{\alpha,\beta}}\psi_{4,4}(s)\|_{-2}\\ \nonumber\leq & \alpha^{-2}\|e^{-i(t-s)\alpha\mathcal{R}'_{\alpha,\beta}}(\psi_{4,1}+\psi_{4,2}+\psi_{4,3})(s)\|_{0}\\ \nonumber&+
CM\alpha^{-4}(1+t-s)^{-1}(1+M+t-s)^{-1}\|\psi_{4,4}(s)\|_{2}\\ \label{psi4st}\leq& C\alpha^{-2}\|(\psi_{4,1}+\psi_{4,2}+\psi_{4,3})(s)\|_{0}+
\frac{CM\|\om(0)\|_{1}}{\alpha^{3}(1+t-s)(1+M+t-s)s}\\ \nonumber\leq& \frac{CM\|\om(0)\|_{2}}{\alpha^{5}s^{2}}+
\frac{CM\|\om(0)\|_{1}}{\alpha^{3}(1+t-s)(1+M+t-s)s}.
\end{align}

Thanks to $\psi_{3,1}=0,\ \psi=0$ at $y=0,1,$ we get by  \eqref{psi31} and \eqref{psiL2H1} that
\begin{align*}
\|\psi_{3,1}\|_2=&\|(\partial^2_y-\alpha^2)\psi_{3,1}\|_{L^2}=
\|-2(\psi'(1/u')')'+\psi'(1/u')''\|_{L^2}\\
\leq & C(\|\psi''\|_{L^2}+\|\psi'\|_{L^2})\leq C\|(\partial^2_y-\alpha^2)\psi\|_{L^2}= C\|\om\|_{0}\leq C\|\om(0)\|_{0},
\end{align*}
which gives
\begin{align}\label{psi43}
&\|\psi_{4,3}(s)\|_2=\|i\alpha(u''-\beta)\psi_{3,1}(s)\|_2\leq C\alpha\|\psi_{3,1}(s)\|_2\leq C\alpha\|\om(0)\|_{0}.
\end{align}
Since $\partial_y\psi_{4,1}=-i\alpha(u'''/u')'\psi$ and $\psi_{4,1}(t,0)=0$, we get
\begin{align*}
\|\psi_{4,1}\|_{L^2}\leq\|\partial_y\psi_{4,1}\|_{L^2}=\|-i\alpha(u'''/u')'\psi\|_{L^2}\leq C\alpha\|\psi\|_{L^2}.
\end{align*}
Thanks to $\partial_y\psi_{4,2}=-i\alpha(u'''/u')\psi'$ and $\psi_{4,1}+\psi_{4,2}=-i\alpha(u'''/u')\psi$, we get
\begin{align*}
&\|\psi_{4,2}\|_{L^2}\leq\|\psi_{4,1}\|_{L^2}+\|\psi_{4,1}+\psi_{4,2}\|_{L^2}\leq C\alpha\|\psi\|_{L^2}+\|\alpha(u'''/u')\psi\|_{L^2}\leq C\alpha\|\psi\|_{L^2},\\&\alpha\|\partial_y\psi_{4,2}\|_{L^2}+\|\partial_y^2\psi_{4,2}\|_{L^2}\leq C\|\partial_y\psi_{4,2}\|_{1}= C\|\alpha(u'''/u')\psi'\|_{1}\leq C\alpha\|\psi'\|_{1}\leq C\alpha\|\psi\|_{2}.
\end{align*}
Summing up, we conclude that
\begin{align*}
&\|\psi_{4,1}\|_{1}\leq C\alpha^2\|\psi\|_{L^2},\ \|\psi_{4,2}\|_{2}\leq C\alpha^3\|\psi\|_{L^2}+C\alpha\|\psi\|_{2}\leq C\alpha\|\psi\|_{2},
\end{align*}
which together with \eqref{psiL2} and \eqref{psiL2H1} gives
\begin{align}\label{psi41}
&\|\psi_{4,1}(s)\|_1\leq C\alpha^2\|\psi(s)\|_{L^2}\leq CM\alpha^{-2}(1+s)^{-1}(1+M+s)^{-1}\|\om(0)\|_{2},\\ \label{psi42}&\|\psi_{4,2}(s)\|_{2}\leq C\alpha\|\psi(s)\|_{2}=C\alpha\|\om(s)\|_{0}\leq C\alpha\|\om(0)\|_{0}.
\end{align}

It follows from \eqref{H-1H1}, \eqref{psi44}, \eqref{psi43}, \eqref{psi41} and \eqref{psi42} that for $0<s<t<T$,
\begin{align}\label{psi4s0}
\|e^{-i(t-s)\alpha\mathcal{R}'_{\alpha,\beta}}\psi_4(s)\|_{-2}\leq& \sum_{j=1}^4\|e^{-i(t-s)\alpha\mathcal{R}'_{\alpha,\beta}}\psi_{4,j}(s)\|_{-2}\\ \nonumber
\leq & \alpha^{-1}\|e^{-i(t-s)\alpha\mathcal{R}'_{\alpha,\beta}}\psi_{4,1}(s)\|_{-1}\\\nonumber
&+M\alpha^{-4}(t-s)^{-2}(\|\psi_{4,2}(s)\|_{2}+\|\psi_{4,3}(s)\|_{2}+\|\psi_{4,4}(s)\|_{2})\\ \nonumber\leq& C\alpha^{-3}(t-s)^{-1}\|\psi_{4,1}(s)\|_{1}+
CM\alpha^{-4}(t-s)^{-2}\alpha\|\om(0)\|_{1}\\ \nonumber\leq& \frac{CM\|\om(0)\|_{2}}{\alpha^{5}(t-s)(1+s)(1+M+s)}+
\frac{CM\|\om(0)\|_{1}}{\alpha^{3}(t-s)^{2}}.
\end{align}

Then we infer from \eqref{om1}, \eqref{om1-2}, \eqref{psi4-2}, \eqref{psi4st} and \eqref{psi4s0} that
\begin{align*}
\|\om_1(t)\|_{-2}\leq&\|e^{-it\alpha\mathcal{R}'_{\alpha,\beta}}\om_1(0)\|_{-2}+
\int_0^{\frac{t}{M+1}}\|e^{-i(t-s)\alpha\mathcal{R}'_{\alpha,\beta}}\psi_4(s)\|_{-2}ds\\ &+\int_{\frac{t}{M+1}}^{\frac{Mt}{M+1}}\|e^{-i(t-s)\alpha\mathcal{R}'_{\alpha,\beta}}\psi_4(s)\|_{-2}ds
+\int_{\frac{Mt}{M+1}}^{t}\|e^{-i(t-s)\alpha\mathcal{R}'_{\alpha,\beta}}\psi_4(s)\|_{-2}ds\\
\leq& C\alpha^{-3}t^{-1}\|\om(0)\|_{2}+
\int_0^{\frac{t}{M+1}}\left(\frac{CM\|\om(0)\|_{2}}{\alpha^{5}(t-s)(1+s)(1+M+s)}+
\frac{CM\|\om(0)\|_{1}}{\alpha^{3}(t-s)^{2}}\right)ds\\&
+\int_{\frac{t}{M+1}}^{\frac{Mt}{M+1}}C\alpha^{-3}(t-s)^{-1}
s^{-1}\|\om(0)\|_{1}ds
\\&+\int_{\frac{Mt}{M+1}}^{t}\left(\frac{CM\|\om(0)\|_{2}}{\alpha^{5}s^{2}}+
\frac{CM\|\om(0)\|_{1}}{\alpha^{3}(1+t-s)(1+M+t-s)s}\right)ds
\\ \leq &C\alpha^{-3}t^{-1}\|\om(0)\|_{2}+
\int_0^{\frac{t}{M+1}}\left(\frac{CM\|\om(0)\|_{2}}{\alpha^{5}t(1+s)(1+M+s)}+
\frac{CM\|\om(0)\|_{1}}{\alpha^{3}t^{2}}\right)ds
\\&+\int_{\frac{t}{M+1}}^{\frac{t}{2}}C\alpha^{-3}t^{-1}
s^{-1}\|\om(0)\|_{1}ds+\int_{\frac{t}{2}}^{\frac{Mt}{M+1}}C\alpha^{-3}(t-s)^{-1}
t^{-1}\|\om(0)\|_{1}ds
\\&+\int_{\frac{Mt}{M+1}}^{t}\left(\frac{CM\|\om(0)\|_{2}}{\alpha^{5}t^{2}}+
\frac{CM\|\om(0)\|_{1}}{\alpha^{3}(1+t-s)(1+M+t-s)t}\right)ds
\\ \leq& C\alpha^{-3}t^{-1}\|\om(0)\|_{2}+
C\alpha^{-5}t^{-1}\|\om(0)\|_{2}\ln(M+1)+
\frac{CM\|\om(0)\|_{1}}{\alpha^{3}t^{2}}\frac{t}{M+1}
\\&+2C\alpha^{-3}t^{-1}
\|\om(0)\|_{1}\ln\frac{M+1}{2}
+\frac{CM\|\om(0)\|_{2}}{\alpha^{5}t^{2}}\frac{t}{M+1}+
\frac{C\|\om(0)\|_{1}\ln(M+1)}{\alpha^{3}t}
\\ \leq& C\alpha^{-3}t^{-1}\|\om(0)\|_{2}(1+
\ln(M+1)).
\end{align*}
Here we used the facts that $\|\om(0)\|_{1}\leq C\alpha^{-1}\|\om(0)\|_{2}\leq C\|\om(0)\|_{2} $ and
\begin{align*}
\int_0^{+\infty}\frac{M}{(1+s)(1+M+s)}ds=\ln\frac{1+s}{1+M+s}\Big|_0^{+\infty}=\ln(M+1).
\end{align*}
This completes the proof of the lemma.
\end{proof}

\subsection{Proof of Theorem \ref{thm:monotone}}
Here we only need the following slightly weak results in Lemmas \ref{lem5.3} and \ref{lem5.3-2}{(the case $\alpha<0$ or $t<0$ can be proved by taking conjugation)}:
\begin{align}
\label{v1}&|\alpha|(\|\partial_y\psi(t)\|_{L^2}+|\alpha|\|\psi(t)\|_{L^2})\leq C\langle t\rangle^{-1} \|\om(0)\|_{H^1},\ |\alpha|^2\|\psi(t)\|_{L^2}\leq C\langle t\rangle^{-2} \|\om(0)\|_{H^2}.
\end{align}

Thanks to $\vec{v}=\nabla^{\perp}\psi=(\psi_{y},-\psi_{x}),$ we get by \eqref{v1} that 
\begin{align*}
\|\vec{v}(t)\|_{L^2_{x,y}}^2&=C\sum_{\al\neq0}(\al^2\|\widehat{\psi}(t,\al,\cdot)\|_{L^2_y}^2
+\|\partial_y\widehat{\psi}(t,\al,\cdot)\|_{L^2_y}^2)\\
&\leq C\sum_{\al\neq0}|\al|^{-2}\langle t\rangle^{-2}\|\widehat{\om}_0(\al,\cdot)\|_{H^1_y}^2\leq C\langle t\rangle^{-2}\|{\om}_0\|_{H_x^{-1}H^1_y}^2,
\end{align*}
and
\begin{align*}
\|v_2(t)\|_{L_{x,y}^2}^2&=C\sum_{\al\neq0}\al^2\|\widehat{\psi}(t,\al,\cdot)\|_{L^2_y}^2\leq C\sum_{\al\neq0}\frac{\|\widehat{\om}_0(\al,\cdot)\|_{H^2_y}^2}{|\al|^{2}\langle t\rangle^{4}}\leq C\frac{\|{\om}_0\|_{H_x^{-1}H^2_y}^2}{\langle t\rangle^{4}}.
\end{align*}
This shows that
\beno
\|\vec{v}(t)\|_{L^2_{x,y}}\le C\langle t\rangle^{-1}\|\om_0\|_{H^{-1}_xH^1_y},\quad  \ \|v_2(t)\|_{L^2_{x,y}}\le  C\langle t\rangle^{-2}\|\om_0\|_{H^{-1}_xH^2_y}.
\eeno

The proof of the scattering part is the same as the case of $ \beta=0$ in Section 10.2 of \cite{WZZ1}. Here we omit the details.

\section{The limiting absorption principle}

\subsection{Compactness results for Rayleigh-Kuo equation}

The limiting absorption principle is based on the contradiction argument, blow-up analysis and compactness.
To this end, we first study the compactness of the solution sequence of the Rayleigh-Kuo equation.
In this subsection, we always assume that the flow $u(y)$ satisfies \textbf{(H1)}, $\al>0$ and $\beta\in \mathbb{R}$.
We denote by $c^i=\textrm{Im}(c)$ and $c^r=\textrm{Re}(c)$ for $c\in\mathbb{C}$ in the sequel.\smallskip

The following two lemmas deal with the compactness in the domain without critical points.

\begin{lemma}\label{noncritical point}
Let $c\in \text{Ran }(u)$, $[a,b]\cap u^{-1}\{c\}=\{y_0\}$ and $u'(y_0)u'(y) >0$ on $[a,b]$.
Assume that $\omega_n, \phi_n\in H^1(a,b), u_n\in H^3(a,b)$ and  $c_n\in \mathbb{C}$  such  that  $\omega_n\to\omega$, $\phi_n\rightharpoonup\phi$ in  $ H^1(a,b)$,
$u_n\to u$ in  $ H^3(a,b)$, $c_n^i>0$, $c_n\to c$ and
\begin{align*}
(u_n-c_n)(\phi_n''-\alpha^2\phi_n)-(u_n''-\beta)\phi_n=\omega_n
\end{align*}
on $[a,b]$. Then $\phi_n\to\phi$ in $H^1(a,b)$.
\end{lemma}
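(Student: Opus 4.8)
The plan is to upgrade the weak convergence $\phi_n\rightharpoonup\phi$ to strong $H^1$ convergence by proving $\|\phi_n'\|_{L^2(a,b)}\to\|\phi'\|_{L^2(a,b)}$; combined with weak convergence of $\phi_n'$ in $L^2$, norm convergence forces $\phi_n'\to\phi'$ in $L^2$, hence $\phi_n\to\phi$ in $H^1$. Since the whole difficulty sits at the single critical layer $y_0$, where $u(y_0)=c$ and, because $u'(y_0)\neq0$, the coefficient $u-c$ has a simple zero, I will split $[a,b]$ into the region away from $y_0$ and a shrinking neighborhood of $y_0$, and treat these by completely different mechanisms.

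First, by Rellich's theorem the weak $H^1$ bound gives $\phi_n\to\phi$ uniformly on $[a,b]$ and strongly in $L^2$. Away from $y_0$, say on $\{|y-y_0|\ge\delta\}$, one has $|u_n-c_n|\ge\kappa(\delta)>0$ uniformly in large $n$; writing the equation as $\phi_n''=\alpha^2\phi_n+\frac{(u_n''-\beta)\phi_n+\omega_n}{u_n-c_n}$ and using $u_n\to u$ in $H^3$, $\omega_n\to\omega$ in $H^1$ and the uniform $H^1$ bound on $\phi_n$, the right-hand side is bounded in $H^1$ there (in one dimension $H^1\hookrightarrow L^\infty$, so the products stay in $H^1$). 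Hence $\phi_n$ is bounded in $H^3_{\mathrm{loc}}$ off $y_0$, and Rellich again yields $\phi_n\to\phi$ strongly in $H^1$, indeed in $C^1$, on each $\{|y-y_0|\ge\delta\}$. In particular $\phi_n'(a)\to\phi'(a)$, $\phi_n'(b)\to\phi'(b)$, and $\int_{|y-y_0|\ge\delta}|\phi_n'-\phi'|^2\to0$ for every fixed $\delta>0$.

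The heart of the proof is a uniform non-concentration estimate near $y_0$: I will show $\int_{|y-y_0|<\delta}|\phi_n'|^2\le\eta(\delta)$ with $\eta(\delta)\to0$ as $\delta\to0$, uniformly for $n$ large. Set $M_n=(u_n''-\beta)\phi_n+\omega_n$, which is bounded in $H^1$, hence uniformly Hölder-$\tfrac12$, and let $\hat y_n\to y_0$ solve $u_n(\hat y_n)=c_n^r$. Integrating $\phi_n''=\alpha^2\phi_n+M_n/(u_n-c_n)$ from a fixed base point off $y_0$ and splitting $M_n=M_n(\hat y_n)+(M_n-M_n(\hat y_n))$, the difference term is uniformly bounded since its integrand is $O(|s-\hat y_n|^{-1/2})$ (Hölder numerator against the simple zero $|u_n-c_n|\gtrsim|s-\hat y_n|$), while the constant term produces, via $\frac{1}{u_n-c_n}=\frac{d}{ds}\big[\frac{1}{u_n'}\log(u_n-c_n)\big]+\frac{u_n''}{(u_n')^2}\log(u_n-c_n)$ with the last piece integrable, the explicit singular profile $\frac{M_n(\hat y_n)}{u_n'}\log(u_n-c_n)$ of uniformly bounded amplitude. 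Thus $|\phi_n'(y)|\le C\big(1+|\log(|y-\hat y_n|+c_n^i)|\big)$ uniformly in $n$, and integrating $\log^2$ over $|y-y_0|<\delta$ yields $\eta(\delta)=C\delta(1+\log^2\delta)\to0$. The identical bound for $\phi$ shows $\int_{|y-y_0|<\delta}|\phi'|^2\to0$ as $\delta\to0$.

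Finally I combine the two regions: for fixed $\delta$, $\|\phi_n'-\phi'\|_{L^2}^2\le\int_{|y-y_0|\ge\delta}|\phi_n'-\phi'|^2+2\int_{|y-y_0|<\delta}|\phi_n'|^2+2\int_{|y-y_0|<\delta}|\phi'|^2$, and letting first $n\to\infty$ and then $\delta\to0$ drives all three terms to zero, giving $\phi_n'\to\phi'$ in $L^2$ and hence $\phi_n\to\phi$ in $H^1$ (the limit being $\phi$, the full sequence converges). I expect the main obstacle to be precisely the uniform non-concentration step, namely showing that the logarithmic singularity of $\phi_n'$ at the critical layer has amplitude bounded independently of $n$ and of $c_n^i\to0^+$; this is exactly where the simple zero $u'(y_0)\neq0$ and the uniform Hölder control of $M_n$ provided by the $H^1$ bound are indispensable.
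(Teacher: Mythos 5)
Your proof is correct and follows essentially the same strategy as the paper's (which is only sketched by reference to Lemma 6.2 of \cite{WZZ2} and is carried out in detail for the degenerate case in Lemma \ref{critical-point-beta-u2dao=0}): strong $C^1$ convergence away from the critical layer by elliptic regularity, plus a uniform non-concentration bound near $y_0$ obtained by isolating a logarithmic singularity of $\phi_n'$ with amplitude controlled by $(u_n''-\beta)\phi_n+\omega_n$ evaluated at the critical point. Your explicit splitting $M_n=M_n(\hat y_n)+(M_n-M_n(\hat y_n))$ is a cosmetic variant of the paper's device of showing $\phi_n'-g_n\ln(u_n-c_n)$ is uniformly bounded with $g_n=((u_n''-\beta)\phi_n+\omega_n)/u_n'$.
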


\begin{lemma}\label{noncritical-point-formula}
Under the assumption of Lemma \ref{noncritical point}, we  have for any $\varphi\in H_0^1(a,b)$,
$$\int_{a}^{b}(\phi'\varphi'+\alpha^2\phi\varphi)dy+p.v.\int_{a}^{b}{((u''-\beta)\phi+\omega)
\varphi\over u-c}dy+i\pi{((u''-\beta)\phi+\omega)\varphi(y_0)\over |u'(y_0)|}=0.$$
\end{lemma}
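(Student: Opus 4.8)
The plan is to pass to the limit in the weak formulation of the Rayleigh--Kuo equation. Since $c_n^i>0$, the coefficient $u_n-c_n$ never vanishes, so I may divide the equation by it; the right-hand side $g_n/(u_n-c_n)$ then lies in $L^2(a,b)$, which forces $\phi_n\in H^2(a,b)$ and legitimizes integration by parts. Testing against $\varphi\in H_0^1(a,b)$ and using that the boundary terms vanish ($\varphi(a)=\varphi(b)=0$), I obtain for each $n$
\[
\int_a^b(\phi_n'\varphi'+\alpha^2\phi_n\varphi)\,dy+\int_a^b\frac{((u_n''-\beta)\phi_n+\omega_n)\varphi}{u_n-c_n}\,dy=0 .
\]
The whole proof reduces to computing the limit of each of these two terms as $n\to\infty$.

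The first (elliptic) term is harmless. By Lemma \ref{noncritical point} we have $\phi_n\to\phi$ in $H^1(a,b)$, hence $\phi_n'\to\phi'$ and $\phi_n\to\phi$ in $L^2(a,b)$, so the first integral converges to $\int_a^b(\phi'\varphi'+\alpha^2\phi\varphi)\,dy$. I would also record here that the numerator $g_n:=(u_n''-\beta)\phi_n+\omega_n$ converges to $g:=(u''-\beta)\phi+\omega$ in $H^1(a,b)$ --- using $u_n\to u$ in $H^3\hookrightarrow C^2$, $\phi_n\to\phi$ and $\omega_n\to\omega$ in $H^1$, together with the algebra property of $H^1(a,b)$ in one dimension --- so that $g_n\to g$ in $C^0[a,b]$ with $H^1$-norms, and hence $C^{0,1/2}$-seminorms, uniformly bounded. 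The hypothesis $u'(y_0)u'(y)>0$ on $[a,b]$ guarantees $|u'|\ge\kappa>0$ there, so $u$ is a bi-Lipschitz diffeomorphism of $[a,b]$ and $y_0$ is the unique simple zero of $u-c$.

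The heart of the matter, and the main obstacle, is the singular term $I_n:=\int_a^b (g_n\varphi)/(u_n-c_n)\,dy$, for which I must prove the Sokhotski--Plemelj identity
\[
I_n\longrightarrow \mathrm{p.v.}\int_a^b\frac{g\varphi}{u-c}\,dy+i\pi\frac{g(y_0)\varphi(y_0)}{|u'(y_0)|}.
\]
Writing $H_n:=g_n\varphi$ and $H:=g\varphi$ (so $H_n\to H$ in $H^1$ and $H(y_0)=g(y_0)\varphi(y_0)$), I would split the integral into a part away from $y_0$ and a part on $(y_0-\delta,y_0+\delta)$. Away from $y_0$ the denominator is bounded below, $u_n-c_n\to u-c$ uniformly, and dominated convergence yields the corresponding piece of the principal value with no imaginary contribution. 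On the small interval I subtract the value at the singularity, $H_n=H_n(y_0)+(H_n-H_n(y_0))$. For the constant piece the computation reduces, after localizing near the moving zero $y_n$ of $\mathrm{Re}(u_n-c_n)$ where $u_n(y)-c_n^r\approx u'(y_0)(y-y_n)$ and the imaginary part equals $-c_n^i<0$, to the elementary limit $\int \frac{dt}{u'(y_0)\,t-i c_n^i}\to \mathrm{p.v.}\!\int\frac{dt}{u'(y_0)\,t}+\frac{i\pi}{|u'(y_0)|}$; the absolute value arises precisely because the $-i0^+$ prescription produces $+i\pi$ times the Dirac mass independently of the sign of $u'(y_0)$, while the Jacobian at the zero contributes the factor $1/|u'(y_0)|$. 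For the remainder $H_n-H_n(y_0)$, the uniform $C^{0,1/2}$ bound furnishes the $n$-independent integrable majorant $C|y-y_0|^{-1/2}$, so its imaginary contribution vanishes and its real part converges to the rest of the principal value by dominated convergence.

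The delicate point to handle carefully is the interchange of the two simultaneous limits in the singular term: the real zero $y_n$ drifts toward $y_0$ while the regularizing parameter $c_n^i$ tends to $0$. I would control this through the uniform lower bound $|u_n(y)-c_n|\ge c\big(|y-y_n|+c_n^i\big)$ on $(y_0-\delta,y_0+\delta)$, valid for large $n$ since $|u_n'|\ge\kappa$ and the oscillation of $u_n'$ is small; this both justifies the majorant above and lets me compare $\int (u_n-c_n)^{-1}\,dy$ on the small interval with the model integral up to an error that is $o(1)$ as $n\to\infty$ for fixed $\delta$ and then $O(\delta^{1/2})$ as $\delta\to0$. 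Combining the far and near contributions and letting first $n\to\infty$ and then $\delta\to0$ produces exactly the principal value plus the residue term; substituting back into the weak formulation of the first paragraph gives the stated identity.
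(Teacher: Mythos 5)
Your proposal follows essentially the route the paper itself defers to: the paper gives no independent proof of this lemma but cites Lemma 6.2 of \cite{WZZ2} ``with $g_n$ replaced by $(u_n''-\beta)\phi_n+\omega_n$'', and that argument is precisely the weak formulation plus Sokhotski--Plemelj passage to the limit that you describe. Your handling of the elliptic term, the $H^1$ (hence uniform $C^{0,1/2}$) convergence of $g_n=(u_n''-\beta)\phi_n+\omega_n$, the sign of the $+i\pi/|u'(y_0)|$ residue coming from the $-ic_n^i$ prescription, and the lower bound $|u_n(y)-c_n|\ge c\,(|y-y_n|+c_n^i)$ are all correct.

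One step, as literally written, does not hold. For the remainder $H_n-H_n(y_0)$ you claim the $n$-independent majorant $C|y-y_0|^{-1/2}$, i.e.\ $|y-y_0|^{1/2}/(|y-y_n|+c_n^i)\le C|y-y_0|^{-1/2}$, which is equivalent to $|y-y_0|\le C(|y-y_n|+c_n^i)$; at $y=y_n$ this reads $|y_n-y_0|\le C\,c_n^i$, and nothing in the hypotheses relates the drift $|y_n-y_0|$ (governed by $c_n^r-c$ and by $u_n-u$) to the regularization $c_n^i$. If one instead splits $|y-y_0|^{1/2}\le|y-y_n|^{1/2}+|y_n-y_0|^{1/2}$, the cross term produces $|y_n-y_0|^{1/2}\int(|y-y_n|+c_n^i)^{-1}dy\sim|y_n-y_0|^{1/2}\ln(\delta/c_n^i)$, which is genuinely indeterminate (take $c_n^r-c=1/n$ and $c_n^i=e^{-n^2}$). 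The repair is the one already implicit in your own localization of the constant piece: subtract $H_n(y_n)$ rather than $H_n(y_0)$, so the remainder is majorized by $C|y-y_n|^{-1/2}$, whose integral over the $\delta$-window is $O(\delta^{1/2})$ uniformly in $n$, and then use $H_n(y_n)\to H(y_0)$ for the constant piece. With that one adjustment the argument is complete and coincides with the cited proof.
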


The proof of Lemmas \ref{noncritical point} and \ref{noncritical-point-formula} is similar to Lemma 6.2 in \cite{WZZ2} with $g_n$ replaced by $(u_n''-\beta)\phi_n+\omega_n$. Here we omit the details. \smallskip

Next we study the compactness in the domain with critical points satisfying $u''-\beta\neq 0$. First of all, we study the behaviour of
the solution at critical points.

\begin{lemma}\label{critical point estimate}
Let $c\notin\mathbb{R}$. Assume that $y_0\in (u')^{-1}\{0\}\cap\omega^{-1}\{0\}$, let $[a,b]$ be an interval so that $\phi,\omega\in H^1(a,b)$, $y_0\in[a,b]\subset[y_1,y_2]$,
\beno
|u(y_0)-c|<\min\{1,\max\{|y_0-a|^2,|y_0-b|^2\}\},\quad
(\beta-u''(y))(\beta-u''(y_0))>0,
\eeno
and $(u-c)(\phi''-\alpha^2\phi)-(u''-\beta)\phi=\omega$ on $[a,b]$. Then we have
$$
|\phi(y_0)|\leq C |u(y_0)-c|^{1\over4}\big(\|\phi\|_{H^1(a,b)}+\|\omega\|_{H^1(a,b)}\big),
$$
where $C$ depends on $\max\{|y_0-a|,|y_0-b|\}$, $\alpha,$ $\beta$ and $u$.
\end{lemma}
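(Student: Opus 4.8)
The plan is to reduce the pointwise bound to a weighted energy estimate and then extract the value at $y_0$ through a Hardy-type inequality. First I would dispose of the easy regime: if $|u(y_0)-c|\gtrsim 1$ then $|\phi(y_0)|\leq C\|\phi\|_{H^1(a,b)}\leq C|u(y_0)-c|^{1/4}(\ldots)$ by Sobolev embedding, so I may assume $\delta:=|u(y_0)-c|^{1/2}$ is small. Write $c_0=u(y_0)-c$, $K=u''-\beta$ and $g=K\phi+\omega$, so that the equation reads $(u-c)(\phi''-\alpha^2\phi)=g$ and $\|g\|_{H^1(a,b)}\leq C R$ with $R:=\|\phi\|_{H^1(a,b)}+\|\omega\|_{H^1(a,b)}$. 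Since $\omega(y_0)=0$ and $u''(y_0)\neq\beta$ (forced by $(\beta-u'')(\beta-u''(y_0))>0$), one has $g(y_0)=(u''(y_0)-\beta)\phi(y_0)$; hence it suffices to prove $|g(y_0)|\leq C\delta^{1/2}R=C|c_0|^{1/4}R$. The two scales driving everything are $\delta$ and the quadratic lower bound $|u(y)-c|\geq c(|c_0|+|y-y_0|^2)$, which follows from $u'(y_0)=0$ together with the nondegeneracy $u''(y_0)\neq0$ in $(\textbf{H1})$.

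The core is a weighted identity for $g$. Using $g/(u-c)=\phi''-\alpha^2\phi$ and integrating by parts, I would derive
\[
\int_a^b\frac{|g|^2}{\overline{u-c}}\,dy=[g\,\overline{\phi'}]_a^b-\int_a^b g'\,\overline{\phi'}\,dy-\alpha^2\int_a^b g\,\overline{\phi}\,dy .
\]
Since $g'=K'\phi+K\phi'+\omega'$, the right-hand side is bounded by $C(\rho)R^2$, where $\rho:=\max\{|y_0-a|,|y_0-b|\}$ (the boundary term being controlled at distance $\sim\rho$ from $y_0$, where $u-c$ has size $\sim\rho^2$). Taking real and imaginary parts and recalling $\mathrm{Im}(u-c)=-c^i$ gives
\[
\Big|\int_a^b\frac{(u-c^r)|g|^2}{|u-c|^2}\,dy\Big|\leq C(\rho)R^2,\qquad c^i\int_a^b\frac{|g|^2}{|u-c|^2}\,dy\leq C(\rho)R^2 .
\]

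Then I would run a Hardy-type argument on the first estimate. By hypothesis $\rho>\delta$, so on at least one side of $y_0$ there is an interval $[y_0,y_0+\rho]\subset[a,b]$ (say), and on the far part $C\delta\leq|y-y_0|\leq\rho$ the quantity $u-c^r$ keeps the fixed sign of $u''(y_0)$ and satisfies $u-c^r\sim u''(y_0)(y-y_0)^2$, $|u-c|\sim(y-y_0)^2$, so that $(u-c^r)/|u-c|^2\sim(y-y_0)^{-2}$. The real-part estimate thus controls $\int_{C\delta<|y-y_0|<\rho}|g|^2\,(y-y_0)^{-2}\,dy$. Since $g\in H^1\hookrightarrow C^{1/2}$ with $\|g'\|_{L^2}\leq CR$, either $|g(y_0)|\leq C\delta^{1/2}R$ and we are done, or $|g|\geq\tfrac12|g(y_0)|$ on an interval of length $\gtrsim\delta$ adjacent to $y_0$, whence $\int_{C\delta<|y-y_0|<\rho}|g|^2(y-y_0)^{-2}\,dy\gtrsim\delta^{-1}|g(y_0)|^2$; combined with the bound $C(\rho)R^2$ this yields $|g(y_0)|^2\leq C\delta R^2$, i.e. $|\phi(y_0)|\leq C|c_0|^{1/4}R$.

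The hard part is the critical layer $|y-y_0|\lesssim\delta$. There $u-c^r$ need not keep its sign: in the configuration where $u-c^r$ vanishes it does so inside the layer, so the real-part integrand is not sign-definite and the layer contribution is a priori of the same order $\delta^{-1}|g(y_0)|^2$ as the good far term, threatening cancellation. Controlling it uniformly as $c^i\to0^+$ is the crux of the lemma: I would combine the imaginary-part estimate, which dominates exactly on the subregion $\{|u-c^r|\lesssim c^i\}$, with the universal lower bound $|u-c|\geq\max\{|c_0|,c^i\}$ and the near-constancy of $g$ across the layer (again via $\|g'\|_{L^2}\leq CR$), to show that the layer term is only a small fraction of the far term and can be absorbed. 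This is precisely where the extra singular points produced by the Coriolis parameter $\beta$ make the bookkeeping delicate, and where the sign hypothesis $(\beta-u'')(\beta-u''(y_0))>0$ and the nondegeneracy $u''(y_0)\neq0$ of $(\textbf{H1})$ enter; the dependence of the final constant on $\rho=\max\{|y_0-a|,|y_0-b|\}$ reflects the worst case $\rho\sim\delta$, in which the far region barely exists.
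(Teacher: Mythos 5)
Your strategy --- the weighted identity $\int_a^b |g|^2/\overline{(u-c)}\,dy=[g\overline{\phi'}]_a^b-\int_a^b g'\overline{\phi'}\,dy-\alpha^2\int_a^b g\overline{\phi}\,dy$ followed by a Hardy-type lower bound on the far region --- is genuinely different from the paper's, but it has a gap at exactly the point you yourself call ``the crux'': the critical layer $|y-y_0|\lesssim\delta$ is never actually controlled. Two of the ingredients you propose for it are false. The ``quadratic lower bound'' $|u(y)-c|\geq c(|c_0|+|y-y_0|^2)$ and the ``universal lower bound'' $|u-c|\geq\max\{|c_0|,c^i\}$ both fail precisely in the bad configuration where $(u(y_0)-c^r)\,u''(y_0)<0$: at the crossing point $y_*$ with $u(y_*)=c^r$, which lies inside the layer ($|y_*-y_0|^2\sim|c_0|$), one has $|u(y_*)-c|=c^i$, which can be arbitrarily small compared with $|c_0|$. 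A crude absolute-value bound on the layer contribution to $\int (u-c^r)|g|^2/|u-c|^2\,dy$ then produces a term of size $\delta^{-1}|g(y_0)|^2\log(1/c^i)$, which cannot be absorbed into the far term uniformly as $c^i\to0^+$; splitting off the subregion $\{|u-c^r|\lesssim c^i\}$ via the imaginary-part estimate does not remove the logarithm either. To beat it one must exploit the sign change of $u-c^r$ across $y_*$ (a genuine cancellation argument using the H\"older continuity of $|g|^2$), and nothing in the proposal supplies this. Secondary issues: the boundary term $[g\overline{\phi'}]_a^b$ involves pointwise values of $\phi'$, which are not controlled by $\|\phi\|_{H^1}$ (fixable by a pigeonhole choice of the endpoints, but not done); and when $\rho:=\max\{|y_0-a|,|y_0-b|\}$ is comparable to $\delta$ the far region is empty and the dichotomy has nothing to stand on (also fixable, since then $|c_0|^{1/4}$ is bounded below in terms of $\rho$, but again unaddressed).

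For comparison, the paper's proof sidesteps the layer entirely. Setting $u_1=u-\beta(y-y_0)^2/2$, the equation is written in divergence form, $((u-c)\phi'-u_1'\phi)'=\alpha^2(u-c)\phi+\beta(y-y_0)\phi'+\omega=:g$, and on the single one-sided interval $[y_0,y_0+\delta]$ one gets $\|g\|_{L^1}\leq C\delta^{3/2}$ (using $\omega(y_0)=0$ and $|u-c|\leq C\delta^2$ there). A pigeonhole choice of $z_1,z_2$ with $|\phi'(z_j)|\lesssim\delta^{-1/2}\|\phi'\|_{L^2}$ makes $(u-c)\phi'|_{z_1}^{z_2}$, hence $u_1'\phi|_{z_1}^{z_2}$, of size $\delta^{3/2}$, and the lower bound $|u_1'(z_2)-u_1'(z_1)|\geq c_0|z_2-z_1|$ --- which comes from $|u''-\beta|\geq c_0$, i.e. from the hypothesis $(\beta-u'')(\beta-u''(y_0))>0$ rather than from $u''(y_0)\neq0$ --- extracts $\delta|\phi(y_0)|\lesssim\delta^{3/2}$. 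No sign analysis of $u-c^r$ and no uniformity in $c^i$ is needed. As written, your proof is incomplete; the missing piece is a quantitative cancellation estimate for $\int(u-c^r)|g|^2/|u-c|^2\,dy$ across the zero of $u-c^r$.
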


\begin{proof}
Without loss of generality, we assume $|y_0-b|\geq|y_0-a|$. Let
$$u_1(y)=u(y)-{\beta(y-y_0)^2\over2}  \;\text{on}\;[a,b].$$
 Note that there exists $c_0>0$ such that $|\beta-u''|>c_0>0$ on $[a,b]$. {We normalize}  $\phi, \om$ so that $\|\phi\|_{H^1(a,b)}+\|\omega\|_{H^1(a,b)}=1. $ Direct computations show that for any $y,\tilde y\in[y_0,b]$,
\begin{align}
&|u_1'(y)|=|u'(y)-u'(y_0)-\beta(y-y_0)|\leq C|y-y_0|,\label{eq:u1-est1}\\
&|u_1'(y)-u_1'(\tilde y)|=|u'(y)-u'(\tilde y)-\beta(y-\tilde y)|\geq c_0|y-\tilde y|.\label{eq:u1-est2}
\end{align}

Let $\delta=|u(y_0)-c|^{1\over2}$. Then for any $0<y-y_0<\delta$, due to $H^1(a,b)\hookrightarrow C^{0,{1\over2}}(a,b)$, we have
\begin{align*}
|\omega(y)|\leq C\delta^{1\over2},\;|u(y)-c|\leq |u(y)-u(y_0)|+|u(y_0)-c|\leq C\delta^2.
\end{align*}
Let
\begin{align*}
g=((u-c)\phi'-u_1'\phi)'=\alpha^2(u-c)\phi+\beta(y-y_0)\phi'+\omega.
\end{align*}
Thus, for $0<y-y_0<\delta$,
$$\|g\|_{L^1(y_0,y_0+\delta)}\leq C\delta^3+C\left(\int_{y_0}^{y_0+\delta}(y-y_0)^2dy\int_{y_0}^{y_0+\delta}|\phi'|^2dy\right)^{1\over2}+C\delta^{3\over2}\leq C\delta^{3\over2}.
$$
 Choose $z_1\in(y_0,y_0+\delta/3)$ and $z_2\in(y_0+2\delta/3,y_0+\delta)$ so that
$|\phi'(z_1)|^2+|\phi'(z_2)|^2\leq 6\delta^{-1}\|\phi'\|^2_{L^2(y_0,y_0+\delta)}$. Otherwise, if ${\delta\over3}|\phi'(y)|^2>\|\phi'\|_{L^2(y_0,y_0+{\delta\over3})}^2$ for all $y\in(y_0,y_0+{\delta\over3})$, then ${\delta\over3}\|\phi'\|_{L^2(y_0,y_0+{\delta\over3})}^2>{\delta\over3}\|\phi'\|_{L^2(y_0,y_0+{\delta\over3})}^2$, which is a contradiction. Using the facts that
\begin{align*}
&\big|(u-c)\phi'|_{z_1}^{z_2}\big|\leq (|\phi'(z_1)|+|\phi'(z_2)|)\|u-c\|_{L^\infty(z_1,z_2)}\leq C\delta^{-{1\over2}}\delta^2=C\delta^{3\over2},\\
&\left|((u-c)\phi'-(u_1'\phi))\big|_{z_1}^{z_2}\right|=\left|\int_{z_1}^{z_2}g(y)dy\right|\leq C\delta^{3\over2},
\end{align*}
we infer that
$$|(u_1'\phi)|_{z_1}^{z_2}|\leq C\delta^{3\over2}.$$
Notice that $(u_1'\phi)|_{z_1}^{z_2}=\phi(y_0)u_1'|_{z_1}^{z_2}+u_1'(z_1)\phi|_{z_1}^{y_0}+u_1'(z_2)\phi|_{y_0}^{z_2}.$
We get by \eqref{eq:u1-est1} and \eqref{eq:u1-est2} that
\begin{align*}
{1\over3} \delta c_0|\phi(y_0)|\leq |\phi(y_0)u_1'|_{z_1}^{z_2}|&\leq |(u_1'\phi)|_{z_1}^{z_2}|+2\|u_1'\|_{L^\infty(y_0,z_2)}\int_{y_0}^{z_2}|\phi'(z)|dz\\
&\leq C\delta^{3\over2}+C\delta^{3\over2}\|\phi'\|_{L^2(a,b)}\leq C\delta^{3\over2}.
\end{align*}
This shows that
$|\phi(y_0)|\leq C |u(y_0)-c|^{1\over4}.$
\end{proof}

\begin{lemma}\label{critical-point-not-beta-u-sec-dao-equal0}
Let $c\in \text{Ran }(u)$, $y_0\in u^{-1}\{c\}\cap (y_1,y_2)$, $u'(y_0)=0$,  and $\delta>0$ so that $(u''(y_0)-\beta)(u''(y)-\beta)>0$ on $[y_0-\delta,y_0+\delta]\subset[y_1,y_2]$ and $[y_0-\delta,y_0+\delta]\cap u^{-1}\{c\}=\{y_0\}$. Assume that $\phi_n,\omega_n\in H^1(y_0-\delta,y_0+\delta)$ and $c_n\in\mathbb{C}$ so that $c_n^i>0$, $c_n\to c$, $\phi_n\rightharpoonup0,\omega_n\to0 $ in $H^1(y_0-\delta,y_0+\delta)$ and
\begin{align}\label{equation-for-noncritical point-for-u}
(u-c_n)(\phi_n''-\alpha^2\phi_n)-(u''-\beta)\phi_n=\omega_n
\end{align}
 holds on $[y_0-\delta,y_0+\delta]$.
Then $\phi_n\to0$ in $H^1(y_0-\delta,y_0+\delta)$.
\end{lemma}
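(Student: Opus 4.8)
The plan is to combine a localization away from the degenerate critical point $y_0$ with an energy identity, and then to control the resulting singular integral near $y_0$ using Lemma \ref{critical point estimate}. First I would record the soft consequences of the hypotheses. Since $\phi_n\rightharpoonup0$ in $H^1$, the sequence is bounded in $H^1(y_0-\delta,y_0+\delta)$ and, by the compact embedding $H^1\hookrightarrow C^0$ in one dimension, $\phi_n\to0$ uniformly and in $L^2$; in particular $\phi_n(y_0)\to0$. On any compact subinterval of $[y_0-\delta,y_0+\delta]\setminus\{y_0\}$ the coefficient $u-c_n$ is bounded away from $0$ for large $n$, so rewriting the equation as $\phi_n''-\alpha^2\phi_n=\frac{(u''-\beta)\phi_n+\omega_n}{u-c_n}$ and using $\phi_n\to0$ in $L^2$, $\omega_n\to0$ in $H^1$ shows $\phi_n\to0$ in $H^2_{loc}$ there; in particular $\phi_n'$ stays uniformly bounded near the endpoints $y_0\pm\delta$. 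Finally, since $u(y_0)=c$ and $u'(y_0)=0$, Lemma \ref{critical point estimate} (applied with the parameter $c_n$, after a harmless modification accounting for $\omega_n(y_0)\to0$) gives the quantitative rate $|\phi_n(y_0)|\le C|c_n-c|^{1/4}$.

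Next I would test the equation against $\bar\phi_n$ on $[y_0-\delta,y_0+\delta]$ and integrate by parts, obtaining
\[
\int(|\phi_n'|^2+\alpha^2|\phi_n|^2)\,dy=\big[\phi_n'\bar\phi_n\big]_{y_0-\delta}^{y_0+\delta}-\int\frac{(u''-\beta)|\phi_n|^2+\omega_n\bar\phi_n}{u-c_n}\,dy.
\]
The boundary term tends to $0$ because $\phi_n(y_0\pm\delta)\to0$ while $\phi_n'(y_0\pm\delta)$ stays bounded. Taking imaginary parts and using $\mathrm{Im}\,(u-c_n)^{-1}=c_n^i|u-c_n|^{-2}$ together with the constant sign of $u''-\beta$ on the interval yields the key a priori bound on the weighted quantity $\int\frac{c_n^i(u''-\beta)|\phi_n|^2}{|u-c_n|^2}\,dy$, which is uniformly controlled by the (vanishing) boundary and forcing contributions. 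It then remains to show that the real singular integral $S_n=\int\frac{(u''-\beta)|\phi_n|^2}{u-c_n}\,dy$ does not obstruct the conclusion, after which the displayed identity forces $\|\phi_n'\|_{L^2}\to0$ and hence $\phi_n\to0$ in $H^1$.

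The hard part is precisely the analysis of $S_n$. Because $u'(y_0)=0$, near $y_0$ one has $u-c_n\approx \tfrac{u''(y_0)}2(y-y_0)^2-(c_n-c)$, so the critical layer is quadratic and the integral develops a resonant singularity of size $|c_n-c|^{-1/2}$, in contrast to the milder behaviour of the monotone layer treated in Lemma \ref{noncritical point}. I would split $|\phi_n|^2=|\phi_n(y_0)|^2+(|\phi_n|^2-|\phi_n(y_0)|^2)$: the remainder is estimated by the H\"older bound $\big||\phi_n(y)|^2-|\phi_n(y_0)|^2\big|\le C\|\phi_n\|_{C^0}|y-y_0|^{1/2}$ (with $\|\phi_n\|_{C^0}\to0$) combined with the weighted $L^2$ bound from the imaginary-part identity, while the constant part contributes $|\phi_n(y_0)|^2\int\frac{u''-\beta}{u-c_n}\,dy$, which is of borderline size $O(1)$ since the exponent $1/4$ in Lemma \ref{critical point estimate} is exactly sharp for balancing the $|c_n-c|^{-1/2}$ resonance.

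Controlling this borderline constant term is the genuine obstacle: its leading resonant contribution must be shown to carry a favourable sign so that it can be absorbed into the left-hand side, and this is exactly where the hypotheses enter — the fixed sign of $u''-\beta$ and the threshold $\beta/u''(y_0)<9/8$ of \textbf{(H1)}, which guarantees real Frobenius exponents $\sigma_\pm=\tfrac12\big(1\pm\sqrt{9-8\beta/u''(y_0)}\big)$ for the Euler-type equation $\phi''\approx \tfrac{2(1-\beta/u''(y_0))}{(y-y_0)^2}\phi$ governing the layer. Once the sign of this term is pinned down, the energy identity closes and gives $\phi_n\to0$ in $H^1(y_0-\delta,y_0+\delta)$.
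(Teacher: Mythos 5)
Your preliminary reductions (uniform convergence of $\phi_n$, $H^2_{loc}$ convergence away from $y_0$, the normalization $\omega_n(y_0)=0$, and the bound $|\phi_n(y_0)|\le C|c_n-c|^{1/4}$ from Lemma \ref{critical point estimate}) are correct and coincide with the paper's first steps. After that, however, your route diverges from the paper's and does not close. The first concrete gap is already in the imaginary-part step: to get the weighted bound on $c_n^i\int\frac{(u''-\beta)|\phi_n|^2}{|u-c_n|^2}\,dy$ you must control $\operatorname{Im}\int\frac{\omega_n\bar\phi_n}{u-c_n}\,dy$, and Cauchy--Schwarz gives only
$\bigl|\int\frac{\omega_n\bar\phi_n}{u-c_n}\bigr|\le A_n^{1/2}\,\|\omega_n\|_{L^2}/(c_n^i)^{1/2}$ with $A_n$ the very quantity you are trying to bound; since $\|\omega_n\|_{H^1}\to0$ with no rate relative to $c_n^i$, this yields $A_n\le C\bigl(o(1)+\|\omega_n\|_{L^2}^2/c_n^i\bigr)$, which is not uniform. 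The underlying obstruction is that for a degenerate critical point the lower bound $|u-c_n|\gtrsim (y-y_0)^2+|c_n-c|$ is false: when $c_n^r$ lies on the attained side of $u(y_0)$, $|u-c_n|$ drops to $c_n^i$ near the two roots $y_0\pm O(\sqrt{|c_n^r-c|})$, so all the singular integrals you invoke ($\int\frac{dy}{|u-c_n|}$, $\int\frac{|y-y_0|^{1/2}}{|u-c_n|}dy$) are much larger than your estimates suggest; in particular your remainder bound for $\int\frac{(u''-\beta)(|\phi_n|^2-|\phi_n(y_0)|^2)}{u-c_n}dy$ also fails without the weighted bound it was supposed to feed into.

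The second and more fundamental gap is the "borderline constant term," which you correctly identify as the crux but then leave as an assertion that its resonant part "must be shown to carry a favourable sign." That is essentially the whole content of the lemma, and a sign observation cannot settle it: under \textbf{(H1)} one may have $(u''(y_0)-\beta)/u''(y_0)\in(-1/8,0)$, in which case the potential $\frac{u''-\beta}{u-c}$ is \emph{negative} near $y_0$ and of the critical Hardy size $\sim\frac{\kappa}{(y-y_0)^2}$ with $\kappa$ up to $1/4$; it can only be absorbed into $\int|\phi_n'|^2$ via the sharp Hardy inequality, not by sign. This is exactly how the paper proceeds, by a genuinely different mechanism: it rescales $y=y_0+r_nz$, $c_n-c=r_n^2e^{2i\theta_n}$, sets $\tilde\phi_n(z)=r_n^{-1/2}\phi_n(r_nz)$, passes to a limiting profile equation $(\tfrac{\beta+2}{2}z^2-e^{2i\theta_0})\tilde\phi_0''=2\tilde\phi_0$ on $\mathbb{R}$, proves $\tilde\phi_0\equiv0$ through a five-case analysis in $\theta_0$ and $\operatorname{sgn}(\beta+2)$, and then treats the intermediate region $|y-y_0|\in[br_n,\delta]$ by an explicit Hardy-type inequality $\int|2\phi_n'-\phi_n/(y-y_0)|^2\ge0$, which is precisely where the threshold $\beta/u''(y_0)<9/8$ enters quantitatively. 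Your proposal identifies the right difficulty and the right hypotheses, but does not supply the argument that resolves it, so as written it is not a proof.
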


\begin{proof}
Let $c=0$, $y_0=0$ and $u''(0)-\beta=2$ for convenience. Otherwise, we can consider $\hat u(y)= k(u(y+y_0)-u(y_0)), \hat \beta= k\beta, \hat \phi_n(y)={\phi}_n(y+y_0), \hat \omega_n(y)=k{\omega}_n(y+y_0)$ and $\hat c_n=k(c_n-u(y_0))$ with $k=2/(u''(y_0)-\beta)$, and the equation
\begin{align*}
(\hat u-\hat c_n)(\hat\phi_n''-\alpha^2\hat\phi_n)-({\hat u''-\hat\beta})\hat\phi_n=\hat\omega_n\;\;\text{on}\;[y_1-y_0,y_2-y_0].
\end{align*}

It suffices to show that $\phi_n\to0$ in $H^1(-\delta,\delta)$ in the case when  $\omega_n(0)=0$ for  $n\geq1$.
Indeed, we consider
\begin{align*}
\phi_{n*}(y)&=\phi_n(y)+{\omega_n(0)\over 2}\cosh (\alpha y),\\
\omega_{n*}(y)&=\omega_{n}(y)-(u''(y)-\beta){\omega_n(0)\over2}\cosh (\alpha y).
\end{align*}
Then it is easy to see that $\phi_{n*},\omega_{n*}\in H^1(-\delta,\delta)$, $\omega_{n*}(0)=0$ and
\begin{align*}(u-c_n)(\phi_{n*}''&-\alpha^2\phi_{n*})-(u''-\beta)\phi_{n*}=\omega_{n*},\\
\|\omega_{n*}\|_{H^1(-\delta,\delta)}&\leq \|\omega_n\|_{H^1(-\delta,\delta)}+C|\omega_n(0)|\leq C\|\omega_{n}\|_{H^1(-\delta,\delta)}\to0.
\end{align*}
Since $\|\phi_n-\phi_{n*}\|_{H^1(-\delta,\delta)}\leq C|\omega_n(0)|\leq C\|\omega_n\|_{H^1(-\delta,\delta)}\to0$ {and $\phi_{n}\rightharpoonup0$} in $H^1(-\delta,\delta)$, we get $\phi_{n*}\rightharpoonup0$ in $H^1(-\delta,\delta)$. Then we have $\|\phi_{n*}\|_{H^1(-\delta,\delta)}\to0$ and $\|\phi_{n}\|_{H^1(-\delta,\delta)}\leq\|\phi_{n}-\phi_{n*}\|_{H^1(-\delta,\delta)}+\|\phi_{n*}\|_{H^1(-\delta,\delta)}\to0$.\smallskip

So, we may assume that $\omega_n(0)=0$ for  $n\geq1$ in the sequel.
Let $c_n=r_n^2e^{2i\theta_n}$ with $\theta_n\in(0,{\pi\over2})$  for  $n\geq1$. Then $r_n\to0^+$.
By Lemma \ref{critical point estimate}, $|\phi_n(0)|\leq Cr_n^{1\over2}.$ We denote
\begin{align}\label{tilde-phi-n-tilde-omega-n-u-n}
\tilde \phi_n(y)=r_n^{-{1\over2}}\phi_n(r_ny),\;\tilde \omega_n(y)=r_n^{-{1\over2}}\omega_n(r_ny),\;u_n(y)=r_n^{-2}u(r_ny).
\end{align}
Then we find
\begin{align}\label{equation-after-scaling}
(u_n-e^{2i\theta_n})\tilde \phi_n''-(u_n''-\beta)\tilde \phi_n=\tilde \omega_n+(u_n-e^{2i\theta_n})(\alpha r_n)^2 \tilde \phi_n
\end{align}
on $[-{\delta/ r_n},{\delta/ r_n}]$ and
\begin{align}\label{first-estimate}
|\tilde \phi_n(0)|\leq C, \quad \|\tilde \phi'_n\|_{L^2(-\delta/r_n,\delta/r_n)}=\|\phi_n'\|_{L^2(-\delta,\delta)}\leq C,\\\nonumber
\tilde \omega_n(0)=0,\quad\|\tilde \omega_n'\|_{L^2(-\delta/r_n,\delta/r_n)}=\| \omega_n'\|_{L^2(-\delta,\delta)}\to0.
\end{align}
This implies that $\tilde \phi_n$ is uniformly bounded in $H^1_{loc}(\mathbb{R})$ and $\tilde \omega_n\to 0$ in $H^1_{loc}(\mathbb{R}).$
Up to a subsequence, we may assume that $\tilde \phi_n\rightharpoonup\tilde \phi_0$ in $H^1_{loc}(\mathbb{R})$, and $\theta_n\to\theta_0\in[0,{\pi\over2}]$. Then by (\ref{first-estimate}), we have $\tilde \phi'_0\in L^2(\mathbb{R}).$ Using the facts that
\begin{align*}
u_n(y)&=y^2\int_0^1\int_0^1tu''(r_nyts)dsdt\to{\beta+2\over2}y^2,\;&u_n'(y)&=y\int_0^1u''(r_nyt)dt\to(\beta+2)y,\\
u_n''(y)&=u''(r_ny)\to\beta+2,\;&u_n'''(y)&=r_nu'''(r_ny)\to0,
\end{align*}
in $L^2_{loc}(\mathbb{R})$, we infer that
\begin{align*}
u_n\to{\beta+2\over 2}y^2 \quad {\rm in} \,\, H^3_{loc}(\mathbb{R}).
\end{align*}

Next, we show that  $\tilde \phi_n\to\tilde \phi_0$ in $H^1_{loc}(\mathbb{R})$ and $\tilde \phi_0\equiv0$ on $\mathbb{R}$.
The proof is very complicated and is split into five cases in terms of $\theta_0$ and $\beta$. \smallskip

{\bf Case 1.} $\theta_0\in(0,{\pi\over2})$.

In this case, $\tilde \phi_n$  is uniformly bounded in $H^2_{loc}(\mathbb{R})$. So, $\tilde \phi_n\to \tilde {\phi}_0$ in $C^1_{loc}(\mathbb{R})$. Moreover,
$$\Big({\beta+2\over2} y^2-e^{2i\theta_0}\Big)\tilde \phi_0''=2\tilde \phi_0\quad \text{on}\,\,\mathbb{R}.$$

 For fixed  $0<\gamma<1$, let $\eta_R\in C_0^{\infty}(-R,R)$, $R>0$ be a cut-off function satisfying
\begin{itemize}\vspace{-0.2cm}
\item[{\rm (i)}] $0\leq \eta_R(y)\leq 1$ for $y\in[-R, R]$ and $\eta\equiv1$ on $[-\gamma R,\gamma R]$,
 \item[{\rm (ii)}]$|\eta_R'(y)|\leq {2\over (1-\gamma) R}$,  $y\in [-R,R]$.
\end{itemize}
We get by integration by parts that
\begin{align}\label{integration by parts after truncation}
\int_{-R}^R\tilde \phi_0''\eta_R\bar{\tilde \phi}_0dy=-\int_{-R}^R \tilde \phi_0'\eta_R'\bar{\tilde \phi}_0+|\tilde \phi'_0|^2\eta_Rdy.
\end{align}
By Hardy's inequality, we have $\|{\tilde \phi_0\over y}\|_{L^2(1,+\infty)}\leq C(\|\tilde \phi_0'\|_{L^2(\mathbb{R})}+|\tilde \phi_0(0)|)<+\infty $, which gives
\begin{align}\label{yuxiang}
\left|\int_{0}^R \tilde \phi_0'\eta_R'\bar{\tilde \phi}_0dy\right|&\leq
\int_{\gamma R}^R|\tilde \phi_0'|{2\over (1-\gamma) R}|{\tilde \phi}_0|dy\\\nonumber
&\leq \|\tilde \phi_0'\|_{L^2({\gamma R},R)}\left(\int_{\gamma R}^R|{2\over (1-\gamma) R}{\tilde \phi}_0|^2dy\right)^{1\over2}\\ \nonumber&\leq C\|\tilde \phi_0'\|_{L^2({\gamma R},R)} \|{\tilde \phi_0\over y}\|_{L^2(1,+\infty)} \to 0
\end{align}
as $R\to +\infty.$ Similarly, $\int_{-R}^{0} \tilde\phi_0'\eta_R'\bar{\tilde \phi}_0dy\to0$ as $R\to+\infty$.
Thus by (\ref{integration by parts after truncation}), we get
\begin{align*}
\int_{\mathbb{R}} |\tilde \phi'_0|^2dy=-\int_{\mathbb{R}}\tilde \phi_0''\bar{\tilde \phi}_0dy&
=-\int_{\mathbb{R}}{2|\tilde \phi_0|^2\over({\beta+2\over2})y^2-e^{2i\theta_0}}dy\\\nonumber
&=-\int_{\mathbb{R}}{2|\tilde\phi_0|^2\left({\beta+2\over2}y^2-\cos 2\theta_0+i\sin 2\theta_0\right)\over\left({\beta+2\over2}y^2-\cos2\theta_0\right)^2+\sin^22\theta_0}dy.\end{align*}
Taking the imaginary part of the equality, we deduce that
$\tilde \phi_0\equiv0$ on $\mathbb{R}$.\smallskip

{\bf Case 2.} $\theta_0=0$ and ${\beta+2\over2}>0$.

Let $a=\sqrt{2\over \beta+2}$. We first claim that for any $\varphi\in H^1(\mathbb{R})$ with compact support,
\begin{align}\label{u10}
\int_{\mathbb{R}}\tilde{\phi}_0'\varphi'dy+p.v.\int_{\mathbb{R}}\frac{2\tilde{\phi}_0\varphi}{{\beta+2\over2}y^2-1}dy
+i\pi\sum_{y=\pm a}\frac{{2}(\tilde{\phi}_0\varphi)(y)}{\sqrt{2(\beta+2)}}=0.
\end{align}
Indeed, since $\tilde \phi_n$  is uniformly bounded in $H^2_{loc}(\mathbb{R}\setminus \{\pm a\})$, thus  $\tilde \phi_n\to \tilde {\phi}_0$ in $C_{loc}^1(\mathbb{R}\setminus \{\pm a\})$, and
$$\Big({\beta+2\over2} y^2-1\Big)\tilde \phi_0''=2\tilde \phi_0\ \ \text{on}\ \ \mathbb{R}\setminus\{\pm a\},$$
which implies \eqref{u10} holds for any $\varphi\in H^1(\mathbb{R})$ with compact support and $\{\pm a\}\cap\text{supp}\ \varphi=\emptyset $. Lemma \ref{noncritical-point-formula} ensures that \eqref{u10} holds for any $\varphi\in H^1(\mathbb{R})$ with $\text{supp}\ \varphi\subset[\pm a-\varepsilon,\pm a+\varepsilon]$, where $\varepsilon\in(0,a)$. Therefore, \eqref{u10} holds for any $\varphi\in H^1(\mathbb{R})$ with compact support.

Now by \eqref{u10}, we have for $R>a$,
\begin{align}\label{phi1}
-\int_{-R}^R (|\tilde\phi_0'|^2\eta_R+\tilde \phi_0'\bar{\tilde \phi}_0\eta_R')dy&=
-\int_{-R}^R\tilde \phi_0'(\bar{\tilde \phi}_0\eta_R)'dy\\\nonumber
&=p.v.\int_{-R}^R{2|\tilde  \phi_0|^2\eta_R\over {\beta+2\over2}y^2-1}dy +i\pi\sum\limits_{y=\pm a}{{2}(|\tilde \phi_0|^2\eta_R)(y)\over\sqrt{2(\beta+2)}}.
\end{align}
Letting $R\to +\infty$ in (\ref{phi1}) and by (\ref{yuxiang}), we get
\begin{align*}
-\int_{\mathbb{R}}|\tilde \phi_0'|^2dy
=p.v.\int_{\mathbb{R}}{2|\tilde  \phi_0|^2\over {\beta+2\over2}y^2-1}dy +i\pi\sum\limits_{y=\pm a}{2|\tilde \phi_0|^2(y)\over\sqrt{2(\beta+2)}}.
\end{align*}
This shows that
\begin{align}\label{phi zero}
\tilde \phi_0(\pm a)=0,
\end{align}
which, together with the Sobolev embedding $H^1(J)\hookrightarrow C^{0,{1\over2}}(J)$ and \eqref{u10}, implies that
\begin{align*}
\left|\int_{\mathbb{R}}\tilde{\phi}_0'\varphi'dy\right|=\left|\int_{\mathbb{R}}\frac{2\tilde{\phi}_0\varphi}
{{\beta+2\over2}y^2-1}dy\right|\leq\left(\int_{\pm a-\varepsilon}^{\pm a+\varepsilon}\left|{2\tilde \phi_0\over {\beta+2\over2}y^2-1}\right|^pdy\right)^{\frac{1}{p}}\|\varphi\|_{L^{p'}}\leq C\|\varphi\|_{L^{p'}},
\end{align*}
for every $ \varphi\in H^1(\mathbb{R})$ with $\text{supp} \ \varphi\subset[\pm a-\varepsilon,\pm a+\varepsilon],$ where $1<p<2,\ 1/p+1/p'=1$,  $ \varepsilon\in(0,a)$, and $J$ is a compact interval. Thus, $\tilde \phi_0\in W^{2,p}(\pm a-\varepsilon,\pm a+\varepsilon),$ and by the Sobolev embedding $W^{2,p}(J)\hookrightarrow C^1(J)$, we have
\begin{align}\label{phi3}
\tilde \phi_0\in C^1(\mathbb{R}).
\end{align}
Noting that
\begin{align*}
-\int_{a}^R(\bar{\tilde\phi}_0'\tilde \phi_0'\eta_R+\bar{\tilde\phi}_0\tilde \phi_0'\eta_R')dy
=-\int_{a}^R(\bar{\tilde\phi}_0\eta_R)'\tilde \phi_0'dy=\int_{a}^R\tilde \phi_0''\bar{\tilde\phi}_0\eta_Rdy, \end{align*}
and similar to (\ref{yuxiang}), we have
$$\left|\int_{a}^R\bar{\tilde\phi}_0\tilde \phi_0'\eta_R'dy\right|\leq C\|\tilde \phi_0'\|_{L^2({\gamma R},R)}\|{\tilde \phi_0\over y}\|_{L^2(1,+\infty)} $$
for $R>0$ sufficiently large.
Letting $R\to+\infty$, we get
\begin{align*}
-\int_{a}^\infty|\tilde \phi_0'|^2dy=\int_{a}^\infty\tilde \phi_0''\bar{\tilde \phi}_0dy
=\int_{a}^\infty{2|\tilde\phi|^2\over{{\beta+2\over2}y^2-1}}dy.
\end{align*}
This shows that $\tilde \phi_0\equiv0$ on $[a,+\infty).$ By (\ref{phi zero}), (\ref{phi3}) and Lemma 2.2 in \cite{LYZ}, we have
 $\tilde\phi_0\equiv0$ on $\mathbb{R}$.

Furthermore, it follows from Lemma \ref{noncritical point} that $\tilde \phi_n\to \tilde \phi_0$ in $H^1\big(\pm a-\delta,\pm a+\delta\big)$ for $\delta>0$ sufficiently small, and thus $\tilde \phi_n\to \tilde {\phi}_0$ in $H^1_{loc}(\mathbb{R})\cap C_{loc}^1\big(\mathbb{R}\setminus \{\pm a\}\big)$.
 \smallskip

{\bf  Case 3.} $\theta_0=0$ and ${\beta+2\over2}<0$.

  Similar to Case 1, we have $\tilde \phi_n\to \tilde {\phi}_0$ in $C^1_{loc}(\mathbb{R})$ and
$$\Big({\beta+2\over2} y^2-1\Big)\tilde \phi_0''=2\tilde \phi_0 \quad\text{on}\; \mathbb{R}.$$

Let $V(y)={-2\over {\beta+2\over2}y^2-1}$. Then $V(y)>0$ for $y\in \mathbb{R}$ and
\begin{align}\label{equation-with-substitution}
(|\tilde \phi_0'|^2+V|\tilde \phi_0|^2)'=V'|\tilde\phi_0|^2.
\end{align}
Multiplying both sides of (\ref{equation-with-substitution}) by $y\eta_R$ and integrating it from $-R$ to $R$, we get
\begin{align}\label{integral-with-y-times-the-equation}
-\int_{-R}^RyV'|\tilde \phi_0|^2\eta_Rdy&=-\int_{-R}^Ry(|\tilde \phi_0'|^2+V|\tilde \phi_0|^2)'\eta_Rdy\\\nonumber
&=\int_{-R}^R(|\tilde \phi_0'|^2+V|\tilde \phi_0|^2)(\eta_R+y\eta_R')dy.\end{align}
Note that
\begin{align}\label{interationwithR}
\int_{-R}^RV|\tilde \phi_0|^2\eta_Rdy=-\int_{-R}^R\tilde \phi_0''\bar{\tilde \phi}_0\eta_Rdy=\int_{-R}^R\left(\tilde \phi_0'\eta_R'\bar{{\tilde\phi}}_0+|\tilde\phi_0'|^2\eta_R\right)dy.\end{align}
Thanks to $\int_{-R}^R \tilde \phi_0'\eta_R'\bar{\tilde \phi}_0dy\to 0 $
as $R\to +\infty$, we get by (\ref{interationwithR})  that
\begin{align}\label{limit-tunc-function}
\lim\limits_{R\to\infty} \int_{-R}^RV|\tilde\phi_0|^2dy=\int_{\mathbb{R}}V|\tilde\phi_0|^2dy=\|\tilde\phi_0'\|_{L^2(\mathbb{R})}^2<\infty.
\end{align}
This yields that
\begin{align}\label{integrationwithR2}
&\left(\int_{- R}^{-\gamma R}+\int_{\gamma R}^R\right)|(|\tilde \phi_0'|^2+V|\tilde \phi_0|^2)(\eta_R+y\eta_R')|dy\\\nonumber
&\leq C\|\tilde \phi_0'\|^2_{L^2((-R,-\gamma R)\cup(\gamma R, R))}+C\left(\int_{-R}^{-\gamma R}+\int_{\gamma R}^R\right)V|\tilde \phi_0|^2 dy\to0
\end{align}
as $R\to+\infty.$ Hence, (\ref{integral-with-y-times-the-equation}) and (\ref{limit-tunc-function})--(\ref{integrationwithR2}) imply
\begin{align*}
-\int_{\mathbb{R}}yV'|\tilde \phi_0|^2dy=\int_{\mathbb{R}}\left(|\tilde \phi_0'|^2+V|\tilde \phi_0|^2\right)dy=2\int_{\mathbb{R}}V|\tilde \phi_0|^2dy,
\end{align*}
that is,
\begin{align*}
\int_{\mathbb{R}}(2V+yV')|\tilde \phi_0|^2dy=0.
\end{align*}
A direct computation finds
\begin{align*}
2V(y)+yV'(y)={4\over\left({\beta+2\over2}y^2-1\right)^2}>0.
\end{align*}
This implies that $\tilde\phi_0\equiv0$ on $\mathbb{R}$.\smallskip

{\bf Case 4.} $\theta_0={\pi\over2}$ and ${\beta+2\over2}>0$.

 Similar to Case 1, we have $\tilde \phi_n\to \tilde {\phi}_0$ in $C^1_{loc}(\mathbb{R})$. Using the cut-off function $\eta_R$ and Hardy's inequality as above, we can also show that 
\begin{align*}
-\int_{\mathbb{R}}|\tilde\phi_0'|^2dy=-\lim\limits_{R\to\infty}\int_{-R}^R\bar{\tilde{\phi}}_0'(\tilde\phi_0'\eta_R+
\tilde\phi_0\eta_R')dy=\lim\limits_{R\to\infty}\int_{-R}^R{2|\tilde \phi_0|^2\eta_R\over {\beta+2\over 2} y^2+1}dy=\int_{\mathbb{R}}{2|\tilde \phi_0|^2\over {\beta+2\over 2} y^2+1}dy,
\end{align*}
which yields that $\tilde \phi_0\equiv0$ on $\mathbb{R}$.\smallskip

{\bf Case 5.} $\theta_0={\pi\over2}$ and ${\beta+2\over2}<0$.

 Similar to Case 2, we have  $\tilde \phi_n\to \tilde {\phi}_0$ in $H^1_{loc}(\mathbb{R})\cap C_{loc}^1(\mathbb{R}\setminus \{\pm\sqrt{{-2\over\beta+2}}\})$, and for any $\varphi\in H^1(\mathbb{R})$ with compact support,
\begin{align*}
\int_{\mathbb{R}}\tilde{\phi}_0'\varphi'dy+p.v.\int_{\mathbb{R}}\frac{2\tilde{\phi}_0\varphi}{{\beta+2\over2}y^2+1}dy
+i\pi\sum_{y=\pm\sqrt{-2\over \beta+2}}\frac{{2}(\tilde{\phi}_0\varphi)(y)}{\sqrt{-2(\beta+2)}}=0.
\end{align*}
Thus,
\begin{align}\label{phi1beta-piover2}
-\int_{-R}^R(|\tilde \phi_0'|^2\eta_R+\tilde \phi_0'\bar{\tilde \phi}_0\eta_R')dy&=
-\int_{-R}^R\tilde \phi_0'(\bar{\tilde \phi}_0\eta_R)'dy\\\nonumber
&=p.v.\int_{-R}^R{2|\tilde  \phi_0|^2\eta_R\over {\beta+2\over2}y^2+1}dy +i\pi\sum\limits_{y=\pm\sqrt{{-2\over \beta+2}}}{{2}(|\tilde \phi_0|^2\eta_R)(y)\over\sqrt{-2(\beta+2)}}.
\end{align}
Letting $R\to +\infty$ in (\ref{phi1beta-piover2}), we get
\begin{align*}
-\int_{\mathbb{R}}|\tilde \phi_0'|^2dy
=p.v.\int_{\mathbb{R}}{2|\tilde  \phi_0|^2\over {\beta+2\over2}y^2+1}dy +i\pi\sum\limits_{y=\pm\sqrt{{-2\over \beta+2}}}{2|\tilde \phi_0|^2(y)\over\sqrt{-2(\beta+2)}}.
\end{align*}
Then we have
\begin{align}\label{phi-zero-piover2}
\tilde \phi_0(\pm\sqrt{{-2\over\beta+2}})=0,
\end{align}
which, similar to Case 2, implies that
$\tilde \phi_0\in C^1(\mathbb{R}).$
By (\ref{phi-zero-piover2}), we have
\begin{align*}
-\int_{-\sqrt{{-2\over\beta+2}}}^{\sqrt{{-2\over\beta+2}}}|\tilde \phi_0'|^2dy
=\int_{-\sqrt{{-2\over\beta+2}}}^{\sqrt{{-2\over\beta+2}}}{2|\tilde \phi_0|^2\over \left({\beta+2\over2}\right)y^2+1}dy. \end{align*}
Since $\left({\beta+2\over2}\right)y^2+1>0$, we  have $\tilde \phi_0\equiv0$ on $\left[-\sqrt{{-2\over\beta+2}},\sqrt{{-2\over\beta+2}}\right]$.
 Then by   Lemma 2.2 in \cite{LYZ}, we have
 $\tilde\phi_0\equiv0$ on $\mathbb{R}$.\smallskip

In summary, we have shown that $\tilde \phi_n\to0$ in $H_{loc}^1(\mathbb{R})\cap C_{loc}^1(\mathbb{R}\setminus E)$, where
 \begin{align*}
		E=\left\{\begin{array}{ll}
		\left\{\pm\sqrt{2\over \beta+2}\right\},\;\;\textrm{if}\;\; \theta_0=0,\beta>-2,\\
		\left\{\pm\sqrt{-2\over \beta+2}\right\},\;\;\textrm{if} \;\;\theta_0={\pi\over2},\beta<-2,\\
\emptyset,\;\;\;\;\;\;\;\;\;\;\;\;\;\;\;\;\;\;\textrm{otherwise}.
		\end{array}\right.
	\end{align*}
Then we have
\begin{align}\label{convergence-near-0}
\|\phi_n'\|_{L^2(-{b\over \sqrt{|u''(0)|}}r_n,{b\over \sqrt{|u''(0)|}}r_n)}=\|\tilde \phi_n'\|_{L^2(-{b\over \sqrt{|u''(0)|}},{b\over \sqrt{|u''(0)|}})}\to0\end{align}
for any fixed $b>0$.

Thanks to $u''(0)\neq0$, choose  $\delta_1\in(0,\delta)$ such that $|u''(y)|> {|u''(0)|\over2}$ and $u''(y)-\beta>1$ on $[-\delta_1,\delta_1]$. Then there exists $C>1$ such that for any $y\in[-\delta_1,\delta_1]$,
$C^{-1}y^2<|u(y)|<Cy^2.$  Let $K>0$ such that ${b\over\sqrt {|u''(0)|}}r_n<\delta_1$ for any $n>K$. Then for any $y\in[{b\over\sqrt {|u''(0)|}}r_n,\delta_1]$,
\begin{align}\label{estimation-u(y)}
|u(y)|\geq \Big|u\Big({b\over\sqrt {|u''(0)|}}r_n\Big)\Big|={|u''(\xi_{r_n})|\over2}{b^2\over |u''(0)|}r_n^2> {|u''(0)|\over 4}{b^2\over |u''(0)|}r_n^2={b^2\over4}|c_n|,\end{align}
where $\xi_{r_n}\in(0,{b\over\sqrt {|u''(0)|}}r_n)$.
Using (\ref{equation-for-noncritical point-for-u}), we get by integration by parts that
\begin{align}\label{2oversqrtu0}
&\int_{{b\over\sqrt {|u''(0)|}}r_n}^{\delta_1}\left(|\phi_n'|^2+\alpha^2|\phi_n|^2+{(u''-\beta)|\phi_n|^2\over u-c_n}\right)dy\\ \nonumber
&=-\int_{{b\over\sqrt {|u''(0)|}}r_n}^{\delta_1}{{\om_n}\bar{\phi}_n\over u-c_n}dy +\phi_n'\bar{\phi}_n|_{{b\over\sqrt {|u''(0)|}}r_n}^{\delta_1}.
\end{align}
To proceed, we consider two cases.\smallskip

{\bf  Case I.} $u''(0)>0$ (resp. ${u''(0)-\beta\over u''(0)}>0$).

Note that $u\geq0$ on $[-\delta_1,\delta_1]$. Choose $b=3$. Then $u(y)-c_n^r>u(y)-|c_n|\geq|c_n^i|$ and $u(y)-c_n^r\leq u(y)+|c_n|\leq 2u(y)$ for any $n>K$ and any $y\in[{3\over\sqrt {u''(0)}}r_n,\delta_1]$, which gives
$${u(y)-c_n^r\over (u(y)-c_n^r)^2+(c_n^i)^2}\geq{1\over 2(u(y)-c_n^r)}\geq {1\over 4u(y)}\geq {1\over Cy^2}. $$
Thus, we have
\begin{align*}
  &\text{Re}{u''(y)-\beta\over u(y)-c_n}={(u''(y)-\beta)(u(y)-c_n^r)\over (u(y)-c_n^r)^2+(c_n^i)^2}\geq {1\over Cy^2}, \\
  &|u(y)-c_n|\geq u(y)-|c_n|\geq {5\over9}u(y)\geq{y^2\over C},
  \end{align*}
    for any $n>K$ and any $y\in[{3\over\sqrt {u''(0)}}r_n,\delta_1]$. Therefore, we get by (\ref{2oversqrtu0}) that
\begin{align}
&\int_{{3\over\sqrt {u''(0)}}r_n}^{\delta_1}\left(|\phi_n'|^2+\alpha^2|\phi_n|^2\right)dy +{1\over C}\|{\phi_n\over y}\|^2_{L^2({3\over\sqrt {u''(0)}}r_n,\delta_1)}\nonumber\\
&\leq C\|{\phi_n\over y}\|_{L^2({3\over\sqrt {u''(0)}}r_n,\delta_1)}\|{\omega_n\over y}\|_{L^2({3\over\sqrt {u''(0)}}r_n,\delta_1)}+\big|\phi_n'\bar{\phi}_n|_{{3\over\sqrt {u''(0)}}r_n}^{\delta_1}\big|\nonumber\\
&\leq \varepsilon\|{\phi_n\over y}\|^2_{L^2({3\over\sqrt {u''(0)}}r_n,\delta_1)}+C\|{\omega_n\over y}\|^2_{L^2({3\over\sqrt {u''(0)}}r_n,\delta_1)}+\big|\phi_n'\bar{\phi}_n|^{\delta_1}_{{3\over\sqrt {u''(0)}}r_n}\big|\nonumber
\end{align}
for some $0<\varepsilon<C^{-1}$. This gives
\begin{align}\label{desired-estimation-for-positive2}
\int_{{3\over\sqrt {u''(0)}}r_n}^{\delta_1}|\phi_n'|^2dy
\leq C\|{\omega_n\over y}\|^2_{L^2({3\over\sqrt {u''(0)}}r_n,\delta_1)}+\big|\phi_n'\bar{\phi}_n|^{\delta_1}_{{3\over\sqrt {u''(0)}}r_n}\big|.
\end{align}

{\bf Case II.} $u''(0)<-16$ (resp. $-{1\over8}<{u''(0)-\beta\over u''(0)}<0$).

Choose $\varepsilon_0>0$ sufficiently small so that $u''(0)<-16-2\varepsilon_0$.
Due to $u\in C^2([-\delta_1,\delta_1])$, there exists $\delta_2\in(0,\delta_1)$ so that $u''(y)<-16-\varepsilon_0$ and $u''(y)-\beta<2+2^{-4}\varepsilon_0$ for any $y\in[-\delta_2,\delta_2]$.
Then for any $y\in(0,\delta_2]$,
\begin{align*}
{u''(y)-\beta\over |u(y)|}={u''(y)-\beta\over{|u''(\xi_y)|\over2}y^2}\leq {2+2^{-4}\varepsilon_0\over(8+2^{-1}\varepsilon_0)y^2}={1-\varepsilon_1\over4y^2}
\end{align*}
where $\xi_y\in(0,y)$ and $\varepsilon_1={2^{-2}\varepsilon_0\over8+2^{-1}\varepsilon_0}>0$.
Let  $b={4\over \sqrt{\varepsilon_1}}+2\sqrt{2}$ in (\ref{convergence-near-0}).
By (\ref{estimation-u(y)}), we have $|u(y)-c_n^r|>|u(y)|-|c_n|\geq|u(y)|-{4\over b^2}|u(y)|={b^2-4\over b^2}|u(y)|$,
 and thus
 \begin{align*}
 \left|\text{Re}{u''(y)-\beta\over u(y)-c_n}\right|= {(u''(y)-\beta)|u(y)-c_n^r|\over |u(y)-c_n|^2}\leq{u''(y)-\beta\over |u(y)-c_n^r|}\leq
 {b^2\over b^2-4}{u''(y)-\beta\over|u(y)|}\leq{(1-\varepsilon_1)b^2\over 4(b^2-4)y^2}
 \end{align*}
 for any $y\in[{b\over\sqrt {u''(0)}}r_n,\delta_2]$ and $n$ sufficiently large. Then by (\ref{2oversqrtu0}) with $\delta_1$ replaced by $\delta_2$, we obtain
\begin{align}
&\int_{{b\over\sqrt {|u''(0)|}}r_n}^{\delta_2}\left(|\phi_n'|^2-{(1-\varepsilon_1)b^2|\phi_n|^2\over 4(b^2-4)y^2}\right)dy\nonumber\\
&\leq C\|{\phi_n\over y}\|_{L^2({b\over\sqrt {|u''(0)|}}r_n,\delta_2)}\|{\omega_n\over y}\|_{L^2({b\over\sqrt {|u''(0)|}}r_n,\delta_2)}+\big|\phi_n'\bar{\phi}_n|_{{b\over\sqrt {|u''(0)|}}r_n}^{\delta_2}\big|\nonumber\\
&\leq \varepsilon_2\|{\phi_n\over y}\|^2_{L^2({b\over\sqrt {|u''(0)|}}r_n,\delta_2)}+C\|{\omega_n\over y}\|^2_{L^2({b\over\sqrt {|u''(0)|}}r_n,\delta_2)}+\big|\phi_n'\bar{\phi}_n|^{\delta_2}_{{b\over\sqrt {|u''(0)|}}r_n}\big|\nonumber
\end{align}
for some $\varepsilon_2\in(0,{\varepsilon_1b^2\over 8(b^2-4)})$. Then we have
\begin{align}\label{desired-estimation-for-negative}
&\int_{{b\over\sqrt {|u''(0)|}}r_n}^{\delta_2}\left({\varepsilon_1b^2/2-8\over b^2-8}|\phi_n'|^2+{(1-\varepsilon_1/2)b^2\over4(b^2-8)}\left(4|\phi_n'|^2-{|\phi_n|^2\over y^2}\right)\right)dy\\
&=\int_{{b\over\sqrt {|u''(0)|}}r_n}^{\delta_2}\left(|\phi_n'|^2-{(1-\varepsilon_1/2)b^2|\phi_n|^2\over 4(b^2-8)y^2}\right)dy\nonumber\\
&\leq\int_{{b\over\sqrt {|u''(0)|}}r_n}^{\delta_2}\left(|\phi_n'|^2-\left({(1-\varepsilon_1)b^2\over 4(b^2-4)}+\varepsilon_2\right){|\phi_n|^2\over y^2}\right)dy\nonumber\\
&\leq C\|{\omega_n\over y}\|^2_{L^2({b\over\sqrt {|u''(0)|}}r_n,\delta_2)}+\big|\phi_n'\bar{\phi}_n|^{\delta_2}_{{b\over\sqrt {|u''(0)|}}r_n}\big|.\nonumber
\end{align}
Note that  ${\varepsilon_1b^2/2-8\over b^2-8}>0$ and ${(1-\varepsilon_1/2)b^2\over4(b^2-8)}>0$ by the choice of $b$.
Direct computation implies
\begin{align}\label{hardy-like-inequality}
&\int_{{b\over\sqrt {|u''(0)|}}r_n}^{\delta_2}\left|2\phi_n'-{\phi_n\over y}\right|^2dy\\
&=\int_{{b\over\sqrt {|u''(0)|}}r_n}^{\delta_2}\left(4|\phi_n'|^2+{|\phi_n|^2\over y^2}-{2\over y}(|\phi_n|^2)'\right)dy\nonumber\\
&=\int_{{b\over\sqrt {|u''(0)|}}r_n}^{\delta_2}\left(4|\phi_n'|^2-{|\phi_n|^2\over y^2}\right)dy-{2|\phi_n|^2\over y}\big|_{{b\over\sqrt {|u''(0)|}}r_n}^{\delta_2}.\nonumber
\end{align}
Plugging (\ref{hardy-like-inequality}) into (\ref{desired-estimation-for-negative}), we obtain
\begin{align*}
&\int_{{b\over\sqrt {|u''(0)|}}r_n}^{\delta_2}\left({\varepsilon_1b^2/2-8\over b^2-8}|\phi_n'|^2+{(1-\varepsilon_1/2)b^2\over4(b^2-8)}\left|2\phi_n'-{\phi_n\over y}\right|^2\right)dy\\
&\leq -{(1-\varepsilon_1/2)b^2\over4(b^2-8)}{2|\phi_n|^2\over y}\big|_{{b\over\sqrt {|u''(0)|}}r_n}^{\delta_2}+C\|{\omega_n\over y}\|^2_{L^2({b\over\sqrt {|u''(0)|}}r_n,\delta_2)}+\big|\phi_n'\bar{\phi}_n|^{\delta_2}_{{b\over\sqrt {|u''(0)|}}r_n}\big|,\nonumber
\end{align*}
which gives
\begin{align}\label{desired-estimation-for-negative22}
&\int_{{b\over\sqrt {|u''(0)|}}r_n}^{\delta_2}|\phi_n'|^2dy\leq { b^2-8\over\varepsilon_1b^2/2-8}\\
&\quad\times \left(-{(1-\varepsilon_1/2)b^2\over4(b^2-8)}{2|\phi_n|^2\over y}\big|_{{b\over\sqrt {|u''(0)|}}r_n}^{\delta_2}+C\|{\omega_n\over y}\|^2_{L^2({b\over\sqrt {|u''(0)|}}r_n,\delta_2)}+\big|\phi_n'\bar{\phi}_n|^{\delta_2}_{{b\over\sqrt {|u''(0)|}}r_n}\big|\right).\nonumber
\end{align}

Next we prove that each term in RHS of (\ref{desired-estimation-for-positive2}) and  (\ref{desired-estimation-for-negative22}) tends to $0$ as $n\to\infty$.
By Hardy's inequality, $\|{\omega_n\over y}\|_{L^2({b\over\sqrt {|u''(0)|}}r_n,\tilde\delta)}\leq C\|\omega_n\|_{H^1(-\delta,\delta)}\to0$, where $\tilde \delta=\delta_1$ in Case I and $\tilde \delta=\delta_2$ in Case II.
 Note that ${b\over\sqrt{|u''(0)|}}\notin E$, where $b=3$ in Case I and  $b={4\over \sqrt{\varepsilon_1}}+2\sqrt{2}$ in Case II.
Since $\phi_n$ is uniformly bounded in $H^2_{loc}((-\delta,\delta)\setminus\{0\})$ and  $\phi_n\rightharpoonup0$ in $H^1(-\delta,\delta)$, we get $\phi_n\to0$ in $C_{loc}^1([-\delta,\delta]\setminus\{0\})$. This, together with  $\tilde \phi_n\to0$ in $H_{loc}^1(\mathbb{R})\cap C_{loc}^1(\mathbb{R}\setminus E)$, implies that
\beno
&&\phi_n'\bar{\phi}_n|_{{b\over\sqrt {|u''(0)|}}r_n}^{\tilde\delta}=
\phi_n'\bar{\phi}_n(\tilde\delta)-\tilde\phi_n'\bar{\tilde\phi}_n\Big({b\over\sqrt {|u''(0)|}}\Big)\to0,\\
&&{|\phi_n|^2\over y}\big|_{{b\over\sqrt {|u''(0)|}}r_n}^{\delta_2}={|\phi_n(\delta_2)|^2\over \delta_2}-|\tilde\phi_n\Big({b\over{\sqrt{|u''(0)|}}}\Big)|^2/{b\over\sqrt{|u''(0)|}}\to0.
\eeno
Therefore, we have $\|\phi_n'\|_{L^2({b\over\sqrt {|u''(0)|}}r_n,\tilde\delta)}\to0$. A similar argument shows $\|\phi_n'\|_{L^2(-\tilde\delta,-{b\over\sqrt {|u''(0)|}}r_n)}\to0$. This, together with (\ref{convergence-near-0}), implies that
$\|\phi_n'\|_{L^2(-\tilde\delta,\tilde\delta)}\to 0$ and hence $\|\phi_n\|_{H^1(-\delta,\delta)}\to 0$.
\end{proof}

\begin{lemma}
\label{convergence-limiting-endpoint}
Let $c\in \text{Ran}\ (u)$, $y_1 \in u^{-1}\{c\}$, $u'(y_1)=0$,  and $\delta\in(0,y_2-y_1)$ so that $(u''(y)-\beta)(u''(y_1)-\beta)>0$ on $(y_1,y_1+\delta]$  and $[y_1,y_1+\delta]\cap u^{-1}\{c\}=\{y_1\}$.
 Assume that $\phi_n,{\omega_n}\in H^1(y_1,y_1+\delta)$,   $\phi_n(y_1)=\omega_n(y_1)=0$, $c_n^i>0$, $c_n\to c$, $\phi_n\rightharpoonup0,{\omega_n}\to0$ in $H^1(y_1,y_1+\delta)$ and
(\ref{equation-for-noncritical point-for-u}) holds on $[y_1,y_1+\delta]$.
Then $\phi_n\to0$ in $H^1(y_1,y_1+\delta)$.
\end{lemma}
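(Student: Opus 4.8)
The plan is to adapt the blow-up/compactness argument of Lemma \ref{critical-point-not-beta-u-sec-dao-equal0} to the present one-sided situation, the essential new features being that the critical point $y_1$ now sits at the wall of the channel and that the Dirichlet conditions $\phi_n(y_1)=\omega_n(y_1)=0$ are imposed. As a first reduction I would normalize by a shift and a scaling, taking $y_1=0$, $c=0$ and $u''(0)-\beta=2$; when $u''(y_1)-\beta<0$ one applies the scaling $k=2/(u''(y_1)-\beta)<0$ together with complex conjugation, which keeps the sign condition $c_n^i>0$ intact. Note that the reduction to $\omega_n(y_1)=0$ that was needed in the interior case is here supplied directly by hypothesis, and that $\phi_n(y_1)=0$ replaces the pointwise bound coming from Lemma \ref{critical point estimate}.

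Next I would carry out the blow-up. Writing $c_n=r_n^2 e^{2i\theta_n}$ with $r_n\to 0^+$ and $\theta_n\to\theta_0\in[0,\pi/2]$, I set
\[
\tilde\phi_n(y)=r_n^{-1/2}\phi_n(r_n y),\quad \tilde\omega_n(y)=r_n^{-1/2}\omega_n(r_n y),\quad u_n(y)=r_n^{-2}u(r_n y),
\]
now defined on $[0,\delta/r_n]$ and satisfying $\tilde\phi_n(0)=0$. As in the interior case, $u_n\to \frac{\beta+2}{2}y^2$ in $H^3_{loc}$ and, up to a subsequence, $\tilde\phi_n\rightharpoonup\tilde\phi_0$ in $H^1_{loc}([0,\infty))$ with $\tilde\phi_0(0)=0$ and $\tilde\phi_0$ solving $(\frac{\beta+2}{2}y^2-e^{2i\theta_0})\tilde\phi_0''=2\tilde\phi_0$ away from its singular points. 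I would then run the same five-case discussion in terms of $\theta_0$ and the sign of $\frac{\beta+2}{2}$: in each case the integration-by-parts identities of Lemma \ref{critical-point-not-beta-u-sec-dao-equal0} are applied on $[0,R]$ rather than $[-R,R]$, the boundary contribution at $y=0$ vanishing because $\tilde\phi_0(0)=0$, while the contribution at $y=R$ is controlled by Hardy's inequality together with the cut-off $\eta_R$ exactly as before. The singular-point cases use the principal-value-plus-residue formula of Lemma \ref{noncritical-point-formula} at the single positive root. This yields $\tilde\phi_0\equiv 0$, hence $\tilde\phi_n\to 0$ in $H^1_{loc}([0,\infty))\cap C^1_{loc}([0,\infty)\setminus E)$, and in particular $\|\phi_n'\|_{L^2(0,\,b r_n/\sqrt{|u''(0)|})}\to 0$ for a suitable fixed $b$ with $b/\sqrt{|u''(0)|}\notin E$.

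It remains to control $\phi_n$ away from the critical point. On $[b r_n/\sqrt{|u''(0)|},\tilde\delta]$ I would multiply the equation by $\bar\phi_n$ and integrate by parts, producing $\int(|\phi_n'|^2+\alpha^2|\phi_n|^2+\frac{(u''-\beta)|\phi_n|^2}{u-c_n})\,dy$ together with a source term $\int\frac{\omega_n\bar\phi_n}{u-c_n}\,dy$ and boundary terms at the two endpoints. Splitting into Case I ($u''(0)>0$) and Case II ($u''(0)<0$) as in Lemma \ref{critical-point-not-beta-u-sec-dao-equal0}, I estimate $\mathrm{Re}\,\frac{u''-\beta}{u-c_n}$ from below and absorb the dangerous term via the Hardy-type inequality $\int|2\phi_n'-\phi_n/y|^2\,dy\ge 0$; here the constant $9/8$ in $(\textbf{H1})$ enters precisely to keep the coefficient of $|\phi_n|^2/y^2$ below the Hardy threshold, so that the quadratic form remains coercive. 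The boundary terms at the outer endpoint $\tilde\delta$ tend to $0$ by the $C^1_{loc}$-convergence $\phi_n\to 0$ on $(y_1,y_1+\delta]$, and those at the inner endpoint $b r_n/\sqrt{|u''(0)|}$ tend to $0$ by the rescaled convergence of $\tilde\phi_n$; combined with the near-critical estimate this gives $\|\phi_n'\|_{L^2(0,\delta)}\to 0$. Finally, $\phi_n\rightharpoonup 0$ in $H^1$ implies $\phi_n\to 0$ in $L^2$ by the Rellich compact embedding, so $\|\phi_n\|_{H^1(y_1,y_1+\delta)}\to 0$.

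I expect the main obstacle to be the quantitative Hardy-type estimate in Case II: one must track the precise coefficients so that the $9/8$-threshold of $(\textbf{H1})$ guarantees positivity of the resulting quadratic form, and one must verify that all boundary terms generated by the one-sided integrations by parts genuinely vanish in the limit. The five-case blow-up analysis is conceptually identical to the interior case, but the bookkeeping of the singular-point residues and of the endpoint behaviour on the half-line is where the care is required.
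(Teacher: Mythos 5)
Your proposal is correct and follows essentially the same route as the paper: normalize to $y_1=0$, $c=0$, $u''(0)-\beta=2$, perform the same blow-up $\tilde\phi_n(y)=r_n^{-1/2}\phi_n(r_ny)$ on the half-line with $\tilde\phi_0(0)=0$ supplying the vanishing boundary term, run the five-case analysis in $\theta_0$ and the sign of $\frac{\beta+2}{2}$, and then reuse the Case I/Case II Hardy-type coercivity estimates away from the critical point. The paper's own proof is exactly this sketch, with the hypotheses $\phi_n(y_1)=\omega_n(y_1)=0$ playing the roles you identify (replacing the Lemma \ref{critical point estimate} bound and the $\omega_n(0)=0$ reduction, respectively).
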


\begin{remark}\label{remark-endpoint}
Similar result in Lemma \ref{convergence-limiting-endpoint} holds true with $y_1$ and $]y_1,y_1+\delta[$ replaced by
$y_2$ and $]y_2-\delta,y_2[$.
\end{remark}

\begin{proof}
Thanks to $\phi_n(y_1)=\omega_n(y_1)=0$ for each $n\geq 1$, the proof is similar to Lemma \ref{critical-point-not-beta-u-sec-dao-equal0}. So, we just sketch it here.

Without loss of generality, we may assume that $c=0$, $y_1=0$ and  $u''(0)-\beta=2$.
Let $c_n=r_n^2e^{2i\theta_n}$ and $\theta_n\in(0,{\pi\over2})$ for $n\geq1$.
We denote
$
\tilde \phi_n,\tilde \omega_n$ and $u_n$ as the same meanings in (\ref{tilde-phi-n-tilde-omega-n-u-n}).
Then (\ref{equation-after-scaling}) holds on $[0,{\delta\over r_n}]$.
Let $\tilde \phi_n\rightharpoonup \tilde \phi_0$ in $H^1_{loc}(\mathbb{R}^+)$ and $\theta_n\to\theta_0\in[0,{\pi\over2}]$. Then $\tilde \phi'_0\in L^2(\mathbb{R}^+)$ and $\tilde \phi_0(0)=0. $
To show that $\tilde \phi_n\to \tilde \phi_0$ in $H^1_{loc}(\mathbb{R}^+)$ and $\tilde \phi_0\equiv0$ on $\mathbb{R}^+$, we again consider five cases.\smallskip

{\bf Case 1.} $\theta_0\in(0,{\pi\over2})$.

In this case, we have $\tilde \phi_n\to \tilde {\phi}_0$ in $C_{loc}^1(\mathbb{R}^+)$, and
$({\beta+2\over2} y^2-e^{2i\theta_0})\tilde \phi_0''=2\tilde \phi_0.$
Using a cut-off argument  and noting that $\tilde \phi_0(0)=0$, we get
\begin{align*}
\int_{0}^\infty |\tilde \phi'_0|^2dy=-\int_{0}^\infty{2|\tilde\phi_0|^2\left({\beta+2\over2}y^2-\cos 2\theta_0+i\sin 2\theta_0\right)\over\left({\beta+2\over2}y^2-\cos2\theta_0\right)^2+\sin^22\theta_0}dy.\end{align*}
Taking the imaginary part of the equality, we get
$\tilde \phi_0\equiv0$ on $\mathbb{R}^+$.\smallskip

{\bf Case 2.} $\theta_0=0$ and ${\beta+2\over2}>0$.

In this case, we have $\tilde \phi_n\to \tilde {\phi}_0$ in $H^1_{loc}(\mathbb{R}^+)\cap C_{loc}^1(\mathbb{R}^+\setminus \{a\})$ and
$({\beta+2\over2} y^2-1)\tilde \phi_0''=2\tilde \phi_0,$ where $a=\sqrt{{2\over\beta+2}}$.
Using a cut-off argument  and $\tilde \phi_0(0)=0$, we have
\begin{align*}
-\int_{0}^\infty|\tilde \phi_0'|^2dy
=p.v.\int_{0}^\infty{2|\tilde  \phi_0|^2\over {\beta+2\over2}y^2-1}dy +i\pi{{2}\left|\tilde \phi_0\left(a\right)\right|^2\over \sqrt{2(\beta+2)}}.
\end{align*}
Then
$\tilde \phi_0\left(a\right)=0$
and thus $
\tilde \phi_0\in C^1(\mathbb{R}^+).
$
Note that
\begin{align*}
-\int_{a}^\infty|\tilde \phi_0'|^2dy=\int_{a}^\infty\tilde \phi_0''\bar{\tilde \phi}_0dy
=\int_{a}^\infty{2|\tilde\phi|^2\over{{\beta+2\over2}y^2-1}}dy.
\end{align*}
Then we get $\tilde \phi_0\equiv0$ on $[a,+\infty),$ and hence
 $\tilde\phi_0\equiv0$ on $\mathbb{R}^+$.\smallskip

{\bf Case 3.} $\theta_0=0$ and ${\beta+2\over2}<0$.

In this case, we have $\tilde \phi_n\to \tilde {\phi}_0$ in $C_{loc}^1(\mathbb{R}^+)$. Let $V(y)={-2\over {\beta+2\over2}y^2-1}$. Then we have $V>0$ on $\mathbb{R}^+$  and
$\int_{0}^\infty(2V+yV')\tilde |\phi_0|^2dy=0.$
Moreover,
$2V(y)+yV'(y)={4\over\left({\beta+2\over2}y^2-1\right)^2}>0$,
and thus $\tilde \phi_0\equiv0$ on $\mathbb{R}^+$.\smallskip

{\bf Case 4.} $\theta_0={\pi\over2}$ and ${\beta+2\over2}>0$.

In this case, we have $\tilde \phi_n\to \tilde {\phi}_0$ in $C_{loc}^1(\mathbb{R}^+)$. Since $-\int_{0}^\infty|\tilde\phi_0|'^2dy=\int_{0}^\infty{2|\tilde \phi_0|^2\over {\beta+2\over 2} y^2+1}dy,$
we have $\tilde \phi_0\equiv0$ on $\mathbb{R}^+$.\smallskip

{\bf Case 5.} $\theta_0={\pi\over2}$ and ${\beta+2\over2}<0$.

In this case, we have $\tilde \phi_n\to \tilde {\phi}_0$ in $H^1_{loc}(\mathbb{R}^+)\cap C_{loc}^1(\mathbb{R}^+\setminus \{\sqrt{{-2\over\beta+2}}\})$, and
$({\beta+2\over2} y^2+1)\tilde \phi_0''=2\tilde \phi_0.$
Then we get
\begin{align*}
-\int_{0}^\infty|\tilde \phi_0'|^2dy
=p.v.\int_{0}^\infty{2|\tilde  \phi_0|^2\over {\beta+2\over2}y^2+1}dy +i\pi{{2}\left|\tilde \phi_0\left(\sqrt{{-2\over \beta+2}}\right)\right|^2\over\sqrt{-2(\beta+2)}}.
\end{align*}
Then $\tilde \phi_0\left(\sqrt{{-2\over\beta+2}}\right)=0,$ and thus
$\tilde \phi_0\in C^1(\mathbb{R}^+).$
Since $
-\int_{0}^{\sqrt{{-2\over\beta+2}}}|\tilde \phi_0'|^2dy
=\int_{0}^{\sqrt{{-2\over\beta+2}}}{2|\tilde \phi_0|^2\over \left({\beta+2\over2}\right)y^2+1}dy,$
 we  have $\tilde \phi_0\equiv0$ on $[0,\sqrt{{-2\over\beta+2}}]$, and thus
 $\tilde\phi_0\equiv0$ on $\mathbb{R}^+$.

Thus, $\|\phi_n'\|_{L^2(0,{b\over \sqrt{|u''(0)|}}r_n)}\to0$, and
moreover $\|\phi_n'\|_{L^2({b\over\sqrt {|u''(0)|}}r_n,\tilde\delta)}\to0$ for some $b>0$ and some $\tilde \delta>0$. Then $\|\phi_n\|_{H^1(0,\delta)}\to 0$.
\end{proof}

Finally, we consider the compactness in the domain with critical points satisfying $u''-\beta=0$.

\begin{lemma}\label{critical-point-beta-u2dao=0}
 Let $c\in \text{Ran}\ (u)$, $y_0\in u^{-1}\{c\}$, $u'(y_0)=0$, $u''(y_0)-\beta=0$, and $\delta>0$ so that $u''(y)u''(y_0)>0$ on $[y_0,y_0+\delta]$, $[y_0,y_0+\delta]\cap u^{-1}\{c\}=\{y_0\}$ and  $[y_0,y_0+\delta]\subset [y_1,y_2]$.
 Assume that $\phi_n,{\omega_n\over u'}\in H^1(y_0,y_0+\delta)$, $c_n\in\mathbb{C}$ such that $c^i_n>0$, $c_n\to c$, $\phi_n\rightharpoonup\phi,{\omega_n\over u'}\to{\omega\over u'} $ in $H^1(y_0,y_0+\delta)$ and $(\ref{equation-for-noncritical point-for-u})$ holds on $[y_0,y_0+\delta]$.
Then $\phi_n\to\phi$ in $H^1(y_0,y_0+\delta)$.
\end{lemma}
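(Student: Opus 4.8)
The plan is to reduce to a one-sided neighborhood of $y_0$ and then analyze the solution there by a blow-up at the critical point, as in Lemma~\ref{critical-point-not-beta-u-sec-dao-equal0}, the essential new feature being that now $u''(y_0)=\beta$. First I would record the local structure. Since $u''(y_0)u''(y)>0$ on $[y_0,y_0+\delta]$ we have $\beta=u''(y_0)\neq0$, so $u-c\sim\frac{\beta}{2}(y-y_0)^2$ while $u'(y)=\beta(y-y_0)+O((y-y_0)^2)$ and $u''-\beta=u'''(y_0)(y-y_0)+\cdots$ both vanish to exactly first order at $y_0$. Because $u'(y_0)=0$, the hypothesis $\omega_n/u'\in H^1$ forces $\omega_n(y_0)=0$, and writing $\omega_n=u'\,(\omega_n/u')$ with $\omega_n/u'\to\omega/u'$ in $H^1$ is what will make the source controllable near $y_0$. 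On $[y_0+\rho,y_0+\delta]$ the coefficient $u-c_n$ stays bounded away from $0$ for large $n$, so the equation is uniformly elliptic there; interior regularity gives $H^2_{loc}$ bounds and $\phi_n\to\phi$ in $C^1_{loc}((y_0,y_0+\delta])$, and Lemma~\ref{noncritical point} on subintervals yields strong $H^1$ convergence on $[y_0+\rho,y_0+\delta]$. Thus everything reduces to controlling $\|\phi_n'\|_{L^2}$ on $(y_0,y_0+\rho)$ for $\rho$ small.

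Next I would run the blow-up of \eqref{tilde-phi-n-tilde-omega-n-u-n}: write $c_n=c+r_n^2e^{2i\theta_n}$, $\theta_n\to\theta_0$, and rescale at scale $r_n$, so that \eqref{equation-after-scaling} holds and $u_n\to\frac{\beta}{2}Y^2$ in $H^3_{loc}$. The decisive difference from Lemma~\ref{critical-point-not-beta-u-sec-dao-equal0} is that here $u_n''-\beta=u''(y_0+r_nY)-\beta\to0$, so the potential term disappears in the limit and, using $\omega_n=u'(\omega_n/u')$ to get the rescaled source $\to0$ in $L^2_{loc}$, the blow-up limit solves the degenerate equation $(\frac{\beta}{2}Y^2-e^{2i\theta_0})\tilde\phi_0''=0$ on $\mathbb{R}^+$ with $\tilde\phi_0'\in L^2(\mathbb{R}^+)$ (granting the size control of $\phi_n(y_0)$ discussed below, which is what keeps the rescaled sequence bounded in $H^1_{loc}$). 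The coefficient has at most one zero on $\mathbb{R}^+$ (at $Y=\sqrt{2/\beta}$ when $\theta_0=0,\beta>0$, or $Y=\sqrt{-2/\beta}$ when $\theta_0=\frac\pi2,\beta<0$); away from it $\tilde\phi_0''=0$, and near it the point is a simple critical layer of $u_n$ (where $u_n'\to\beta Y\neq0$), so the $p.v.$--residue identity of Lemma~\ref{noncritical-point-formula} together with Lemma~\ref{noncritical point} upgrade the convergence to strong $H^1_{loc}$ and force $\tilde\phi_0$ to be constant, i.e. $\tilde\phi_0'\equiv0$. This gives $\|\phi_n'\|_{L^2(y_0,y_0+br_n)}\to0$ for each fixed $b>0$.

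To cover the intermediate range $(y_0+br_n,y_0+\rho)$ I would test the equation against $\bar\phi_n$ and integrate by parts as in \eqref{2oversqrtu0}, producing $\int(|\phi_n'|^2+\alpha^2|\phi_n|^2)$, boundary contributions (controlled at the outer end by the $C^1_{loc}$ convergence and at the inner end by the blow-up convergence), and the two singular integrals $\int(u''-\beta)|\phi_n|^2\,\mathrm{Re}\frac1{u-c_n}$ and $\int\mathrm{Re}\frac{\omega_n\bar\phi_n}{u-c_n}$. Thanks to the first-order vanishing of $u''-\beta$ and $u'$ against $u-c\sim\frac{\beta}{2}(y-y_0)^2$, both integrands are only $O(|y-y_0|^{-1})$, one power milder than in the generic critical case, so that with $\omega_n=u'(\omega_n/u')$ and the $H^1$ convergence of $\omega_n/u'$, a Hardy inequality $\|\phi_n/(y-y_0)\|_{L^2}\lesssim\|\phi_n'\|_{L^2}+|\phi_n(y_0+\rho)|$ absorbs them; I expect no sign condition on $\mathrm{Re}\frac{u''-\beta}{u-c_n}$ to be needed here. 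Letting $n\to\infty$ and then $\rho\to0$ and combining with the interior convergence then gives $\phi_n\to\phi$ in $H^1(y_0,y_0+\delta)$.

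The hard part is the matching at the innermost scale. Because $u''(y_0)=\beta$, the pointwise bound of Lemma~\ref{critical point estimate} degenerates (its hypothesis $(\beta-u'')(\beta-u''(y_0))>0$ fails at $y_0$), so the size control of $\phi_n$ near $y_0$ that feeds the blow-up requires a separate argument exploiting the precise normalization $\omega_n=u'(\omega_n/u')$---this is exactly the role played by the weight $p$ of \eqref{def-p} in Theorem~\ref{thm:non-monotone}. Carrying the nonzero weak limit $\phi$ through (using that the contribution of $\phi$ on the shrinking inner interval vanishes) and checking that the rescaled source genuinely tends to $0$ are the remaining technical points; I do not expect the sign-definite Hardy mechanism with the $9/8$ threshold of (\textbf{H1}) that was needed when $u''(y_c)\neq\beta$ to enter, since the extra vanishing makes the singular terms subcritical.
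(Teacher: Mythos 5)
Your proposal transplants the blow-up scheme of Lemma \ref{critical-point-not-beta-u-sec-dao-equal0} to the degenerate case $u''(y_0)=\beta$, and the point you yourself flag as ``the hard part'' is in fact a gap that the approach cannot get past. The rescaling $\tilde\phi_n(Y)=r_n^{-1/2}\phi_n(y_0+r_nY)$ only produces a bounded sequence if $|\phi_n(y_0)|\le Cr_n^{1/2}$; in the non-degenerate case this is supplied by Lemma \ref{critical point estimate}, whose hypothesis $(\beta-u'')(\beta-u''(y_0))>0$ fails here, and no substitute exists because the estimate is simply false in general: when $u''(y_0)=\beta$ both $(u''-\beta)\phi$ and $\omega$ vanish at $y_0$ like $u'$, the equation imposes no vanishing of $\phi$ at $y_0$, and the lemma explicitly allows a weak limit $\phi$ with $\phi(y_0)\neq 0$, so $\phi_n(y_0)$ stays bounded away from $0$ along typical sequences. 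Without the $r_n^{1/2}$ normalization the conclusion $\|\phi_n'\|_{L^2(y_0,y_0+br_n)}\to0$ does not follow from any soft blow-up limit (an unweighted rescaling gives $\|\phi_n'\|^2_{L^2(y_0,y_0+br_n)}=r_n^{-1}\|\tilde\phi_n'\|^2_{L^2(0,b)}$, which requires a quantitative $o(r_n)$ bound that the limit equation does not provide). A second, related problem is that your inner and intermediate arguments are modelled on the case $\phi_n\rightharpoonup0$, $\omega_n\to0$; for a general weak limit the energy identity \eqref{2oversqrtu0} controls $\|\phi_n'\|_{L^2}$ but does not by itself yield convergence of $\phi_n'$ to $\phi'$, and subtracting $\phi$ destroys the equation since $\phi$ solves it with $c$, not $c_n$.

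The paper's proof avoids all of this and is worth internalizing: it is a direct equicontinuity argument with no blow-up and no contradiction. Set $g_n=\big((u''-\beta)\phi_n+\omega_n\big)/u'$; since $u''(y_0)=\beta$ and $u''(y_0)\neq0$, the ratio $(u''-\beta)/u'$ lies in $H^1(y_0,y_0+\delta)$, so $g_n$ is uniformly bounded in $H^1$ (this is exactly where your correct observation that $\omega_n=u'(\omega_n/u')$ with $\omega_n/u'\in H^1$ enters). Writing the equation as $(u-c_n)(\phi_n''-\alpha^2\phi_n)=u'g_n$ and integrating once gives $\big(\phi_n'-g_n\ln(u-c_n)\big)'=\alpha^2\phi_n-g_n'\ln(u-c_n)$; a uniform $L^p$ bound on $\ln(u-c_n)$ for all $p<\infty$ (using $|u-c_n|\gtrsim|y-y_n|^2$) then shows $\phi_n'-g_n\ln(u-c_n)$ is uniformly bounded in $L^\infty$, whence $\sup_n\|\phi_n\|_{H^1(y_0,y_0+\varepsilon)}\to0$ as $\varepsilon\to0^+$. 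Combined with $\phi_n\to\phi$ in $H^1_{loc}((y_0,y_0+\delta])$ this gives the conclusion for an arbitrary weak limit $\phi$. Your intuition that the degeneracy makes the singularity ``subcritical'' and removes the need for the $9/8$ Hardy mechanism is correct, but the mechanism that exploits it is the explicit antiderivative $\ln(u-c_n)$ of $u'/(u-c_n)$, not a rescaling at the critical layer.
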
\begin{remark}\label{remark-on-critical-beta}
If $[y_0-\delta,y_0]\subset [y_1,y_2]$,
similar result in Lemma \ref{critical-point-beta-u2dao=0} holds with $]y_0,y_0+\delta[$ replaced by $]y_0-\delta,y_0[$, including the results of the uniform $H^1$ bound of $g_n$ and the uniform $L^p$ bound of $\ln(u-c_n)\ (1<p<+\infty)$ in the following proof.
\end{remark}

\begin{proof}
We denote
\begin{align}\label{def-gn-func}
g_n={(u''-\beta)\phi_n+\omega_n\over u'}.
\end{align}
Using the facts that
\begin{align*}
{u''-\beta\over u'}={\int_{y_0}^{y}u'''(z)dz\over\int_{y_0}^{y}u''(z)dz}={\int_0^1u'''(y_0+t(y-y_0))dt\over\int_0^1u''(y_0+t(y-y_0))dt},
\end{align*}
and
\begin{align*}
\left({u''-\beta\over u'}\right)'
=&{\int_0^1tu''''(y_0+t(y-y_0))dt\int_0^1u''(y_0+t(y-y_0))dt\over
\left(\int_0^1u''(y_0+t(y-y_0))dt\right)^2}\\
&-{\int_0^1u'''(y_0+t(y-y_0))dt\int_0^1tu'''(y_0+t(y-y_0))dt\over\left(\int_0^1u''(y_0+t(y-y_0))dt\right)^2},
\end{align*}
we have ${u''-\beta\over u'}\in H^1(y_0,y_0+\delta).$ Since $\phi_n$ and ${\omega_n\over u'}$ are uniformly bounded in $H^1(y_0,y_0+\delta)$, we infer that
$g_n$ is  uniformly bounded in $H^1(y_0,y_0+\delta)$.

Thanks to $(u-c_n)(\phi_n''-\alpha^2\phi_n)=u'g_n,$ we find
\begin{align*}
(\phi_n'-g_n\ln (u-c_n))'=\alpha^2\phi_n-g_n'\ln (u-c_n).
\end{align*}Here $\ln (u-c_n)=\ln |u-c_n|+i\left(\arctan({u-c_n^r\over c_n^i})-{\pi\over2}\right). $
Let us claim that $\ln(u-c_n)$ is uniformly bounded in $L^p(y_0,y_0+\delta)$ for every $1<p<+\infty.$ This, along with
\begin{align*}
\|g_n'\ln (u-c_n)\|_{L^1(y_0,y_0+\delta)}\leq\|g_n'\|_{L^2(y_0,y_0+\delta)}\|\ln (u-c_n)\|_{L^2(y_0,y_0+\delta)},
\end{align*}
and that $g_n$ is uniformly bounded in $L^\infty(y_0,y_0+\delta)$, yields that $\phi_n'-g_n\ln (u-c_n)$ is uniformly bounded in $\dot{W}^{1,1}(y_0,y_0+\delta)\cap L^2(y_0,y_0+\delta)$. Thus,
$\phi_n'-g_n\ln (u-c_n)$ is uniformly bounded in $L^\infty(y_0,y_0+\delta)$. This implies that
\begin{align*}
\lim\limits_{\varepsilon\to0^+}\sup\limits_{n}\|\phi_n\|_{H^1(y_0,y_0+\varepsilon)}=0,
\end{align*}
where $\varepsilon\in(0,\delta)$. This along with the fact $\phi_n\to\phi$ in $H^1_{loc}((y_0,y_0+\delta])$ implies that
$\phi_n\to\phi$ in $H^1(y_0,y_0+\delta)$.\smallskip

Finally, we show that $\ln(u-c_n)$ is uniformly bounded in $L^p(y_0,y_0+\delta)$ for $1<p<+\infty.$
Thanks to $u'(y_0)=0$ and $u''(y_0)\neq 0,$ there exist $\delta_1\in(0,\delta)$ and $c_0>0$ such that $ |u'(y)|\geq c_0 (y-y_0)$  for any $y\in[y_0,y_0+\delta_1]$. Note that
$(y-y_0)^2-(z-y_0)^2=(y-z)^2+2(y-z)(z-y_0)\geq (y-z)^2$ if $y_0\leq z\leq y\leq y_0+\delta_1$ and
$(z-y_0)^2-(y-y_0)^2=(z-y)^2+2(z-y)(y-y_0)\geq (z-y)^2$ if $y_0\leq y< z\leq y_0+\delta_1$.
Thus,
\begin{align*}
 |u(y)-u(z)|=\left|\int_y^z|u'(\xi)|d\xi\right|\geq c_0\left|\int_y^z(\xi-y_0)d\xi\right|\geq {c_0\over2} |(y-y_0)^2-(z-y_0)^2|\geq {c_0\over2} |y-z|^2,
 \end{align*}
where $y,z\in[y_0,y_0+\delta_1]$. Choose $y_n\in [y_0,y_0+\delta_1]$  such that $u(y_n)=c_n^r$ if $c_n^r\in u([y_0,y_0+\delta_1])$; $y_n=y_0$ if $c_n^r<u(y_0)<u(y_0+\delta_1)$ or $c_n^r>u(y_0)>u(y_0+\delta_1)$; $y_n=y_0+\delta_1$ if $c_n^r<u(y_0+\delta_1)<u(y_0)$ or $c_n^r>u(y_0+\delta_1)>u(y_0)$. Then $C\geq |u(y)-c_n|\geq|u(y)-c_n^r|\geq|u(y)-u(y_n)|\geq {c_0\over2}|y-y_n|^2$ and
$$|\ln(u-c_n)|\leq |\ln|u-c_n||+\pi\leq C-\ln|u-c_n|\leq C-\ln({c_0\over2}|y-y_n|^2) $$
for $y\in(y_0,y_0+\delta_1)$. Hence,
$$\int_{y_0}^{y_0+\delta_1}|\ln(u-c_n)|^pdy\leq C \int_{y_0}^{y_0+\delta_1}(|\ln|y-y_n|^2|^p+1) dy\leq C \int_{-\delta_1}^{\delta_1}(|\ln|z|^2|^p+1) dz\leq C.$$
This shows that $\ln(u-c_n)$ is uniformly bounded in $L^p(y_0,y_0+\delta)$ for $1<p<+\infty.$
\end{proof}

\begin{lemma}\label{critical-point-beta-u2dao=0-formula}
 Let $c\in \text{Ran}\ (u)$, $y_0\in u^{-1}\{c\}\cap (y_1,y_2)$, $u'(y_0)=0$, $u''(y_0)-\beta=0$, and $\delta>0$ so that  $u''(y)u''(y_0)>0$ on $[y_0,y_0+\delta]$, $[y_0-\delta,y_0+\delta]\cap u^{-1}\{c\}=\{y_0\}$ and $[y_0-\delta,y_0+\delta]\subset (y_1,y_2)$.
 Assume that $\phi_n,{\omega_n\over u'}\in H^1(y_0-\delta,y_0+\delta)$,  $c_n^i>0$, $c_n\to c$, $\phi_n\rightharpoonup\phi,{\omega_n\over u'}\to{\omega\over u'} $ in $H^1(y_0-\delta,y_0+\delta)$ and $(\ref{equation-for-noncritical point-for-u})$  holds on $[y_0-\delta,y_0+\delta]$.
Then  for all $\varphi\in H_0^1(y_0-\delta,y_0+\delta)$,
\begin{align*}
\int_{y_0-\delta}^{y_0+\delta}(\phi'\varphi'+\alpha^2\phi\varphi)dy+p.v.\int_{y_0-\delta}^{y_0+\delta}{((u''-\beta)\phi+\omega)\varphi
\over u-c}dy=0.\end{align*}
\end{lemma}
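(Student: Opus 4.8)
The plan is to divide the equation by $u-c_n$, test against $\varphi\in H_0^1(y_0-\delta,y_0+\delta)$, and then pass to the limit $n\to\infty$, the only delicate point being the singular integral. Writing $g_n={(u''-\beta)\phi_n+\omega_n\over u'}$ as in \eqref{def-gn-func}, equation \eqref{equation-for-noncritical point-for-u} reads $\phi_n''-\alpha^2\phi_n={u'g_n\over u-c_n}$; multiplying by $\varphi$ and integrating by parts (boundary terms vanish since $\varphi(y_0\pm\delta)=0$) gives
\begin{align*}
\int_{y_0-\delta}^{y_0+\delta}(\phi_n'\varphi'+\alpha^2\phi_n\varphi)\,dy+\int_{y_0-\delta}^{y_0+\delta}{u'g_n\over u-c_n}\,\varphi\,dy=0.
\end{align*}
Since $\phi_n\rightharpoonup\phi$ in $H^1$, the first integral converges to $\int(\phi'\varphi'+\alpha^2\phi\varphi)\,dy$, so the whole matter reduces to computing the limit of the singular integral.

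For the singular term I would exploit ${u'\over u-c_n}=\big(\ln(u-c_n)\big)'$, which is legitimate for each fixed $n$ because $c_n^i>0$ keeps $u-c_n$ away from $0$. Integrating by parts once more (again $g_n\varphi\in H_0^1$ kills the boundary terms) yields
\begin{align*}
\int_{y_0-\delta}^{y_0+\delta}{u'g_n\over u-c_n}\,\varphi\,dy=-\int_{y_0-\delta}^{y_0+\delta}\ln(u-c_n)\,(g_n\varphi)'\,dy.
\end{align*}
By Lemma \ref{critical-point-beta-u2dao=0} together with Remark \ref{remark-on-critical-beta} we have $\phi_n\to\phi$ in $H^1(y_0-\delta,y_0+\delta)$, and since ${u''-\beta\over u'},\,{\omega_n\over u'}$ converge in $H^1$ this gives $g_n\to g:={(u''-\beta)\phi+\omega\over u'}$ in $H^1$, hence $(g_n\varphi)'\to(g\varphi)'$ in $L^2$. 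On the other hand the proof of Lemma \ref{critical-point-beta-u2dao=0} (and Remark \ref{remark-on-critical-beta}) shows $\ln(u-c_n)$ is uniformly bounded in $L^p$ for every $1<p<\infty$; combined with the pointwise a.e.\ limit $\ln(u-c_n)\to\ln|u-c|-i\pi\mathbf 1_{\{u<c\}}$ (as $c_n^i>0$ forces $\arg(u-c_n)\to0$ where $u>c$ and $\to-\pi$ where $u<c$) and Vitali's theorem, this convergence holds in $L^2$. Pairing a strongly convergent factor with a strongly convergent factor,
\begin{align*}
\int_{y_0-\delta}^{y_0+\delta}{u'g_n\over u-c_n}\,\varphi\,dy\longrightarrow-\int_{y_0-\delta}^{y_0+\delta}\big(\ln|u-c|-i\pi\mathbf 1_{\{u<c\}}\big)(g\varphi)'\,dy.
\end{align*}

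The main point — and the reason no $i\pi$ residue survives, in contrast with Lemma \ref{noncritical-point-formula} — is that the hypothesis $u''(y)u''(y_0)>0$ forces $u-c$ to have constant sign on the punctured interval, i.e.\ a zero of even order at $y_0$. If $u''(y_0)>0$ then $u>c$ on $[y_0-\delta,y_0+\delta]\setminus\{y_0\}$, so $\mathbf 1_{\{u<c\}}\equiv0$ and the imaginary contribution vanishes identically; if $u''(y_0)<0$ then $u<c$ off $y_0$ and the imaginary part is $i\pi\int_{y_0-\delta}^{y_0+\delta}(g\varphi)'\,dy=i\pi\,[g\varphi]_{y_0-\delta}^{y_0+\delta}=0$ because $\varphi\in H_0^1$. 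For the real part I would integrate by parts back on $(y_0-\delta,y_0-\varepsilon)\cup(y_0+\varepsilon,y_0+\delta)$ and let $\varepsilon\to0^+$: the emerging boundary terms are controlled by $|\ln\varepsilon|\,|g\varphi(y_0-\varepsilon)-g\varphi(y_0+\varepsilon)|$, which tends to $0$ since $g\varphi\in C^{0,1/2}$ makes the difference $O(\varepsilon^{1/2})$, giving $-\int\ln|u-c|(g\varphi)'\,dy=\mathrm{p.v.}\int{u'\over u-c}g\varphi\,dy=\mathrm{p.v.}\int{((u''-\beta)\phi+\omega)\varphi\over u-c}\,dy$. Collecting the three convergences produces the claimed weak formulation. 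I expect the chief obstacle to be precisely this last step: correctly identifying the limit of $\ln(u-c_n)$ across the double zero and verifying that the would-be residue cancels; the remaining estimates are routine.
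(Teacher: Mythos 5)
Your argument is correct and rests on the same two technical pillars as the paper's proof (the uniform $H^1$ bound of $g_n$ and the uniform $L^p$ bound of $\ln(u-c_n)$ from Lemma \ref{critical-point-beta-u2dao=0} and Remark \ref{remark-on-critical-beta}, plus the integration by parts against $\ln(u-c_n)$), but you organize the limit passage differently. The paper never identifies the limit of $\ln(u-c_n)$: it splits the singular integral at a scale $\tau$, uses uniform convergence on $E_\tau^c$, and shows the near-$y_0$ contribution ($I_{n,\tau}+II_{n,\tau}$) is $O(\tau^{1/4})$ uniformly in large $n$, so the principal value emerges as the limit of the truncated integrals. You instead integrate by parts over the whole interval, identify the strong $L^2$ limit $\ln|u-c|-i\pi\mathbf 1_{\{u<c\}}$ via Vitali, and then undo the integration by parts. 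Your route has the merit of making explicit \emph{why} no $i\pi$ residue survives (the zero of $u-c$ at $y_0$ has even order, so either $\{u<c\}$ is empty or the residue is $i\pi\int(g\varphi)'\,dy=0$), which the paper's estimate-only approach leaves implicit; the price is that you need the strong $H^1$ convergence $\phi_n\to\phi$ (hence the full strength of Lemma \ref{critical-point-beta-u2dao=0}) to get $(g_n\varphi)'\to(g\varphi)'$ in $L^2$, whereas weak convergence of $(g_n\varphi)'$ paired with the strong convergence of $\ln(u-c_n)$ would already suffice and would keep the proof at the level of uniform bounds only.

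One small imprecision in your last step: the boundary term produced by integrating by parts back on $\{|y-y_0|>\varepsilon\}$ is
\begin{align*}
\ln|u(y_0+\varepsilon)-c|\,\big[(g\varphi)(y_0+\varepsilon)-(g\varphi)(y_0-\varepsilon)\big]
+(g\varphi)(y_0-\varepsilon)\,\ln\frac{|u(y_0+\varepsilon)-c|}{|u(y_0-\varepsilon)-c|},
\end{align*}
and only the first piece is controlled by your stated bound $|\ln\varepsilon|\,|g\varphi(y_0-\varepsilon)-g\varphi(y_0+\varepsilon)|$. The second piece also tends to $0$, since $|u(y_0\pm\varepsilon)-c|=\tfrac12|u''(\xi_{\pm})|\varepsilon^2$ with $\xi_\pm\to y_0$, so the ratio tends to $1$; this is exactly the computation the paper performs for $I_{n,\tau}$, and you should include it to make the cancellation of the $|\ln\varepsilon|$-sized boundary terms complete.
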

\begin{proof}
By (\ref{equation-for-noncritical point-for-u}), for any $\varphi\in H^1_0(y_0-\delta,y_0+\delta)$, we have
\begin{align*}
\int_{y_0-\delta}^{y_0+\delta}\left(\phi_n'\varphi'+\alpha^2\phi_n\varphi+{((u''-\beta)\phi_n+\omega_n)\varphi \over u-c_n}\right)dy=0.
\end{align*}
Thanks to $\phi_n\rightharpoonup\phi $ in $H^1(y_0-\delta,y_0+\delta)$, we get
\begin{align*}
\lim\limits_{n\to\infty}\int_{y_0-\delta}^{y_0+\delta}\left(\phi_n'\varphi'+\alpha^2\phi_n\varphi\right) dy=\int_{y_0-\delta}^{y_0+\delta}\left(\phi'\varphi'+\alpha^2\phi\varphi\right) dy.
\end{align*}
So, it suffices to show that
\begin{align}\label{real-part}
\lim\limits_{n\to\infty}\int_{y_0-\delta}^{y_0+\delta}{((u''-\beta)\phi_n+\omega_n)\varphi\over u-c_n} dy&=
p.v.\int_{y_0-\delta}^{y_0+\delta}{((u''-\beta)\phi+\omega)\varphi\over u-c} dy.
\end{align}

First of all, for any $\varepsilon>0$, there exists $\tau_1\in(0,\delta)$ so that if $0<\tau<\tau_1$, then
\begin{align*}
\left|\int_{E_{\tau}^c}{((u''-\beta)\phi+\omega)\varphi\over u-c}dy-p.v.\int_{y_0-\delta}^{y_0+\delta}{((u''-\beta)\phi+\omega)\varphi\over u-c}dy\right|<{\varepsilon\over3},
\end{align*}
where $E_\tau^c=[y_0-\delta,y_0+\delta]\setminus[y_0-\tau,y_0+\tau]$.
Note that
\begin{align*}
{((u''-\beta)\phi_n+\omega_n)\varphi\over u-c_n}\longrightarrow {((u''-\beta)\phi+\omega)\varphi\over u-c}
\end{align*}
uniformly  in $E_{\tau}^c$ as $n\to\infty$. Hence, if $n$ is sufficiently large, then
\begin{align*}
\left|\int_{E_\tau^c}{((u''-\beta)\phi_n+\omega_n)\varphi\over u-c_n}dy-
\int_{E_{\tau}^c}{((u''-\beta)\phi+\omega)\varphi\over u-c}dy\right|<{\varepsilon\over3}.
\end{align*}

Let $\tau<\tau_1$ and note that
\begin{align*}
&\int_{y_0-\tau}^{y_0+\tau}{((u''-\beta)\phi_n+\omega_n)\varphi\over u-c_n}dy
=\int_{y_0-\tau}^{y_0+\tau}{(g_n\varphi)u'\over u-c_n}dy\\
=&\left((g_n\varphi)\ln(u-c_n)\right)\big|_{y_0-\tau}^{y_0+\tau}-\int_{y_0-\tau}^{y_0+\tau}{(g_n\varphi)'\ln(u-c_n)}dy:=I_{n,\tau}+II_{n,\tau},
\end{align*}
where $g_n$ is given in (\ref{def-gn-func}). Direct computation gives
\begin{align*}
I_{n,\tau}=(g_n\varphi)\big|_{y_0-\tau}^{y_0+\tau}\ln(u(y_0+\tau)-c_n)+(g_n\varphi)(y_0-\tau)\ln(u-c_n)\big|_{y_0-\tau}^{y_0+\tau}.
\end{align*}
By the Sobolev embedding $H^1(y_0-\tau,y_0+\tau)\hookrightarrow C^{0,{1\over2}}(y_0-\tau,y_0+\tau)$, we have
\begin{align*}
&\left|(g_n\varphi)\big|_{y_0-\tau}^{y_0+\tau}\right|
\leq C\tau^{1\over2},\ |(g_n\varphi)(y_0-\tau)|\leq C.
\end{align*}
This, together with $c_n\to u(y_0)$, yields  \begin{align*}
|I_{n,\tau}|\leq& C\tau^{1\over2}|\ln(u(y_0+\tau)-c_n)|+C\left|\ln(u-c_n)\big|_{y_0-\tau}^{y_0+\tau}\right|
\\ \longrightarrow& C\tau^{1\over2}|\ln(u(y_0+\tau)-u(y_0))|+C\left|\ln\frac{u(y_0+\tau)-u(y_0)}{u(y_0-\tau)-u(y_0)}\right|
\end{align*}
as $n\to \infty$. Note that
$u(y)-u(y_0)=u''(\xi_y)|y-y_0|^2/2,$
where $\xi_y\in(y,y_0)$ or $\xi_y\in(y_0,y)$, and thus
$$C_1|y-y_0|^2\leq |u(y)-u(y_0)|\leq C_2|y-y_0|^2$$
for $y\in[y_0-\tau,y_0+\tau]$.
Then we have
\begin{align*}
\limsup_{n\to+\infty}|I_{n,\tau}|&\leq C\tau^{1\over2}|\ln(u(y_0+\tau)-u(y_0))|+C\left|\ln\frac{u(y_0+\tau)-u(y_0)}{u(y_0-\tau)-u(y_0)}\right|\\&\leq C\tau^{1\over2}(|\ln\tau^2|+C)+C\left|\ln\frac{u''(\xi_{y_0+\tau})\tau^2/2}{u''(\xi_{y_0-\tau})\tau^2/2}\right|,
\end{align*}
which gives
\begin{align*}
\limsup_{\tau\to0+}\limsup_{n\to+\infty}|I_{n,\tau}|&\leq C\lim_{\tau\to0+}\tau^{1\over2}(|\ln\tau^2|+C)+C\lim_{\tau\to0+}\left|\ln\frac{u''(\xi_{y_0+\tau})}{u''(\xi_{y_0-\tau})}\right|=0.
\end{align*}
Thus, if $\tau>0$ is sufficiently small and $n$ is sufficiently large, we have
\begin{align}\label{real-part3}
|I_{n,\tau}|&<\varepsilon/6.
\end{align}
Using the facts that the uniform $H^1$ bound of $g_n$ and the uniform $L^4$ bound of $\ln(u-c_n)$, we have
\begin{align}\label{real-part4}
|II_{n,\tau}|\leq& (2\tau)^{1/4}\|(g_n\varphi)'\|_{L^2(y_0-\tau,y_0+\tau)}\|\ln(u-c_n)\|_{L^4(y_0-\tau,y_0+\tau)}
\leq C\tau^{1/4}\leq \varepsilon/ 6,
\end{align}
when $\tau>0$ is sufficiently small. Now, it follows from (\ref{real-part3}) and (\ref{real-part4}) that
\begin{align*}
\left|\int_{y_0-\tau}^{y_0+\tau}{((u''-\beta)\phi_n+\omega_n)\varphi\over u-c_n}dy\right|\leq {\varepsilon\over3},
\end{align*}
when $\tau>0$ is sufficiently small and $n$ is sufficiently large.
Therefore, (\ref{real-part}) holds.
\end{proof}

\subsection{Limiting absorption principle for general shear flows}

In this subsection, we establish the limiting absorption principle for a class of shear flows satisfying \textbf{(H1)}.

The spectrum $\sigma(\mathcal{R}_{\alpha,\beta})$ is compact and $\sigma_{ess}(\mathcal{R}_{\alpha,\beta})=\text{Ran }(u)$ for any $\alpha>0$ and $\beta\in\mathbb{R}$. The  embedding eigenvalue of $\mathcal{R}_{\alpha,\beta}$ is defined  as follows.

\begin{definition}\label{embedding eigenvalue}
 Let $\alpha>0$ and $\beta\in\mathbb{R}$. $c\in \text{Ran }(u)$ is called an embedding eigenvalue of $\mathcal{R}_{\alpha,\beta}$ if there exists a nontrivial $\phi\in H_0^1(y_1,y_2)$ such that for any $\varphi\in H_0^1(y_1,y_2)$ and $supp \ \varphi\subset(y_1,y_2)\setminus\{y\in(y_1,y_2)|u(y)=c,u'(y)=0,u''(y)\neq\beta\}$,
$$\int_{y_1}^{y_2}(\phi'\varphi'+\alpha^2\phi\varphi)dy+p.v.\int_{y_1}^{y_2}{(u''-\beta)\phi\varphi\over u-c}dy+i\pi\sum\limits_{y\in u^{-1}\{c\},u'(y)\neq0}{(u''-\beta)\phi\varphi(y)\over |u'(y)|}=0.$$
\end{definition}

\begin{theorem}\label{uniform-H1-bound}
Let $\alpha>0$ and $\beta\in\mathbb{R}$.
Assume that $u$ satisfies \textbf{(H1)}, $\mathcal{R}_{\alpha,\beta}$ has no embedding eigenvalues, $\omega(y)=0$ for any $y\in\{y_1,y_2\}\cap(u')^{-1}\{0\}$, and ${\omega\over p}\in H^1(y_1,y_2)$,  where $p$ is given in (\ref{def-p}).
Then there exists $\varepsilon_0>0$ such that $\Omega_{\varepsilon_0}\cap\sigma_d(\mathcal{R}_{\alpha,\beta})=\emptyset$ and
for any $c\in \Omega_{\varepsilon_0}\setminus \text{Ran }(u)$, the unique solution $\Phi$ to the boundary value problem
\begin{align}\label{rayleigh-equation-Phi}
(u-c)(\Phi''-\alpha^2\Phi)-(u''-\beta)\Phi=\omega, \;\Phi(y_1)=\Phi(y_2)=0
\end{align}
has the uniform $H^1$ bound
\begin{align}\label{uniform bound-Phi}
\|\Phi\|_{H^1(y_1,y_2)}\leq C\|{\omega\over p}\|_{H^1(y_1,y_2)},
\end{align}
where $\Omega_{\varepsilon_0}=\big\{c\in\mathbb{C}|\exists\ c_0\in \text{Ran }(u)\ \text{such that}\ |c-c_0|<\varepsilon_0\big\}.$

Moreover, there exist $\Phi_{\pm}(\cdot,c)\in H_0^1(y_1,y_2)$ for each $c\in \text{Ran }(u)$ such that $\Phi(\cdot,c\pm i\varepsilon)\to \Phi_{\pm}(\cdot,c)$ in $C([y_1,y_2])$ as $\varepsilon\to0^+$ and
 \begin{align}\label{bound-Phi-pm}
 \|\Phi_{\pm}(\cdot,c)\|_{H^1(y_1,y_2)}\leq C \|{\omega\over p}\|_{H^1(y_1,y_2)},
 \end{align}
uniformly for $c\in \text{Ran }(u)$.

\end{theorem}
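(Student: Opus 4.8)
The plan is to establish both assertions --- the absence of discrete spectrum near $\text{Ran}(u)$ and the uniform bound \eqref{uniform bound-Phi} --- by a single contradiction-and-compactness argument in the spirit of \cite{WZZ2}. Away from $\text{Ran}(u)$ the multiplication operator $u-c$ is boundedly invertible, so for \eqref{uniform bound-Phi} it suffices to preclude a sequence $c_n\to c\in\text{Ran}(u)$ with $c_n^i>0$ (the cases $c_n^i<0$ and real $c_n\notin\text{Ran}(u)$ being obtained by conjugation or being non-singular) together with solutions $\Phi_n$ of \eqref{rayleigh-equation-Phi}, normalized by $\|\Phi_n\|_{H^1}=1$, whose data satisfy $\omega_n/p\to0$ in $H^1$. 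The hypotheses that $\omega_n$ vanishes at boundary critical points and that $\omega_n/p\in H^1$ are precisely what license the local compactness lemmas at the singular points.

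First I would extract a weak limit $\Phi_n\rightharpoonup\Phi$ in $H_0^1(y_1,y_2)$ and upgrade it to strong $H^1$ convergence through a finite covering of $[y_1,y_2]$. Since \textbf{(H1)} makes the critical points nondegenerate, hence isolated and finite, and $u^{-1}\{c\}$ is finite, I split the interval into regions where $u\neq c$, on which ellipticity yields strong convergence; neighborhoods of regular layer points $u(y_0)=c$, $u'(y_0)\neq0$, handled by Lemma \ref{noncritical point}; neighborhoods of degenerate interior and boundary layer points with $u'(y_0)=0$, $u''(y_0)\neq\beta$, handled by Lemma \ref{critical-point-not-beta-u-sec-dao-equal0} and Lemma \ref{convergence-limiting-endpoint} respectively; and neighborhoods of the most singular points $y_0\in u^{-1}\{c\}\cap A$, handled by Lemma \ref{critical-point-beta-u2dao=0}. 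At these last points the input $\omega_n/p\to0$ in $H^1$ translates, using $u'\sim\beta(y-y_0)$ and the factor $(z-y_0)$ in $p$, into $\omega_n/u'\to0$ in $H^1$, which is exactly the hypothesis required by Lemma \ref{critical-point-beta-u2dao=0}.

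With strong convergence established, $\|\Phi\|_{H^1}=1$, so $\Phi\neq0$, and I pass to the limit $c_n\to c$ in the weak formulation. Lemma \ref{noncritical-point-formula} contributes a principal value together with the $i\pi$-residue at each regular layer point, while Lemma \ref{critical-point-beta-u2dao=0-formula} contributes only a principal value at each point of $u^{-1}\{c\}\cap A$, with no residue since $u''-\beta$ vanishes there; at the degenerate points with $u'(y_0)=0$, $u''(y_0)\neq\beta$ the pointwise bound of Lemma \ref{critical point estimate} forces $\Phi(y_0)=0$, and these are precisely the points excluded from the admissible test-function support. The resulting identity is exactly the weak equation of Definition \ref{embedding eigenvalue}, so $\Phi$ is an embedding eigenfunction, and the no-embedding-eigenvalue hypothesis forces $\Phi=0$, contradicting $\|\Phi\|_{H^1}=1$. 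Running the same argument with $\omega_n\equiv0$ rules out a sequence $c_n\in\sigma_d(\mathcal{R}_{\alpha,\beta})\cap\Omega_{1/n}$, giving $\Omega_{\varepsilon_0}\cap\sigma_d(\mathcal{R}_{\alpha,\beta})=\emptyset$.

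For the boundary values $\Phi_\pm$, I would use the uniform bound just proved: for fixed $c\in\text{Ran}(u)$ the family $\{\Phi(\cdot,c+i\varepsilon)\}_{\varepsilon>0}$ is bounded in $H_0^1$, so any subsequence converges weakly, and the same covering and compactness lemmas upgrade this to strong $H^1$ convergence, hence to convergence in $C([y_1,y_2])$ via $H^1\hookrightarrow C^{0,1/2}$; the limit inherits \eqref{bound-Phi-pm} by lower semicontinuity. Uniqueness of $\Phi_+$, so that the whole family converges rather than a subsequence, follows because any two subsequential limits solve the same limiting Rayleigh--Kuo equation with the $+i0$ prescription, whence their difference is an embedding eigenfunction and therefore vanishes. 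The main obstacle throughout is the analysis at the points of $A$, where the Coriolis term creates the extra singularity: one must pass from the natural $\omega/p$ scaling to the $\omega/u'$ scaling, control $\ln(u-c_n)$ uniformly in $L^p$ (as in the proof of Lemma \ref{critical-point-beta-u2dao=0}), and verify that no spurious residue term survives in the limiting identity.
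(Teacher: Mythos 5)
Your overall strategy --- contradiction, normalization $\|\Phi_n\|_{H^1}=1$ with $\omega_n/p\to0$, a finite covering of $[y_1,y_2]$ by the four types of neighborhoods, the limiting weak formulation, and the no-embedding-eigenvalue hypothesis --- is the same as the paper's, and your treatment of the boundary values $\Phi_\pm$ (subsequential limits in $C([y_1,y_2])$ via the uniform $H^1$ bound, plus uniqueness because the difference of two limits satisfies the homogeneous weak formulation and is therefore an embedding eigenfunction) is a legitimate variant of the paper's uniform-continuity argument on $\Omega_+$. However, there is one genuine gap in the order of operations in the main contradiction argument.

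You propose to first upgrade $\Phi_n\rightharpoonup\Phi$ to strong $H^1$ convergence, conclude $\|\Phi\|_{H^1}=1$, and only afterwards identify $\Phi$ as an embedding eigenfunction and invoke the hypothesis to get a contradiction. The strong-convergence step cannot be executed in that order at the degenerate critical points with $u'(y_0)=0$ and $u''(y_0)\neq\beta$: Lemma \ref{critical-point-not-beta-u-sec-dao-equal0} and Lemma \ref{convergence-limiting-endpoint} are stated, and proved via the blow-up/rescaling analysis, only under the hypothesis $\phi_n\rightharpoonup0$, and their conclusion is $\phi_n\to0$; they do not yield $\phi_n\to\phi$ for a general weak limit $\phi$ (unlike Lemma \ref{noncritical point} and Lemma \ref{critical-point-beta-u2dao=0}, which do). Nor can you reduce to the zero-limit case by subtracting $\Phi$, since $\Phi$ solves the limiting singular equation rather than the equation with $c_n$, and near such a point $\Phi''$ is in general not integrable (one only knows $\Phi(y_0)=0$ and $\Phi\in H^1$, so $\Phi''\sim|y-y_0|^{-3/2}$), so the error $(c-c_n)(\Phi''-\alpha^2\Phi)$ is not a controllable forcing; the pointwise vanishing $\Phi(y_0)=0$ from Lemma \ref{critical point estimate}, which you do record, is not enough. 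The repair is to reverse the order, as the paper does: pass to the limit in the weak formulation first (this requires only weak convergence, via Lemma \ref{noncritical-point-formula}, Lemma \ref{critical-point-beta-u2dao=0-formula} and interior elliptic regularity away from $u^{-1}\{c_0\}$), invoke the no-embedding-eigenvalue hypothesis to conclude $\Phi\equiv0$, and only then apply the four compactness lemmas --- now legitimately, since the weak limit is zero --- to obtain $\Phi_n\to0$ in $H^1(y_1,y_2)$, contradicting the normalization. With that reordering, the rest of your outline (including the translation of $\omega_n/p\to0$ into $\omega_n/u'\to0$ near points of $A$, and the treatment of $\sigma_d$) goes through.
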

\begin{proof}
We first prove (\ref{uniform bound-Phi}). Assume that  $c^i>0$. The proof for the case  $c^i<0$ is similar.

Suppose that (\ref{uniform bound-Phi}) is not true. Then there exists $\Phi_n\in H_0^1(y_1,y_2)$, ${\omega_n\over p}\in H^1(y_1,y_2)$, $\omega_n(y)=0$ for any $y\in\{y_1,y_2\}\cap(u')^{-1}\{0\}$,  and  $c_n$ with $c_n^i>0$ such that $\|\Phi_n\|_{H^1(y_1,y_2)}=1$, $\|{\omega_n\over p}\|_{H^1(y_1,y_2)}\to0$, $c_n\to c_0\in \text{Ran }(u)$ and
\begin{align*}
(u-c_n)(\Phi_n''-\alpha^2\Phi_n)-(u''-\beta)\Phi_n=\omega_n.
\end{align*}
Up to a subsequence,  there exists $\Phi_0\in H_0^1(y_1,y_2)$ such that $\Phi_n\rightharpoonup\Phi_0$ in $H^1(y_1,y_2)$.

Next we show that for any $\varphi\in H_0^1(y_1,y_2)$ with $\text{supp} \ \varphi\in(y_1,y_2)\setminus\{y\in(y_1,y_2)|u(y)=c_0,u'(y)=0,u''(y)\neq\beta\}$,
\begin{align}\label{integral-formula-for-limit-function}
\int_{y_1}^{y_2}(\Phi_0'\varphi'+\alpha^2\Phi_0\varphi)dy&+p.v.\int_{y_1}^{y_2}{(u''-\beta)\Phi_0\varphi\over u-c_0}dy\\
&+i\pi\sum\limits_{y\in u^{-1}\{c_0\},u'(y)\neq0}{(u''-\beta)\Phi_0\varphi(y)\over |u'(y)|}=0.\nonumber
\end{align}
Let  $y_0\in \{y\in(y_1,y_2)|u(y)=c_0,u'(y)\neq0\}$. By Lemma \ref{noncritical-point-formula}, (\ref{integral-formula-for-limit-function}) holds
for any $\varphi\in H_0^1(y_1,y_2)$ with $\text{supp}\ \varphi\subset(y_0-\delta,y_0+\delta)$, where $\delta>0$ is sufficiently small.
Let $y_0\in\{y\in(y_1,y_2)|u(y)=c_0,u'(y)=0,u''(y)=\beta\}$. By Lemma \ref{critical-point-beta-u2dao=0-formula}, (\ref{integral-formula-for-limit-function}) holds
for any $\varphi\in H_0^1(y_1,y_2)$ with $\text{supp}\ \varphi\subset(y_0-\delta,y_0+\delta)$.
Moreover,
since $\Phi_n$ is uniformly bounded in $H_{loc}^3([y_1,y_2]\setminus u^{-1}\{c_0\}),$ we have
$\Phi_n\to\Phi_0$ in $C^2_{loc}([y_1,y_2]\setminus u^{-1}\{c_0\})$ and thus  for any $y\in [y_1,y_2]\setminus u^{-1}\{c_0\}$,
\begin{align}\label{not-zero-point}
(u-c_0)(\Phi_0''-\alpha^2\Phi_0)-(u''-\beta)\Phi_0=0, \;\Phi_0(y_1)=\Phi_0(y_2)=0.
\end{align}
Therefore, (\ref{integral-formula-for-limit-function}) holds for any $\varphi\in H_0^1(y_1,y_2)$ with $\text{supp} \ \varphi\in(y_1,y_2)\setminus\{y\in(y_1,y_2)|u(y)=c_0,u'(y)=0,u''(y)\neq\beta\}$.

If $\Phi_0\neq0$, from Definition \ref{embedding eigenvalue}, we know that $c_0$ is an embedding eigenvalue of $\mathcal{R}_{\alpha,\beta}$, which is  a contradiction. Thus, $\Phi_0\equiv0$ on $[y_1,y_2]$.

Now we show that $\Phi_n\to0$ in $H^1(y_1,y_2)$.  Let $y_0\in\{y\in[y_1,y_2]|u(y)=c_0,u'(y)\neq0\}$. Then by Lemma \ref{noncritical point}, $\Phi_n\to0$ in $H^1((y_0-\delta,y_0+\delta)\cap[y_1,y_2])$. Let $y_0\in\{y\in(y_1,y_2)|u(y)=c_0,u'(y)=0,u''(y)\neq\beta\}$. It follows from Lemma \ref{critical-point-not-beta-u-sec-dao-equal0} that
$\Phi_n\to0$ in $H^1(y_0-\delta,y_0+\delta)$.  Let $y_0\in\{y\in\{y_1,y_2\}|u(y)=c_0,u'(y)=0,u''(y)\neq\beta\}$. Then
$\Phi_n\to0$ in $H^1((y_0-\delta,y_0+\delta)\cap[y_1,y_2])$
due to Lemma \ref{convergence-limiting-endpoint} and Remark \ref{remark-endpoint}. Let $y_0\in\{y\in[y_1,y_2]|u(y)=c_0,u'(y)=0,u''(y)=\beta\}$. In view of Lemma \ref{critical-point-beta-u2dao=0} and Remark \ref{remark-on-critical-beta}, we have $\Phi_n\to0$ in $H^1((y_0-\delta,y_0+\delta)\cap[y_1,y_2])$.
Note that $\Phi_n\to0$ in $C^2_{loc}([y_1,y_2]\setminus u^{-1}\{c_0\})$. Therefore, $\Phi_n\to0$ in $H^1(y_1,y_2)$, which contradicts $\|\Phi_n\|_{H^1(y_1,y_2)}=1$, $n\geq1$.

We have shown (\ref{uniform bound-Phi}) when $c\in \Omega_{\varepsilon_0}\setminus\mathbb{R}$. Since $\Phi(\cdot,c)\to\Phi(\cdot,c_1)$ in $H^1(y_1,y_2)$ as $c\to c_1 \in (\Omega_{\varepsilon_0}\cap\mathbb{R})\setminus \text{Ran }(u)$, we have (\ref{uniform bound-Phi}) holds true for all $c\in \Omega_{\varepsilon_0}\setminus \text{Ran }(u)$. \smallskip

Next, we prove the second part of the theorem. We only show the conclusion for $\Phi_{+}$, and the proof for $\Phi_{-}$ is similar.

Consider $\Phi$ as a mapping $c\mapsto\Phi(\cdot,c)$ from $\Omega_{\varepsilon_0}\setminus \mathbb{R}$ to $C([y_1,y_2])$.
Then we show that $\Phi$ is uniformly continuous in $\Omega_{+}=\{c+i\varepsilon|c\in \text{Ran}(u), 0<\varepsilon\leq {\varepsilon_0\over2}\}.$
Suppose otherwise, there exist $c_{n,1},c_{n,2}\in\Omega_{+}$ and $\kappa>0$  such that  $|c_{n,1}-c_{n,2}|\to0$ and
$
\|\Phi(\cdot,c_{n,1})-\Phi(\cdot,c_{n,2})\|_{C([y_1,y_2])}>\kappa.$
By (\ref{uniform bound-Phi}), $\Phi(\cdot, c_{n,j})$, $n\geq1$, is uniformly bounded in $H^1(y_1,y_2)$, where $j=1,2$. Then up to a subsequence, $\Phi(\cdot, c_{n,j})\rightharpoonup \Phi_j$ in
$H^1(y_1,y_2)$ for some $\Phi_j\in H_0^1(y_1,y_2)$ and $c_{n,j}\to c_0$ for some $c_0\in\bar{\Omega}_{+}$, where $j=1,2$. So $\Phi(\cdot, c_{n,j})\to \Phi_j$ in $C([y_1,y_2])$ and thus $\|\Phi_1-\Phi_2\|_{C([y_1,y_2])}\geq \kappa$. We divide the following discussion into two cases.\smallskip

{\bf Case 1.} $c_0\in \Omega_{+}$.

In this case, $\Phi_j\in C^2([y_1,y_2])$ and satisfies (\ref{rayleigh-equation-Phi}) with $c=c_0$, where $j=1,2$. Then $\Phi_1-\Phi_2$ is a solution of  (\ref{not-zero-point}) with $\om=0$. So, $\Phi_1-\Phi_2\equiv0$ on $[y_1,y_2]$, which is a contradiction.\smallskip

{\bf Case 2.} $c_0\in  \text{Ran }(u)$.

First of all, we show that for any $\varphi\in H_0^1(y_1,y_2)$ with $supp \ \varphi\subset(y_1,y_2)\setminus\{y\in(y_1,y_2)|u(y)=c_0,u'(y)=0,u''(y)\neq\beta\}$,
\begin{align}\label{integral-formula-for-limit-function-inhomogeneuous}
\int_{y_1}^{y_2}(\Phi_j'\varphi'+\alpha^2\Phi_j\varphi)dy&+p.v.\int_{y_1}^{y_2}{((u''-\beta)\Phi_j+\omega)\varphi\over u-c_0}dy
\\\nonumber &+i\pi\sum\limits_{y\in u^{-1}\{c_0\}, u'(y)\neq0}{((u''-\beta)\Phi_j+\omega)\varphi(y)\over |u'(y)|}=0,
\end{align}
where $j=1,2$. Choose $\delta>0$  sufficiently small. Let  $y_0\in \{y\in(y_1,y_2)|u(y)=c_0,u'(y)\neq0\}$. By Lemma \ref{noncritical-point-formula}, (\ref{integral-formula-for-limit-function-inhomogeneuous}) holds
for any $\varphi\in H_0^1(y_1,y_2)$ with $supp\ \varphi\subset(y_0-\delta,y_0+\delta)$.
Let $y_0\in\{y\in(y_1,y_2):u(y)=c_0,u'(y)=0,u''(y)=\beta\}$. By Lemma \ref{critical-point-beta-u2dao=0-formula}, (\ref{integral-formula-for-limit-function-inhomogeneuous}) holds
for any $\varphi\in H_0^1(y_1,y_2)$ with $supp\ \varphi\subset(y_0-\delta,y_0+\delta)$.
This, together with the fact that
$\Phi(\cdot,c_{n,j})\to\Phi_j$ in $C^2_{loc}([y_1,y_2]\setminus u^{-1}\{c_0\})$, implies   (\ref{integral-formula-for-limit-function-inhomogeneuous}) holds for any $\varphi\in H_0^1(y_1,y_2)$ with $supp \ \varphi\subset(y_1,y_2)\setminus\{y\in(y_1,y_2)|u(y)=c_0,u'(y)=0,u''(y)\neq\beta\}$.

Set $\Phi_0=\Phi_1-\Phi_2$. Then we have
\begin{align*}
\int_{y_1}^{y_2}(\Phi_0'\varphi'+\alpha^2\Phi_0\varphi)dy+p.v.\int_{y_1}^{y_2}{(u''-\beta)\Phi_0\varphi\over u-c_0}dy
 +i\pi\sum\limits_{y\in u^{-1}\{c_0\}, u'(y)\neq0}{(u''-\beta)\Phi_0\varphi(y)\over |u'(y)|}=0
\end{align*}
for any $\varphi\in H_0^1(y_1,y_2)$ with
$supp \ \varphi\subset(y_1,y_2)\setminus\{y\in(y_1,y_2)|u(y)=c_0,u'(y)=0,u''(y)\neq\beta\}$.
Then $c_0$ is an embedding eigenvalue of $\mathcal{R}_{\alpha,\beta}$, which is a contradiction.

Define $\Phi_+(\cdot,c):=\lim\limits_{\varepsilon\to0^+}\Phi(\cdot,c+i\varepsilon)$ in $C([y_1,y_2])$.
By (\ref{uniform bound-Phi}), $\|\Phi(\cdot,c+i\varepsilon)\|_{H^1(y_1,y_2)}\leq C\|{\omega\over p}\|_{H^1(y_1,y_2)}$
for all $0<\varepsilon<\varepsilon_0$. Then up to  a subsequence, $\Phi(\cdot,c+i\varepsilon)\rightharpoonup\Phi_+(\cdot,c)$ in $H^1(y_1,y_2)$ and (\ref{bound-Phi-pm}) holds.
\end{proof}

\subsection{Limiting absorption principle for monotone shear flows}

In this subsection, we establish the limiting absorption principle for monotone flows considered in Section 2, i.e. Lemma \ref{lem5.6}.
The main difference is that we present a uniform $H^1$ bound of $\Phi$ in the wave number $\al$.

\begin{lemma}\label{lem5.4}
If $f\in H_0^1(0,1)$, then for $c^i>0$,
\beno
\left|\int_0^1\frac{f(y)}{u(y)-c}dy\right|\leq C\alpha^{-\frac{1}{2}}\big(\|\partial_yf\|_{L^2}+\alpha\|f\|_{L^2}\big),
\eeno
where the constant $C$ only depends on $c_0$.
\end{lemma}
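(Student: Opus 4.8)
The plan is to remove the kernel $\dfrac{1}{u-c}$ from the geometry of $u$ by the change of variables $z=u(y)$, and then to read off the gain $\alpha^{-1/2}$ from a one–dimensional Fourier estimate of Agmon type, using crucially that $c^i=\mathrm{Im}\,c>0$. Since $u'\ge c_0>0$, the map $y\mapsto z=u(y)$ is a $C^4$ diffeomorphism of $[0,1]$ onto $[a,b]:=[u(0),u(1)]$. Setting $h(z)=f(u^{-1}(z))/u'(u^{-1}(z))$, I would first record that
\[
\int_0^1\frac{f(y)}{u(y)-c}\,dy=\int_a^b\frac{h(z)}{z-c}\,dz ,
\]
together with the norm conversions $\|h\|_{L^2(a,b)}\le c_0^{-1/2}\|f\|_{L^2}$ and $\|h'\|_{L^2(a,b)}\le C\big(\|\partial_y f\|_{L^2}+\|f\|_{L^2}\big)$, the latter coming from $h'=f'/(u')^2-fu''/(u')^3$. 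The essential structural fact is that $f\in H^1_0(0,1)$ forces $h(a)=h(b)=0$, so the zero extension $\tilde h$ of $h$ to $\mathbb R$ lies in $H^1(\mathbb R)$ with no boundary jumps, hence $\|\tilde h\|_{L^2}=\|h\|_{L^2}$ and $\|\tilde h'\|_{L^2}=\|h'\|_{L^2}$.

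Next I would pass to the Fourier side. Because $c^i>0$, the kernel $k(z)=(z-c)^{-1}$ lies in $L^2(\mathbb R)$ (as $\int_{\mathbb R}|z-c|^{-2}\,dz=\pi/c^i$) and has one–sided spectrum: a residue computation gives $\hat k(\xi)=2\pi i\,e^{-i\xi c}\mathbf 1_{\{\xi<0\}}$, so $|\hat k(-\xi)|\le 2\pi\,\mathbf 1_{\{\xi>0\}}$ since $|e^{i\xi c}|=e^{-\xi c^i}\le1$ for $\xi>0$. Parseval then yields
\[
\Big|\int_a^b\frac{h}{z-c}\,dz\Big|=\Big|\frac1{2\pi}\int_{\mathbb R}\hat{\tilde h}(\xi)\,\hat k(-\xi)\,d\xi\Big|\le\int_0^{\infty}|\hat{\tilde h}(\xi)|\,d\xi\le\|\hat{\tilde h}\|_{L^1(\mathbb R)} .
\]
The only nontrivial ingredient is then the interpolation bound $\|\hat{\tilde h}\|_{L^1}\le C\|\tilde h\|_{L^2}^{1/2}\|\tilde h'\|_{L^2}^{1/2}$, obtained by splitting $\{|\xi|<R\}$ and $\{|\xi|>R\}$, applying Cauchy–Schwarz (using $\|\hat{\tilde h}\|_{L^2}\sim\|\tilde h\|_{L^2}$ and $\||\xi|\hat{\tilde h}\|_{L^2}\sim\|\tilde h'\|_{L^2}$), and optimizing in $R=\|\tilde h'\|_{L^2}/\|\tilde h\|_{L^2}$.

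Combining these with the norm conversions gives $\big|\int_0^1 f/(u-c)\,dy\big|\le C\|f\|_{L^2}^{1/2}\big(\|\partial_y f\|_{L^2}+\|f\|_{L^2}\big)^{1/2}$. To finish I would repackage this into the stated form: by AM–GM, $\|f\|_{L^2}^{1/2}\|\partial_y f\|_{L^2}^{1/2}\le\tfrac12\alpha^{-1/2}\big(\|\partial_y f\|_{L^2}+\alpha\|f\|_{L^2}\big)$ for every $\alpha>0$, while the leftover term is absorbed through $\|f\|_{L^2}\le c_0^{-1/2}\alpha^{1/2}\|f\|_{L^2}\le c_0^{-1/2}\alpha^{-1/2}\big(\|\partial_y f\|_{L^2}+\alpha\|f\|_{L^2}\big)$, using $\alpha\ge c_0$. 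This produces the claimed inequality with a constant depending only on $c_0$ and $u$.

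The hard part is extracting the factor $\alpha^{-1/2}$. The naive route, writing $(u-c)^{-1}=(u')^{-1}\partial_y\ln(u-c)$, integrating by parts (the boundary terms vanish since $f(0)=f(1)=0$), and using $\|\ln(u-c)\|_{L^2}\le C$, only delivers the weaker bound $C\big(\|\partial_y f\|_{L^2}+\|f\|_{L^2}\big)$, which is useless for large $\alpha$; the loss is exactly the failure to see the cancellation in the principal–value part of the kernel. Gaining the missing half derivative, i.e. replacing $\|\partial_y f\|_{L^2}$ by the interpolated $\|f\|_{L^2}^{1/2}\|\partial_y f\|_{L^2}^{1/2}$ that the $\alpha$–weighted right-hand side encodes, is what forces one to use simultaneously the endpoint vanishing of $f$ (so that $\tilde h\in H^1(\mathbb R)$ and $\hat{\tilde h}\in L^1$) and the analyticity afforded by $c^i>0$ (so that only the favorable half-line of frequencies of the Cauchy kernel contributes).
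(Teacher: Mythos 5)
Your proof is correct and takes essentially the same route as the paper's. The paper writes $\frac{1}{u(y)-c}=i\int_0^{+\infty}e^{-it(u(y)-c)}\,dt$ for $c^i>0$ (precisely your observation that the Cauchy kernel has one-sided frequency support), changes variables $z=u(y)$, applies Plancherel to $g(t)=\int_0^1 f(y)e^{-itu(y)}\,dy$ and to $tg(t)$ (using $f\in H^1_0$ to kill the boundary terms, as you do via the zero extension of $h$), and obtains the factor $\alpha^{-\frac12}$ by Cauchy--Schwarz against the weight $(\alpha^2+t^2)^{-\frac12}$ --- the same gain you extract by splitting frequencies at $R$ and then applying AM--GM.
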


\begin{proof}
Due to $c^i>0$, we have
\begin{align*}
&\int_0^1\frac{f(y)}{u(y)-c}dy=i\int_0^1{f(y)}\int_0^{+\infty}e^{-it(u(y)-c)}dtdy
=i\int_0^{+\infty}e^{itc}\int_0^1{f(y)}e^{-itu(y)}dydt.
\end{align*}
Let $g(t)=\int_0^1{f(y)}e^{-itu(y)}dy$. Then we get
\begin{align*}
&\left|\int_0^1\frac{f(y)}{u(y)-c}dy\right|\leq\int_0^{+\infty}|e^{itc}|\left|\int_0^1{f(y)}e^{-itu(y)}dy\right|dt
=\int_0^{+\infty}e^{-tc^i}\left|g(t)\right|dt\leq\|g\|_{L^1(\mathbb{R})}.
\end{align*}
Due to $f\in H_0^1(0,1),$ we have
\begin{align*}
&g(t)=\int_0^1{f(y)}e^{-itu(y)}dy=\int_{u(0)}^{u(1)}e^{-i tz}(f/u')\circ u^{-1}(z)dz,\\&itg(t)=\int_{u(0)}^{u(1)}e^{-i tz}((f/u')'/u')\circ u^{-1}(z)dz,
\end{align*}
from which and Plancherel's formula, we infer that
\begin{align*}
\|g\|_{L^2(\mathbb{R})}^2&={2\pi}\|(f/u')\circ u^{-1}\|_{L^2(u(0),u(1))}^2={2\pi}\||f|^2/u'\|_{L^1(0,1)}\leq (2\pi/c_0)\|f\|_{L^2}^2,\\ \|tg(t)\|_{L^2(\mathbb{R})}^2&={2\pi}\||(f/u')'|^2/u'\|_{L^1(0,1)}\leq C\big(\|\partial_yf\|_{L^2}+\|f\|_{L^2}\big)^2.
\end{align*}
Thus, we obtain
\begin{align*}
\|(\alpha^2+t^2)^{\frac{1}{2}}g(t)\|_{L^2(\mathbb{R})}^2=&\alpha^2\|g\|_{L^2(\mathbb{R})}^2+\|tg(t)\|_{L^2(\mathbb{R})}^2
\\ \leq& C\alpha^2\|f\|_{L^2}^2+ C\big(\|\partial_yf\|_{L^2}+\|f\|_{L^2}\big)^2\leq C\big(\|\partial_yf\|_{L^2}+\alpha\|f\|_{L^2}\big)^2,
\end{align*}
and
\begin{align*}
\left|\int_0^1\frac{f(y)}{u(y)-c}dy\right|&\leq\|g\|_{L^1(\mathbb{R})}\leq\|(\alpha^2+t^2)^{\frac{1}{2}}g(t)\|_{L^2(\mathbb{R})}
\|(\alpha^2+t^2)^{-\frac{1}{2}}\|_{L^2(\mathbb{R})}\\ &\leq C\big(\|\partial_yf\|_{L^2}+\alpha\|f\|_{L^2}\big)\alpha^{-\frac{1}{2}}.
\end{align*}
This completes the proof.
\end{proof}

\begin{lemma}\label{lem5.5}
Let $\alpha\ge c_0>0$, $c^i>0$. Then the unique solution $\Phi$ to the boundary value problem
\begin{align*}
(u-c)(\Phi''-\alpha^2\Phi)=\omega, \;\Phi(0)=\Phi(1)=0
\end{align*}
has the uniform bound
\begin{align*}
\|\partial_y\Phi\|_{L^2}+\alpha\|\Phi\|_{L^2}\leq C\alpha^{-1}\big(\|\partial_y\om\|_{L^2}+\alpha\|\om\|_{L^2}\big).
\end{align*}
Moreover, if $\om(0)=\om(1)=0 $, then we have
\begin{align*}
|\partial_y\Phi(0)|+|\partial_y\Phi(1)|\leq C\alpha^{-\frac{1}{2}}\big(\|\partial_y\om\|_{L^2}+\alpha\|\om\|_{L^2}\big).
\end{align*}
\end{lemma}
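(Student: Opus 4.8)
The plan is to treat the problem as a perturbation of the modified Helmholtz operator and reduce everything to the singular-integral bound of Lemma \ref{lem5.4}. Since $c^i>0$ and $u$ is real, $u-c$ never vanishes on $[0,1]$, so the problem is equivalent to $(\partial_y^2-\alpha^2)\Phi=\omega/(u-c)$ with $\Phi(0)=\Phi(1)=0$; as $\partial_y^2-\alpha^2$ with Dirichlet data is invertible this has a unique solution, and $\omega\in H^1$ gives $\Phi\in H^2\hookrightarrow C^1[0,1]$. For the first estimate I would run the basic energy identity: multiply the equation by $\bar\Phi$, integrate over $[0,1]$, and integrate by parts. The endpoint terms vanish because $\Phi(0)=\Phi(1)=0$, leaving
\[
\|\partial_y\Phi\|_{L^2}^2+\alpha^2\|\Phi\|_{L^2}^2=\Big|\int_0^1\frac{\omega\bar\Phi}{u-c}\,dy\Big|.
\]
The right-hand side is precisely the type of singular integral governed by Lemma \ref{lem5.4}, applied with $f=\omega\bar\Phi$; this is legitimate since $\Phi(0)=\Phi(1)=0$ forces $\omega\bar\Phi\in H_0^1(0,1)$.

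The second step is bookkeeping of norms. Writing $E_\Phi=\|\partial_y\Phi\|_{L^2}+\alpha\|\Phi\|_{L^2}$ and $E_\omega=\|\partial_y\omega\|_{L^2}+\alpha\|\omega\|_{L^2}$, I would invoke the Gagliardo--Nirenberg bound already used in Lemma \ref{lem5.7}, namely $\|g\|_{L^\infty}\le C\alpha^{-1/2}(\|\partial_yg\|_{L^2}+\alpha\|g\|_{L^2})$, to obtain $\|\Phi\|_{L^\infty}\le C\alpha^{-1/2}E_\Phi$ and $\|\omega\|_{L^\infty}\le C\alpha^{-1/2}E_\omega$. Expanding $\partial_y(\omega\bar\Phi)=\omega'\bar\Phi+\omega\bar\Phi'$ and placing the $L^\infty$ norm on the undifferentiated factor (and using $\|\Phi\|_{L^2}\le\alpha^{-1}E_\Phi$ for the zeroth-order term) yields
\[
\|\partial_y(\omega\bar\Phi)\|_{L^2}+\alpha\|\omega\bar\Phi\|_{L^2}\le C\alpha^{-1/2}E_\omega E_\Phi.
\]
Feeding this into Lemma \ref{lem5.4} gives $\|\partial_y\Phi\|_{L^2}^2+\alpha^2\|\Phi\|_{L^2}^2\le C\alpha^{-1}E_\omega E_\Phi$; since the left side is comparable to $E_\Phi^2$, dividing by $E_\Phi$ produces the claimed bound $E_\Phi\le C\alpha^{-1}E_\omega$.

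For the boundary-derivative estimate I would test against the explicit harmonic functions $\gamma_0,\gamma_1$ from \eqref{gamma12-def}. Multiplying the equation by $\gamma_1$ and integrating by parts twice, the interior integral drops because $(\partial_y^2-\alpha^2)\gamma_1=0$, and all boundary contributions except one cancel thanks to $\Phi(0)=\Phi(1)=0$ together with $\gamma_1(0)=0$, $\gamma_1(1)=1$, leaving $\partial_y\Phi(1)=\int_0^1\omega\gamma_1/(u-c)\,dy$; symmetrically $\partial_y\Phi(0)=-\int_0^1\omega\gamma_0/(u-c)\,dy$. Here the hypothesis $\omega(0)=\omega(1)=0$ is exactly what makes $\omega\gamma_j\in H_0^1(0,1)$, so Lemma \ref{lem5.4} applies once more. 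Using the crude uniform bounds $\|\gamma_j\|_{L^\infty}\le1$ and $\|\gamma_j'\|_{L^\infty}=\alpha\coth\alpha\le C\alpha$ (valid for $\alpha\ge c_0$), one gets $\|\partial_y(\omega\gamma_j)\|_{L^2}+\alpha\|\omega\gamma_j\|_{L^2}\le CE_\omega$, whence $|\partial_y\Phi(0)|+|\partial_y\Phi(1)|\le C\alpha^{-1/2}E_\omega$.

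The main obstacle is not any single delicate estimate but the precise tracking of powers of $\alpha$: Lemma \ref{lem5.4} supplies one factor $\alpha^{-1/2}$ and the Gagliardo--Nirenberg conversion of the product $\omega\bar\Phi$ into $L^\infty\times L^2$ supplies another, and these must combine to exactly $\alpha^{-1}$ so that, after the self-improving absorption of $E_\Phi$, the net gain matches the statement. Choosing the right test function in each part --- $\bar\Phi$ for the energy bound and the harmonic $\gamma_j$ for the pointwise derivative bounds --- is precisely what guarantees the relevant product lands in $H_0^1(0,1)$ and that Lemma \ref{lem5.4} is applicable.
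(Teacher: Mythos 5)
Your proposal is correct and follows essentially the same route as the paper: the energy identity tested against $\bar\Phi$, Lemma \ref{lem5.4} applied to $\om\bar\Phi\in H_0^1$, the Gagliardo--Nirenberg conversion to close the estimate by absorbing $E_\Phi$, and testing against $\gamma_0,\gamma_1$ with $\om\gamma_j\in H_0^1$ for the boundary derivatives. The power counting in $\alpha$ matches the paper's computation exactly.
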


\begin{proof}By Gagliardo-Nirenberg inequality, we get
\begin{align*}
\|\om\|_{L^{\infty}}\leq C\|\om\|_{L^{2}}^{\frac{1}{2}}\|\om\|_{H^{1}}^{\frac{1}{2}}\leq C\alpha^{-\frac{1}{2}}\big(\|\om\|_{H^1}+\alpha\|\om\|_{L^2}\big)\leq C\alpha^{-\frac{1}{2}}\big(\|\partial_y\om\|_{L^2}+\alpha\|\om\|_{L^2}\big),
\end{align*}
and similarly $\|\Phi\|_{L^{\infty}}\leq  C\alpha^{-\frac{1}{2}}\big(\|\partial_y\Phi\|_{L^2}+\alpha\|\Phi\|_{L^2}\big).$
Since
\begin{align*}
\|\partial_y\Phi\|_{L^2}^2+\alpha^2\|\Phi\|_{L^2}^2=-\langle\Phi''-\alpha^2\Phi,\Phi\rangle= -\langle\om/(u-c),\Phi\rangle =-\int_0^1\frac{\om(y)\overline{\Phi(y)}}{u(y)-c}dy,
\end{align*}
and $\om\overline{\Phi}(0)=\om\overline{\Phi}(1)=0,$ we get by Lemma \ref{lem5.4} that
\begin{align*}
\|\partial_y\Phi\|_{L^2}^2+\alpha^2\|\Phi\|_{L^2}^2\leq& C\alpha^{-\frac{1}{2}}\big(\|\partial_y(\om\overline{\Phi})\|_{L^2}+\alpha\|\om\overline{\Phi}\|_{L^2}\big)\\ \leq& C\alpha^{-\frac{1}{2}}\big(\|\partial_y\om\|_{L^2}\|{\Phi}\|_{L^{\infty}}+\|\om\|_{L^{\infty}}\|\partial_y{\Phi}\|_{L^2}
+\alpha\|\om\|_{L^{\infty}}\|{\Phi}\|_{L^2}\big)\\
\leq& C\alpha^{-\frac{1}{2}}\big(\alpha^{-\frac{1}{2}}\|\partial_y\om\|_{L^2}+\|\om\|_{L^{\infty}})
(\alpha^{\frac{1}{2}}\|{\Phi}\|_{L^{\infty}}+\|\partial_y{\Phi}\|_{L^2}+{\alpha}\|{\Phi}\|_{L^2}\big)\\ \leq& C\alpha^{-\frac{1}{2}}\big(\alpha^{-\frac{1}{2}}\|\partial_y\om\|_{L^2}+\alpha^{\frac{1}{2}}\|\om\|_{L^{2}})
(\|\partial_y{\Phi}\|_{L^2}+{\alpha}\|{\Phi}\|_{L^2}\big),
\end{align*}
which implies the first inequality.

Recall that $\gamma_1$ and $\gamma_2$ are defined in (\ref{gamma12-def}). Then we have $|\gamma_j|\leq 1,\ |\gamma_j'|\leq C\alpha $ and
\begin{align*}
&|\partial_y\Phi(j)|=|\langle \Phi''-\alpha^2\Phi,\gamma_j\rangle|=|\langle \om/(u-c),\gamma_j\rangle|=\left|\int_0^1\frac{\om(y){\gamma_j(y)}}{u(y)-c}dy\right|,\ j=0,1.
\end{align*}
If $\om(0)=\om(1)=0$, then $\om\gamma_j\in H_0^1(0,1) ,$ and by Lemma \ref{lem5.4}, we have
\begin{align*}
&|\partial_y\Phi(j)|\leq C\alpha^{-\frac{1}{2}}\big(\|\partial_y(\om\gamma_j)\|_{L^2}+\alpha\|\om\gamma_j\|_{L^2}\big)\leq C\alpha^{-\frac{1}{2}}\big(\|\partial_y\om\|_{L^2}+\alpha\|\om\|_{L^2}\big),\ j=0,1,
\end{align*}
which gives the second inequality.\end{proof}

Now we are in a position to prove Lemma \ref{lem5.6}.

\begin{proof}
Suppose that the first inequality is not true. Then there exist $\Phi_n\in H_0^1(0,1)$, ${\omega_n}\in H^1(0,1)$ and  $c_n\in\mathbb{C},\ \alpha_n\in \Lambda$ with $c_n^i>0$ such that $\|\partial_y\Phi_n\|_{L^2}+\alpha_n\|\Phi_n\|_{L^2}=\alpha_n^{-1}$, $\|\partial_y\om_n\|_{L^2}+\alpha_n\|\om_n\|_{L^2}=\delta_n\to0$, $c_n^i\to 0$, $c_n\to c_0\in\mathbb{R}\cup\{\pm\infty\}$ and
\begin{align*}
(u-c_n)(\Phi_n''-\alpha_n^2\Phi_n)-(u''-\beta)\Phi_n=\omega_n.
\end{align*}

By Lemma \ref{lem5.5}, we have
\begin{align*}
1=&\alpha_n(\|\partial_y\Phi_n\|_{L^2}+\alpha_n\|\Phi_n\|_{L^2})\leq C\big(\|\partial_y(\om_n+(u''-\beta)\Phi_n)\|_{L^2}+\alpha_n\|\om_n+(u''-\beta)\Phi_n\|_{L^{2}}\big)\\
\leq& C\big(\|\partial_y\om_n\|_{L^2}+\|\partial_y\Phi_n\|_{L^2}+\alpha_n\|\om_n\|_{L^{2}}+\alpha_n\|\Phi_n\|_{L^{2}}\big)\leq C\big(\alpha_n^{-1}+\delta_n\big).
\end{align*}
Since $\delta_n\to0$, this implies that $\alpha_n$ is uniformly bounded.
Up to a subsequence, we may assume that $\alpha_n$ is constant($\alpha_n=\alpha>0$) and that there exists $\Phi_0\in H_0^1(0,1)$ so that $\Phi_n\rightharpoonup\Phi_0$ in $H^1(0,1)$.

If $c_n\to \pm\infty$, then $\|(u-c_n)^{-1}\|_{L^{\infty}}\to 0$ and
\begin{align*}
\|\partial_y\Phi_n\|_{L^2}^2+\alpha_n^2\|\Phi_n\|_{L^2}^2=&-\langle\Phi_n''-\alpha_n^2\Phi_n,\Phi_n\rangle
=-\langle(u-c_n)^{-1}(\om_n+(u''-\beta)\Phi_n),\Phi_n\rangle\\ \leq &\|(u-c_n)^{-1}\|_{L^{\infty}}(\|\om_n\|_{L^2}\|\Phi_n\|_{L^2}+C\|\Phi_n\|_{L^2}^2)\to0,
\end{align*}
which contradicts with $\|\partial_y\Phi_n\|_{L^2}+\alpha_n\|\Phi_n\|_{L^2}=\alpha_n^{-1},\ \alpha_n=\alpha$, $n\geq1$.

If $c_n\to c_0\in\mathbb{R}\setminus[u(0),u(1)]$, then $\Phi_n\to\Phi_0$ in $H^1(0,1)$ and $\Phi_0$ satisfies \eqref{not-zero-point} for any $y\in [0,1]$. Thus, $\|\partial_y\Phi_0\|_{L^2}+\alpha\|\Phi_0\|_{L^2}={\alpha^{-1}}$ and $c_0$ is an  eigenvalue of $\mathcal{R}_{\alpha,\beta}$, which is a contradiction.

If $c_n\to c_0\in[u(0),u(1)],$ as in the proof of Theorem \ref{uniform-H1-bound}, we know that $\Phi_0$ satisfies \eqref{integral-formula-for-limit-function} for any $\varphi\in H_0^1(0,1)$ with $\text{supp } \varphi\in(0,1)$, that $\Phi_0\equiv0$ on $[0,1]$ (since $\mathcal{R}_{\alpha,\beta}$ has no embedding eigenvalues), and that $\Phi_n\to0$ in $H^1(0,1)$, which contradicts with $\|\partial_y\Phi_n\|_{L^2}+\alpha\|\Phi_n\|_{L^2}=\alpha^{-1}$.

In summary, this shows  the first inequality.\smallskip

If $\om(0)=\om(1)=0 $, then $\om+(u''-\beta)\Phi=0$ at $y=0,1$. Then from Lemma \ref{lem5.5} and the first inequality, we deduce that 
\begin{align*}
|\partial_y\Phi(j)|&\leq C\alpha^{-\frac{1}{2}}\big(\|\partial_y(\om+(u''-\beta)\Phi)\|_{L^2}+\alpha\|\om+(u''-\beta)\Phi\|_{L^2}\big)\\
&\leq C\alpha^{-\frac{1}{2}}\big(\|\partial_y\om\|_{L^2}+\alpha\|\om\|_{L^2}+\|\partial_y\Phi\|_{L^2}+\alpha\|\Phi\|_{L^2}\big)\\
&\leq C\alpha^{-\frac{1}{2}}\big(\|\partial_y\om\|_{L^2}+\alpha\|\om\|_{L^2}\big),\ j=0,1,
\end{align*}
which gives the second inequality.
\end{proof}

\section{Linear inviscid damping and vorticity depletion}

In this section, we prove the linear inviscid damping and vorticity depletion for a class of  shear flows  satisfying \textbf{(H1)}.
Let us first prove Theorem \ref{thm:non-monotone}.
\begin{proof}
Since $P_{\sigma_d(\mathcal{R}_{\alpha,\beta})}\widehat{\psi}(0,\alpha,\cdot)=0$, we have
\begin{align*}
\widehat{\psi}(t,\alpha,y)={1\over 2\pi i}\int_{\partial\Omega_{\varepsilon}}e^{-i\alpha tc}(c-\mathcal{R}_{\alpha,\beta})^{-1}\widehat{\psi}(0,\alpha,y)dc,
\end{align*}
where $\Omega_{\varepsilon}, 0<\varepsilon\leq \varepsilon_0,$ is defined in Theorem \ref{uniform-H1-bound}. Let $c\in\partial\Omega_{\varepsilon}$ and $\Phi(\alpha,y,c)={1\over i\alpha}(c-\mathcal{R}_{\alpha,\beta})^{-1}\widehat{\psi}(0,\alpha,y)$.
Then $\Phi$ satisfies (\ref{rayleigh-equation-Phi}) with $\omega={{\widehat{\omega}}_0(\alpha,\cdot)\over i\alpha}$ and
\begin{align*}
\widehat{\psi}(t,\alpha,y)={\alpha\over 2\pi }\int_{\partial\Omega_{\varepsilon}}e^{-i\alpha tc}\Phi(\alpha,y,c)dc.
\end{align*}
Let $\tilde \Phi=\Phi_--\Phi_+.$ It follows from Theorem \ref{uniform-H1-bound} that
\begin{align*}
\widehat{\psi}(t,\alpha,y)&=\lim\limits_{\varepsilon\to0^+}{\alpha\over 2\pi }\int_{\partial\Omega_{\varepsilon}}e^{-i\alpha tc}\Phi(\alpha,y,c)dc\\
&={\alpha\over2\pi }\int_{\text{Ran }(u)}e^{-i\alpha t c}(\Phi_-(\alpha,y,c)-\Phi_+(\alpha,y,c))dc\\
&={\alpha\over2\pi }\int_{\text{Ran }(u)}e^{-i\alpha t c}\tilde \Phi(\alpha,y,c) dc.
\end{align*}
Then by Plancherel's formula, $2\pi\|\widehat{\psi}(\cdot,\alpha,y)\|^2_{L^2(\mathbb{R})}= |\alpha|\|\tilde\Phi(\alpha,y,\cdot)\|^2_{L^2(\text{Ran } (u))}$, and thus
\begin{align*}
&\|\widehat{v}(\cdot,\alpha,\cdot)\|_{H_t^1(\mathbb{R};L^2_y)}^2
=\int_{\mathbb{R}}\big(\|\widehat{v}(t,\alpha,\cdot)\|_{L^2_y}^2+\|\partial_t\widehat{v}(t,\alpha,\cdot)\|_{L^2_y}^2\big)dt\\
&=\int_{y_1}^{y_2}\int_{\mathbb{R}}\big(\alpha^2|\widehat{\psi}(t,\alpha,y)|^2+|\partial_y\widehat{\psi}(t,\alpha,y)|^2+\alpha^2
|\partial_t\widehat{\psi}(t,\alpha,y)|^2+|\partial_t\partial_y\widehat{\psi}(t,\alpha,y)|^2\big)dtdy\\
&={|\alpha|\over2\pi}\int_{y_1}^{y_2}\int_{\text{Ran }(u)}(1+(\alpha c)^2)\big(\alpha^2|\tilde{\Phi}(\alpha,y,c)|^2+|\partial_y\tilde{\Phi}(\alpha,y,c)|^2\big)dcdy\\
&\leq C\int_{\text{Ran }(u)}\|\tilde \Phi(\alpha,\cdot,c)\|_{H_y^1}^2dc\leq C\|{\widehat{\omega}_0(\alpha,\cdot)\over p}\|_{H_y^1}^2,
\end{align*}
where we used (\ref{bound-Phi-pm}) in the last inequality. In particular, we have
\begin{align*}
\|\widehat{v}(t,\alpha,\cdot)\|_{L^2_y}\leq C\|\widehat{v}(\cdot,\alpha,\cdot)\|_{C_t([t-1,t+1];L^2_y)}\leq C\|\widehat{v}(\cdot,\alpha,\cdot)\|_{H^1_t([t-1,t+1];L^2_y)}\to0
\end{align*}
as $t\to\infty$.
\end{proof}

Next we show the vorticity depletion phenomena of shear flow under Coriolis effects. For this, we need the following lemma.

\begin{lemma}\label{estimation on phi''-alpha2phi}
Let $\alpha>0$, $\beta\in\mathbb{R}$ and $c\notin\mathbb{R}$.
Assume that $y_0\in (u')^{-1}\{0\}$, $(a,b)$ be an interval such that $y_0\in (a,b)\subset[y_1,y_2]$, $|u(y_0)-c|<\min\{1,\max\{|y_0-a|^2,|y_0-b|^2\}\}$,  $(\beta-u''(y))(\beta-u''(y_0))>0$, $\phi, \omega\in H^1(a,b)$, and $(u-c)(\phi''-\alpha^2\phi)-(u''-\beta)\phi=\omega$ on $[a,b]$. Then
$$
|(\phi''-\alpha^2\phi)(y_0)|\leq C |u(y_0)-c|^{-{3\over4}}\big(\|\phi\|_{H^1(a,b)}+\|\omega\|_{H^1(a,b)}\big),
$$
where $C$ depends on $\max\{|y_0-a|,|y_0-b|\}$, $\alpha$, $\beta$ and $u$.
\end{lemma}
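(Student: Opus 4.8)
The plan is to read the value $(\phi''-\alpha^2\phi)(y_0)$ directly off the equation and thereby reduce everything to the already-established pointwise bound in Lemma~\ref{critical point estimate}. Since $u$ is real-valued and $c\notin\mathbb{R}$, we have $u(y_0)-c\neq0$, so evaluating $(u-c)(\phi''-\alpha^2\phi)-(u''-\beta)\phi=\omega$ at $y=y_0$ gives
\[
(\phi''-\alpha^2\phi)(y_0)=\frac{(u''(y_0)-\beta)\phi(y_0)+\omega(y_0)}{u(y_0)-c}.
\]
Hence the lemma is equivalent to the numerator estimate
\[
\big|(u''(y_0)-\beta)\phi(y_0)+\omega(y_0)\big|\leq C|u(y_0)-c|^{1/4}\big(\|\phi\|_{H^1(a,b)}+\|\omega\|_{H^1(a,b)}\big),
\]
after which dividing by $|u(y_0)-c|$ produces the claimed exponent $-3/4$.

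The obstacle is that Lemma~\ref{critical point estimate} requires the source to vanish at $y_0$, whereas here $\omega(y_0)$ is only $O(\|\omega\|_{H^1})$. I would remove this by a $\cosh$-correction, exactly as in the reduction in the proof of Lemma~\ref{critical-point-not-beta-u-sec-dao-equal0}. The sign hypothesis applied at $y=y_0$ forces $u''(y_0)-\beta\neq0$, so I may set $A=\dfrac{\omega(y_0)}{u''(y_0)-\beta}$ and define
\[
\tilde\phi(y)=\phi(y)+A\cosh(\alpha(y-y_0)),\qquad \tilde\omega(y)=\omega(y)-A(u''(y)-\beta)\cosh(\alpha(y-y_0)).
\]
Because $\cosh(\alpha(\cdot-y_0))$ is annihilated by $\partial_y^2-\alpha^2$, the pair $(\tilde\phi,\tilde\omega)$ solves the same Rayleigh--Kuo equation on $[a,b]$, and $\tilde\omega(y_0)=0$ by the choice of $A$. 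Using $H^1(a,b)\hookrightarrow C^{0,1/2}$ we have $|\omega(y_0)|\leq C\|\omega\|_{H^1(a,b)}$, hence $|A|\leq C\|\omega\|_{H^1(a,b)}$, and since $\|\cosh(\alpha(\cdot-y_0))\|_{H^1(a,b)}\leq C$ both $\|\tilde\phi\|_{H^1(a,b)}$ and $\|\tilde\omega\|_{H^1(a,b)}$ are bounded by $C(\|\phi\|_{H^1(a,b)}+\|\omega\|_{H^1(a,b)})$.

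Now $(\tilde\phi,\tilde\omega)$ satisfies every hypothesis of Lemma~\ref{critical point estimate}, in particular $y_0\in(u')^{-1}\{0\}\cap\tilde\omega^{-1}\{0\}$ together with the smallness condition on $|u(y_0)-c|$ and the nondegeneracy of $\beta-u''$. Applying that lemma yields
\[
|\tilde\phi(y_0)|\leq C|u(y_0)-c|^{1/4}\big(\|\tilde\phi\|_{H^1(a,b)}+\|\tilde\omega\|_{H^1(a,b)}\big)\leq C|u(y_0)-c|^{1/4}\big(\|\phi\|_{H^1(a,b)}+\|\omega\|_{H^1(a,b)}\big).
\]
Finally, $\tilde\phi(y_0)=\phi(y_0)+A=\dfrac{(u''(y_0)-\beta)\phi(y_0)+\omega(y_0)}{u''(y_0)-\beta}$, so multiplying the last inequality by the fixed constant $|u''(y_0)-\beta|$ gives exactly the numerator estimate above; combining it with the first display completes the argument. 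I expect the only genuine subtlety to be bookkeeping of the $\cosh$-correction so that the modified source vanishes at $y_0$ while the $H^1$ norms stay controlled by the original data; the essential $1/4$-gain (and hence the $-3/4$ power) is entirely inherited from Lemma~\ref{critical point estimate}, whose proof is the substantive input and is already available.
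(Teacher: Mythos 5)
Your proposal is correct and follows essentially the same route as the paper: evaluate the equation at $y_0$, remove $\omega(y_0)$ by the $\cosh(\alpha(\cdot-y_0))$ correction (the paper splits into the cases $\omega(y_0)=0$ and $\omega(y_0)\neq0$, whereas you apply the correction uniformly, which is harmless since $A=0$ in the first case), and then invoke Lemma \ref{critical point estimate} to gain the factor $|u(y_0)-c|^{1/4}$.
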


\begin{proof}
If $\omega(y_0)=0$, then it follows from Lemma \ref{critical point estimate} that
\begin{align*}
|(\phi''-\alpha^2\phi)(y_0)|=\left|{(u''-\beta)\phi\over u-c}\right|(y_0)\leq C|u(y_0)-c|^{-{3\over4}}\big(\|\phi\|_{H^1(a,b)}+\|\omega\|_{H^1(a,b)}\big).
\end{align*}
If $\omega(y_0)\neq0$, let
\begin{align*}
\phi_*(y)&=\phi(y)+{\omega(y_0)\over u''(y_0)-\beta}\cosh \alpha(y-y_0),\;\\
\omega_*(y)&=\omega(y)-(u''(y)-\beta){\omega(y_0)\over u''(y_0)-\beta}\cosh \alpha(y-y_0).
\end{align*}
Then $\phi_*,\omega_*\in H^1(a,b)$, $\omega_*(y_0)=0$ and
\begin{align*}
(u-c)(\phi''_*-\alpha^2\phi_*)-(u''-\beta)\phi_*=\omega_*.
\end{align*}
This reduces $\phi_*$ and $\omega_*$ to the case of $\om(y_0)=0$, and hence we obtain
\begin{align*}
|(\phi''-\alpha^2\phi)(y_0)|=&|(\phi_*''-\alpha^2\phi_*)(y_0)|\\
\leq& C|u(y_0)-c|^{-{3\over4}}\big(\|\phi_*\|_{H^1(a,b)}+\|\omega_*\|_{H^1(a,b)}\big)\\
\leq &C|u(y_0)-c|^{-{3\over4}}\big(\|\phi\|_{H^1(a,b)}+\|\omega\|_{H^1(a,b)}+|\omega(y_0)|\big)\\
\leq &C|u(y_0)-c|^{-{3\over4}}\big(\|\phi\|_{H^1(a,b)}+\|\omega\|_{H^1(a,b)}\big).
\end{align*}
This completes the proof.\end{proof}

\begin{theorem}
Under the assumption of Theorem \ref{thm:non-monotone}, if $u'(y_0)=0$, then
\begin{align*}
\lim_{t\to\infty}\widehat{\omega}(t,\alpha,y_0)=0.
\end{align*}
\end{theorem}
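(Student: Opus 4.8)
The plan is to start from the representation of $\widehat{\psi}$ obtained in the proof of Theorem \ref{thm:non-monotone} and turn the assertion about $\widehat{\omega}$ into an oscillatory integral in the spectral parameter $c$, to which the Riemann--Lebesgue lemma applies. Since $\psi=-(\partial_y^2-\alpha^2)^{-1}\omega$, we have $\widehat{\omega}=-(\partial_y^2-\alpha^2)\widehat{\psi}$, and inserting
\[
\widehat{\psi}(t,\alpha,y)=\frac{\alpha}{2\pi}\int_{\text{Ran }(u)}e^{-i\alpha tc}\,\tilde\Phi(\alpha,y,c)\,dc,\qquad \tilde\Phi=\Phi_--\Phi_+,
\]
gives
\[
\widehat{\omega}(t,\alpha,y_0)=-\frac{\alpha}{2\pi}\int_{\text{Ran }(u)}e^{-i\alpha tc}\,(\partial_y^2-\alpha^2)\tilde\Phi(\alpha,y_0,c)\,dc .
\]
The first key observation is that $\Phi_+$ and $\Phi_-$ both solve (\ref{rayleigh-equation-Phi}) with the \emph{same} right-hand side $\omega=\widehat{\omega}_0(\alpha,\cdot)/(i\alpha)$, so their difference $\tilde\Phi$ solves the \emph{homogeneous} Rayleigh--Kuo equation $(u-c)(\tilde\Phi''-\alpha^2\tilde\Phi)-(u''-\beta)\tilde\Phi=0$. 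Using $u'(y_0)=0$ and evaluating at $y_0$, the integrand is therefore explicit:
\[
g(c):=(\partial_y^2-\alpha^2)\tilde\Phi(\alpha,y_0,c)=\frac{(u''(y_0)-\beta)\,\tilde\Phi(\alpha,y_0,c)}{u(y_0)-c}\qquad(c\neq u(y_0)).
\]

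The decisive step is to show $g\in L^1(\text{Ran }(u))$. Away from $c=u(y_0)$ this is immediate: by Theorem \ref{uniform-H1-bound} the functions $\Phi_\pm(\cdot,c)$ are uniformly bounded in $H^1\hookrightarrow C^0$, hence $|\tilde\Phi(\alpha,y_0,c)|\leq C$, and $g$ is bounded wherever $|u(y_0)-c|$ is bounded below; note that the denominator of $g$ only vanishes at the single value $c=u(y_0)$. At that critical value I would apply Lemma \ref{estimation on phi''-alpha2phi} to the approximating solutions $\Phi(\cdot,c\pm i\varepsilon)$ of (\ref{rayleigh-equation-Phi}): choosing a fixed interval $[a,b]\ni y_0$ on which $(\beta-u'')(\beta-u''(y_0))>0$, and restricting to $c$ close enough to $u(y_0)$ that the smallness hypothesis holds, the lemma gives $|(\partial_y^2-\alpha^2)\Phi(y_0,c\pm i\varepsilon)|\leq C|u(y_0)-(c\pm i\varepsilon)|^{-3/4}$, with $C$ uniform thanks to the uniform $H^1$ bound of Theorem \ref{uniform-H1-bound}. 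Letting $\varepsilon\to0^+$ (the $\omega(y_0)$ contributions share the same limiting denominator and cancel in the difference) yields $|g(c)|\leq C|u(y_0)-c|^{-3/4}$. Since $-3/4>-1$, this is integrable near $c=u(y_0)$, so $g\in L^1(\text{Ran }(u))$, and the Riemann--Lebesgue lemma gives $\widehat{\omega}(t,\alpha,y_0)=-\tfrac{\alpha}{2\pi}\int e^{-i\alpha tc}g(c)\,dc\to0$ as $t\to\infty$. The gain of $|u(y_0)-c|^{1/4}$ by $\tilde\Phi$ at the critical point (Lemma \ref{critical point estimate}) is precisely the vorticity depletion, and is exactly what renders this singular integral convergent.

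The main obstacle I anticipate is twofold. First, one must rigorously justify commuting $(\partial_y^2-\alpha^2)$ with the $c$-integral and evaluating at the single station $y_0$; this is where the pointwise bound $|g(c)|\leq C|u(y_0)-c|^{-3/4}$, uniform for $y$ in a neighborhood of $y_0$, is needed to legitimize the interchange by dominated convergence. Second, Lemma \ref{estimation on phi''-alpha2phi} requires $u''(y_0)\neq\beta$; in the degenerate case $u''(y_0)=\beta$ (so that $y_0\in A$) the argument is actually easier, since the coefficient $u''(y_0)-\beta$ vanishes and the homogeneous relation for $\tilde\Phi$ forces $g\equiv0$ on $\text{Ran }(u)\setminus\{u(y_0)\}$, whence $\widehat{\omega}(t,\alpha,y_0)=0$ identically. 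Combining the two cases completes the proof.
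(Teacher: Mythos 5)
Your overall strategy is the one the paper intends: express $\widehat{\omega}(t,\alpha,y_0)$ as an oscillatory integral in the spectral variable $c$ whose density is $(\partial_y^2-\alpha^2)\tilde\Phi(y_0,c)=\frac{(u''(y_0)-\beta)\tilde\Phi(y_0,c)}{u(y_0)-c}$, use Theorem \ref{uniform-H1-bound} away from $c=u(y_0)$ and Lemma \ref{estimation on phi''-alpha2phi} (equivalently the $|u(y_0)-c|^{1/4}$ vanishing of Lemma \ref{critical point estimate}) near it to get an $L^1$ bound of order $|u(y_0)-c|^{-3/4}$, and finish by Riemann--Lebesgue. That is exactly the combination the paper invokes, following the proof of Theorem 2.1 in \cite{WZZ2}, and your identification of the depletion mechanism is correct.

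There is, however, a genuine gap at the step you yourself single out: the interchange of $(\partial_y^2-\alpha^2)$ with the $c$-integral evaluated at $y_0$. You propose to justify it by a dominating bound $|(\partial_y^2-\alpha^2)\tilde\Phi(y,c)|\le C|u(y_0)-c|^{-3/4}$ \emph{uniform for $y$ in a neighborhood of $y_0$}. No such uniform integrable majorant exists: for $y\ne y_0$ near $y_0$ one has $u'(y)\ne0$, the singularity of $c\mapsto\frac{(u''(y)-\beta)\tilde\Phi(y,c)}{u(y)-c}$ sits at the \emph{regular} critical value $c=u(y)$, where $\tilde\Phi(y,u(y))$ does not vanish in general, so the singularity has the non-integrable order $|u(y)-c|^{-1}$; worse, by Lemma \ref{noncritical-point-formula} the distribution $(\partial_y^2-\alpha^2)\tilde\Phi(\cdot,c)$ carries a Dirac mass of weight $-i\pi\big((u''-\beta)(\Phi_-+\Phi_+)+2\omega\big)/|u'|$ at the critical layer, so it is not even a function of $y$ there. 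Dominated convergence therefore cannot be applied in the form you state, and the same objection applies to your treatment of the degenerate case $u''(y_0)=\beta$, which also rests on this interchange (the paper instead disposes of that case, and of $y_0\in\{y_1,y_2\}$, by noting that $(\beta-u''(y_0))\widehat{\psi}(t,y_0)=0$ reduces \eqref{linearized Euler equation} at $y_0$ to pure transport, while the hypotheses of Theorem \ref{thm:non-monotone} force $\widehat{\omega}_0(\alpha,y_0)=0$). The standard repair, which is what the cited argument does, is never to differentiate the representation in $y$: apply Duhamel in time to $\partial_t\widehat{\omega}+i\alpha u(y_0)\widehat{\omega}=-i\alpha(u''(y_0)-\beta)\widehat{\psi}(\cdot,\alpha,y_0)$, insert the pointwise identity $\widehat{\psi}(s,\alpha,y_0)=\frac{\alpha}{2\pi}\int e^{-i\alpha sc}\tilde\Phi(y_0,c)\,dc$ (legitimate since $\tilde\Phi(\cdot,c)\in H^1\hookrightarrow C^0$ uniformly), carry out the $s$-integral to produce the kernel $\frac{e^{-i\alpha ct}-e^{-i\alpha u(y_0)t}}{c-u(y_0)}$, and only then invoke $|\tilde\Phi(y_0,c)|\le C|u(y_0)-c|^{1/4}$ before applying Riemann--Lebesgue.
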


\begin{proof}
If $\beta-u''(y_0)\neq0$ and $y_0\in (y_1,y_2)$,  with the help of Theorem \ref{uniform-H1-bound} and Lemma \ref{estimation on phi''-alpha2phi}, the proof is similar to that of Theorem 2.1 in \cite{WZZ2}. If $\beta-u''(y_0)=0$ or $y_0\in\{y_1,y_2\}$, then $(\beta-u''(y_0))\widehat{\psi}(y_0)=0$. Using (\ref{linearized Euler equation}) at $y=y_0$ and taking Fourier transform with respect to $x$, we get
\begin{align*}
\partial_{t}\widehat{\omega}+i \alpha u(y_0)\widehat{\omega}=0.
\end{align*}
Then $\widehat{\omega}(t,\alpha,y_0)={\widehat{\omega}(0,\alpha,y_0)} e^{-i\alpha u(y_0)t}$. As $\widehat{\omega}(0,\alpha,y_0)=\widehat{\omega}_0(\alpha,y_0)=0$, we have $\widehat{\omega}(t,\alpha,y_0)\equiv0$ for any $t\in\mathbb{R}$. 
\end{proof}

\section{Application to the Sinus flow}

In this section, we consider the linear inviscid damping of the flow with Sinus profile:
\begin{equation*}
u(y)={\frac{1+\cos(\pi y)}{2}},\ \ \ \ y\in\lbrack-1,1].
\end{equation*}
Clearly, $(u')^{-1}\{0\}=\{0,\pm1\}$ and thus $\{\beta\in\mathbb{R}|u'(y)=0,{\beta\over u''(y)}<{9\over8}\}=(-{9\over16}\pi^2,{9\over16}\pi^2)$.
Now we want to examine the region of $(\alpha,\beta)$ in $(0,\infty)\times (-{9\over16}\pi^2,{9\over16}\pi^2)$ such that $\mathcal{R}_{\alpha,\beta}$ has no embedding eigenvalues.

\begin{center}
 \begin{tikzpicture}[scale=0.8]
 \draw [->](-10, 0)--(10, 0)node[right]{$\beta$};
 \draw [->](0,0)--(0,7) node[above]{$\alpha$};
 \draw (-7.8, 5).. controls (-7,5) and (2,5)..(7.8,5);
 \draw (0, 0).. controls (3, 4) and (8, 4)..(9,6.2);
 \draw (7.8, 0).. controls (8.3,1) and (9.05, 2.5)..(9,3);
  \draw (-7.8, 0).. controls (-8.3,1) and (-9.05, 2.5)..(-9,3);
   \path (-9, 0)  edge [-,dotted](-9, 7) [line width=0.7pt];
   \path (9, 0)  edge [-,dotted](9, 7) [line width=0.7pt];
   \path (0, 3)  edge [-,dotted](9, 3) [line width=0.7pt];
   \path (0, 6.2)  edge [-,dotted](9, 6.2) [line width=0.7pt];
   \path (7.8, 0)  edge [-,dotted](7.8, 5) [line width=0.7pt];
   \path (-7.8, 0)  edge [-,dotted](-7.8, 5) [line width=0.7pt];
    \path (-9, 3)  edge [-,dotted](0, 3) [line width=0.7pt];
    \node (a) at (-9.2,-0.5) {\small$-{9\over 16}\pi^2$};
    \node (a) at (-7.7,-0.5) {\small$-{1\over 2}\pi^2$};
    \node (a) at (0,-0.5) {\small$0$};
    \node (a) at (7.7,-0.5) {\small${1\over 2}\pi^2$};
    \node (a) at (9.2,-0.5) {\small${9\over 16}\pi^2$};
    \node (a) at (-0.5,3) {\small${\sqrt{7}\over4}\pi$};
    \node (a) at (-0.5,4.59) {\small${\sqrt{3}\over2}\pi$};
    \node (a) at (-0.59,6.2) {\small${\sqrt{15}\over4}\pi$};
     \node (a) at (-8.17,1.6) {$\gamma_4$};
     \node (a) at (-2,5.2) {$\gamma_1$};
     \node (a) at (5,3.9) {$\gamma_2$};
     \node (a) at (8.13,1.6) {$\gamma_3$};
 \end{tikzpicture}
\end{center}\vspace{-0.2cm}
 \begin{center}\vspace{-0.2cm}
   {\small {\bf Figure 1.} }
  \end{center}\vspace{-0.2cm}
Let
\begin{align*}
\gamma_1&=\{(\alpha,\beta)|\alpha={\sqrt{3}\over2}\pi,\beta\in(-{1\over2}\pi^2,{1\over2}\pi^2)\},\\
\gamma_2&=\{(\alpha,\beta)|\alpha=\pi\sqrt{1-r^2},\beta=\pi^2(-r^2+{1\over2}r+{1\over2}),r\in({1\over4},1)\},\\
\gamma_3&=\{(\alpha,\beta)|\alpha=\pi\sqrt{-r^2-r+{3\over4}},\beta=\pi^2(-r^2+{1\over2}r+{1\over2}),r\in({1\over4},{1\over2})\},\\
\gamma_4&=\{(\alpha,\beta)|\alpha=\pi\sqrt{-r^2-r+{3\over4}},\beta=\pi^2(r^2-{1\over2}r-{1\over2}),r\in({1\over4},{1\over2})\}.
\end{align*}

By detailed analysis on the spectrum of $\mathcal{R}_{\alpha,\beta}$ and applying Theorem \ref{thm:non-monotone}, our main result for the Sinus profile is stated as follows.

\begin{theorem}Consider the Rayleigh-Kuo operator $\mathcal{R}_{\alpha,\beta}$ with $(\alpha,\beta)\in(0,\infty)\times (-{9\over16}\pi^2,{9\over16}\pi^2).$ Assume that $\widehat{\omega}_0(\alpha,\pm1)=0$ and $P_{\sigma_d(\mathcal{R}_{\alpha,\beta})}\widehat{\psi}(0,\alpha,\cdot)=0$. Then we have

\begin{itemize}

\item[(1)] $\mathcal{R}_{\alpha,\beta}$ has exactly an embedding eigenvalue $c={1\over2}-{\beta\over \pi^2}$  if and only if $(\alpha,\beta)\in\gamma_1$;
    $\mathcal{R}_{\alpha,\beta}$ has exactly  an embedding eigenvalue $c=0$  if and only if $(\alpha,\beta)\in\gamma_2\cup\gamma_3$;
    $\mathcal{R}_{\alpha,\beta}$ has exactly  an embedding eigenvalue $c=1$  if and only if $(\alpha,\beta)\in\gamma_4\cup\{({\sqrt{3}\over2}\pi,-{1\over2}\pi^2)\}$; and
 $\mathcal{R}_{\alpha,\beta}$  has no embedding eigenvalues if and only if
$$(\alpha,\beta)\in \Gamma=\Big((0,\infty)\times (-{9\over16}\pi^2,{9\over16}\pi^2)\Big)\setminus\left(\gamma_1\cup\gamma_2\cup\gamma_3\cup\gamma_4\right).$$

\item[(2)] If $(\alpha,\beta)\in \Gamma$, $\beta\neq\pm{\pi^2\over2}$ and   $\widehat{\omega}_0(\alpha,\cdot)\in H_y^1(-1,1)$, trhen
\begin{align*}
\|\widehat{v}(\cdot, \alpha,\cdot)\|_{H^1_tL^2_y}\leq C \|{\widehat{\omega}_0(\alpha,\cdot)}\|_{H_y^1}\;\;
\text{and}\;\;\lim\limits_{t\to\infty}\|\widehat{v}(t,\alpha,\cdot)\|_{L^2_y}=0.
\end{align*}

\item[(3)]  If $(\alpha,\beta)\in \Gamma$, $\beta={\pi^2\over2}$ and $h_0(\alpha,y)={\widehat{\omega}_0(\alpha,y)\over y^2-1}\in H_y^1(-1,1)$, then
\begin{align*}
\|\widehat{v}(\cdot, \alpha,\cdot)\|_{H^1_tL^2_y}\leq C \|h_0(\alpha,\cdot)\|_{H_y^1}\;\;
\text{and}\;\;\lim\limits_{t\to\infty}\|\widehat{v}(t,\alpha,\cdot)\|_{L^2_y}=0.
\end{align*}

\item[(4)]  If  $(\alpha,\beta)\in \Gamma$, $\beta=-{\pi^2\over2}$ and $f_0(\alpha,y)={\widehat{\omega}_0(\alpha,y)\over y}\in H_y^1(-1,1)$, then
\begin{align*}
\|\widehat{v}(\cdot, \alpha,\cdot)\|_{H^1_tL^2_y}\leq C \|f_0(\alpha,\cdot)\|_{H_y^1}\;\;
\text{and}\;\;\lim\limits_{t\to\infty}\|\widehat{v}(t,\alpha,\cdot)\|_{L^2_y}=0.
\end{align*}

\end{itemize}

\end{theorem}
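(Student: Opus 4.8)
The plan is to treat parts (2)--(4) as immediate corollaries of Theorem \ref{thm:non-monotone} and to concentrate the work on the spectral classification in part (1). For the Sinus flow one has $u'(y)=-\tfrac{\pi}{2}\sin(\pi y)$ and $u''(y)=-\tfrac{\pi^2}{2}\cos(\pi y)$, so $(u')^{-1}\{0\}=\{0,\pm1\}$ with $u''(0)=-\tfrac{\pi^2}{2}$ and $u''(\pm1)=\tfrac{\pi^2}{2}$. First I would record that $(\textbf{H1})$ holds exactly for $\beta\in(-\tfrac{9}{16}\pi^2,\tfrac{9}{16}\pi^2)$, since $\beta/u''(y_c)<9/8$ at $y_c\in\{0,\pm1\}$ unwinds to $\beta>-\tfrac{9}{16}\pi^2$ and $\beta<\tfrac{9}{16}\pi^2$, while $u''(y_c)\neq0$ is automatic. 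Next I would evaluate $p$ from \eqref{def-p}: the set $A=\{y:u'(y)=0,\ u''(y)=\beta\}$ is empty when $\beta\neq\pm\tfrac{\pi^2}{2}$, equals $\{\pm1\}$ when $\beta=\tfrac{\pi^2}{2}$, and equals $\{0\}$ when $\beta=-\tfrac{\pi^2}{2}$; hence $p\equiv1$, $p(y)=y^2-1$, and $p(y)=y$ respectively, so that $\widehat{\omega}_0/p$ is precisely $\widehat{\omega}_0$, $h_0$, $f_0$. Since $\{y_1,y_2\}\cap(u')^{-1}\{0\}=\{\pm1\}$, the standing assumption $\widehat{\omega}_0(\alpha,\pm1)=0$ is exactly the boundary vanishing required, and once $(\alpha,\beta)\in\Gamma$ guarantees no embedding eigenvalues, parts (2)--(4) follow verbatim from Theorem \ref{thm:non-monotone}.

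For part (1) the decisive structural fact is $u''=-\pi^2\big(u-\tfrac12\big)$, whence $u''-\beta=-\pi^2(u-c_*)$ with the generalized inflection value $c_*=\tfrac12-\tfrac{\beta}{\pi^2}$. Substituting into the Rayleigh--Kuo equation and dividing by $u-c$ reduces an embedding eigenfunction $\phi\in H_0^1(-1,1)$ to
\[
\phi''+(\pi^2-\alpha^2)\phi+\pi^2(c-c_*)\frac{\phi}{u-c}=0 .
\]
I would then argue that the only candidate speeds are $c\in\{c_*,0,1\}$: by the neutral-mode classification for such symmetric flows in \cite{LYZ}, interior modes can only sit at the generalized inflection value $c_*$, and the extrema $0,1$ of $u$ are handled separately. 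The role of $c_*$ is confirmed by the imaginary part of the identity in Definition \ref{embedding eigenvalue}: since $u''-\beta=-\pi^2(c-c_*)$ at any critical layer, testing against $\phi$ forces $\sum_{u(y)=c,\,u'(y)\neq0}|\phi(y)|^2/|u'(y)|=0$ whenever $c\neq c_*$, so the (symmetric, noncritical) critical layers must be zeros of $\phi$, which regularizes and ultimately excludes all other interior $c$.

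Solving each surviving case is explicit. For $c=c_*$ the singular term drops and $\phi''+(\pi^2-\alpha^2)\phi=0$ with $\phi(\pm1)=0$ admits a nontrivial $H^1_0$ solution only for $\alpha=\tfrac{\sqrt3}{2}\pi$, and $c_*\in(0,1)$ forces $\beta\in(-\tfrac{\pi^2}{2},\tfrac{\pi^2}{2})$, i.e.\ $\gamma_1$. For $c=0$ one has $u=\cos^2(\tfrac{\pi y}{2})$, giving a Pöschl--Teller equation with a $\sec^2(\tfrac{\pi y}{2})$ potential whose explicit $H^1_0$ eigenfunctions are $\cos^{2r}(\tfrac{\pi y}{2})$ and $\cos^{2r}(\tfrac{\pi y}{2})\sin(\tfrac{\pi y}{2})$; their relations $\alpha^2=\pi^2(1-r^2)$ and $\alpha^2=\pi^2(\tfrac34-r-r^2)$ together with $\beta=\pi^2(-r^2+\tfrac r2+\tfrac12)$ are precisely $\gamma_2$ and $\gamma_3$. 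For $c=1$ one has $u-1=-\sin^2(\tfrac{\pi y}{2})$ and a $\csc^2(\tfrac{\pi y}{2})$ potential singular at the interior point $y=0$; the solution $|\sin(\tfrac{\pi y}{2})|^{2r}\cos(\tfrac{\pi y}{2})$ reproduces $\gamma_4$, while the inflection mode at $\beta=-\tfrac{\pi^2}{2}$ (where $c_*=1$) yields the isolated point $(\tfrac{\sqrt3}{2}\pi,-\tfrac{\pi^2}{2})$. In every case the threshold $r>\tfrac14$ is the square-integrability of $\phi'$ at the critical layer, the upper $r$-endpoints come from $\alpha^2>0$, and higher Gegenbauer modes are discarded because $\alpha^2=\tfrac{\pi^2}{4}(4-(2r+n)^2)<0$ for $n\geq2$.

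The main obstacle I anticipate is the \emph{only if} direction, namely excluding embedding eigenvalues off the curves for interior speeds $c\in(0,1)\setminus\{c_*\}$, where $1/(u-c)$ is no longer elementary. The imaginary-part argument reduces this to a real eigenvalue problem with $\phi$ vanishing at the two symmetric critical layers and at $y=\pm1$, and the delicate step is proving that no nontrivial solution survives on the resulting subintervals for all admissible $(\alpha,\beta)$; here I would rely most heavily on the oscillation/monotonicity analysis and neutral-mode classification of \cite{LYZ}. Careful bookkeeping is also needed at the endpoints $\beta=\pm\tfrac{\pi^2}{2}$, where $c_*$ merges with an extremum of $u$ and the $\gamma_1$-family either coalesces with $\gamma_2$ (at $\beta=\tfrac{\pi^2}{2}$, $r=\tfrac12$) or detaches as the isolated point of the $c=1$ set (at $\beta=-\tfrac{\pi^2}{2}$), so that the final statement of part (1) matches the claimed curves exactly.
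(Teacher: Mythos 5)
Your overall architecture coincides with the paper's: parts (2)--(4) are read off from Theorem \ref{thm:non-monotone} after computing $p\equiv1$, $p(y)=y^2-1$, $p(y)=y$ in the three cases (exactly as the paper does), and part (1) is split into the three explicit speeds $c\in\{\tfrac12-\tfrac{\beta}{\pi^2},0,1\}$ treated by explicit eigenfunctions plus Sturm--Liouville eigenvalue counting, together with an exclusion of all other interior speeds via the imaginary part of the identity in Definition \ref{embedding eigenvalue} (this is Propositions \ref{1over2-betaoverpi2}--\ref{cin(0,1)}). Your Gegenbauer bookkeeping $\alpha^2=\tfrac{\pi^2}{4}\bigl(4-(2r+n)^2\bigr)$ is a clean repackaging of the paper's computation of the second and third eigenvalues and the zero-counting argument from Zettl. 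However, two steps of your plan are genuinely underspecified relative to what is actually needed. First, for $c\in(0,1)$ with $c\neq\tfrac12-\tfrac{\beta}{\pi^2}$, knowing that $\phi$ vanishes at the two critical layers $z_1,z_2$ does not by itself ``regularize and exclude'': the paper's Proposition \ref{cin(0,1)} closes this with a specific completed-square identity,
\begin{align*}
\int_{z_1}^{z_2}\Bigl|\phi'-\frac{u'\phi}{u-c}\Bigr|^2dy+\int_{z_1}^{z_2}\Bigl(\alpha^2-\frac{\beta}{u-c}\Bigr)|\phi|^2dy=0,
\end{align*}
which is a sum of nonnegative terms precisely because $u-c>0$ on $(z_1,z_2)$ and $\beta\leq0$ (and a mirror version on the outer intervals when $\beta>0$). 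An appeal to ``oscillation/monotonicity analysis of \cite{LYZ}'' does not obviously produce this sign structure, and without it the exclusion does not go through.

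Second, your parametrized families only reach part of the $\beta$-range for the ``only if'' directions at the extremal speeds: for $c=0$ the curve $\beta=\pi^2(-r^2+\tfrac r2+\tfrac12)$, $r\in(\tfrac14,1)$, only sweeps $\beta\in(0,\tfrac{9}{16}\pi^2)$, so the Sturm--Liouville count says nothing about whether $c=0$ can be an embedding eigenvalue for $\beta\in(-\tfrac{9}{16}\pi^2,0]$; similarly for $c=1$ and $\beta\geq -\tfrac{\pi^2}{2}$. The paper disposes of these complementary ranges by invoking Tung's stability criterion ((4.8) in \cite{Tung1981}) and Lemma 4.2 in \cite{LYZ}, and your proposal needs some such input there; the Gegenbauer spectrum alone cannot decide it. With those two ingredients supplied, your proof would be essentially the paper's.
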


\begin{proof}
The proof of (1) is finished by Propositions \ref{1over2-betaoverpi2}--\ref{cin(0,1)}, while (2)--(4) are direct consequences of Theorem \ref{thm:non-monotone}.
\end{proof}

\begin{proposition}\label{1over2-betaoverpi2}
$\mathcal{R}_{\alpha,\beta}$ has  an embedding eigenvalue $c={1\over2}-{\beta\over \pi^2}$  if and only if $(\alpha,\beta)\in\gamma_1$.
\end{proposition}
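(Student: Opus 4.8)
The plan is to exploit a special algebraic identity enjoyed by the Sinus profile. Since $u(y)=\frac{1+\cos\pi y}{2}$, a direct differentiation gives $u''(y)=-\frac{\pi^2}{2}\cos\pi y=-\pi^2\bigl(u(y)-\tfrac12\bigr)$, so that
\[
u''-\beta=-\pi^2\Bigl(u-\tfrac12+\tfrac{\beta}{\pi^2}\Bigr)=-\pi^2(u-c),\qquad c=\tfrac12-\tfrac{\beta}{\pi^2}.
\]
This is the crucial observation: for precisely this value of $c$ the coefficient $u''-\beta$ is a constant multiple of $u-c$, so the singular quotient $\frac{u''-\beta}{u-c}$ occurring in the Rayleigh--Kuo equation and in Definition \ref{embedding eigenvalue} collapses to the constant $-\pi^2$.

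First I would fix $\beta\in(-\frac{\pi^2}{2},\frac{\pi^2}{2})$, so that $c=\frac12-\frac{\beta}{\pi^2}\in(0,1)=\mathrm{Ran}(u)$ and the level set $u^{-1}\{c\}$ consists of two interior points where $u'\neq0$. Substituting the identity into the weak formulation of Definition \ref{embedding eigenvalue}, the principal value integral becomes the ordinary integral $-\pi^2\int_{-1}^{1}\phi\varphi\,dy$, while the residue sum vanishes because $(u''-\beta)(y)=-\pi^2(u(y)-c)=0$ at every $y\in u^{-1}\{c\}$. Hence $c$ is an embedding eigenvalue if and only if there is a nontrivial $\phi\in H_0^1(-1,1)$ with
\[
\int_{-1}^{1}\bigl(\phi'\varphi'+\alpha^2\phi\varphi\bigr)\,dy-\pi^2\int_{-1}^{1}\phi\varphi\,dy=0\quad\text{for all }\varphi\in H_0^1(-1,1),
\]
that is, a weak Dirichlet solution of the constant-coefficient equation $\phi''+(\pi^2-\alpha^2)\phi=0$ on $(-1,1)$.

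Next I would solve this elementary problem. The Dirichlet eigenvalues of $-\partial_y^2$ on $(-1,1)$ are $\frac{n^2\pi^2}{4}$, $n\ge1$, with eigenfunctions $\sin\frac{n\pi}{2}(y+1)$; requiring $\pi^2-\alpha^2=\frac{n^2\pi^2}{4}$ with $\alpha>0$ (so $\pi^2-\alpha^2<\pi^2$) forces $n=1$, hence $\alpha=\frac{\sqrt3}{2}\pi$ and $\phi(y)=\cos\frac{\pi}{2}y$. This yields both implications at once: an embedding eigenvalue at $c=\frac12-\frac{\beta}{\pi^2}$ exists exactly when $\alpha=\frac{\sqrt3}{2}\pi$ (the eigenfunction being explicitly $\cos\frac{\pi}{2}y\in H_0^1$), and for $\beta$ outside $[-\frac{\pi^2}{2},\frac{\pi^2}{2}]$ the candidate $c$ leaves $\mathrm{Ran}(u)$ and cannot be an embedding eigenvalue at all, so the eigenvalue occurs precisely on $\gamma_1$.

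The only genuinely delicate point is the reduction of the distributional formulation of Definition \ref{embedding eigenvalue} to the regular boundary value problem: one must check that the cancellation of the singular term is legitimate, i.e.\ that the identity $u''-\beta=-\pi^2(u-c)$ simultaneously renders the principal value an ordinary integral and makes the interface residues vanish. One must also isolate the borderline values $\beta=\pm\frac{\pi^2}{2}$, where $c$ degenerates to $1$ or $0$ and the critical layer migrates to a point with $u'=0$; these endpoints lie outside $\gamma_1$ and are handled by the companion propositions treating the eigenvalues $c=0$ and $c=1$.
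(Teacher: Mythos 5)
Your proof is correct and follows essentially the same route as the paper: both exploit the identity $u''-\beta=-\pi^2(u-c)$ at $c=\tfrac12-\tfrac{\beta}{\pi^2}$ to reduce the embedding-eigenvalue condition to the constant-coefficient Dirichlet problem $-\phi''=(\pi^2-\alpha^2)\phi$ on $(-1,1)$, whose only eigenvalue compatible with $\alpha>0$ forces $\alpha=\tfrac{\sqrt3}{2}\pi$ with eigenfunction $\cos(\tfrac{\pi y}{2})$. Your treatment is in fact slightly more self-contained (the paper cites Kuo and Lin--Yang--Zhu for the forward direction and phrases the converse via the second Sturm--Liouville eigenvalue being $0$), and your explicit check that the principal value and the residue sum in Definition \ref{embedding eigenvalue} degenerate is a welcome detail the paper leaves implicit.
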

\begin{proof}
It follows from (7.4) in \cite{Kuo1974} or (4.5) in \cite{LYZ} that when $(\alpha,\beta)\in\gamma_1$,  $\mathcal{R}_{\alpha,\beta}$ has an embedding eigenvalue $c={1\over2}-{\beta\over \pi^2}$ with the eigenfunction $\phi(y)=\cos({\pi y\over 2})$.
Conversely, we rewrite the homogeneous Rayleigh-Kuo equation with $c={1\over2}-{\beta\over \pi^2}$ to be a Sturm-Liouville problem
\begin{align}\label{Rayleigh-Kuo-to-SL-problem}
-\phi''+{u''-\beta\over u-c}\phi=-\phi''-\pi^2\phi=\lambda\phi,\;\; \phi(\pm1)=0,
	\end{align}
where $\lambda=-\alpha^2$. Then it is easy to see that the second eigenvalue of (\ref{Rayleigh-Kuo-to-SL-problem}) is $0$. Therefore, ${1\over2}-{\beta\over \pi^2}$ is not an embedding eigenvalue of $\mathcal{R}_{\alpha,\beta}$ when $(\alpha,\beta)\notin\gamma_1$.
\end{proof}

\begin{proposition}\label{embedding eigenvaluec=0}
$\mathcal{R}_{\alpha,\beta}$ has   an embedding eigenvalue $c=0$  if and only if  $(\alpha,\beta)\in\gamma_2\cup\gamma_3$.
\end{proposition}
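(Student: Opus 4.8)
The plan is to reduce the existence of an embedding eigenvalue $c=0$ to an explicitly solvable singular ODE and read off the quantization condition. Since $u(y)=\cos^2(\pi y/2)$ attains the value $0$ only at the endpoints $y=\pm1$, where $u'(\pm1)=0$, the set $\{y\in u^{-1}\{0\}:u'(y)\neq0\}$ is empty, so the $i\pi$-sum in Definition \ref{embedding eigenvalue} drops out. Hence $c=0$ is an embedding eigenvalue exactly when the homogeneous Rayleigh--Kuo equation
\[
u(\phi''-\alpha^2\phi)-(u''-\beta)\phi=0,\qquad \phi(\pm1)=0,
\]
has a nontrivial solution $\phi\in H_0^1(-1,1)$. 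Using $\cos(\pi y)=2u-1$ to write $(u''-\beta)/u=-\pi^2+(\pi^2-2\beta)/(2u)$, this becomes
\[
\phi''+(\pi^2-\alpha^2)\phi-\frac{\pi^2-2\beta}{2}\sec^2\!\Big(\frac{\pi y}{2}\Big)\phi=0,
\]
a P\"oschl--Teller equation with a double pole at each endpoint.

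First I would set $\theta=\pi y/2$ and factor out the admissible indicial behaviour at the endpoints. Writing $\phi=(\cos\theta)^p v$ with $p=\tfrac12\big(1+\sqrt{9-16\beta/\pi^2}\big)$ — the larger root of $p(p-1)=2-4\beta/\pi^2$, real since $(\mathbf{H1})$ gives $\beta<9\pi^2/16$ — cancels the singular term, and then $z=\sin\theta$ turns the equation into the Gegenbauer equation
\[
(1-z^2)v_{zz}-(2p+1)z\,v_z+(k^2-p^2)v=0,\qquad k:=2\sqrt{1-\alpha^2/\pi^2}.
\]
The key structural point is that $\phi=(\cos\theta)^pv\in H_0^1(-1,1)$ iff $v$ stays bounded at $z=\pm1$: the second local exponent of the Gegenbauer equation at $z=1$ is $\tfrac12-p$, which produces the behaviour $(\cos\theta)^{1-p}\sim(\mathrm{dist})^{\rho_-}$ with $\rho_-=1-p<\tfrac12$, and this is excluded from $H^1$. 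Therefore a global $H_0^1$ solution exists iff the solution regular at $z=-1$ terminates as a polynomial at $z=+1$, i.e. iff $n:=k-p$ is a nonnegative integer, in which case $v=C_n^{\,p}(z)$.

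Then I would solve the quantization $k=p+n$. Putting $r=p/2$, one gets $9-16\beta/\pi^2=(4r-1)^2$, hence $\beta=\pi^2(-r^2+\tfrac12 r+\tfrac12)$, while $k=2r+n$ gives $\alpha^2=\pi^2\big(1-(r+n/2)^2\big)$. For $n=0$ this is $\alpha^2=\pi^2(1-r^2)$ with the even eigenfunction $\phi=(\cos(\pi y/2))^{2r}$; the constraints $p>\tfrac12$ (for $H^1$) and $\alpha>0$ force $r\in(\tfrac14,1)$, which is exactly $\gamma_2$. For $n=1$ one obtains $\alpha^2=\pi^2(\tfrac34-r^2-r)$ with the odd eigenfunction $\phi\propto(\cos(\pi y/2))^{2r}\sin(\pi y/2)$, and $p>\tfrac12$, $\alpha>0$ force $r\in(\tfrac14,\tfrac12)$, exactly $\gamma_3$. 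For $n\ge2$ one has $k=2r+n>2$ (as $r>\tfrac14$), so $\alpha^2=\pi^2(1-k^2/4)<0$, excluding all such $n$; similarly $\alpha\ge\pi$ makes $k$ imaginary and admits no solution. This yields both implications: on $\gamma_2\cup\gamma_3$ the displayed functions are genuine $H_0^1$ eigenfunctions, and conversely any embedding eigenfunction forces $(\alpha,\beta)$ onto these curves.

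The hard part will be the second step: rigorously justifying that $H_0^1$-solvability is equivalent to termination of the Gegenbauer series, i.e. controlling the second linearly independent solution at the degenerate endpoints and confirming it is never in $H^1$ (the exponent identity $\rho_-=1-p$), together with verifying that the constructed $\phi$ actually satisfies the weak principal-value formulation of Definition \ref{embedding eigenvalue} — not merely the classical equation on the open interval — given that the critical layers sit at the boundary. The algebra relating $(n,r)$ to $(\alpha,\beta)$, the reparametrization $r=p/2$, and the exclusion of $n\ge2$ are then routine.
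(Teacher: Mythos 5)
Your reduction is correct: for $c=0$ the critical layers sit at $y=\pm1$ where $u'=0$, so the $i\pi$-sum in Definition \ref{embedding eigenvalue} is empty, the excluded set for test functions is empty, and the question becomes whether the homogeneous Rayleigh--Kuo equation has a nontrivial $H_0^1(-1,1)$ solution. Your computations check out against the paper: the substitution $\phi=(\cos\theta)^{p}v$, $z=\sin\theta$ with $p=2r$ produces exactly the Gegenbauer family $\phi=\cos^{2r}(\tfrac{\pi y}{2})C_n^{2r}(\sin\tfrac{\pi y}{2})$; your $n=0$ and $n=1$ eigenfunctions are the paper's $\gamma_2$ and $\gamma_3$ eigenfunctions, your $n=2$ case (with $\alpha^2=-\pi^2(r^2+2r)<0$) is precisely the paper's third Sturm--Liouville eigenvalue $\pi^2(r^2+2r)$ with eigenfunction $\phi_3$, and the constraint $p>\tfrac12$ reproduces $r>\tfrac14$, i.e.\ $\beta<\tfrac{9}{16}\pi^2$. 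The route, however, is genuinely different. The paper fixes $\beta$ (equivalently $r$), treats $\lambda=-\alpha^2$ as the spectral parameter of the singular Sturm--Liouville problem \eqref{SL-c=0}, exhibits the first three eigenfunctions explicitly, orders them by counting zeros via oscillation theory (Theorem 10.12.1 in \cite{Zettl2005}), and concludes that only the first two eigenvalues can be negative; it also outsources the forward direction for $r\ge\tfrac12$ to \cite{Kuo1974} and the converse for $\beta\le0$ to \cite{Tung1981}. Your quantization $k=p+n$ handles all $n$ and all $\beta<\tfrac{9}{16}\pi^2$ in one stroke and is more self-contained; what the paper's argument buys is that it never has to solve the connection problem for the Gegenbauer equation -- oscillation theory substitutes for it.

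That connection problem is the one real gap in your write-up, and you correctly flag it: you must show that when $k-p\notin\{0,1,2,\dots\}$ the solution recessive at $z=-1$ is \emph{not} recessive at $z=+1$. This is standard but needs to be said: either invoke the hypergeometric connection formula (the coefficient of the exponent-$0$ solution at $z=1$ in terms of the exponent-$0$ solution at $z=-1$ is a ratio of Gamma functions vanishing only at the polynomial values of the parameter), or observe that the recessive-at-both-ends condition defines a self-adjoint realization of the singular Sturm--Liouville operator on $L^2((-1,1),(1-z^2)^{p-\frac12}dz)$ whose spectrum is exactly $\{n(n+2p)\}_{n\ge0}$ by completeness of the Gegenbauer polynomials. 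Two smaller points to nail down: (i) at $r=\tfrac12$ and $r=\tfrac34$ the indicial roots $p$ and $1-p$ differ by an integer, so the second solution may carry a logarithm; your exclusion of it from $H_0^1$ still goes through ($\phi_-$ is either non-vanishing at the endpoint or has derivative $\sim(1-y)^{-p}\notin L^2$), but this case should be mentioned; (ii) for the forward direction you should record that $\phi\varphi(u''-\beta)/u=O((1\mp y)^{p-3/2})$ is absolutely integrable near $\pm1$ for $p>\tfrac12$ and that the boundary terms $\phi'\varphi\sim(1\mp y)^{p-1/2}$ vanish, so the classical solution genuinely satisfies the weak formulation of Definition \ref{embedding eigenvalue} -- this is the same verification the paper performs when it extends Kuo's range $r\in[\tfrac12,1)$ down to $r\in(\tfrac14,\tfrac12)$.
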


\begin{proof}
 We get by (7.5) in \cite{Kuo1974} that $\mathcal{R}_{\alpha,\beta}$ has an embedding eigenvalue $c=0$ with the eigenfunction $\phi(y)=\cos^{2r}({\pi y\over 2})$ for $\alpha=\pi\sqrt{1-r^2}$ and $\beta=\pi^2(-r^2+{1\over2}r+{1\over2})$ with $r\in[{1\over2},1)$. By Definition \ref{embedding eigenvalue}, we know that this also holds true when $r\in({1\over4},{1\over2})$. Therefore, $\mathcal{R}_{\alpha,\beta}$ has an embedding eigenvalue $c=0$ when $(\alpha,\beta)\in\gamma_2$. By Lemma 4.3 in \cite{LYZ},  the second eigenvalue of
\begin{align}\label{SL-c=0}
-\phi''+{u''-\pi^2(-r^2+{1\over2}r+{1\over2})\over u}\phi=\lambda\phi,\;\; \phi(\pm1)=0
	\end{align}
is $\pi^2(r^2+r-{3\over4})$ with the eigenfunction $\phi_2=\cos^{2r}({\pi y\over2})\sin({\pi y\over2})$.
Since $\pi^2(r^2+r-{3\over4})<0$ when $r\in({1\over4},{1\over2})$ and $\pi^2(r^2+r-{3\over4})\geq0$ when $r\in[{1\over2},1)$, 
$\mathcal{R}_{\alpha,\beta}$ has an embedding eigenvalue $c=0$ when $(\alpha,\beta)\in\gamma_3$.

Conversely, we compute by induction that $\pi^2(r^2+2r)$ is  an eigenvalue of  (\ref{SL-c=0}) with the eigenfunction
 $$\phi_3(y)=\cos^{2r}({\pi y\over2})((2r+1)\sin^2({\pi y\over2})-{1\over2}).$$
  Since $\phi_3$ has two zeros in $(-1,1)$, we have by Theorem 10.12.1 in \cite{Zettl2005} that $\pi^2(r^2+2r)$ is the third eigenvalue of  (\ref{SL-c=0}). Noting that $\pi^2(r^2+2r)>0$ when $r\in({1\over4},1)$, we have that $0$ is not an embedding eigenvalue of $\mathcal{R}_{\alpha,\beta}$ when $(\alpha,\beta)\in (0,+\infty)\times(0,{9\over16}\pi^2)\setminus (\gamma_2\cup \gamma_3)$.
It follows from (4.8) in \cite{Tung1981} that $c=0$ is not an embedding eigenvalue of $\mathcal{R}_{\alpha,\beta}$ when $(\alpha,\beta)\in(0,+\infty)\times(-{9\over16}\pi^2,0]$.
\end{proof}

\begin{proposition}\label{embedding eigenvalue c=1}
$\mathcal{R}_{\alpha,\beta}$ has  an embedding eigenvalue $c=1$  if and only if $(\alpha,\beta)\in\gamma_4\cup\{({\sqrt{3}\over2}\pi,-{1\over2}\pi^2)\}$.
\end{proposition}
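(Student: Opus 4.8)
The plan is to mirror the analysis of Proposition \ref{embedding eigenvaluec=0}, now with the critical layer at the interior point $y=0$, where $u(0)=1$. The starting point is an algebraic identity special to the Sinus flow: since $u-1=-\sin^2(\tfrac{\pi y}{2})$ and $u''-\beta=-\tfrac{\pi^2}{2}-\beta+\pi^2\sin^2(\tfrac{\pi y}{2})$, one has
\[
\frac{u''-\beta}{u-1}=\frac{\tfrac{\pi^2}{2}+\beta}{\sin^2(\tfrac{\pi y}{2})}-\pi^2 ,
\]
so the homogeneous Rayleigh--Kuo equation with $c=1$ becomes the singular Sturm--Liouville problem
\[
-\phi''+\Big(\frac{\tfrac{\pi^2}{2}+\beta}{\sin^2(\tfrac{\pi y}{2})}-\pi^2\Big)\phi=-\alpha^2\phi,\qquad \phi(\pm1)=0 ,
\]
whose potential is singular at $y=0$. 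First I would dispose of the degenerate case $\beta=-\tfrac{\pi^2}{2}$: here the singular term disappears, $u''(0)=\beta$ so that $y=0$ is \emph{not} excluded in Definition \ref{embedding eigenvalue}, and the problem reduces to $-\phi''-\pi^2\phi=-\alpha^2\phi$ with Dirichlet data, exactly as on $\gamma_1$. Its ground state $\phi=\cos(\tfrac{\pi y}{2})$ forces $\alpha=\tfrac{\sqrt3}{2}\pi$ while all higher Dirichlet modes give $\alpha^2\le0$; this yields the isolated point $(\tfrac{\sqrt3}{2}\pi,-\tfrac{\pi^2}{2})$, which is precisely the limit of the $\gamma_1$ branch as its eigenvalue $c=\tfrac12-\beta/\pi^2$ reaches the endpoint $1$.

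For $\beta\ne-\tfrac{\pi^2}{2}$ the point $y=0$ satisfies $u''(0)\ne\beta$, hence it belongs to the excluded set of Definition \ref{embedding eigenvalue}; admissible test functions are supported away from $0$, so the weak problem decouples into the two mirror-symmetric intervals $(0,1)$ and $(-1,0)$. On $(0,1)$ I would substitute $\phi=\sin^{s}(\tfrac{\pi y}{2})\,w$, where $s(s-1)=2+\tfrac{4\beta}{\pi^2}$ and, writing $\nu:=\sqrt{9+16\beta/\pi^2}$, the $H^1$ requirement near the singular endpoint selects the root $s=\tfrac{1+\nu}{2}=2r$ (so that $s>\tfrac12$). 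Passing to $t=\cos(\tfrac{\pi y}{2})$ converts the equation into the Gegenbauer equation $(1-t^2)w''-(2s+1)t\,w'+(\tilde\mu-s^2)w=0$ with $\tilde\mu=4-4\alpha^2/\pi^2$; the solutions regular at the singular endpoint and vanishing at $y=1$ (that is, $w|_{t=0}=0$) are the Gegenbauer polynomials $C_n^{(s)}$ of \emph{odd} degree, with $\tilde\mu_n=(s+n)^2$. The lowest mode $n=1$ gives the explicit eigenfunction $\phi_*(y)=|\sin(\tfrac{\pi y}{2})|^{2r}\cos(\tfrac{\pi y}{2})$; substituting it back and matching the two powers of $\sin(\tfrac{\pi y}{2})$ produces exactly $\beta=\pi^2(r^2-\tfrac12 r-\tfrac12)$ and $\alpha^2=\pi^2(\tfrac34-r^2-r)$, i.e. $(\alpha,\beta)\in\gamma_4$. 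Since $\phi_*\in H_0^1(-1,1)$ precisely when $r>\tfrac14$ and it is a genuine embedding eigenfunction in the sense of Definition \ref{embedding eigenvalue}, this settles the ``if'' direction.

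For the converse I would observe that $\phi_*>0$ on $(0,1)$, so it has no interior node and is therefore the ground state of the (Friedrichs) singular Sturm--Liouville operator there, while the next admissible mode $n=3$ carries eigenvalue $\tilde\mu_3=(s+3)^2>4$, which forces $\alpha^2=\tfrac{\pi^2}{4}(4-\tilde\mu_3)<0$ and is inadmissible. Hence $c=1$ is an embedding eigenvalue exactly when the ground state eigenvalue is negative, i.e. $\alpha^2=\pi^2(\tfrac34-r^2-r)>0$, equivalently $r<\tfrac12$; together with $r>\tfrac14$ this is precisely $\gamma_4$, and for $\beta>-\tfrac{\pi^2}{2}$ (so $r>\tfrac12$) even the ground state eigenvalue is positive and no embedding eigenvalue can occur. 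The ordering of the modes, and thus the identification of $n=1$ as the bottom of the spectrum, would be justified by Sturm oscillation / node counting exactly as in Proposition \ref{embedding eigenvaluec=0} (Theorem 10.12.1 in \cite{Zettl2005}), and the explicit formulas can be cross-checked against \cite{Kuo1974,LYZ,Tung1981} as in the case $c=0$.

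The delicate point is the \emph{interior} singular endpoint $y=0$. On $\gamma_4$ one has $s=2r\in(\tfrac12,1)$, so the potential $\sim s(s-1)/(\tfrac{\pi y}{2})^2$ sits in the limit-circle range and the operator is not essentially self-adjoint; the main work is to show that the $H^1$ membership built into Definition \ref{embedding eigenvalue} selects exactly the Friedrichs realization, equivalently that near $y=0$ the branch $\sim|y|^{(1-\nu)/2}$ is ruled out and only $\sim|y|^{(1+\nu)/2}$ survives, so that the decoupled problems on $(0,1)$ and $(-1,0)$ have precisely the discrete spectrum $\{(s+n)^2\}_{n\ \mathrm{odd}}$ used above. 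Once this self-adjoint realization and the completeness and ordering of its eigenfunctions are pinned down, the remaining steps are the routine verifications indicated.
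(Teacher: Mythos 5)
Your proposal is correct, and its core coincides with the paper's argument: both reduce the question to a singular Sturm--Liouville problem on the half-interval $(0,1)$ (since $u''(0)=-\tfrac{\pi^2}{2}\neq\beta$ puts $y=0$ in the excluded set of Definition \ref{embedding eigenvalue}, so the weak formulation decouples across $y=0$), both identify $\sin^{2r}(\tfrac{\pi y}{2})\cos(\tfrac{\pi y}{2})$ as the nodeless ground state with eigenvalue $\pi^2(r^2+r-\tfrac34)=-\alpha^2$, and both use a one-node second eigenfunction together with Sturm oscillation (Theorem 10.12.1 of \cite{Zettl2005}) to exclude any other negative eigenvalue; your $n=3$ Gegenbauer mode with $\tilde\mu_3=(s+3)^2$ is precisely the paper's $\tilde\phi_2=\sin^{2r}(\tfrac{\pi y}{2})\bigl(-\tfrac43(1+r)\cos^{3}(\tfrac{\pi y}{2})+\cos(\tfrac{\pi y}{2})\bigr)$ with eigenvalue $\pi^2(r^2+3r+\tfrac54)$, and your treatment of $\beta=-\tfrac{\pi^2}{2}$ as the regular case yielding the isolated point $(\tfrac{\sqrt3}{2}\pi,-\tfrac12\pi^2)$ matches the paper's. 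The genuine differences are twofold. First, you obtain the eigenfunctions systematically from the substitution $\phi=\sin^{s}(\tfrac{\pi y}{2})w$ and the Gegenbauer equation rather than by direct verification. Second, and more substantially, your formula $s=\tfrac12\bigl(1+\sqrt{9+16\beta/\pi^2}\,\bigr)$ with the sign of the ground-state eigenvalue governed by $s\lessgtr 1$ disposes of the entire range $\beta\geq-\tfrac{\pi^2}{2}$ in one stroke, whereas the paper outsources $\beta\in(-\tfrac{\pi^2}{2},0)$ to Lemma 4.2 of \cite{LYZ} and $\beta\in[0,\tfrac{9}{16}\pi^2)$ to (4.8) of \cite{Tung1981}; your route is therefore more self-contained. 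The ``delicate point'' you flag at the interior limit-circle endpoint is genuine but not a gap relative to the paper, which handles it only implicitly by demanding $\tilde\phi_1\in H^1(0,1)$; you should simply record the one-line indicial computation that near $y=0$ the branch $|y|^{1-s}$ has derivative $\sim|y|^{-s}\notin L^2$ for $s\geq\tfrac12$, so $H^1$ membership forces the branch $|y|^{s}$ (hence $\phi(0)=0$ and the Friedrichs realization), after which the node-counting you and the paper both invoke applies and no appeal to completeness of the Gegenbauer family is actually needed.
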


\begin{proof}
  Similar to the proof of Proposition \ref{1over2-betaoverpi2},  $c=1$ is an embedding eigenvalue of $\mathcal{R}_{{\sqrt{3}\over2}\pi,-{1\over2}\pi^2}$. Then we show that $c=1$ is  an embedding eigenvalue of $\mathcal{R}_{\alpha,\beta}$  when $(\alpha,\beta)\in\gamma_4$. Consider the boundary value problem
 \begin{align}\label{boundary value problem-for-examining}
 -\phi''+{u''+\pi^2(-r^2+{1\over2}r+{1\over2})\over u-1}\phi=\lambda\phi, \;\text{on}\; (0,1), \;\phi(0)=\phi(1)=0,
 \end{align}
 where $r\in({1\over4},{1\over2})$.   $\tilde\phi_1(y)=\sin^{2r}({\pi y\over2})\cos({\pi y\over2})\in H^1(0,1)$ with $\lambda=\pi^2(r^2+r-{3\over4})$ is a solution of (\ref{boundary value problem-for-examining}).
Since $\tilde\phi_1$ has no zeros in $(0,1)$ for any $r\in({1\over4},{1\over2})$, $\pi^2(r^2+r-{3\over4})$ is the first eigenvalue of (\ref{boundary value problem-for-examining}).
 By noting that $\pi^2(r^2+r-{3\over4})<0$ and $\tilde\phi_1\in H^1(0,1)$ for any $r\in({1\over4},{1\over2})$, we know that
 $c=1$ is an embedding eigenvalue of $\mathcal{R}_{\alpha,\beta}$ when $(\alpha,\beta)\in\gamma_4$.

 Conversely,
 direct computation implies that $(r^2+3r+{5\over4})\pi^2$ is also an eigenvalue of (\ref{boundary value problem-for-examining}) with the eigenfunction
 $$\tilde \phi_2=\sin^{2r}({\pi y\over2})(-{4\over3}(1+r)\cos^{3}({\pi y\over2})+\cos({\pi y\over2})).$$
 Since $\tilde \phi_2$ has exactly one zero in $(0,1)$ for any $r\in({1\over4},{1\over2})$, $(r^2+3r+{5\over4})\pi^2$ is the second eigenvalue of (\ref{boundary value problem-for-examining}).
Noting that $(r^2+3r+{5\over4})\pi^2>0$ when $r\in({1\over4},{1\over2})$,  $1$ is not an embedding eigenvalue of $\mathcal{R}_{\alpha,\beta}$ when $(\alpha,\beta)\in (0,+\infty)\times(-{9\over16}\pi^2,-{\pi^2\over2})\setminus \gamma_4$.
 Similar to the proof of Proposition \ref{1over2-betaoverpi2}, $c=1$ is  not an embedding eigenvalue of $\mathcal{R}_{\alpha,-{1\over2}\pi^2}$ with $\alpha\in(0,{\sqrt{3}\over2}\pi)\cup({\sqrt{3}\over2}\pi,+\infty)$.
By Lemma 4.2 in \cite{LYZ}, $c=1$ is not an embedding eigenvalue of $\mathcal{R}_{\alpha,\beta}$ when $(\alpha,\beta)\in(0,+\infty)\times(-{1\over2}\pi^2,0)$.
 It follows from (4.8) in \cite{Tung1981} that $c=1$ is not an embedding eigenvalue of $\mathcal{R}_{\alpha,\beta}$ when $(\alpha,\beta)\in(0,+\infty)\times[0,{9\over16}\pi^2)$. 
\end{proof}

Next, we exclude other embedding eigenvalues of $ \mathcal{R}_{\alpha,\beta}$.
 \begin{proposition}\label{cin(0,1)}
  For any $c\in(0,1)$ and $c\neq{1\over2}-{\beta\over\pi^2}$, it is not an embedding eigenvalue of $\mathcal{R}_{\alpha,\beta}$ when $(\alpha,\beta)\in (0,\infty)\times (-{9\over16}\pi^2,{9\over16}\pi^2)$.
  \end{proposition}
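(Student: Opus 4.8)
The plan is to argue by contradiction: reduce a putative embedding eigenvalue $c\in(0,1)$ with $c\neq\frac12-\frac{\beta}{\pi^2}$ to a genuine regular neutral mode, and then exclude it by Sturm--Liouville oscillation theory. First I would record the algebraic identity special to the Sinus flow: since $u=\cos^2(\frac{\pi y}{2})$ one has $u''=\frac{\pi^2}{2}-\pi^2 u$, hence
\begin{align*}
u''-\beta=-\pi^2(u-c_\ast),\qquad c_\ast:=\tfrac12-\tfrac{\beta}{\pi^2}.
\end{align*}
For $c\in(0,1)$ the level set $u^{-1}\{c\}=\{\pm y_c\}$ consists of two interior points with $u'(\pm y_c)\neq0$, and $(u''-\beta)(\pm y_c)=-\pi^2(c-c_\ast)\neq0$ precisely because $c\neq c_\ast$. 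In particular the set $\{y:u(y)=c,\ u'(y)=0,\ u''(y)\neq\beta\}$ is empty, so the test functions in Definition \ref{embedding eigenvalue} range over all of $H_0^1(-1,1)$.

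Suppose $c$ is an embedding eigenvalue with eigenfunction $\phi\in H_0^1(-1,1)\setminus\{0\}$. Testing the relation in Definition \ref{embedding eigenvalue} with $\varphi=\bar\phi$ and taking imaginary parts, the two real integrals drop out and I obtain
\begin{align*}
\pi\sum_{y=\pm y_c}\frac{(u''-\beta)|\phi(y)|^2}{|u'(y)|}=-\pi^3(c-c_\ast)\sum_{y=\pm y_c}\frac{|\phi(y)|^2}{|u'(y)|}=0 .
\end{align*}
Since $c\neq c_\ast$, this forces $\phi(\pm y_c)=0$. Consequently $\frac{(u''-\beta)\phi}{u-c}$ is continuous across $\pm y_c$, the principal value in Definition \ref{embedding eigenvalue} becomes an ordinary integral, and $\phi$ is a classical nontrivial solution of the homogeneous Rayleigh--Kuo equation $-\phi''+\frac{u''-\beta}{u-c}\phi=-\alpha^2\phi$ on $(-1,1)$ vanishing at the four points $\pm1,\pm y_c$, i.e.\ a regular neutral mode in the sense of \cite{LYZ}.

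I would then read this as a Dirichlet eigenvalue problem for the Sturm--Liouville operator $\mathcal L_c=-\frac{d^2}{dy^2}+\frac{u''-\beta}{u-c}$ on $(-1,1)$ with eigenvalue $\lambda=-\alpha^2$, exactly as in the converse parts of Propositions \ref{1over2-betaoverpi2}--\ref{embedding eigenvalue c=1}. Because $u$ is even, the eigenvalue is simple and $\phi$ may be taken even or odd; in either case the forced zeros at $\pm y_c$ (together with $\phi(0)=0$ in the odd case) show that $\phi$ has at least two interior zeros. By the oscillation theorem for this problem (Theorem 10.12.1 in \cite{Zettl2005}, applied on the nodal subintervals where $\phi(\pm y_c)=0$ removes the pole of the potential), $-\alpha^2$ must be at least the third eigenvalue $\lambda_3(c)$ of $\mathcal L_c$.

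The crux, and the step I expect to be the main obstacle, is to prove $\lambda_3(c)\ge0$ for every $c\in(0,1)$ with $c\neq c_\ast$ and every $\beta\in(-\frac{9}{16}\pi^2,\frac{9}{16}\pi^2)$; this contradicts $-\alpha^2<0$ and finishes the proof. The difficulty is that, unlike the endpoint cases $c\in\{0,1\}$ treated previously, the potential $\frac{u''-\beta}{u-c}=-\pi^2-\frac{\pi^2(c-c_\ast)}{u-c}$ now has interior simple poles and is sign-indefinite, so the explicit hypergeometric eigenfunctions of \cite{Kuo1974,LYZ} are no longer available: under $t=u$ the transformed equation is of Heun type with an extra regular singular point at $t=c$. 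I would establish the bound by a continuation/comparison argument. The eigenvalue $\lambda_3(c)$ depends continuously on $c\in(0,1)$, and as $c\to0^+$ and $c\to1^-$ it degenerates to the positive higher eigenvalues already exhibited in Propositions \ref{embedding eigenvaluec=0} and \ref{embedding eigenvalue c=1} (e.g.\ $\pi^2(r^2+2r)>0$), which are strictly positive throughout the admissible range. It then remains to rule out a crossing $\lambda_3(c)=0$ at some interior $c$: such a crossing would produce a solution of $-\phi''+\frac{u''-\beta}{u-c}\phi=0$ with three nodal domains, which I would exclude through the variational characterisation of $\lambda_3$ combined with the hypothesis (\textbf{H1}) $\beta/u''(y_c)<\frac98$ (equivalently $\beta\in(-\frac{9}{16}\pi^2,\frac{9}{16}\pi^2)$) that bounds the strength of the singular term at the critical layer. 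The value $c=c_\ast$ is genuinely excluded from this scheme because there the pole cancels and $\phi$ need not vanish at the critical layer, which is exactly why the curve $\gamma_1$ survives in Proposition \ref{1over2-betaoverpi2}.
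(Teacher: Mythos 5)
Your opening is exactly the paper's: taking the imaginary part of the quadratic form in Definition \ref{embedding eigenvalue} with $\varphi=\bar\phi$, and using $u''-\beta=-\pi^2(u-c_\ast)$ with $c\neq c_\ast$ to force $\phi(z_1)=\phi(z_2)=0$ at the two simple critical layers. After that, however, you branch into an oscillation-theoretic scheme whose decisive step is not actually carried out. You reduce the proposition to the inequality $\lambda_3(c)\ge 0$ for the singular Sturm--Liouville operator $-\frac{d^2}{dy^2}+\frac{u''-\beta}{u-c}$ for every interior $c\neq c_\ast$, flag this as ``the main obstacle,'' and propose to prove it by continuity in $c$ plus ruling out a crossing $\lambda_3(c)=0$ ``through the variational characterisation of $\lambda_3$ combined with (\textbf{H1}).'' That last step is circular: a crossing $\lambda_3(c)=0$, or more generally $\lambda_3(c)<0$, is essentially the existence of the neutral mode you are trying to exclude, so invoking the variational characterisation without an explicit estimate proves nothing. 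Moreover the explicit third eigenfunctions $\phi_3$ and $\tilde\phi_2$ from Propositions \ref{embedding eigenvaluec=0} and \ref{embedding eigenvalue c=1} are tied to the limiting potentials at $c=0,1$ (where the equation becomes hypergeometric); as you yourself note, for interior $c$ the equation is of Heun type and no such closed-form spectral data is available, so the endpoint positivity does not transfer by continuity alone. There is also a secondary soft spot: the simplicity/parity claim and the application of Theorem 10.12.1 of \cite{Zettl2005} are stated for regular problems, whereas your potential has two interior poles, so even the reduction ``two interior zeros $\Rightarrow$ $-\alpha^2\ge\lambda_3$'' needs justification.

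The paper closes the argument in a completely elementary way that avoids any global spectral count. Having forced $\phi(z_1)=\phi(z_2)=0$, it multiplies the equation by $\bar\phi$ on a single well-chosen subinterval and uses the substitution $\phi'\mapsto \phi'-u'\phi/(u-c)$ (Howard's trick), which turns $\int\bigl(|\phi'|^2+\frac{u''}{u-c}|\phi|^2\bigr)dy$ into the perfect square $\int\bigl|\phi'-u'\frac{\phi}{u-c}\bigr|^2dy$ and leaves the remainder $\int\bigl(\alpha^2-\frac{\beta}{u-c}\bigr)|\phi|^2dy$. For $\beta\le 0$ one works on $[z_1,z_2]$, where $u>c$ so that $-\beta/(u-c)\ge 0$; for $\beta>0$ one works on the outer intervals $[-1,z_1]$ and $[z_2,1]$, where $u<c$ and again $-\beta/(u-c)>0$. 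In either case the identity forces $\phi\equiv 0$ on that subinterval, and a $W^{2,p}$ bootstrap plus the unique continuation Lemma 2.2 of \cite{LYZ} propagates the vanishing to all of $[-1,1]$. If you want to salvage your route, you would need to replace the continuation heuristic by an actual lower bound on $\lambda_3(c)$ uniform in $c$; the sign argument above is precisely the quantitative input that makes this unnecessary.
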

\begin{proof}
  Denote the two zeros of $u-c$ to be $z_1$ and $z_2$.
Suppose that $c$ is an embedding eigenvalue. Thanks to Definition \ref{embedding eigenvalue}, there exists $\phi\in H_0^1(-1,1)$ so that
$$\int_{-1}^{1}(|\phi'|^2+\alpha^2|\phi|^2)dy+p.v.\int_{-1}^{1}{(u''-\beta)|\phi|^2\over u-c}dy+i\pi\sum\limits_{y\in\{z_1,z_2\}}{(u''-\beta)|\phi|^2(y)\over |u'(y)|}=0.$$
Noting that $\beta-u''=\pi^2(u-{1\over2}+{\beta\over\pi^2})$ and $c\neq{1\over2}-{\beta\over\pi^2}$, we have $u''(z_j)-\beta\neq0$ and thus $\phi(z_j)=0$ for $j=1,2$. Moreover,
\begin{align*}
-\phi''+\alpha^2\phi+{u''-\beta\over u-c}\phi=0\;\;\text{on}\;\;(-1,1)\setminus\{z_1,z_2\}.
	\end{align*}

Let $\beta\in(-{9\over16}\pi^2,0]$. Then
\begin{align*}
\int_{z_1}^{ z_2}\left(|\phi'|^2+\alpha^2|\phi|^2+{u''-\beta\over u-c}|\phi|^2\right) dy=0.
\end{align*}
Thus, we get by integration by parts that 
\beno
\int_{z_1}^{ z_2}\Big|\phi'-u'\frac{\phi}{u-c}\Big|^2dy+\int_{z_1}^{ z_2}\left(\alpha^2-{\beta\over u-c}\right)|\phi|^2dy=0,
\eeno
which
implies $\phi\equiv0$ on $[z_1,z_2]$.
By Sobolev embedding $H^1(J)\hookrightarrow C^{0,{1\over2}}(J)$ and the fact that $c$ is an embedding eigenvalue of $\mathcal{R}_{\alpha,\beta}$, we have
\begin{align*}
\left|\int_{-1}^{1}{\phi}'\varphi'dy\right|=&\left|\int_{-1}^{1}\alpha^2\phi\varphi +\frac{(u''-\beta)\phi\varphi}
{u-c}dy\right|\\
\leq &C\left(\|\phi\|_{L^p}+\left(\int_{z_j-\varepsilon}^{z_j+\varepsilon}\left|{\phi\over u-c}\right|^pdy\right)^{1\over p}\right)\|\varphi\|_{L^{p'}}\leq C\|\varphi\|_{L^{p'}},
\end{align*}
for every $ \varphi\in H^1(-1,1)$ with $\text{supp}\ \varphi\subset[z_j-\varepsilon,z_j+\varepsilon],$ where $1<p<2,\ 1/p+1/p'=1$,  $j=1,2$, $\varepsilon>0$ is sufficiently small and $J$ is a compact interval. Thus, $ \phi\in W^{2,p}(z_j-\varepsilon,z_j+\varepsilon),$ and by Sobolev embedding $W^{2,p}(J)\hookrightarrow C^1(J)$, we have
$\phi\in C^1([-1,1])$.
Then by Lemma 2.2 in \cite{LYZ}, we have
 $\phi\equiv0$ on $[-1,1]$.

Let $\beta\in(0, {9\over16}\pi^2)$.  With a similar argument to $\beta\in(-{9\over16}\pi^2,0]$, we can first show that $\phi\equiv0$ on $[{-1},z_1]$ and $[z_2,1]$, then show that $\phi\equiv0$ on $[-1,1]$.

Therefore, $c$ is not an embedding eigenvalue of $\mathcal{R}_{\alpha,\beta}$ when $(\alpha,\beta)\in (0,\infty)\times (-{9\over16}\pi^2,{9\over16}\pi^2)$, and this completes the proof.
\end{proof}

\section*{Acknowledgement}
H. Zhu  would like to thank School of
Mathematical Science at Peking University, where part of this work was done when he was a visitor. Z. Zhang is partially supported by NSF of China under Grant 11425103.

\end{CJK*}

\end{document}